\definecolor{ddmagenta}{rgb}{0.8,0,0.8}
\definecolor{dgreen}{rgb}{0.1,0.5,0.7}
\numberwithin{equation}{section}
\patchcmd\maketitle{\def\@makefnmark{\rlap{\@textsuperscript{\normalfont\@thefnmark}}}}{}{}{}
\def\thanksAAffil#1{
  \footnotemarkAAffil\protected@xdef\@thanks{\@thanks%
        \protect\footnotetextAAffil[\the \c@footnoteAAffil]{#1}}%
}
\def\thanksANote#1{%
  \footnotemarkANote%
  \protected@xdef\@thanks{\@thanks%
        \protect\footnotetextANote[\the \c@footnoteANote]{#1}}%
}
\newcommand*{\colorboxed}{}
\def\colorboxed#1#{%
  \colorboxedAux{#1}%
}
\newcommand*{\colorboxedAux}[3]{%
  \begingroup
    \colorlet{cb@saved}{.}%
    \color#1{#2}%
    \boxed{%
      \color{cb@saved}%
      #3%
    }%
  \endgroup
}
\newtheorem{thmx}{Theorem}
\newtheorem{notation}{Notation}
\newtheorem{hypx}[thmx]{Hypothesis}
\definecolor{dblue}{rgb}{0,0,0.5}
\definecolor{hellblau}{rgb}{0.85,0.9,1}
\definecolor{bernstein}{rgb}{0.9,0.6,0.05}
\def\trait #1 #2 #3 {\vrule width #1pt height #2pt depth #3pt}
\def\fin{
    \trait .3 5 0
    \trait 5 .3 0
    \kern-5pt
    \trait 5 5 -4.7
    \trait 0.3 5 0
\medskip}
\newcommand{\QED}{\hfill $\square$}
  \def\bbC{{\mathbb C}}
\def\bbV{{\mathbb V}}
\def\calD{{\mathcal D}} \def\calE{{\mathcal E}}
\def\calP{{\mathcal P}}
  \def\bff{{\FG f}}
\def\bfg{{\FG g}}
\newcommand{\bele}{\begin{lemm}\begin{sl}}
\newcommand{\enle}{\end{sl}\end{lemm}}
\newcommand{\bedef}{\begin{defi}\begin{sl}}
\newcommand{\eddef}{\end{sl}\end{defi}}
\newcommand{\bete}{\begin{teor}\begin{sl}}
\newcommand{\ente}{\end{sl}\end{teor}}
\newcommand{\beos}{\begin{osse}\begin{rm}}
\newcommand{\eddos}{\end{rm}\end{osse}}
\newcommand{\bepr}{\begin{prop}\begin{sl}}
\newcommand{\empr}{\end{sl}\end{prop}}
\newcommand{\bepro}{\begin{prob}\begin{rm}}
\newcommand{\empro}{\end{rm}\end{prob}}
\newcommand{\bede}{\begin{defin}\begin{sl}}
\newcommand{\edde}{\end{sl}\end{defin}}
\newcommand{\beco}{\begin{coro}\begin{sl}}
\newcommand{\enco}{\end{sl}\end{coro}}
\newcommand{\behy}{\begin{hypo}\begin{sl}}
\newcommand{\enhy}{\end{sl}\end{hypo}}
\newcommand{\thspace}{\hspace{3mm}}
\newcommand{\beeq}[1]{\begin{equation}\label{#1}}
\newcommand{\eddeq}{\end{equation}}
\newcommand{\beeqa}[1]{\begin{eqnarray}\label{#1}}
\newcommand{\eddeqa}{\end{eqnarray}}
\newcommand{\beal}[1]{\begin{align}\label{#1}}
\newcommand{\eddal}{\end{align}}
\newcommand{\bespl}[1]{\begin{split}\label{#1}}
\newcommand{\edspl}{\end{split}}
\newcommand{\bega}[1]{\begin{gather}\label{#1}}
\newcommand{\edga}{\end{gather}}
\newcommand{\beeqax}{\begin{eqnarray*}}
\newcommand{\eddeqax}{\end{eqnarray*}}
\newcommand{\weaksto}{{\rightharpoonup^*}}
\newcommand{\weakto}{\rightharpoonup}
\newcommand{\itt}{\int_0^t}
\newcommand{\ito}{\itt\!\io}
\DeclareMathOperator{\spa}{span}
\newcommand{\tensore}{\varepsilon({\bf u})}
\newcommand{\foraa}{\text{for a.a.}}
\def\fine{\hfill\kern4pt \vrule height4pt depth0pt width4pt }
\numberwithin{equation}{section}
\numberwithin{equation}{section}
\newcommand{\dd}{\, \mathrm{d}}
\newcommand{\pairing}[4]{ \sideset{_{#1 }}{_{ #2}}  {\mathop{\langle #3 , #4  \rangle}}}
\newcommand{\eps}{\varepsilon}
\newlength{\dhatheight}
\newcommand{\doublehat}[1]{%
    \settoheight{\dhatheight}{\ensuremath{\hat{#1}}}%
    \addtolength{\dhatheight}{-0.35ex}%
    \hat{\vphantom{\rule{1pt}{\dhatheight}}%
    \smash{\hat{#1}}}}
\newcommand{\DDD}[3]{\begin{array}[t]{c}#1\vspace*{-1em}\\_{#2}\vspace*{-.5em}\\_{#3}\end{array}}
\newcommand{\ddd}[3]{\DDD{\begin{array}[t]{c}\underbrace{#1}\vspace*{.6em}\end{array}}{\text{\footnotesize #2}}{\text{\footnotesize #3}}}
\newcommand{\down}{\downarrow}
\newcommand{\rmC}{\mathrm{C}}
\newcommand{\EEE}{\color{black}}
\newcommand{\e}{\varepsilon}
\newcommand{\DIV}{\,\mathrm{div}}
\newcommand{\R}{\mathbb{R}}
\newcommand{\N}{\mathbb{N}}
\newcommand{\C}{\mathcal}
\newcommand{\dx}{\,\mathrm dx}
\newcommand{\dt}{\,\mathrm dt}
\newcommand{\ds}{\,\mathrm ds}
\newcommand{\dta}{\,\mathrm d\tau}
\newcommand{\dS}{\,\mathrm dS}
\newcommand{\dr}{\,\mathrm dr}
\newcommand{\dxt}{\,\mathrm dx\,\mathrm dt}
\newcommand{\weaklim}{\rightharpoonup}
\newcommand{\weakstarlim}{\stackrel{\star}{\rightharpoonup}}
\newcommand{\ff}{\f f}
\newcommand{\uu}{\f u}
\newcommand{\ut}{\uu_t}
\newcommand{\utt}{\uu_{tt}}
\newcommand{\vv}{\f v}
\newcommand{\chit}{\chi_t}
\newcommand{\io}{\int_\Omega}
\newcommand{\intt}{\int_0^t}
\newcommand{\dis}[3]{{#1}^{#2}_{#3}}
\newcommand{\chik}{\dis \chi k \tau}
\newcommand{\chikk}{\dis\chi {k-1}\tau}
\newcommand{\uk}{\dis{\mathbf{u}} k \tau}
\newcommand{\ukk}{\dis{\mathbf{u}}{k-1}\tau}
\newcommand{\ukkk}{\dis{\mathbf{u}}{k-2}\tau}
\newcommand{\chil}{\dis{\chi}\ell\tau}
\newcommand{\ul}{\dis{\mathbf{u}}\ell\tau}
\newcommand{\ull}{\dis{\mathbf{u}}{\ell-1}\tau}
\newcommand{\chij}{\dis {\chi} j\tau}
\newcommand{\chijj}{\dis{\chi}{j-1}\tau}
\newcommand{\uj}{\dis{\mathbf{u}} j \tau}
\newcommand{\ujj}{\dis{\mathbf{u}} {j-1}\tau}
\newcommand{\ph}{\text{\boldmath$\varphi$}}
\newcommand{\hpsi}{\widehat\psi}
\newcommand{\olt}{\overline{\mathsf{t}}_\tau}
\newcommand{\olu}{\overline{\mathbf{u}}_\tau}
\newcommand{\ulu}{\underline{\mathbf{u}}_\tau}
\newcommand{\olchi}{\overline{\chi}_\tau}
\newcommand{\ulchi}{\underline{\chi}_\tau}
\newcommand{\olv}{\overline{\mathbf{v}}_\tau}
\newcommand{\ulv}{\underline{\mathbf{v}}_\tau}
\newcommand{\ointe}[2]{\overline{#1}_{#2}}
\newcommand{\uinte}[2]{\underline{#1}_{#2}}
\newcommand{\linte}[2]{{#1}_{#2}}
\newcommand{\aein}{\text{ a.e. in }}
\newcommand{\cD}{\mathcal{D}}
\newcommand{\cE}{\mathcal{E}}
\newcommand{\CC}{\mathbb{C}}
\newcommand{\VV}{\mathbb{V}}
\DeclareSymbolFont{tipa}{T3}{cmr}{m}{n}
\DeclareMathAccent{\invbreve}{\mathalpha}{tipa}{16}
\newtheorem{thm}{Theorem}[section]
\newtheorem{remark}[thm]{{\textbf Remark}}
\newtheorem{lemma}[thm]{{\textbf Lemma}}
\newtheorem{theorem}[thm]{{\textbf Theorem}}
\newtheorem{proposition}[thm]{{\textbf Proposition}}
\newtheorem{definition}[thm]{{\textbf Definition}}
\newtheorem{corollary}[thm]{{\textbf Corollary}}
\DeclareMathOperator{\Vm}{\mathbb{V}}
\DeclareMathOperator{\dom}{dom}
\newcommand{\tu}{\tilde{\f u}}
\newcommand{\tchi}{\tilde{\chi}}
\newcommand{\tut}{\tilde{\f u}_t}
\newcommand{\f}[1]{{\pmb{ #1}}}
\newcommand{\Cm}[2]{\mathbb{C}\varepsilon({ #1}){:}\varepsilon({ #2})}
\newcommand{\di}{\nabla{\cdot}}
\newcommand{\Lquo}{L^2(\Omega)_{/ \R}}
\newcommand{\Hquo}{H^1(\Omega)_{/ \R}}
\newcommand{\AC}{\mathcal{AC}}
\newcommand{\cgamma}[1]{{\gamma}_{#1}}
\newcommand{\Gdir}{\Gamma_{\!\scriptscriptstyle{\rm D}}}
\newcommand{\Gneu}{\Gamma_{\!\scriptscriptstyle{\rm N}}}
\newcommand{\Spx}{\boldsymbol{X}}
\newcommand{\Spxw}{\boldsymbol{X}_{\mathrm{weak}}}
\renewcommand{\bff}{\boldsymbol{f}}
\renewcommand{\bfg}{\boldsymbol{g}}
\newcommand{\zz}{\boldsymbol{z}}
\newcommand{\betaup}{\upbeta}
\newcommand{\RNEW}{\color{black}} 
\newcommand{\wh}[1]{\widehat{#1}}
\begin{document}
	\title{\bf Existence and weak-strong uniqueness for damage systems in viscoelasticity}

\author{
  Robert Lasarzik%
  \thanksAAffil{Weierstrass Institute for Applied
Analysis and Stochastics, Mohrenstr.~39  D-10117 Berlin.
Email: {\ttfamily  robert.lasarzik@wias-berlin.de}}, \ 
  Elisabetta Rocca%
  \thanksAAffil{Dipartimento di Matematica ``F. Casorati'',
 Universit\`a di Pavia and IMATI -- C.N.R., Via Ferrata 5  I-27100 Pavia, Italy. Email: {\ttfamily elisabetta.rocca@unipv.it}}, \
  Riccarda Rossi%
  \thanksAAffil{Dipartimento di Ingegneria Meccanica e Industriale, Universit\`a di Brescia, Via Branze 38 I-25133 Brescia and IMATI -- C.N.R., Via Ferrata 5  I-27100 Pavia, Italy.
Email: {\ttfamily riccarda.rossi\,@\,unibs.it}}%
}

%
%
%
%
%

\maketitle



\begin{abstract}
\noindent 
In this paper we investigate the existence of solutions and their weak-strong uniqueness 
 property \EEE
for a PDE system 
 modelling \EEE
 damage in viscoelastic materials.
 \par
 In fact, we address two solution concepts, \emph{weak} and \emph{strong} solutions. For the former, we obtain a global-in-time existence result, but the highly nonlinear character of the system
 prevents us from proving their uniqueness. 
 For the latter,
 we prove local-in-time existence. Then, we show that the strong solution, as long as it exists, \EEE  is unique in the class of weak solutions. 
This  \emph{weak-strong uniqueness} statement is 
proved by means of a suitable relative energy inequality. 
\end{abstract}

\noindent {\bf Key words:}\thspace damage,  viscoelasticity, global-in-time weak solutions, local-in-time strong solutions, time discretization, generalized solutions,  weak-strong uniqueness.

  \vspace{4mm}

\noindent {\bf AMS (MOS) subject clas\-si\-fi\-ca\-tion:}\thspace
35D30,  
35D35, 
74G25, 
74A45. 

\section{Introduction}

In this paper we address the following PDE system
\begin{subequations}
	\label{PDEsystem}
	\begin{align}
		&\utt-\DIV\big(a(\chi) \CC \e(\uu)+ b(\chi)\VV\e(\ut)\big)=  \RNEW \bff\EEE
			&&\text{a.e. in }\Omega{\times}(0,T),\label{uEq}\\
		&\chi_t+\partial I_{(-\infty,0]}(\chi_t)-\Delta\chi+  \frac12 \EEE  a'(\chi) \e(\uu) \CC \e(\uu)+ \partial W(\chi) \EEE
		\ni 0
			&&\text{a.e. in }\Omega{\times}(0,T),
			\label{chiEq}\\
		&\uu(0)=\uu_0,\quad\ut(0)=\vv_0,\quad\chi(0)=\chi_0
			&&\text{a.e. in }\Omega,
			\label{initialCond}\\
		&\chi\geq 0,\quad\chi_t\leq 0
			&&\text{a.e. in }\Omega{\times}(0,T),
			\label{constraints}
			\intertext{coupled with homogeneous Neumann boundary conditions for $\chi$}
		&\partial_{\pmb n}\chi=0\hspace*{12.7em}
			&&\text{a.e. on }\partial\Omega{\times}(0,T),\hspace*{2.7em}
			\intertext{and with Robin-type boundary conditions for $\uu$}
			\label{Robin-intro}
			&  \cgamma{0}   \pmb n {\cdot} \left(a(\chi) \CC \eps(\f u) + b(\chi)\VV\eps(\f u_t) \right)+ \cgamma{1} \f u _t + \cgamma{2} \f u = \bfg  && \text{ a.e. on }\partial\Omega{\times}(0,T)\,,
	\end{align}
	\end{subequations}
	 tuned by coeffcients  
	\[
	 \cgamma{0} ,\,  \cgamma{1},\,   \cgamma{2} \geq 0\,.
	 \] 
System \eqref{PDEsystem}
models  damage processes in  a viscoelastic material
 occupying a   bounded Lipschitz domain in  $ \R^d$, $d=1,2,3$. We consider the evolution of the phenomenon in a time-interval $(0,T)$ and 
set \EEE
$Q:=\Omega{\times} (0,T)$  
 and $\Sigma := \partial \Omega {\times} (0,T)$. \EEE
 The state variables 
  are the  vector of small displacements  $\uu$, satisfying the momentum balance \eqref{uEq}, and the damage parameter $\chi$, representing the local proportion of damage: $\chi=1$ means that the material is completely safe, while $\chi=0$ means it is completely damaged.  We formulate the damage flow rule in the framework of the theory of 
  \textsc{M.\ Fr\'emond} \cite{fremond} and so we allow the phase parameter $\chi$ to assume also intermediate values inbetween $0$ and $1$ in the points of the domain $\Omega$ where only partial damage occurs.
\par
In  \eqref{uEq}, $\varepsilon( \uu)_{ij}:=(\uu_{i,j}+\uu_{j,i})/2$ denotes the linearized symmetric strain tensor, 
  while $\CC$ and $\VV$ are the elastic and viscosity tensors, respectively. The $\chi$-dependent coefficients
 $ a,b \in \mathrm{C}^1(\R)$ \EEE
mark
 the damage dependence of the elasticity and viscosity  modula, respectively;
  we will precisely specify our conditions on $\CC$, $\VV$, $a$, and $b$, in Section \ref{s:2} ahead. 
  The momentum balance is  supplemented by the 
the Robin-type boundary condition \eqref{Robin-intro},  \EEE where 
 the parameters $ \cgamma{0},  \cgamma{1},  \cgamma{2}$ in principle may be tuned in such a way as to yield a variety of boundary conditions for $\uu$, among which
 \[
 \begin{cases}
 \text{Neumann boundary conditions} & \text{for } \cgamma 0 \neq 0, \ \cgamma 1 =\cgamma 2 =0,
 \\
 \text{time-dependent Dirichlet boundary conditions} &  \text{for } \cgamma 0 =0,  \min \{  \cgamma 1 ,\cgamma 2\}>0\,. 
 \end{cases}
 \]
 Later on, we will point out to which extent we can encompass the \emph{general}
 conditions \eqref{Robin-intro} in our analysis. 
\par
 The damage flow rule~\eqref{chiEq}
has a doubly nonlinear structure. Indeed, it features the subdifferential term $\partial I_{(-\infty,0]}(\chi_t)$, with $\partial I_{(-\infty,0]}: \R \rightrightarrows \R$ the
(convex analysis) subdifferential of the indicator function $ I_{(-\infty,0]}$,  which serves to the purpose of enforcing unidirectionality of damage evolution via the constraint $\chi_t \leq 0$ a.e.\ in $Q$.  In turn, the \EEE  ``double-well'' type potential $W:=\breve{W}+\invbreve{W}$  is assumed to be the sum of a convex (possibly non-smooth) part $\breve{W}$ and non-convex (but regular) part $\invbreve{W}$. Typical choices for $W$ which we can include in our analysis are the logarithmic
potential
\begin{equation}
\label{logW} W(r):= r \ln (r) + (1-r) \ln(1-r) -c_1 r^2 -c_2 r -c_3
\quad \forall \, r \in (0,1),
\end{equation}
where $c_1$ and $ c_2$  are positive constants, as well as the sum of the
indicator function $\breve{W}=I_{[0,1]}$, 
 forcing $\chi$ to range between $0$ and $1$,  \EEE with a smooth non convex $\invbreve{W}$.  Therefore, 
the subdifferential $\partial W$ includes the (possibly) multivalued subdifferential $\partial\breve{W}$.  \EEE 
We note that the upper wall of the well at $1$ will already be respected by the unidirectional damage evolution $ \chi_t \leq 0$ together with the condition on the initial value $ \chi_0\leq 1$ in $\Omega$.
The coupling with 
\eqref{uEq} occurs through the term \EEE
 $\e(\uu) \CC \e(\uu)$, which is a short-hand for the
colon product $\tensore \colon \CC\tensore$. 

 System \eqref{PDEsystem} can be derived in the frame of the modelling approach by Fr\'emond
\cite{fremond} (cf.\ also~\cite{BoBo, bosch, bss})
 from of the following  choices \EEE  of the free-energy 
functional and of the pseudo-potential of dissipation: 
\begin{align*}
	\cE(\uu,\chi,\ut):={}&\io \left\{  \frac12|\ut|^2 {+}  \frac12  a(\chi) \CC \e(\uu){:} \e(\uu) {+} \frac12|\nabla\chi|^2{+}W(\chi)\right\} \dx
	  + \int_{\partial\Omega} \frac{\cgamma2}2 |\uu|^2 \dd S  \,, 
					\\
	 \cD(\chi,\ut,\chit):={}&\io \left\{ b(\chi)\VV\e(\ut){:}\e(\ut) {+} |\chit|^2{+}I_{(-\infty,0]}(\chit) \right\} \dx
	  +
	\int_{\partial\Omega} \cgamma1 |\uu_t|^2 \dd S  \,.
				\end{align*}

\subsection*{Mathematical difficulties}

The main mathematical hurdles encountered in the study of this system are related to the $\chi$-dependence in the viscosity and elastic coffiecients $a$ and $b$ in \eqref{uEq}, 
 and to the nonlinear features of equation \eqref{chiEq}. In particular, the  simultaneous \EEE
 presence of the non-smooth subdifferentials 
of $I_{(-\infty,0]}$, and  $W$,
 and the quadratic term $\frac12 a'(\chi)\eps(\uu)\CC\eps(\uu)$ occurring in \eqref{chiEq}, impart a strongly
nonlinear character to the system, so that the related analysis turns out to be nontrivial.
\par
 In the pioneering papers~\cite{bosch, bss}, the momentum balance equation
(with \emph{scalar} displacements)
 had a 
degenerating character due to the loss of ellipticity in regions where 
$a(\chi)=b(\chi) =0$. Consequently, only local-in-time existence results were proven. 
However, in most papers \emph{complete damage} is avoided, and  
non-degenerating coefficients in front of either the elasticity or viscosity tensors are considered:
we will also adopt this assumption hereafter.
\par
  Still, the highly nonlinear coupling between the momentum balance and the damage flow rule poses a major hurdle  to
  global-in-time existence  as already shown in \cite{BoBo}, where the coupling with thermal effects was also encompassed.
As a remedy to that,  the flow rule for  $\chi$ has been often 
 regularized by means of a nonlinear $p$-Laplacian operator, with  the exponent $p$  greater  than the space dimension
 (or a linear fractional Laplacian, \cite{KnRoZa13VVAR}),
  in place of the usual Laplacian acting on $\chi$. Indeed, this leads to higher
  spatial regularity for the damage variable and, as a consequence, paves the way for enhanced elliptic regularity estimates in the momentum balance, as
   well.
   This strategy has led to 
   \emph{global-in-time} existence for damage models in thermoviscoelastic materials
    \cite{HR, rocca-rossi-deg,rocca-rossi-full}, even encompassing phase separation 
    \cite{HKRR}. 
    \par
    Finally, let us also mention that in
\cite{rocca-rossi-full}
we addressed the asymptotic analysis of the damage system with $p$-Laplacian regularization,
 where the case of the Laplacian operator was considered as a limit for $p\searrow 2$  in the $p$-Laplacian term. 
 In that case,  we showed that
 that the limit damage system needs to
 be formulated in a weaker fashion. We will dwell on this solvability concept later on. 
 \medskip
 \par
 The main aim of this paper is to cope with the analysis of system \eqref{PDEsystem}  \emph{without} resorting to any higher-order regularization of the damage flow rule. 
 In this context:
 \begin{enumerate}
 \item We will contend with \emph{global-in-time solvability} for \eqref{PDEsystem}. As the literature available up to now suggest, global existence may be expected only for  weak solutions to \eqref{PDEsystem}: we will carefully introduce  our solvability concept and provide a set of conditions on the constitutive functions of the model, on the forces, and on the initial data, guaranteeing the existence of global-in-time solutions.
 \item We will then turn to handling \emph{strong} solutions, with  the displacement $\uu$ and the damage variable $\chi$ sufficiently regular in such a way as to satisfy 
  system  \eqref{PDEsystem} pointwise. We will prove that such solutions exist at least locally in time. 
  \item We finally show that strong solutions are unique, as long as they exist, within the class of weak solutions.
 \end{enumerate}
 The latter
 property goes under the name of \emph{weak-strong} uniqueness. In this regard, let us mention that there  \EEE
   is nowadays a consolidated literature on weak-strong uniqueness results in the context of fluid dynamics, 
such as \textsc{Serrin}'s uniqueness 
result~\cite{serrin} for \textsc{Leray}'s weak solutions~\cite{leray} to the
incompressible \textsc{Navier}--\textsc{Stokes} equation in three space dimensions, 
or the weak-strong uniqueness for suitable weak-solutions to the 
incompressible \textsc{Navier}--\textsc{Stokes} system~\cite{Feireislrelative} 
or to the full \textsc{Navier--Stokes--Fourier} system~\cite{novotny}. 
The formulation of a relative 
energy inequality entailing weak-strong uniqueness of solutions for  thermodynamical systems 
 goes back to \textsc{Dafermos}~\cite{dafermos}.  In the context of fluid dynamics, 
 this method
  has also been used to show the 
stability of a stationary solution~\cite{feireislstab}, 
the convergence to a singular limit~\cite{fei}, or to derive \textit{a posteriori} estimates
for simplified models~\cite{fischer}. 
Even though the method is consolidated, there are fewer articles dealing with the case of nonconvex energies, and most of them are related to liquid crystals models (cf., e.g., ~\cite{hyper, weakstrongweak, weakstrong, diss}). Finally we can quote the more recent papers \cite{LRS} and \cite{ALR}, where the weak-strong uniqueness of solutions is obtained for the first time for a Fr\'emond model of phase transitions accounting also for the temperature-evolution and for some Oldroyd-B type models for viscoelasticity at large strains, respectively. 
 \EEE
\subsection*{Our results}
 Firstly, let us specify the notion of weak solution we will address. Our concept \EEE
  couples a standard variational formulation of the momentum balance,
with the  damage flow rule weakly formulated in terms of a one-sided variational inequality 
\[
\io \left( \chi_t(t) \psi{+}\nabla\chi(t){\cdot}\nabla\psi{+} \tfrac12 a'(\chi(t)) \CC \e(\uu(t)) 
					{:} \e(\uu(t)) \psi{+}W '(\chi(t)) \psi  \right)\dx\geq 0;
	\]
						for all $\psi\in H^1(\Omega)\cap L^\infty(\Omega)$ with $\psi \leq 0$ a.e.\ in $\Omega$, which is coupled 
with an  energy-dissipation \EEE inequality 
\begin{align*}
						\begin{aligned}
					& \cE(\uu(t),\chi(t),\ut(t)) +\int_0^t \cD(\chi(s),\ut(s),\chi_t(s))\ds 
					\\
					& 
					\leq 	\cE(\uu_0,\chi_0,\vv_0)
					+\int_0^t \langle {\bff (s)}, {\ut(s)} \rangle_{H^1(\Omega)} \dd s
					 +\int_0^t \pairing{}{H^{1/2}(\partial\Omega)}{\bfg (s)}{\ut(s)} \dd s \,.	
					\end{aligned}				
				\end{align*}
In Section \ref{s:2} ahead  we will provide more insight into  this notion of solution, which was first introduced \cite{HK-1,HK-2} for  PDE systems modelling damage
in bodies  at elastic equilibrium
 (hence, without inertial and viscous terms in the displacement equation), undergoing phase separation. 	
Our first main result,  Theorem~\ref{thm:1} below, states the existence of {\em global-in-time weak solutions}. Its proof, carried out in Section \ref{ss:proof-thm-1},
 relies on a time discrete approximation scheme suitably tailored in order to obtain, as a byproduct, the non-negativity of the damage parameter $\chi$.  
\par 
 The existence of  local-in-time weak solutions, cf.\  Theorem~\ref{thm:2},  will be proved throughout Section \ref{s:proof-thm2}. It relies on 
careful estimates, yielding
higher spatial regularity for $\uu$ and $\chi$.  The latter cannot be rigorously rendered on a time-discretization scheme, as they rely on a  local-in-time  Gronwall estimate
that is not available on the time discrete level.
 In fact, we will resort to a different method based on 
\emph{spatial} discretization (via a Faedo-Galerkin scheme) for a suitable approximation of system \eqref{PDEsystem}. For this approximate system we will prove  local existence  
via a fixed point argument, and accordingly obtain local-in-time solutions to \eqref{PDEsystem} by passing to the limit. \EEE
\par
Our weak-strong uniqueness result, Theorem \ref{thm:3}, will be obtained  in the case of a regular potential $W$ by means of the proof of a suitable relative energy inequality (cf.~Proposition~\ref{prop:weakstrong}). The proof of such a result in case of a non-smooth potential $W$ is still an open problem even for simpler semilinear equations.

\section{Main results}
\label{s:2}
In this section we  lay the ground for  \EEE our main results,   stating the \EEE existence of global-in-time weak solutions and of   local-in-time strong solutions  to the
 damage PDE system,  as well as 
 the weak-strong uniqueness property for \eqref{PDEsystem}. 

 Preliminarily, \EEE  we  settle some general notation that will be used throughout the paper.
\begin{notation} 
\label{not:1.1}
\upshape
 Given a Banach space $\mathrm{X}$, we will
 denote by 
$\pairing{}{\mathrm{X}}{\cdot}{\cdot}$  both 
  the duality pairing between $\mathrm{X}^*$ and $\mathrm{X}$ and that between $(\mathrm{X}^d)^*$ and $\mathrm{X}^d$; we  will   just write $\pairing{}{}{\cdot}{\cdot}$ for the inner Euclidean product in $\R^d$. 
  Analogously, we  will
 indicate  by $\| \cdot \|_{\mathrm{X}}$ the norm in $\mathrm{X}$ and, most  often, use the same symbol for the norm in $\mathrm{X}^d$, while we will  just write $|\cdot| $ for the Euclidean norm in $\R^d$. 
\par
Hereafter, we will
 use the symbols
$c,\,c',\, C,\,C'$, etc., whose meaning may vary even within the same   line,   to denote various positive constants depending only on
known quantities. 
Furthermore, the symbols $I_i$,  $i = 0, 1,... $,
will be used as place-holders for several integral terms (or sums of integral terms) appearing in
the various estimates: we  
will
  not be
consistent with the numbering, so that, for instance, the
symbol $I_1$ will occur several times with different meanings.  
\end{notation}
 \EEE
%
\subsection{Existence of weak solutions}
We collect the first basic set of conditions on the 
tensors $\CC$ and $\VV$ and on the 
constitutive functions $a$, $b$ and $W$.
\begin{hypx}[Constitutive functions]
\label{h:1}
The elasticity and viscosity tensors 
$\CC, \, \VV \in  \R^{d\times d \times d \times d }$ are symmetric and positive definite in the sense that 
\begin{equation}
\label{asstensors}
\begin{aligned}
\begin{cases}
\displaystyle
\mathbb{E} _{ijkl}= \mathbb{E}_{klij}=\mathbb{E} _{jikl}=\mathbb{E}_{ijlk}  \text{ for } i,j,l,k \in \{1,\ldots, d\}
 \\ 
\displaystyle \exists\,  \eta_{\mathbb E}>0 \ \ \forall\,  A\in \R^{d\times d}  \, : \qquad 
 \mathbb{E}  A:A \geq  \eta_{\mathbb E}   | A|^2 
 \end{cases} \qquad \text{for } \mathbb{E} \in \{ \CC, \VV\}.
 \end{aligned}
 \end{equation}
 For the  coefficient $a$ we require that 
\begin{subequations}
\label{ass-a}
\begin{align}
&
\label{ass-a-1}
a \in \mathrm{C}^1(\R),   
\\
\label{ass-a-2}
&  a \text{ is non-decreasing}, \  a(r)\equiv 0 \text{ for }  r \in (-\infty, 0]
\\
& 
\label{ass-a-3}
a \text{ is convex},
\end{align}
\end{subequations}
while we impose that 
\begin{align}
&
\label{ass-b}
b \in \mathrm{C}^0(\R) 
\text{ and } \exists\, b_0>0  \ \forall\, r\in \R \, : \quad  b(r) \geq b_0. 
\end{align}
Finally, we assume that 
\begin{subequations}
\label{assW-new}
\begin{align}
&
\label{assW-1-new}
\text{$W \in \mathrm{C}^1(\R)$,} 
\\
&
\label{assW-2-new}
\exists\, \ell \geq 0 \, : \ \text{the mapping }  r\mapsto W(r) + \frac\ell 2 |r|^2 \text{ is convex,}
 \\
 &
 \label{assW-3-new}
W(0) \leq W(r)  \text{ for all } r \leq 0.
\end{align}
\end{subequations}
\end{hypx}

\noindent
We now specify our conditions on the volume force and on the initial data for the existence of weak solutions.
\begin{hypx}[Force and data]
\label{h:2}
We require that 
 \begin{subequations}
 \begin{align}
&
\label{force+data}
\begin{aligned}
  &  \bff \in L^2(0,T;H^{1}(\Omega;\R^d)^*), \qquad \bfg \in  L^2(0,T;H^{-1/2}(\partial\Omega;\R^d))\,,
\\
 &  \uu_0 \in H^1(\Omega;\R^d),  \quad \f v_0\in L^2(\Omega;\R^d),  \quad \chi_0 \in H^1(\Omega) \text{ with } \chi _0\in[0,1] \text{ a.e.~in $\Omega$.}
 \end{aligned}
 \end{align}
 \end{subequations}
	\end{hypx}
  \noindent Clearly, in the case of homogeneous Dirichlet boundary conditions for $\uu$, \eqref{force+data} should have to be suitably modified by requiring, for instance, $\uu_0 \in H_0^1(\Omega;\R^d)$. \EEE
	\begin{remark}
	\upshape
	\label{rmk:hyp-weak-sol}
	A few comments on Hyp.\ \textbf{\ref{h:1}} are in order:
	\begin{compactenum}
	\item  We have confined to spatially homogeneous tensors $\CC$ and $\VV$, but for the analysis of weak solutions  we could indeed 
	handle a suitable dependence on $x$, cf.\ Remark \ref{rmk:ext} ahead.
	\item Clearly, it follows from \eqref{ass-a-2} that $a(r)\geq 0$ for all $r\in \R$; the possible degeneracy $a(\chi)=0$ for the coefficient modulating the elasticity
	tensor is compensated  by the fact that  the coefficient $b(\chi)$ stays strictly positive by \eqref{ass-b}.
	\item It follows from \eqref{assW-2-new} that  $W$ admits the  \emph{convex/concave} decomposition 
	\begin{subequations}
	\begin{equation}
	\label{cvx-cnc-as-conseq}
	W = \breve{W} + \invbreve{W} \qquad \text{with } \begin{cases}
	\breve{W}(r) = W(r) + \frac{\ell}{2} r^2,
	\\
	\invbreve{W}(r) = -\frac{\ell}{2} r^2\,.
	\end{cases}
	\end{equation}
	Obviously, since $W \in  \rmC^1(\R)$, we have that 
	 $\breve{W},\, \invbreve{W} \in \rmC^1(\R)$; we remark for later use that 
	\begin{equation}
	 \label{assW-3}
	 \begin{cases}
	 \breve{W}(0) \leq \breve{W} (r) &  \text{for all } r \leq 0,
	 \\
	  \invbreve{W}'(r) \leq 0  &  \text{for all } r \in [0,1],
	  \end{cases}
	\end{equation}
	where the first of \eqref{assW-3} obviously derives from \eqref{assW-3-new}. 
	\end{subequations} 
	\item Our requirements on $a$ are designed in such a way as to  construct, via time discretization, 
	weak solutions $(\uu,\chi)$ to system \eqref{PDEsystem} 
	fulfilling 
	$\chi \geq 0$ a.e.\ in $Q$, as well as the associated  energy-dissipation inequality, cf.\ \eqref{UEDI} ahead. 
	In fact, while postponing all details to Section \ref{ss:proof-thm-1}, we may mention that  condition  \eqref{ass-a-2} 
	is exploited in the proof of the positivity of the discrete damage variable via a maximum principle argument, cf.\ Lemma \ref{lemma:existenceDiscrSol}. In turn,
	the convexity of $a$ allows us to tailor the time discretization scheme for \eqref{PDEsystem} in such a way as to 
	guarantee the validity of a discrete energy-dissipation inequality, cf.\ Lemma \ref{lemma:discrEI} ahead. 
	\item We will also resort to the convex/concave splitting \eqref{cvx-cnc-as-conseq} of $W$ in the  proof of  Lemma \ref{lemma:discrEI}, while  
	 properties \eqref{assW-3} 
	of $\breve W$
	and $\invbreve{W}$  will be used for the proof of the positivity of $\chi$. 
	\end{compactenum}
	\end{remark}
%
%
%
%
\EEE

\par
Our notion of weak solution features the following energy and dissipation functionals 
	\begin{align}
	\label{def:en}  
	\cE(\uu,\chi,\ut):={}&\io \left\{  \frac12|\ut|^2 {+}  \frac12 \EEE a(\chi) \CC \e(\uu){:} \e(\uu) {+} \frac12|\nabla\chi|^2{+}W(\chi)\right\} \dx
	  + \int_{\partial\Omega} \frac{\cgamma2}2 |\uu|^2 \dd S  \,, 
					\\
					\label{def:diss}
	 \cD(\chi,\ut,\chit):={}&\io \left\{ b(\chi)\VV\e(\ut){:}\e(\ut) {+} |\chit|^2{+}I_{(-\infty,0]}(\chit) \right\} \dx
	  +
	\int_{\partial\Omega} \cgamma1 |\uu_t|^2 \dd S  \,.
				\end{align}
 In fact, while $\cE$ subsumes  the  contributions of the kinetic and  elastic energies, of the volume force,  and of the 
gradient regularization and potential energy for the damage variable,  $\cD$  encompasses the dissipation due to viscous damping and the quadratic
dissipation for the damage gradient flow, with the indicator term enforcing unidirectionality. 
The weak solvability concept 
that we specify in Definition \ref{def:weakSol}
 below has been introduced, for (purely) elastic damage models possibly coupled  with other diffusion processes, in 
\cite{HK-1,HK-2}.  According to this notion,  the (standard variational formulation of) the momentum balance  is coupled
with the  damage flow rule, weakly formulated in terms of 
\eqref{weakChiIneq} \&
\eqref{UEDI}.  This formulation
 reflects the fact that, if the subdifferential  in \eqref{chiEq} is lifted to an operator
$ \partial I_{(-\infty,0]}: H^1(\Omega) \rightrightarrows  H^1(\Omega)^*$, then
\eqref{chiEq} rephrases as
\[
\begin{cases}
\pairing{}{H^1(\Omega)}{-\{ \chi_t {-}\Delta\chi {+}  \tfrac12 a'(\chi) \CC \e(\uu) 
					{:} \e(\uu) {+}W '(\chi) \}}{\psi} \leq \int_\Omega I_{(-\infty,0]}(\psi) \dd x  & \text{for all } \psi \in  H^1(\Omega),
					\\
\pairing{}{H^1(\Omega)}{-\{ \chi_t {-}\Delta\chi {+}  \tfrac12 a'(\chi) \CC \e(\uu) 
					{:} \e(\uu) {+}W '(\chi) \}}{\chi_t} \geq \int_\Omega I_{(-\infty,0]}(\chi_t) \dd x,  & 
\end{cases}
\]
both inequalities holding a.e.\ in $(0,T)$.  Note that we used the $1$-homogeneity of $ I_{(-\infty, 0 ]}$ in order to deduce the two above inequalities from~\eqref{chiEq}. Then,  restricting the first inequality to negative test functions $\psi \in L^\infty(\Omega)$ 
(in order to have the term $\int_\Omega \tfrac12 a'(\chi) \CC \e(\uu) 
					{:} \e(\uu) \psi \dd x $ well defined) yields \eqref{weakChiIneq}. Adding the second inequality 
				with the weak momentum balance tested by $\uu_t$ and integrating in time 
						leads 
					to  \eqref{UEDI}, which is termed an \emph{upper} energy-dissipation inequality to emphasize that
					the
				overall energy $\cE(\uu(t),\chi(t),\ut(t))$ at the current process time is estimated from above by the initial energy
				and the work of the external forces. 
\begin{definition}[Weak solution]
	\label{def:weakSol}
		We call a pair $(\uu,\chi)$ a weak solution  to the Cauchy problem for  system \eqref{PDEsystem} if  
		\begin{subequations}
		\label{regs-u-chi}
		\begin{align}
			&\uu\in H^1(0,T;  H^1(\Omega;\R^d))\cap W^{1,\infty}(0,T;L^2(\Omega;\R^d))\cap H^2(0,T;  H^1(\Omega;\R^d)^*\EEE),
			\label{reg-u}
			\\
			&\chi\in L^\infty(0,T;H^1(\Omega))\cap L^\infty(Q)\cap H^1(0,T;L^2(\Omega))
			\label{reg-chi}
		\end{align}
		\end{subequations}
	satisfy initial conditions \eqref{initialCond}, constraints \eqref{constraints}, and 
		\begin{itemize}
			\item the \emph{weak momentum balance} for almost all $t\in (0,T)$, \textit{i.e.},
				\begin{align}
					& 	\label{weakUEq}
					\begin{aligned}
					& \langle\utt(t),\ph\rangle_{H^1(\Omega)} +\io \big( b(\chi(t))\VV\e(\ut(t)):\e(\ph){+} a(\chi(t))\CC \e(\uu):\e(\ph) \big)\dx
					\\
					& \quad 
					 +\int_{\partial\Omega} \big( \cgamma 1\uu_t {+} \cgamma 2 \uu \big) \ph 
					=\langle\bff(t),\ph\rangle_{H^1(\Omega)} + 
				\langle\bfg(t),\ph\rangle_{H^{1/2}(\partial\Omega)}  \EEE
					\end{aligned}
				\end{align}
				 for all $ \ph\in H^1(\Omega;\R^d)$; \EEE
			\item the \emph{one-sided variational inequality} for the damage flow rule, \textit{i.e.},   for almost all $t\in (0,T)$
				\begin{align}
					&\io \left( \chi_t(t) \psi{+}\nabla\chi(t){\cdot}\nabla\psi{+} \tfrac12 a'(\chi(t)) \CC \e(\uu(t)) 
					{:} \e(\uu(t)) \psi{+}W '(\chi(t)) \psi  \right)\dx\geq 0;
						\label{weakChiIneq}
						\end{align}
						for all $\psi\in H^1(\Omega)\cap L^\infty(\Omega)$ with $\psi \leq 0$ a.e.\ in $\Omega$;
\item the (overall)  \emph{upper energy-dissipation inequality},  \textit{i.e.},   for all $t\in [0,T]$		
						\begin{align}
						\label{UEDI}
						\begin{aligned}
					& \cE(\uu(t),\chi(t),\ut(t)) +\int_0^t \cD(\chi(s),\ut(s),\chi_t(s))\ds 
					\\
					& 
					\leq 	\cE(\uu_0,\chi_0,\vv_0)
					+\int_0^t \langle {\bff (s)},{\ut(s)}  \rangle_{H^1(\Omega)} \dd s
					 +\int_0^t \pairing{}{H^{1/2}(\partial\Omega)}{\bfg (s)}{\ut(s)} \dd s \,.	 \EEE
					\end{aligned}				
				\end{align}
%
		\end{itemize}
	\end{definition}
\EEE


\begin{theorem}[Global existence of weak solutions]
\label{thm:1}
Assume Hypotheses \textbf{\ref{h:1}} \& \textbf{\ref{h:2}}. 
 Then, 
    the Cauchy problem for system  \eqref{PDEsystem} admits a weak solution $(\f u , \chi)$ in the sense of Definition~\ref{def:weakSol}.
\end{theorem}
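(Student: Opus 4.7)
The plan is to obtain a weak solution as the limit of discrete approximations produced by a time-discretization scheme, as anticipated in Remark~\ref{rmk:hyp-weak-sol}. Fix $N \in \mathbb{N}$, set $\tau := T/N$ and $t_\tau^k := k\tau$; at each step $k = 1,\dots, N$ we aim to solve an incremental problem for $(\uk, \chik)$, initialized from $\uu_0$, $\vv_0$, $\chi_0$. The scheme is tailored so that the convexity of $a$ from \eqref{ass-a-3} and the convex/concave splitting $W = \breve W + \invbreve W$ in \eqref{cvx-cnc-as-conseq} yield a discrete counterpart of the energy-dissipation inequality \eqref{UEDI}: a natural choice is a two-step backward-Euler discretization of $\utt$ with elasticity and viscosity coefficients evaluated at $\chik$, coupled with an implicit Euler scheme for $\chi_t$ under the unilateral constraint $\chik \leq \chikk$, the quadratic coupling $\tfrac12 a'(\chi)\CC\e(\uu){:}\e(\uu)$ being discretized in a way compatible with a discrete chain rule (for instance, via divided differences of $a$ exploiting its convexity).

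Existence of $(\uk, \chik)$ at each step follows from a Schauder-type fixed-point argument decoupling the linear elliptic problem for $\uk$ (with $\chik$ frozen) and the variational inequality for $\chik$ (with $\uk$ frozen), both being standard consequences of coercivity \eqref{asstensors} and \eqref{ass-b} and of the convexity of the damage energy once the non-convex part $\invbreve W$ is absorbed. A maximum-principle argument, obtained by testing the discrete damage inequality with $-(\chik)^-$ and using that $a$ (hence $a'$) vanishes on $(-\infty,0]$ together with the sign conditions \eqref{assW-3} on $\breve W$ and $\invbreve W$, shows that $\chik \geq 0$. Testing the discrete momentum balance with $\uk - \ukk$ and the discrete damage inequality with $\chik - \chikk$, and exploiting the convexity of $a$ and the splitting of $W$, then yields a discrete energy-dissipation inequality.

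A priori bounds stemming from this discrete inequality, combined with Hyp.~\textbf{\ref{h:2}} and the coercivity of $\CC$ and $\VV$, provide uniform estimates for the piecewise-constant and piecewise-affine interpolants $\olu, \ulu$ and $\olchi, \ulchi$ in the spaces prescribed by \eqref{regs-u-chi}. Via Aubin--Lions compactness, the bounds $\chi \in L^\infty(0,T;H^1(\Omega)) \cap H^1(0,T;L^2(\Omega))$ yield strong convergence, along a subsequence, of the interpolants of $\chi$ in $\rmC^0([0,T];L^p(\Omega))$ for every $p \in [1,\infty)$ and pointwise a.e.\ in $Q$; continuity of $a, a', b, W'$ then gives the corresponding strong convergences of the nonlinear coefficients in suitable Lebesgue spaces. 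The displacement interpolants converge weakly in the topologies of \eqref{reg-u}, and their boundary traces pass to the limit by continuity of the trace operator.

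The main obstacle is passing to the limit in the quadratic coupling $\tfrac12 a'(\chi)\CC\e(\uu){:}\e(\uu)$ appearing in the damage variational inequality \eqref{weakChiIneq} and in the elastic energy contribution to \eqref{UEDI}, since $\e(\uu)$ is available only in weakly convergent form. The decisive observation for \eqref{weakChiIneq} is that admissible test functions satisfy $\psi \leq 0$ while $a' \geq 0$ by the monotonicity assumption \eqref{ass-a-2}; hence the integrand $a'(\chi)\CC\e(\uu){:}\e(\uu)\psi$ is pointwise non-positive, and the functional $\uu \mapsto -\int_\Omega a'(\chi)\CC\e(\uu){:}\e(\uu)\psi \dd x$ is convex and weakly lower semicontinuous in $\e(\uu)$. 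Combined with the strong convergence of $a'(\chi_\tau)\psi$, this delivers the correct one-sided inequality in the limit. An analogous lower-semicontinuity argument handles the elastic energy $\tfrac12\int_\Omega a(\chi)\CC\e(\uu){:}\e(\uu) \dd x$ in \eqref{UEDI}, using $a \geq 0$. The remaining terms pass to the limit by weak lower semicontinuity (kinetic, gradient, and dissipation terms), by continuity of $W$, and by the strong convergences recorded above, thereby concluding the proof.
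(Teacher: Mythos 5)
The broad strategy you propose — a time-discretization tailored to the convexity of $a$ and the convex/concave splitting of $W$, positivity of $\chik$ by truncation, a discrete energy-dissipation inequality, a priori bounds on the interpolants, Aubin--Lions compactness, and weak lower semicontinuity (Ioffe-type) exploiting the signs $\psi\leq 0$ and $a'\geq 0$ to handle $\tfrac12a'(\chi)\CC\e(\uu){:}\e(\uu)$ in both \eqref{weakChiIneq} and \eqref{UEDI} — is exactly the route the paper takes in Section~\ref{ss:proof-thm-1}. Your observation about why the quadratic term passes to the limit under weak convergence of $\e(\uu)$ is the key point, and your handling of positivity by testing with the truncation of $\chik$ is a sound alternative to the paper's argument (which compares $\mathcal{P}((\chik)^+)\leq\mathcal{P}(\chik)$ and invokes strict convexity of the incremental functional $\mathcal{P}$).

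There is, however, one genuine gap in your construction of the discrete scheme: you never pin down which time level of $\uu$ enters the quadratic coupling in the discrete flow rule, and the Schauder fixed-point framing strongly suggests a fully implicit scheme in which $\uk$ appears there. This does \emph{not} work. The scheme must use the \emph{explicit} lag $\ukk$ in $\tfrac12 a'(\chik)\CC\e(\ukk){:}\e(\ukk)$, and this choice is essential for two separate reasons. First, it decouples the damage and momentum subproblems, so that $\chik$ can be found by minimizing a convex functional with $\ukk$ as a datum, and then $\uk$ by Lax--Milgram with $\chik$ as a datum — no fixed-point argument is needed at all, and invoking Schauder here is superfluous. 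Second, and more seriously, the explicit lag is what makes the discrete elastic energy telescope. Testing the discrete momentum balance by $\uj-\ujj$ and the discrete damage inequality by $\chijj$, the combined elastic contributions are
\[
\io a(\chij)\,\CC\e(\uj){:}\e(\uj-\ujj)\dx + \io \tfrac12 a'(\chij)\,\CC\e(\ujj){:}\e(\ujj)\,(\chij-\chijj)\dx,
\]
and convexity of $\uu\mapsto\tfrac12\CC\e(\uu){:}\e(\uu)$ and of $a$ gives exactly $\geq \tfrac12\io a(\chij)\CC\e(\uj){:}\e(\uj)\dx - \tfrac12\io a(\chijj)\CC\e(\ujj){:}\e(\ujj)\dx$. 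If instead $\uj$ appeared in the damage inequality, the same estimates leave a residual term $\tfrac12\io(a(\chij)-a(\chijj))\big(\CC\e(\uj){:}\e(\uj)-\CC\e(\ujj){:}\e(\ujj)\big)\dx$ which has no sign: the density $(\chi,\uu)\mapsto\tfrac12 a(\chi)\CC\e(\uu){:}\e(\uu)$ is \emph{not} jointly convex (already for $a(\chi)=\chi^2$ the scalar Hessian has negative determinant), so the gradient-convexity inequality that would be needed for a fully implicit telescoping fails. Your suggested ``divided differences of $a$'' fix the chain rule in $\chi$ but do not cure this, since it is the mismatch in the $\uu$-levels, not the $\chi$-levels, that breaks the estimate. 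You should therefore spell out the explicit lag in the quadratic coupling; once that is done, everything else in your outline goes through as written.
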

\noindent
The \emph{proof} will be carried out in Section \ref{ss:proof-thm-1}. 

\begin{remark}[Extensions]
\label{rmk:ext}
\upshape
 Theorem \ref{thm:1} may be extended to 
  the non-homogeneous case, \textit{i.e.}, with \EEE 
 spatially dependent tensors $\VV, \CC  \in L^\infty(\Omega;\R^{d\times d\times d\times d})$.
 \par
  Let us emphasize that, so far, we have not specified other conditions on the parameters $\cgamma i$, $i \in \{0,1,2\}, $ besides $\cgamma 1, \cgamma 2 \geq 0$. Thus, 
  as pointed out in the Introduction, the existence statement of Thm.\ \ref{thm:1} in particular encompasses the case of null Dirichlet boundary conditions on $\partial\Omega$,
  correspoding to $\cgamma 0=\cgamma 1=0$, $\cgamma2>0$, $\bfg \equiv \mathbf{0}$ in \eqref{Robin-intro}. Clearly, in that case the weak momentum balance would feature test functions $\varphi \in H_0^1(\Omega;\R^d)$. 
We could also allow for a suitable  time-dependent 
Dirichlet loading $\mathbf{w} $
enforcing the condition 
\begin{equation}
\label{time-dep-Dir}
\uu = \mathbf{w} \qquad \text{ on $\Sigma=\partial\Omega{\times}(0,T)$.}
\end{equation}
 Indeed, \eqref{time-dep-Dir} would  correspond to 
 the case $\cgamma 0=0$,  $\bfg \equiv \mathbf{0}$ , $\min \{ \gamma_1,\gamma_2 \}>0$, with 
 \[
  \mathbf{w}(t) = \mathbf{c} \exp\left( {-}\frac{\gamma_2}{\gamma_1} t \right) \qquad \text{for some vector }  \mathbf{c}  \in \R^d\,.
 \]
To handle \eqref{time-dep-Dir}, \EEE  it would be sufficient to formulate the momentum balance \eqref{weakUEq} for $\uu = \widehat{\uu} + \mathbf{w}$, 
with $\widehat \uu (t) \in H_0^1(\Omega;\R^d)$ for all $t\in [0,T]$,  and  seek for a solution
$\widehat \uu $   complying with homogeneous Dirichlet boundary conditions. \EEE
\par
A closer perusal of the proof of Thm.\  \ref{thm:1}  also reveals that, since our estimates do not hinge on elliptic regularity arguments
for the displacement variable, mixed  boundary conditions could be also considered for $\uu$:   in particular, 
the body could be clamped on a portion
$\Gdir$ of the boundary, while an assigned traction could be applied on $\Gneu = \partial\Omega{\setminus}\Gdir$.  \EEE
\par
The extension to the case of a nonsmooth potential $W$ is more delicate; it will be addressed in Section \ref{ss:3-outlook} ahead.
\end{remark}
\EEE
%
%

\subsection{Existence of strong solutions}
 We start by specifying our notion of \emph{strong} solvability for system \eqref{PDEsystem}  which, we recall, we address  in the case
  of a possibly nonsmooth convex potential $\breve W$. \EEE
In Definition \ref{def:strongSol} below, we ask for 
 enhanced regularity and integrability properties  for $
\uu$,  which as a consequence ensure that the term $a'(\chi)\CC \e(\uu){:} \e(\uu)$ in the damage flow rule belongs to $L^2(\Omega)$.
Then, both the momentum balance and the flow rule for $\chi$ make sense pointwise in space and time.
Moreover,
 by comparison, $H^2(\Omega)$-regularity 
follows for  $\chi$. 
 \EEE
\begin{definition}[Strong solution]
	\label{def:strongSol}
		We call a pair $(\uu,\chi)$ a strong solution 
		if it enjoys the 
		regularity properties
		\begin{equation}
		\label{reg-strong-sols}
		\begin{aligned}
			&\uu\in H^1(0,T;H^3(\Omega;\R^d))\cap W^{1,\infty}(0,T;H^2(\Omega;\R^d)) \EEE \cap H^2(0,T;  H^1(\Omega;\R^d)), \EEE \\
			&\chi\in L^\infty(0,T;H^2(\Omega))\cap H^1(0,T;H^1(\Omega))\,,
		\end{aligned}
		\end{equation}
		  and system \eqref{PDEsystem}, with  the 
		 boundary condition  
		\begin{equation}
\label{homogNeu}
{\f n} {\cdot} \CC \eps(\f u)   = 0 \qquad \text{ a.e.~on }\Sigma\,,
\end{equation}
		  is satisfied  pointwise a.e.\ in $Q$, which for the damage  flow rule means that
		  \begin{equation}
		  \label{precise-pointwise-flow-rule}
		  \begin{aligned}
		  &
		  \exists\, \eta, \, \xi \in L^2(Q) \quad \text{with } \begin{cases}
		  \eta \in \partial  \partial I_{(-\infty,0]}(\chi_t),
		  \\
		  \xi \in \partial \breve{W}(\chi) 
		  \end{cases} \quad \text{a.e.~in $Q$, such that }
		  \\
		  &
		   \chi_t - \Delta \chi + \frac{1}{2}a'(\chi) \Cm{\f u}{\f u} + \xi + \eta + \invbreve W'(\chi) = 0  \quad \text{a.e.~in $Q$.}
\end{aligned}
\end{equation}
	\end{definition}		  
 \begin{remark}
\label{rmk:EDBstrong}
\upshape
It is straightforward to check that any strong solution satisfies  inequality
\eqref{UEDI}
(in which the coefficients $\gamma_1$ and $\gamma_2$ in the dissipation potential $\mathcal{D}$ and  in the energy functional $\mathcal{E}$, respectively, are null due to the boundary condition \eqref{homogNeu}), as an energy-dissipation \emph{balance}. 
\end{remark} 
\par
We will prove the existence of local-in-time strong solutions
under an additional  smoothness condition for the spatial domain $\Omega$ to allow for regularity estimates. Namely, we require that
\begin{equation}
\label{omega-smooth}
\tag{$\mathrm{H}_\Omega$}
\Omega \subset \R^d \text{ is  a bounded domain of class $\mathrm{C}^1$,}
\end{equation}
and under the following strengthened versions of Hypotheses \textbf{\ref{h:1}} and  \textbf{\ref{h:2}}  (although we no longer need to assume $a$ convex, cf.\
\eqref{ass-a-3-strong} below). 
\begin{hypx}[Constitutive functions]
\label{h:1-strong}
In addition to \eqref{asstensors}  for the elasticity and viscosity tensors, we assume that 
\begin{subequations}
\label{ass-a-strong}
\begin{align}
&
\label{ass-a-1-strong}
    \mathbb C = \mathbb V\,,
\qquad a \in \mathrm{C}^2(\R), 
\\  
& 
\label{ass-a-3-strong}
\exists\, \kappa_1>0 \  \exists\, p\geq1 \ \ \forall\, r 
\in \R \, : \quad  |a''(r)| \leq \kappa_1 (|r|^p{+}1),
\end{align}
\end{subequations}
and analogously we impose that, in addition to \eqref{ass-b},
\begin{align}
&
\label{ass-b-strong}
b \in \mathrm{C}^2(\R), 
\text{ and } \exists\, \kappa_2>0 \  \exists\, q\geq1 \ \ \forall\, r 
\in \R  \, : \quad  |b''(r)| \leq \kappa_2 (|r|^q{+}1).
\end{align}
        As for $W$, we require that $W:\R\to (-\infty, +\infty] $, with $\dom W \neq \emptyset$, is $\ell$-convex
        as in \eqref{assW-2-new}.
 \EEE
\end{hypx}x
\begin{remark}
\label{rmk:hyp-const-strong}
\upshape
Let us motivate the  above conditions and compare them with Hypothesis \ref{h:1}:
\begin{enumerate}
\item The enhanced regularity required of the coefficients $a$ and $b$ will be instrumental in performing enhanced regularity estimates for the solutions. 
To carry them out, we will also resort to the polynomial growth conditions \eqref{ass-a-3-strong} and \eqref{ass-b-strong}, which obviously imply analogous growth conditions
for $a',\, b'$ and $a,\, b $, namely 
\begin{equation}
\label{ultimate-growth-conditions}
\begin{cases}
\exists\, \widehat{\kappa}_i>0 \ \forall\, r \in \R \, : \quad |\zeta'(r)| \leq \widehat{\kappa}_i (|r|^{\rho+1}{+}1),
\\
\exists\, \doublehat{\kappa}_i>0 \ \forall\, r \in \R \, : \quad |\zeta(r)| \leq \doublehat{\kappa}_i (|r|^{\rho+2}{+}1),
\end{cases}
\quad \text{for } i \in \{1,2\}, \ \zeta \in \{a,b\}, \ \rho \in \{p,q\}\,.
\end{equation}
\item Let us emphasize that Hypothesis \ref{h:1-strong} allows for nonsmoothness of $W$ (or, equivalently, of $\breve W$): in particular, in this context we can encompass the case in which $\breve W = I_{[0,\infty)}$, and positivity of $\chi$ is automatically enforced. 
\item
Condition \eqref{assW-2-new} guarantees the convex/concave decomposition 
$ W = \breve{W} + \invbreve{W} $,  with $\invbreve{W}(r) = -\frac{\ell}{2} r^2$, 
which we are going to use for the analysis of strong solutions, too. 
\end{enumerate}
\end{remark}
Our conditions on the force and on the initial data will be enhanced as well.   The compatibility condition \eqref{compat-below} below reflects  that we  confine our analysis  of strong solutions 
        to  the  homogeneous Neumann boundary conditions \eqref{homogNeu}.         
 \EEE 
\begin{hypx}[Force and data]
\label{h:2-strong}
We require that 
 \begin{subequations}
 \begin{align}
&
\label{force+data-strong}
   \bff \in L^2(0,T; H^1(\Omega;\R^d)),\quad    \uu_0 \in H^3(\Omega;\R^d) \EEE    \quad \f v_0\in H^2(\Omega;\R^d),  
  \\
&
     \label{compat-below}
  \f n  {\cdot}    \CC \eps(\uu_0)  = 0  
 \quad \text{a.e.\ on $\partial\Omega$.} 
\EEE
\intertext{We  take $\gamma_1=\gamma_2 =0$, $\bfg \equiv \mathbf{0}$, and $\gamma_0 \neq 0$
in \eqref{Robin-intro}, and assume}
\label{chidata-strong}
 &  \chi_0 \in H^2(\Omega) \text{ with } 
 \begin{cases}
 \chi_0 (x)\leq 1   \text{ for all $x\in \overline\Omega$} 
 \\
|\partial \breve{W}^{\circ}|(\chi_0) \in L^2(\Omega) 
 \end{cases}
 \end{align}
 \end{subequations}
 where 
 \[
 |\partial \breve{W}^{\circ}|(\chi_0(x)) := \inf\{ |\xi|\, : \ \xi \in \partial  \breve{W}(\chi_0(x)) \} \qquad \foraa\, x \in \Omega\,.
 \]
 	\end{hypx}
\begin{remark}[On \eqref{chidata-strong}]
\upshape
First of all, it is immediate to check that the $\inf$ in the definition of $ |\partial \breve{W}^{\circ}|(\chi_0(x))$ is indeed a $\min$. Furthermore, the von Neumann-Aumann selection theorem yields that there exists a \emph{measurable} selection
 \begin{subequations}
 \label{xi-circ}
 \begin{align}
 &
 \Omega \ni x \mapsto \xi^\circ(x) \in \mathrm{Argmin}\{ |\xi|\, : \ \xi \in \partial  \breve{W}(\chi_0(x)) \}\,,
\intertext{so that \eqref{chidata-strong} is indeed equivalent to requiring that}
&
 \xi^\circ \in  L^2(\Omega ) \,.
 \end{align}
 \end{subequations}
 From this there follows that  $ W(\chi_0) \in L^1(\Omega) $: in fact, taking into account that $\invbreve{W}(\chi_0) \in L^\infty(\Omega)$
 by the quadratic growth of $\invbreve W$, it is sufficient to show that  $ \breve{W}(\chi_0) \in L^1(\Omega) $. This is a consequence of the estimate 
\[
 \int_\Omega [\breve W(\chi_0) - \breve W( r_o)] \dd x  \leq \int_\Omega \xi^\circ (x) (\chi _0(x) {-}r_o) \dd x 
 \]
 where $r_o$ is \emph{any} element in  $\dom \breve W $.
\end{remark}
\EEE
%

Throughout Section \ref{s:proof-thm2} we will prove the following result. 
\begin{theorem}[Local existence of strong solutions]
\label{thm:2}
Assume Hypotheses \textbf{\ref{h:1-strong}} \& \textbf{\ref{h:2-strong}}; let $\Omega$ fulfill condition \eqref{omega-smooth}.  
\par
Then, 
there exists $\widehat T \in (0,T]$ such that 
the Cauchy problem for system \eqref{PDEsystem} admits   strong solution $(\uu,\chi)$ in the sense of Definition \ref{def:strongSol} on the interval $(0,\widehat T)$.
\end{theorem}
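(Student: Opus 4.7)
The plan is to construct strong solutions via a Faedo--Galerkin scheme in space, combined with Yosida-type regularizations of the nonsmooth subdifferentials $\partial I_{(-\infty,0]}$ and $\partial \breve W$. Concretely, I would take a Galerkin basis $\{w_i\}_{i\in\mathbb{N}}$ of eigenfunctions of the Neumann Laplacian for the damage $\chi$, and a basis compatible with the elastic Neumann boundary condition \eqref{homogNeu} for $\uu$; denote the finite-dimensional subspaces by $V_n$. Replace $\partial I_{(-\infty,0]}$ by its Yosida approximation $\beta_\lambda\in\mathrm{C}^1(\R)$ and $\partial\breve W$ by the Moreau--Yosida regularization $(\breve W_\mu)'\in\mathrm{C}^1(\R)$. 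The resulting finite-dimensional system is a smooth ODE in the Galerkin coefficients, for which a Banach fixed-point/Cauchy--Lipschitz argument yields a unique local-in-time approximate solution $(\uu^{n,\lambda,\mu},\chi^{n,\lambda,\mu})$.

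The heart of the argument is to derive higher-order a priori estimates, uniform in $(n,\lambda,\mu)$, on a time interval whose length can be controlled uniformly from below. First, the basic energy identity, obtained by testing the momentum balance with $\uu_t$ and the damage equation with $\chi_t$, yields bounds on $\|\uu\|_{W^{1,\infty}(0,T;L^2)\cap H^1(0,T;H^1)}$ and $\|\chi\|_{L^\infty(0,T;H^1)\cap H^1(0,T;L^2)}$, together with the Yosida terms. Second, testing the regularized damage equation with $-\Delta\chi_t$, exploiting the monotonicity of $\beta_\lambda$ and the $\ell$-convexity of $\breve W$, gives control on $\|\chi\|_{L^\infty(0,T;H^2)}$ modulo a source term of the form $\int_0^t\|\nabla(a'(\chi)\CC\e(\uu){:}\e(\uu))\|_{L^2}^2\ds$, which requires $H^2$-regularity of $\uu$ to be absorbed. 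Third, testing the momentum balance with $\utt$ (crucially exploiting $\CC=\VV$ to cancel the cross-term $a(\chi)\CC\e(\uu){:}\e(\utt)$ against $b(\chi)\CC\e(\ut){:}\e(\utt)$ up to lower-order remainders) provides bounds on $\utt\in L^2(0,T;L^2(\Omega;\R^d))$ and $\ut\in L^\infty(0,T;H^1)$. Elliptic regularity, applied to the momentum balance rewritten as
\[
-\DIV(b(\chi)\CC\e(\ut))=\bff-\utt+\DIV(a(\chi)\CC\e(\uu)),
\]
then upgrades this to $\ut\in L^2(0,T;H^2)$ and, after differentiating the equation further in space and using $\chi\in L^\infty(0,T;H^2)\hookrightarrow L^\infty(Q)$ for $d\leq 3$, to the $H^3$-regularity of $\uu$ claimed in \eqref{reg-strong-sols}.

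The coupled higher-order bounds close into a superlinear Gronwall-type inequality $\frac{\rmd}{\rmd t}\mathcal F(t)\leq C\,(1+\mathcal F(t))^\kappa$ with $\kappa>1$, where $\mathcal F$ collects the norms above; an ODE comparison then produces a uniform lower bound $\widehat T\in(0,T]$ on the existence time of the approximate solutions, together with uniform bounds on $[0,\widehat T]$. \emph{The main obstacle} is precisely the closure of this coupled estimate: the source term $\tfrac12 a'(\chi)\CC\e(\uu){:}\e(\uu)$ is quadratic in $\e(\uu)$, and its spatial gradient involves $\nabla^2\uu$, so the $H^2$-estimate for $\chi$ genuinely couples with the $H^3$-estimate for $\uu$; only the equality $\CC=\VV$ and the growth conditions \eqref{ass-a-3-strong}--\eqref{ass-b-strong} enable the cancellations and the interpolations needed to close the system, at the price of a superlinear right-hand side. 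This is exactly why the result is local in time and, as the authors remark, cannot be rendered on a time-discrete scheme where Gronwall localization is unavailable.

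The passage to the limit $(n,\lambda,\mu)\to(\infty,0,0)$ is then standard: the uniform bounds give weak-$*$ convergence in the strong-solution spaces of \eqref{reg-strong-sols}; Aubin--Lions compactness yields strong convergence in $L^2$, sufficient to identify the nonlinear coefficients $a(\chi),\,a'(\chi),\,b(\chi)$ and the quadratic term $\CC\e(\uu){:}\e(\uu)$; and the theory of maximal monotone operators (Brezis-type lower semicontinuity for $\beta_\lambda$ and $(\breve W_\mu)'$) identifies their weak limits with selections $\eta\in\partial I_{(-\infty,0]}(\chi_t)$ and $\xi\in\partial\breve W(\chi)$, respectively. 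This delivers a pair $(\uu,\chi)$ satisfying \eqref{precise-pointwise-flow-rule}, the momentum balance pointwise a.e.\ in $\Omega\times(0,\widehat T)$, and the boundary condition \eqref{homogNeu}, completing the proof.
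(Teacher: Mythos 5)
Your overall plan (Galerkin discretization, Yosida regularization, higher-order a priori estimates closing into a superlinear Gronwall inequality, local-in-time existence, then limit passage with monotone-operator identification) is in the right ballpark and coincides in spirit with the paper's strategy. However, the specific test functions you propose lead to a genuine gap and deviate from the paper in ways that matter.

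\smallskip

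\emph{The main problem: your estimate for $\chi$.} You propose to test the regularized flow rule with $-\Delta\chi_t$. This produces the term
\[
\int_\Omega \breve W_\mu'(\chi)\,(-\Delta\chi_t)\dd x
= \int_\Omega \breve W_\mu''(\chi)\,\nabla\chi\cdot\nabla\chi_t\dd x ,
\]
which has no sign and no total time-derivative structure; the only available bound is of order $\|\breve W_\mu''\|_{L^\infty}\sim 1/\mu$, so the estimate is \emph{not uniform in the Yosida parameter}. The paper avoids exactly this by introducing the auxiliary variable $\omega = -\Delta\chi + \breve W_\delta'(\chi) + \chi$ and testing the flow rule by $\omega_t$ (Lemma~\ref{l:props-omega} and Claim $3$ of Proposition~\ref{prop:formal-aprio}): with that choice the quantity $\int_\Omega\chi_t\,\omega_t\dd x$ produces a full $\|\chi_t\|_{H^1}^2$ plus the nonnegative term $\int_\Omega\breve W_\delta''(\chi)|\chi_t|^2\dd x$, the term $\int_\Omega I_\delta'(\chi_t)\,\omega_t\dd x$ is nonnegative by monotonicity, and the damage $H^2$-bound is recovered uniformly from $\|\omega\|_{L^2}$ via Lemma~\ref{l:props-omega}. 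This change of variables is the key structural idea your proposal is missing. Moreover, to make $\omega_t$ an admissible $L^2$-valued test function the paper adds the elliptic-in-time regularization $\nu\omega_{tt}$ to the flow rule, with a scaling $\nu_n^{1/2}/\delta_n\to 0$; again this device does not appear in your argument.

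\smallskip

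\emph{Secondary differences.} For the momentum balance the paper does not test with $\utt$; it tests with the sixth-order spatial operator $\di(\Vm{:}\eps(\di(\Vm{:}\eps(\uu_t))))$, which stays in $V^n$ because the Galerkin basis is made of eigenfunctions of $-\di(\Vm\eps(\cdot))$ with the Neumann condition~\eqref{homogNeu}, and the boundary terms vanish identically. This yields $\uu_t\in L^\infty(H^2)\cap L^2(H^3)$ directly, via the elliptic-regularity equivalences of Corollary~\ref{corB:ellipt}, rather than through a bootstrap via $\utt$ and an elliptic step that, as you acknowledge, is genuinely circular and requires careful interpolation. (The $\CC=\VV$ assumption is used in the paper so that $a(\chi)\CC\e(\uu)$ and $b(\chi)\VV\e(\ut)$ sit in the same space under the test operator; it is not used for the $\utt$-cancellation you describe.) Also, the paper discretizes only $\uu$ in space and keeps $\chi$ continuous (solved via a parabolic-type regularization and a Schauder fixed point on $\bar\chi\mapsto\chi$), whereas you Galerkin-discretize $\chi$ as well — this is possible but introduces additional compatibility issues with the $H^2$-test functions that the paper sidesteps. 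Finally, the paper identifies the pointwise flow rule through the consistency result (Proposition~\ref{prop:consistency}): the energy-dissipation inequality plus a one-sided variational inequality in the limit suffice. Your route via Minty/monotone-operator theory directly on $\beta_\lambda$ and $(\breve W_\mu)'$ is plausible but would need the strong convergence of $\chi_t$ in $L^2(Q)$, which the paper does not have a priori; the consistency argument is exactly the workaround.
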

\begin{remark}[Positivity for strong solutions]
\upshape
As previously pointed out, our analysis of strong solutions encompasses the choice of 
a nonsmooth potential $ \breve{W}$. In that  case, if  we additionally have  $\mathrm{dom}(\breve W) \subset [0,\infty)$, then we  immediately obtain that
$  \chi(x,t)\geq 0$ for all $(x,t)\in \overline\Omega \times [0,T]$. 
  \par
  An alternative way for obtaining nonnegativity of strong solutions is via  the   weak-strong uniqueness guaranteed  by Thm.\ \ref{thm:3} ahead: in this way,  we
deduce $\chi \geq 0$ for the strong solution, since it is coincides with the weak one, which is known to be  positive for instance under the assumptions of Thm.\ \ref{thm:1}. 
\par
Outside  these two cases, we do not claim positivity of $\chi$ and it is actually not needed in the analysis
of strong solutions. 
 Especially, in the case of nonmonotone $a$, we do not expect  such a  property due to the negative contribution on the left-hand side of the damage flow rule. 
\end{remark} 
 We conclude this section with  a consistency result, useful for the proof of Thm.\ \ref{thm:2}, showing that, for  a sufficiently regular pair $(\uu,\chi)$,  the pointwise flow rule 
may be proved  by just checking a variational inequality, cf.\ \eqref{weakChiIneqSub} below, joint with the energy-dissipation inequality \eqref{UEDI}. 
\begin{proposition}
\label{prop:consistency}
    Let $(\f u , \chi)$ enjoy the regularity properties \eqref{reg-strong-sols} and fulfill  the weak momentum balance~\eqref{weakUEq}  and the  energy-dissipation inequality \eqref{UEDI}.
    \par
    Then, $(\f u , \chi)$ satisfies the pointwise flow rule \eqref{precise-pointwise-flow-rule},  joint with a selection
    $\xi  \in  \partial \breve W (\chi)$ a.e.~in $Q$,
     if and  only if it complies with  the variational inequality
%
%
%
%
  \begin{align}
					&\io \left( \chi_t(t) \psi{+}\nabla\chi(t){\cdot}\nabla\psi{+} \tfrac12 a'(\chi(t)) \CC \e(\uu(t)) 
					{:} \e(\uu(t)) \psi{+}(\xi{+} \invbreve W'(\chi(t)) ) \psi  \right)\dx\geq 0
						\label{weakChiIneqSub}
						\end{align}
  for all $\psi\in H^1(\Omega)\cap L^\infty(\Omega)$ with $\psi \leq 0$ a.e.\ in $\Omega$. 
\end{proposition}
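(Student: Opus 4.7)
The plan is to establish the equivalence by proving each implication separately, the forward direction (pointwise $\Rightarrow$ variational inequality) being a direct multiplication argument and the converse relying on a complementarity identity obtained by combining the chain rule for $\chi_t$ with the energy--dissipation inequality \eqref{UEDI}. For the easy direction, I would multiply the pointwise equation in \eqref{precise-pointwise-flow-rule} by $\psi \in H^1(\Omega)\cap L^\infty(\Omega)$ with $\psi \leq 0$, integrate the Laplacian term by parts exploiting the homogeneous Neumann boundary condition $\partial_{\f n}\chi = 0$, and use that $\eta \in \partial I_{(-\infty,0]}(\chi_t)$ forces $\eta \geq 0$ a.e., so $\int_\Omega \eta \psi \dd x \leq 0$ since $\psi \leq 0$. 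The remaining contributions reproduce precisely the left-hand side of \eqref{weakChiIneqSub}.

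For the converse direction, I would first define
\[
R := \chi_t - \Delta\chi + \tfrac12 a'(\chi)\Cm{\uu}{\uu} + \xi + \invbreve W'(\chi),
\]
which under \eqref{reg-strong-sols} belongs to $L^2(Q)$, thanks to the Sobolev embedding $H^2(\Omega)\hookrightarrow L^\infty(\Omega)$ in dimension $d\leq 3$ (ensuring $a'(\chi)\Cm{\uu}{\uu} \in L^\infty(Q)$) and the $L^2$-integrability of $\xi \in \partial \breve W(\chi)$. Plugging test functions $\psi \in \rmC_c^\infty(\Omega)$ with $\psi \leq 0$ into \eqref{weakChiIneqSub} and integrating the gradient term by parts, I would obtain $\int_\Omega R\psi \dd x \geq 0$ for every such $\psi$, whence $R \leq 0$ a.e.\ in $Q$. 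Setting $\eta := -R \geq 0$ a.e.\ then gives the pointwise equation by construction; what remains is the complementarity $\eta\chi_t = 0$ a.e., which, together with $\chi_t \leq 0$, is equivalent to $\eta \in \partial I_{(-\infty,0]}(\chi_t)$.

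To obtain this complementarity, I would multiply the pointwise equation by $\chi_t$ and integrate over $Q_t=\Omega\times(0,t)$, invoking the chain rule $\int_0^t\!\!\int_\Omega \xi \chi_t \dd x\ds = \int_\Omega[\breve W(\chi(t)) - \breve W(\chi_0)]\dd x$ for the convex potential $\breve W$ along the absolutely continuous curve $\chi$, the smooth analogues for $\invbreve W(\chi)$ and $\tfrac12|\nabla\chi|^2$ (the latter using again $\partial_{\f n}\chi = 0$), and a time chain rule for $\tfrac12 a(\chi)\CC\e(\uu){:}\e(\uu)$ generating the elastic work term $\int_0^t\!\!\int_\Omega a(\chi)\CC\e(\uu){:}\e(\ut)\dd x\ds$. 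Substituting this last term via the weak momentum balance \eqref{weakUEq} tested with $\ut$ produces the identity
\[
\cE(\uu(t),\chi(t),\ut(t)) + \int_0^t \cD(\chi(s),\ut(s),\chi_t(s))\ds + \int_{Q_t}\eta\chi_t \dd x\ds = \cE(\uu_0,\chi_0,\vv_0) + \int_0^t\langle\bff(s),\ut(s)\rangle_{H^1(\Omega)}\ds + \int_0^t \langle\bfg(s),\ut(s)\rangle_{H^{1/2}(\partial\Omega)}\ds.
\]
Comparing this with \eqref{UEDI} forces $\int_{Q_t}\eta\chi_t \dd x\ds \geq 0$; but the pointwise signs $\eta\geq 0$, $\chi_t\leq 0$ give $\eta\chi_t\leq 0$ a.e., so $\eta\chi_t=0$ a.e.\ in $Q$, as required.

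The main technical obstacle is the rigorous justification of the chain rule for the possibly nonsmooth convex potential $\breve W$ along $\chi$. This is handled by a classical maximal-monotone-operator result: since $\chi \in H^1(0,T;L^2(\Omega))$, $\xi \in L^2(Q)$, and $\xi(\cdot,t) \in \partial \breve W(\chi(\cdot,t))$ a.e., the map $t \mapsto \int_\Omega \breve W(\chi(\cdot,t))\dd x$ is absolutely continuous with almost-everywhere derivative $\int_\Omega \xi \chi_t \dd x$; the other chain rules follow by elementary computations using the enhanced regularity \eqref{reg-strong-sols}.
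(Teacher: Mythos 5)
Your proposal is correct and follows essentially the same strategy as the paper: define $\eta = -R \in L^2(Q)$, extract $\eta \ge 0$ from the pointwise-in-time variational inequality (the paper does this via the density argument leading to \eqref{straight4indicator}), and obtain the sign condition $\int_{Q_t}\eta\chi_t \ge 0$ by combining the chain rule with the energy--dissipation inequality and the momentum balance tested by $\uu_t$ (the paper's \eqref{converse4indicator}); the complementarity $\eta\chi_t = 0$ a.e.\ then pins down $\eta \in \partial I_{(-\infty,0]}(\chi_t)$. You additionally spell out the ``easy'' forward implication, which the paper merely declares clear, and you are more explicit than the paper in justifying the chain rule for the nonsmooth convex part $\breve W$ via the Br\'ezis-type lemma --- a welcome clarification, since the paper's one-line remark that ``$\eta$ and $\chi_t$ are in a duality pairing'' glosses over exactly this point.
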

\begin{proof}
Clearly, it suffices to show that from \eqref{weakChiIneqSub} we can derive  \eqref{precise-pointwise-flow-rule}. For this, we start by observing that, 
by the assumed regularity~\eqref{reg-strong-sols}, \EEE
    \begin{equation}
    \label{reg-of-eta}
    \eta : = - \left(  \chi_t {-} \Delta \chi {+} \frac{1}{2}a'(\chi) \Cm{\f u}{\f u} {+} \xi {+} \invbreve W'(\chi)  \right) \in L^2(Q)\,.
    \end{equation}
Choosing $\ph = \uu $ in~\eqref{weakUEq} and subtracting  this from~\eqref{UEDI}, we find 
\[
\begin{aligned}
    & \int_\Omega \frac{1}{2}a(\chi) \Cm{\uu}{\uu} {+} \frac{1}{2}|\nabla \chi|^2 {+} W(\chi) \dx \Big|_0^t {+} \int_0^t \int_\Omega\left( |\chi_t|^2  {-} a(\chi) \Cm{\uu}{\uu_t} \right) \dx \ds 
    \\ & \leq - \int_0^t  I_{(-\infty,0]}(\chi_t) \dx \ds   \,.
\end{aligned}
\]
 Now, by the chain rule (which holds since $\eta$ and $\chi_t$ are in a duality pairing thanks to \eqref{reg-of-eta}), the above left-hand side equals $\int_\Omega ({-}\eta)\chi_t \dd x$. Thus, we deduce \EEE
\begin{align}
\label{converse4indicator}
    \int_0^t \int_\Omega  \eta \chi_t \dx \ds \geq  \int_0^t \int_\Omega I_{(-\infty,0]}(\chi_t) \dx \ds  \,
\end{align}
for a.e.~$t\in (0,T)$.
In turn,  inequality~\eqref{weakChiIneqSub}  and a density argument (again relying on \eqref{reg-of-eta}) \EEE implies  that 
    \begin{align}
    \label{straight4indicator}
     \int_\Omega  \eta \psi \dx  \leq   \int_\Omega I_{(-\infty,0]}(\psi) \dx  \qquad \text{for all } \psi \in L^2(\Omega) \,,
\end{align}
where we note that the inequality becomes empty in the case that  $\psi > 0$ on a set of positive measure in  $\Omega$.  
 Combining \eqref{converse4indicator} and \eqref{straight4indicator} we deduce $ \eta \in \partial I_{(-\infty, 0]}(\chi_t)$ a.e.\ in $Q$, \textit{i.e.},   \eqref{precise-pointwise-flow-rule}.
\end{proof}
\EEE
\subsection{Weak-strong uniqueness}
We will prove the weak-strong uniqueness property for the Cauchy problem for system \eqref{PDEsystem},  confining the discussion to 
 the case of the 
homogeneous Neumann boundary conditions \eqref{homogNeu}. 
We will work 
 under the following  conditions. 
 \begin{hypx}
\label{h:4}
We assume that $\bbC = \bbV$ complies with  \eqref{asstensors}, and that the nonlinear functions $a$, $b$, and $W$ satisfy
\begin{subequations}
\label{ass-WS}
\begin{align}
&
\label{ass-a-WS}
a \in \mathrm{C}^2(\R) 
\text{ is convex and  non-decreasing},
\\
&
\label{ass-b-WS}
b \in \mathrm{C}^1(\R) 
\text{ and } \exists\, b_0>0  \ \forall\, r\in \R \, : \quad  b(r) \geq b_0,
\\
\label{ass-W-WS}
&
\text{$W \in \mathrm{C}^2(\R)$ and } 
\exists\, \ell \geq 0 \, : \ \text{the mapping }  r\mapsto W(r) + \frac\ell 2 |r|^2 \text{ is convex.}
\end{align}
\end{subequations} 
\end{hypx}

\begin{theorem}[Weak-strong uniqueness]
\label{thm:3}
Under Hypothesis \textbf{\ref{h:4}},
let $ (\f u , \chi )$ be a weak solution  in the sense of Definition~\ref{def:weakSol} and $(\tu , \tchi)$ a strong solution in the sense of Definition~\ref{def:strongSol} emanating from the same initial data, with forcing term $\f f$ as in   \eqref{force+data}. 
Then it holds 
$$
\f u(t) = \tu(t) \quad \chi (t) = \tchi(t) \quad \text{for all }t\in[0, \hat T]\,.
$$
\end{theorem}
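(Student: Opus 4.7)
The strategy is a Dafermos-type relative energy argument, which splits into two steps: first establish a relative energy inequality (presumably the content of Proposition~\ref{prop:weakstrong}), then conclude by Gronwall's lemma. I would set
\begin{equation*}
\mathcal{R}(t) := \tfrac12\|\ut(t){-}\tut(t)\|_{L^2}^2 + \tfrac12\!\int_\Omega\! a(\chi(t))\,\CC\eps(\uu(t){-}\tu(t)){:}\eps(\uu(t){-}\tu(t))\dx + \tfrac12\|\nabla(\chi(t){-}\tchi(t))\|_{L^2}^2,
\end{equation*}
augmented if needed by $\tfrac{\ell}{2}\|\chi{-}\tchi\|_{L^2}^2$ to handle the $\ell$-convexity of $W$ in~\eqref{ass-W-WS}. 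The target is an inequality of the form $\mathcal{R}(t) + \int_0^t \mathcal{D}_{\mathrm{rel}}(s)\,\ds \leq \int_0^t K(s)\,\mathcal{R}(s)\,\ds$, in which $\mathcal{D}_{\mathrm{rel}}$ gathers the positive ``relative dissipation'' quantities (notably $\int_\Omega b(\chi)\VV\eps(\ut{-}\tut){:}\eps(\ut{-}\tut)\dx$ and $\|\chi_t{-}\tchi_t\|_{L^2}^2$) and $K\in L^1(0,\widehat T)$ depends only on higher Sobolev norms of $(\tu,\tchi)$, all finite by~\eqref{reg-strong-sols}.

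The ingredients to derive this inequality are: (i)~the overall energy-dissipation inequality~\eqref{UEDI} for $(\uu,\chi)$; (ii)~the same relation as an \emph{equality} for $(\tu,\tchi)$, cf.~Remark~\ref{rmk:EDBstrong}; (iii)~the weak momentum balance~\eqref{weakUEq} tested with $\tut$, balanced by the strong momentum equation for $\tu$ tested with $\ut$; (iv)~the one-sided variational inequality~\eqref{weakChiIneq} tested with $\psi=\tchi_t$, which is admissible since $\tchi_t\leq 0$ a.e.\ in $Q$ by~\eqref{constraints}; (v)~the pointwise flow rule~\eqref{precise-pointwise-flow-rule} for $\tchi$ tested with $\chi_t-\tchi_t$. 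The Lagrange multiplier $\tilde\eta\in\partial I_{(-\infty,0]}(\tchi_t)\subset[0,\infty)$ arising in the strong flow rule contributes with the correct sign: by complementarity $\tilde\eta\,\tchi_t=0$, together with $\tilde\eta\geq 0$ and $\chi_t\leq 0$, one has $\int_\Omega\tilde\eta(\chi_t-\tchi_t)\dx\leq 0$, which combines favourably with the weak variational inequality.

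I expect the crux of the argument to be the treatment of the \emph{nonconvex} coupling $\tfrac12 a(\chi)\CC\eps(\uu){:}\eps(\uu)$, which enters both evolution equations and is not jointly convex in $(\uu,\chi)$. Forming the ``relative'' version of this contribution produces, after a second-order Taylor expansion in $\chi$ around $\tchi$, a remainder of the form $\int_\Omega a''(\vartheta)(\chi{-}\tchi)^2\,\CC\eps(\tu){:}\eps(\tu)\dx$ (with $\vartheta$ intermediate), controlled by $\mathcal{R}$ via the $L^\infty$-bound on $\eps(\tu)$ provided by~\eqref{reg-strong-sols} and the embedding $H^2(\Omega)\hookrightarrow L^\infty(\Omega)$ valid for $d\leq 3$, together with $a\in\mathrm{C}^2$ (Hyp.~\ref{h:4}). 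The mixed cross-terms of type $(a(\chi){-}a(\tchi))\,\CC\eps(\tu){:}\eps(\ut{-}\tut)$ and $a'(\tchi)(\chi{-}\tchi)\,\CC\eps(\tu){:}\eps(\uu{-}\tu)$ are absorbed into $\mathcal{D}_{\mathrm{rel}}$ through Cauchy--Young, at the cost of factors involving norms of $\tu$ and $\tu_t$ provided by~\eqref{reg-strong-sols}; an analogous splitting handles the $b(\chi)$-term, which is somewhat easier since $b\geq b_0>0$. Because $(\uu,\chi)$ and $(\tu,\tchi)$ emanate from the same data, $\mathcal{R}(0)=0$, and Gronwall's lemma forces $\mathcal{R}\equiv 0$ on $[0,\widehat T]$, so that $\uu=\tu$ and $\chi=\tchi$ there.
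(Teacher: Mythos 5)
Your proposal matches the paper's argument: it defines the same relative energy (the Bregman distance of the total energy, with the $\ell$-term compensating the nonconvexity of $W$), uses the same five ingredients — the upper energy-dissipation inequality for the weak solution, the energy balance for the strong solution, the cross-tested momentum balances, the one-sided variational inequality~\eqref{weakChiIneq} tested with $\tchi_t\leq 0$, and the strong flow rule with the sign observations on the multiplier $\tilde\eta\in\partial I_{(-\infty,0]}(\tchi_t)$ — and closes by Gronwall. This is precisely the route taken in Proposition~\ref{prop:weakstrong}; the only cosmetic difference is that where you invoke a second-order Taylor expansion of $a$ to control the nonconvex coupling, the paper instead rearranges the cross-terms algebraically so that the quantity $-\int_\Omega a'(\chi)\tchi_t\,\Cm{\f u -\tu}{\f u-\tu}\dx$ appears with the favorable sign ($a'\geq 0$, $\tchi_t\leq 0$) and the remaining terms are absorbed by Cauchy--Young exactly as you indicate.
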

 The \emph{proof} will be carried out in Section~\ref{s:rel} ahead.

\section{Proof of Theorem \ref{thm:1}}
\label{ss:proof-thm-1}
	 We will prove the existence of weak solutions by resorting to a suitable time-discretization scheme. 
	Let $\tau=T/K$ be the time step size of  an equidistant partition
	$\{ 0 = \dis t0{\tau} < \dis t1\tau < \ldots <\dis tk \tau < \ldots < \dis t K \tau =T\}$
	of $[0,T]$ into $K$ subintervals. We will approximate the volume  and surface forces \EEE  by local means on the intervals $[\dis t {k-1}\tau, \dis t k \tau]$, by setting
	\begin{equation}
	\label{local-means-f}
	\dis \bff k \tau: = \frac1\tau \int_{\dis t{k-1}\tau}^{\dis t k \tau} \bff(s) \dd s\,, \qquad 	 \dis \bfg k \tau: = \frac1\tau \int_{\dis t{k-1}\tau}^{\dis t k \tau} \bfg(s) \dd s\,.
	\end{equation}
	\EEE Hence, the time discretization scheme for system \eqref{PDEsystem} reads,  in its strong formulation, \EEE
		\begin{subequations}
	\label{discrPDEsystem}
	\begin{align}
		&\frac{\uk-2\ukk+\ukkk}{\tau^2}-\DIV\Big(b(\chik)\VV\e\Big(\frac{\uk-\ukk}{\tau}\Big)+ a(\chik) \CC\e(\uk)\Big)=  \dis \bff  k \tau 
			\quad \text{in }\Omega,\\
		&\left.
			\begin{aligned}
				&\frac{\chik-\chikk}{\tau}+\partial I_{(-\infty,0]}\Big(\frac{\chik-\chikk}{\tau}\Big)-\Delta\chik\\&
				\qquad + \frac{  a'(\chik)
    }{2} \CC \e(\ukk){:}\e(\ukk)
				+  \breve W'(\chik) \EEE +\invbreve W'(\chikk)\ni 0
			\end{aligned}\;\;
			\right\}
			\quad \text{in }\Omega,\\
		&   \cgamma{0}   \pmb n {\cdot} \left(a(\chik) \CC \eps(\uk) {+} b(\chik)\VV\eps \left(\frac{\uk{-} \ukk}{\tau} \right)\right) + \cgamma{1} \frac{\uk{-} \ukk}{\tau} +
		 \cgamma{2} \uk = \dis \bfg k \tau \quad \text{on }\partial \Omega, \EEE
		 \\
		 &
		\partial_{\pmb n}\chik=0
			\quad \text{on }\partial \Omega, 
			\label{discrPDEsystem5}
	\end{align}
	supplemented with the initial conditions $\uu_0$,  $\dis \uu {-1} \tau: = \uu_0 - \tau \vv_0$, and $\chi_0$. \EEE
%
%
	\end{subequations}
	In the above scheme,   the convex/concave   splitting $W=\breve W+\invbreve W$ from  \eqref{cvx-cnc-as-conseq} 
	has been carefully combined with the choice of implicit/explicit terms in such a way as to yield the validity of a discrete energy-dissipation 
	inequality, cf.\ Lemma \ref{lemma:discrEI} ahead.
	\subsection{Existence and a priori estimates for time-discrete solutions}
	\label{ss:3.0}
	With our first result, we establish the existence of solutions to the weak formulation of system \eqref{discrPDEsystem}. Additionally, we prove the positivity 
	property $\chik \geq 0$ a.e.\ in $\Omega$ via a maximum principle argument mimicking that from \cite[Prop.\ 4.2]{KnRoZa13VVAR}. \EEE
	\begin{lemma}[Existence of time-discrete solutions]
	\label{lemma:existenceDiscrSol}
		Starting from  $\dis \uu {-1} \tau = \uu_0 - \tau \vv_0$,   $\dis u 0\tau: = u_0$, and $\dis\chi 0\tau: = \chi_0$, 
		there exists $\overline{\tau}>0$ such that for all $0<\tau<\overline{\tau}$ and 
		for every  $k=1,\ldots,K$ \EEE
		there exists a \emph{weak} solution  $\uk\in H^1(\Omega;\R^d)$  \EEE and $\chik\in H^1(\Omega)\cap L^\infty(\Omega)$ 
			to the time-discrete system \eqref{discrPDEsystem},
			fulfilling 
					\begin{subequations}
		\label{discrPDEsystemWeak}
		\begin{align}
			&
			\begin{aligned}
			&
			\io \left\{ \frac{\uk-2\ukk+\ukkk}{\tau^2}{\cdot}\ph{+}
			b(\chik)\VV\e\Big(\frac{\uk{-}\ukk}{\tau}\Big){:}\e(\ph)
			{+}a(\chik) \C\e(\uk){:}\e(\ph) 
			\right\}\dx
			\\
		 & 	 \quad +\int_{\partial\Omega} \left(  \gamma_1 \frac{\uk{-}\ukk}{\tau} {+} \gamma_2 \uk \right) \dd S 
			=
			 \langle{\dis\bff k\tau},{\ph} \rangle_{H^1(\Omega)}  +  \pairing{}{H^{1/2}(\partial\Omega)}{\dis\bfg k\tau}{\ph}  \EEE 
			 \end{aligned}
			 \EEE
				\label{discrUeqWeak}
			\intertext{for all $\ph\in H^1(\Omega;\R^d)$,}
			&
			\label{discrChiEqWeak}
				\begin{aligned}
					&\io \left\{ \frac{\chik{-}\chikk}{\tau}(\psi{-}\chik){+} \nabla\chik{\cdot}\nabla(\psi{-}\chik) 
       {+} \frac12 a'(\chik) \EEE
      \Cm{\ukk}{\ukk}(\psi{-}\chik) \right\}\dx
      \\
					&\hspace*{13.5em}+ \io\left\{  \breve{W}'(\chik)\EEE (\psi{-}\chik){+}\invbreve{W}'(\chikk)(\psi{-}\chik) \right\} \dx\geq 0
				\end{aligned}
				\intertext{for all $\psi\in X_\tau^{k-1}:=\big\{v\in H^1(\Omega)\cap L^\infty(\Omega):\;v\leq \chikk \aein\Omega\big\}$, as well as the constraints}
				& 
				\label{constraints-chi}
				0 \leq \chik \leq \chikk  \leq 1 \qquad \text{a.e.\ in } \Omega\,.
		\end{align}
		\end{subequations}
	\end{lemma}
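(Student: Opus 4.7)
My approach is induction on $k\in\{1,\ldots,K\}$, exploiting the key observation that the discrete flow rule \eqref{discrChiEqWeak} depends on $\ukk$ but not on $\uk$. Thus, given $\ukkk,\ukk \in H^1(\Omega;\R^d)$ and $\chikk \in H^1(\Omega)\cap L^\infty(\Omega)$ with $0\leq\chikk\leq 1$ a.e.\ from the inductive hypothesis, I would first construct $\chik$ by convex minimization and only afterwards obtain $\uk$ by the Lax-Milgram lemma. Concretely, minimize the functional
\begin{equation*}
J_k(\chi) := \tfrac{1}{2\tau} \io |\chi-\chikk|^2 \dx + \tfrac{1}{2} \io |\nabla\chi|^2 \dx + \tfrac{1}{2}\io a(\chi)\, \Cm{\ukk}{\ukk} \dx + \io \breve W(\chi) \dx + \io \invbreve W'(\chikk)\,\chi \dx
\end{equation*}
over the convex, weakly-$H^1$-closed set $\mathcal{K}^{k-1}_\tau := \{v \in H^1(\Omega):\, v \leq \chikk \text{ a.e.\ in }\Omega\}$. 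Strict convexity comes from the $L^2$-term; convexity of the remaining nonlinear contributions is ensured by \eqref{ass-a-3} (combined with $\Cm{\ukk}{\ukk}\geq 0$) and by \eqref{assW-2-new}. Since $\chikk \in \mathcal{K}^{k-1}_\tau$ with $J_k(\chikk)<\infty$, the direct method yields a unique minimizer $\chik \in H^1(\Omega)$.

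The main obstacle is proving the positivity $\chik\geq 0$ from \eqref{constraints-chi}; my strategy is a minimizer-comparison argument with the truncation $\tilde\chi := \chik\vee 0$. Since $\chikk \geq 0$ inductively, one has $\tilde\chi \leq \chikk$, so $\tilde\chi \in \mathcal{K}^{k-1}_\tau$. A term-by-term analysis shows $J_k(\tilde\chi) \leq J_k(\chik)$: the $L^2$- and gradient-terms decrease under truncation (the latter by Stampacchia's chain rule); the $a$-term is unchanged because $a\equiv 0$ on $(-\infty,0]$ by \eqref{ass-a-2}; and the two $W$-terms decrease thanks to \eqref{assW-3}, since on $\{\chik<0\}$ one has $\breve W(0)\leq \breve W(\chik)$, while $\chikk\in[0,1]$ forces $\invbreve W'(\chikk)\leq 0$ and hence $\invbreve W'(\chikk)\cdot 0 \leq \invbreve W'(\chikk)\cdot\chik$ on that set. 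Uniqueness of the minimizer then forces $\chik = \tilde\chi \geq 0$ a.e., and combined with the constraint $\chik \leq \chikk \leq 1$ this yields \eqref{constraints-chi} and $\chik \in L^\infty(\Omega)$. With $\chik$ now bounded, the Euler condition computed along variations $\chik + s(\psi-\chik)$, $s\in(0,1)$, $\psi \in X^{k-1}_\tau$, is well-posed and gives exactly \eqref{discrChiEqWeak}.

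Finally, with $\chik \in [0,1]$ a.e., both $a(\chik)$ and $b(\chik)$ lie in $L^\infty(\Omega)$ with $b(\chik)\geq b_0>0$ by \eqref{ass-b}. Then \eqref{discrUeqWeak} is a linear elliptic problem for $\uk$ driven by the bilinear form
\begin{equation*}
\mathcal B_k(\uu,\ph) := \io \tfrac{1}{\tau^2}\,\uu{\cdot}\ph \dx + \io \left( \tfrac{b(\chik)}{\tau}\VV + a(\chik)\CC \right)\e(\uu){:}\e(\ph) \dx + \int_{\partial\Omega}\left(\tfrac{\gamma_1}{\tau}+\gamma_2\right)\uu{\cdot}\ph \dd S,
\end{equation*}
which is bounded on $H^1(\Omega;\R^d)$ and, combining $\tau^{-2}\|\uu\|_{L^2}^2$ with the coercive contribution $b_0 \eta_\VV \tau^{-1}\|\e(\uu)\|_{L^2}^2$ via Korn's second inequality, is also $H^1$-coercive (for any $\tau>0$). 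The right-hand side of \eqref{discrUeqWeak} defines a bounded linear functional on $H^1(\Omega;\R^d)$, so Lax-Milgram produces a unique $\uk \in H^1(\Omega;\R^d)$, closing the induction. The smallness of $\tau$ is not actually needed for the construction itself; it is merely carried along to accommodate the a priori estimates and discrete energy inequality in subsequent steps.
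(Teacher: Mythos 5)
Your proof is correct and follows essentially the same route as the paper's: construct $\chik$ first by minimizing a strictly convex functional over the obstacle set $\{v \leq \chikk\}$ (the paper calls this functional $\mathcal{P}$ and the set $\tilde{X}_\tau^{k-1}$), deduce $\chik\geq 0$ by comparing the minimizer with its positive truncation using \eqref{ass-a-2} and \eqref{assW-3}, extract \eqref{discrChiEqWeak} as the first-order optimality condition, and finally obtain $\uk$ from Lax--Milgram using $b(\chik)\geq b_0$. The only cosmetic differences are that the paper splits its functional into two parts and invokes a reference (Tr\"oltzsch) for the Euler inequality rather than computing the directional derivative from scratch, and that the paper phrases the coercivity/lower-boundedness estimate for $\mathcal{P}$ with a smallness restriction on $\tau$; your remark that no such restriction is genuinely needed for the construction at fixed $k$ is a fair observation, as the threshold $\overline{\tau}$ is really carried along for the uniform discrete energy estimates of Lemma~\ref{lemma:discrEI}.
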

	\begin{proof}
		Let $k=1,\ldots,K$ be given
	 and, accordingly,  $\ukk \in  H_0^1(\Omega;\R^d)$ and $\chikk\in H^1(\Omega)\cap L^\infty(\Omega)$. We first construct a solution to \eqref{discrChiEqWeak}
	by finding a 
		 minimizer $\chik\in H^1(\Omega)$  
		 for 
		 of the convex potential
		$\C P:H^1(\Omega) {\cap} L^\infty (\Omega) \to\R$ \EEE  defined by  
		\begin{equation}
		\label{P-functional}
		\begin{aligned}
			& \C P(\chi)= \mathcal{P}_1(\chi) + \mathcal{P}_2(\chi)  \qquad \text{with }
			\\
			& \qquad \begin{cases}
			 \displaystyle  \mathcal{P}_1(\chi)    {=} \io \left\{ \tfrac{\tau}{2}\Big|\frac{\chi{-}\chikk}{\tau}\Big|^2{+}\frac12|\nabla\chi|^2{+}  \breve{W}(\chi){+}\invbreve{W}'(\chikk)\chi \right\} \dx
			\\
			 \displaystyle  \mathcal{P}_2(\chi)  {=} \io  \tfrac12 a(\chi)  \Cm{\ukk}{\ukk} \dd x  +  I_{\tilde{X}_\tau^{k-1}}(\chi)\,,
			\end{cases}
			\end{aligned}
			\end{equation}
			with the set $ \tilde{X}_\tau^{k-1}:=\{v\in H^1(\Omega):  v\leq \chi^{k-1}\aein\Omega\}.$
			 We thus address the minimum problem 
			\begin{equation}
			\label{minimum-4-P}
			\min_{\chi \in \tilde{X}_\tau^{k-1}}  \C P(\chi). \qquad
			\end{equation} \EEE	
		First of all, observe that, for sufficiently small $\tau$ the functional   $\cal{P}$ is bounded from below   and suitably coercive.
		 To check this, we recall that,
		since $\breve{W}$ is convex, it is is bounded from below  by an affine function; combining this with the information 
		that $\invbreve{W}'(\chikk) \in L^\infty(\Omega)$  - since $0 \leq \chikk \leq \chi_0 \leq 1$ a.e.\ in 
		$\Omega$ -  we ultimately conclude that there exist  positive constants $c_W,\, C_W $, only depending on $W$, such that
		\begin{equation}
		\label{affine-W}
		\begin{aligned}
		\mathcal{P}_1(\chi)
		&  \geq \io \left\{ \frac{\tau}{2}\Big|\frac{\chi{-}\chikk}{\tau}\Big|^2{+}\frac12|\nabla\chi|^2 - c_W |\chi| - C_W \right\} \dd x  
		\stackrel{(1)}{\geq} c_W' \| \chi \|_{H^1(\Omega)}^2 - C_W'\,,
		\end{aligned}
		\end{equation}
		where {\footnotesize (1)} follows from absorbing $-c_W \|\chi\|_{L^1(\Omega)}$ into $\tfrac1 {2\tau}\|\chi{-}\chikk\|_{L^2(\Omega)}^2$, for sufficiently small $\tau$. 
		In turn, we observe that 
		if $
		\mathcal{P}_2(\chi) <+\infty
		$, then 
		$ \chi^+ \in L^\infty(\Omega)$ and, a fortiori, 
		 \[
		 a(\chi)  \Cm{\ukk}{\ukk}  = a((\chi)^+)  \Cm{\ukk}{\ukk} \in L^1(\Omega)
		 \] 
		 (the above equality holds because $a\equiv 0$ on $(-\infty, 0]$ by 
		\eqref{ass-a-2}). 
		All in all, we may conclude that 
		\[
		\forall S>0 \ \exists\, C_S>0 \ \forall \, \chi \in H^1(\Omega)\, : \qquad  \C P(\chi)\leq S \,  \Longrightarrow \, 
		 \begin{cases}
		 \| \chi \|_{H^1(\Omega)} \leq C_S
		 \\
		 \int_\Omega  a'(\chi)  \Cm{\ukk}{\ukk} \dd x \leq C_S
\end{cases}\,.
		\]
Therefore, any minimizing sequence for $\cal{P}$ is bounded in $H^1(\Omega)$ and thus  weakly converges, up to a subsequence, to some $\bar \chi \in H^1(\Omega) \cap \tilde{X}_\tau^{k-1}$;  by standard lower semicontinuity arguments we conclude that $\bar\chi$ is a minimizer for $\cal{P}$ and we set $\chik: = \bar \chi$.
		\par
	 We now show that \emph{any} solution $ \chik$ for the minimum problem \eqref{minimum-4-P} fulfills \EEE
	$\chik \geq 0$ a.e.\ in $\Omega$.  With this aim, we observe  that 
		the truncated function $(\chik)^+:=\max\{\chik,0\}$ fulfills a.e.\ in $\Omega$
		\[
		\begin{cases}
			 \Big\|\frac{(\chik)^+-\chikk}{\tau}\Big\|_{L^2}^2 & \leq \Big\|\frac{\chik-\chikk}{\tau}\Big\|_{L^2}^2,
			 \\
		 \|\nabla(\chik)^+\|_{L^2}^2 & \leq \|\nabla\chik\|_{L^2}^2,
			\\  
    a((\chik)^+)   & \leq 
   a(\chik), 
   \end{cases}
		\]
		where the latter estimate is again due to \eqref{ass-a-2}. 
		Furthermore the splitting $W=\breve W+\invbreve W$ is constructed in a way such that,  by \eqref{assW-3}, 
		\begin{align*}
			&&&&\breve W((\chik)^+)\leq{}& \breve W(\chik) && \aein\, \Omega, &&\\
			&&&&\invbreve W'(\chi^{k-1})(\chik)^+\leq{}& \invbreve W'(\chi^{k-1})\chik && \aein\, \Omega,
				&&\text{since $\invbreve W'(\chi^{k-1})\leq 0$.}&&&&
		\end{align*}
		Therefore,
		$
			\C P((\chik)^+)\leq \C P(\chik).
		$
		Due to the strict convexity of $\C P$, minimizers are unique and thus
		$\chik=(\chik)^+$. All in all, we have obtained \eqref{constraints-chi}. 
		\par
		 A fortiori, we have that any solution   $ \chik$  of \eqref{minimum-4-P} is indeed in $L^\infty(\Omega)$. Therefore, 
		\begin{equation}
		\label{auxiliary-min-prob}
		\chik \in \mathrm{Argmin}_{\chi \leq \chikk} \widetilde{\calP}(\chi),
		\end{equation}
		with $ \widetilde{\calP}: H^1(\Omega) {\cap}L^\infty(\Omega) \to \R$ the G\^{a}teaux-differentiable functional defined by 
		\[
		 \widetilde{\calP}(\chi):=  \mathcal{P}_1(\chi) + \widetilde{\mathcal{P}}_2(\chi)   \text{ and  }
		  \widetilde{\mathcal{P}}_2(\chi)  {=} \io  \tfrac12 a(\chi)  \Cm{\ukk}{\ukk} \dd x\,.
		  \]
		  We may apply, e.g., \cite[Lemma 2.21, p.\ 63]{Troltzsch-book}
		   to the auxiliary minimum problem \eqref{auxiliary-min-prob}, 
	and 	  
 the variational inequality \eqref{discrChiEqWeak} then follows 
		as first-order necessary condition.  \EEE
		\par	
		Finally, equation \eqref{discrUeqWeak},   with $\chik $ given as a datum,  \EEE  can be solved for $\uk$ by the Lax-Milgram lemma.
	\end{proof}
	\begin{lemma}[Time-discrete   energy-dissipation \EEE inequality]
	\label{lemma:discrEI}
		It holds for all $0\leq \ell\leq k\leq K$
		\begin{equation}
		\label{discrEDI}
		\begin{aligned}
			 & \cE\Big(\uk,\chik,\frac{\uk-\ukk}{\tau}\Big)
			+\tau\sum_{j=\ell}^k\cD\Big(\chij,\frac{\uj-\ujj}{\tau},\frac{\chij-\chijj}{\tau}\Big)
				\\ & \leq \cE\Big(\ul,\chil,\frac{\ul-\ull}{\tau}\Big)
				+ \tau \sum_{j=\ell}^k \langle{\bff_j},{\uj-\ujj} \rangle_{H^1(\Omega)}\,.
		\end{aligned}
		\end{equation}
	\end{lemma}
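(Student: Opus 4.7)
My plan is to derive a one-step inequality between levels $k-1$ and $k$, and then telescope. The one-step inequality will be obtained by adding a momentum-balance inequality (from testing \eqref{discrUeqWeak} with $\ph=\uk-\ukk$) to a flow-rule inequality (from testing \eqref{discrChiEqWeak} with the admissible test $\psi=\chikk\in X_\tau^{k-1}$).

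First, I would test \eqref{discrUeqWeak} with $\ph=\uk-\ukk$. Using the discrete chain-rule identity $(\dot\uk-\dot\ukk)\cdot\dot\uk=\tfrac{1}{2}(|\dot\uk|^2-|\dot\ukk|^2+|\dot\uk-\dot\ukk|^2)$ (writing $\dot\uk:=(\uk-\ukk)/\tau$), the inertial term yields $\geq\tfrac12\int_\Omega(|\dot\uk|^2-|\dot\ukk|^2)\dx$. The viscosity term equals $\tau\int_\Omega b(\chik)\VV\e(\dot\uk){:}\e(\dot\uk)\dx$. For the elastic term I use $\CC\e(\uk){:}\e(\uk-\ukk)\geq\tfrac12[\Cm{\uk}{\uk}-\Cm{\ukk}{\ukk}]$ (from symmetry/positivity of $\CC$) so that it is bounded below by $\tfrac12\int_\Omega a(\chik)[\Cm{\uk}{\uk}-\Cm{\ukk}{\ukk}]\dx$. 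The boundary traces handle the $\cgamma1$-dissipation and the $\cgamma2$-energy analogously.

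Next I test \eqref{discrChiEqWeak} with $\psi=\chikk$, which is admissible and satisfies $\psi-\chik=\chikk-\chik\geq0$ by \eqref{constraints-chi}. The zero-order time term produces $-\tau\int_\Omega|\dot\chi^k|^2\dx$. The gradient term is handled by the elementary inequality $\nabla\chik\cdot\nabla(\chikk-\chik)\leq\tfrac12(|\nabla\chikk|^2-|\nabla\chik|^2)$. Convexity of $a$ (Hyp.\ \textbf{\ref{h:1}}) together with the \emph{sign} $\Cm{\ukk}{\ukk}\geq0$ gives $\tfrac12 a'(\chik)\Cm{\ukk}{\ukk}(\chikk-\chik)\leq\tfrac12[a(\chikk)-a(\chik)]\Cm{\ukk}{\ukk}$, which is crucial because the resulting $-\tfrac12 a(\chik)\Cm{\ukk}{\ukk}$ term will cancel exactly against the analogous contribution generated in the momentum-balance step above. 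Finally, the $W$-terms: convexity of $\breve W$ yields $\breve W'(\chik)(\chikk-\chik)\leq\breve W(\chikk)-\breve W(\chik)$, while concavity of $\invbreve W(r)=-\tfrac\ell2 r^2$ (used at the \emph{explicit} argument $\chikk$) gives $\invbreve W'(\chikk)(\chikk-\chik)\leq\invbreve W(\chikk)-\invbreve W(\chik)$. Summing yields $[\breve W'(\chik){+}\invbreve W'(\chikk)](\chikk-\chik)\leq W(\chikk)-W(\chik)$; this is exactly where the implicit/explicit splitting in scheme \eqref{discrPDEsystem} pays off.

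Adding the two resulting inequalities, the crossed terms $\pm\tfrac12 a(\chik)\Cm{\ukk}{\ukk}$ cancel, the unidirectionality $\dot\chi^k\leq0$ ensures that $I_{(-\infty,0]}$ contributes $0$ to $\cD$, and what remains is the one-step inequality
\begin{equation*}
\cE(\uk,\chik,\dot\uk)+\tau\cD(\chik,\dot\uk,\dot\chi^k)\leq\cE(\ukk,\chikk,\dot\ukk)+\langle\dis\bff k\tau,\uk-\ukk\rangle_{H^1(\Omega)}+\pairing{}{H^{1/2}(\partial\Omega)}{\dis\bfg k\tau}{\uk-\ukk}.
\end{equation*}
Iterating/summing for $j=\ell+1,\dots,k$ gives \eqref{discrEDI} by telescoping. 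The main technical obstacle is the delicate sign bookkeeping in step two: getting $\psi=\chikk$ (not $\chik$) as the test function so that $\chikk-\chik\geq0$ makes the convexity/concavity inequalities all work in the same direction, and in particular ensures that the sign condition $\Cm{\ukk}{\ukk}\geq0$ converts $a'(\chik)$ into a comparison of $a$-values at consecutive levels; this is the precise reason Hyp.\ \textbf{\ref{h:1}} requires $a$ convex together with $a\geq0$.
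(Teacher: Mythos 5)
Your proof is correct and follows the same route as the paper's: test the momentum balance with $\uj-\ujj$, test the flow rule with $\chijj$, use convexity of $\f u\mapsto\Cm{\f u}{\f u}$, of $a$, and of $\breve W$ (and concavity of $\invbreve W$) to bound the increments from below, then add and telescope. Your one-step inequality is in fact slightly more complete than the displayed statement \eqref{discrEDI}, since it correctly retains the surface-force pairing $\pairing{}{H^{1/2}(\partial\Omega)}{\dis\bfg k\tau}{\uk-\ukk}$, which appears in the paper's own proof (cf.\ \eqref{discrUeqEst}) and in the interpolated version \eqref{DEDI-interp} but was evidently dropped from the lemma's display.
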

	\begin{proof}
		Testing \eqref{discrUeqWeak} with $\uj-\ujj$ and \eqref{discrChiEqWeak} with $\chijj$
		and using standard convexity estimates yield
		\begin{subequations}
		\begin{align}
			&\left.
				\begin{aligned}
				& 	\frac12\|\uj\|_{L^2(\Omega)}^2-\frac12\|\ujj\|_{L^2 (\Omega)}^2+  \io \EEE a(\chij) \CC\eps (\uj) {:}\e(\uj{-}\ujj)\dx&\\
					& \quad+\tau\io b(\chij)\VV\frac{\e(\uj){-}\e(\ujj)}{\tau}{:}\frac{\e(\uj){-}\e(\ujj)}{\tau}\dx
					\\
					& \quad  + \tau\int_{\partial\Omega} \gamma_1 \left| \frac{\uj{-}\ujj}{\tau}\right|^2\dS 
					+	\frac{\gamma_2}2\|\uj\|_{L^2(\partial \Omega)}^2-\frac{\gamma_2}2\|\ujj\|_{L^2(\partial \Omega)}^2				\\
					&\leq  \langle{\dis \bff j \tau},{\uj{-}\ujj} \rangle_{H^1(\Omega)} +   \pairing{}{H^{1/2}(\partial\Omega)}{\dis \bfg j \tau}{\uj{-}\ujj},   \EEE
				\end{aligned}\hspace*{4em}
				\right\}
				\label{discrUeqEst}\\
			&\left.
				\begin{aligned}
					&\tau\Big\|\frac{\chij{-}\chijj}{\tau}\Big\|_{L^2}^2+\frac12\|\nabla\chij\|_{L^2}^2-\frac12\|\nabla\chijj\|_{L^2}^2
     \\
					&+\io \left[ \tfrac{  a'(\chij) \EEE
      }{2} \Cm{\ujj}{\ujj} {+} \breve W'(\chij){+}\invbreve W'(\chijj)\right](\chij{-}\chijj)\dx\leq 0.
				\end{aligned}
				\right\}
				\label{discrChieqEst}
		\end{align}
		\end{subequations}
	By the convexity of 
  $\f u \mapsto  \tfrac12 \EEE  \Cm{\f u}{\f u}$  and $a$ \EEE 
		we have
		\begin{align*}
			\io  a(\chij) \Cm{\uj}{\uj-\ujj} \dd x 
			 	\geq{}& \io   \tfrac12 \EEE   a(\chij)  \Cm{\uj}{\uj} \dd x
				\\
				& \qquad \qquad  -\io   \tfrac12 \EEE  a(\chij)  \Cm{\ujj}{\ujj}  \dd x\,,
      \\
		 \io  \tfrac{a'(\chij)}{2} \Cm{\ujj}{\ujj} (\chij{-}\chijj) \dd x 
      \geq {}& \io \tfrac12  (a(\chij){-} a(\chijj) ) \Cm{\ujj}{\ujj}  \dd x \,.
 		\end{align*}
		In the same spirit,
		convexity of $\breve W$ and concavity of $\invbreve W$ yield
		\begin{align*}
			\begin{aligned}
				\io \breve W'(\chij)(\chij{-}\chijj) \dd x 
					\geq{}&\io [ \breve W(\chij){-}\breve W(\chijj)] \dx ,\\
				\io \invbreve W'(\chijj)(\chij{-}\chijj) \dx 
					\geq{}& \io  ]\invbreve W(\chij){-}\invbreve W(\chijj)] \dx\,,
			\end{aligned}\quad
	\end{align*}
	so that 
		\[
				\io \big(\breve W'(\chij){+}\invbreve W'(\chijj)\big)(\chij{-}\chijj) \dd x
				\geq \io  W(\chij) \dd x - \io W(\chijj) \dd x \,.
		\]
		\par
		All in all, adding the inequalities \eqref{discrUeqEst} and \eqref{discrChieqEst},
		applying the above estimates
		and summing over  index
		 $ j \in \{ \ell+1,\ldots,k\}$ \EEE  proves the assertion.
	\end{proof}
	\par
	 From Lemma \ref{lemma:discrEI} we deduce  the basic a priori estimates for the 
	families 
	 $(\olu)_\tau$, $(\ulu)_\tau$, $(\olchi)_\tau$, $(\ulchi)$
	of the (left and right continuous) piecewise constant interpolants	  of the discrete solutions,  as well as for their
	piecewise linear interpolants
	 $(\uu_\tau)_\tau$, $(\chi_\tau)_\tau$. Furthermore, we will consider the interpolants $(\olv)_\tau$, $(\ulv)_\tau$ and $(\vv_\tau)_\tau$ of 
	 the difference quotients $(\vv^k:=\frac{\uk-\ukk}{\tau})_{k=1}^K$, and  the (left continuous) piecewise constant   interpolants
	  $\ointe{\bff}\tau:[0,T] \to H^{1}(\Omega;\R^d)^*$ and  $\ointe{\bfg}\tau:[0,T] \to H^{-1/2}(\partial\Omega;\R^d)$ 
	       of the values $(\dis {\bff} k\tau)_{k=1}^K$ and  $(\dis {\bfg} k\tau)_{k=1}^K$ , respectively. \EEE
	  We record for later use that, as $
	\tau \to 0$, we have 
	  \begin{equation}
	  \label{strong-cvg-ftau}
	  \ointe{\bff}\tau \to \bff \qquad \text{in } L^2(0,T;H^1(\Omega;\R^d)^*), \qquad    \ointe{\bfg}\tau \to \bfg \qquad \text{in } L^2(0,T;H^{-1/2}(\partial\Omega;\R^d)). \EEE
	  \end{equation}
\begin{proposition}[A priori estimates]
\label{prop:aprio}
There exists a constant $S>0$ such that the following estimates hold for all $0<\tau<\bar\tau$
	\begin{subequations}\label{aprioriEst}
		\begin{align}
			&\label{AEST:1} \|\uu_\tau\|_{H^1(0,T;H^1(\Omega;\R^d))\cap W^{1,\infty}(0,T;L^2(\Omega;\R^d))}\leq S,\\
			&\label{AEST:2}  \|\olu\|_{L^\infty(0,T;H^1(\Omega;\R^d))}\leq S,\\
			& \label{AEST:3} \|\ulu\|_{L^\infty(0,T;H^1(\Omega;\R^d))}\leq S,\\
			& \label{AEST:4}\|\chi_\tau\|_{L^\infty(0,T;H^1(\Omega))\cap L^{\infty}(0,T;L^\infty(\Omega))\cap H^1(0,T;L^2(\Omega))}\leq S,\\
			&\label{AEST:5} \|\olchi\|_{L^\infty(0,T;H^1(\Omega))\cap L^{\infty}(0,T;L^\infty(\Omega))}\leq S,\\
			&\label{AEST:6} \|\ulchi\|_{L^\infty(0,T;H^1(\Omega))\cap L^{\infty}(0,T;L^\infty(\Omega))}\leq S,\\ 
			&\label{AEST:8} \|\olv\|_{L^\infty(0,T;L^2(\Omega;\R^d))}\leq S,\\
			&\label{AEST:9} \|\ulv\|_{L^\infty(0,T;L^2(\Omega;\R^d))}\leq S, \\
			&\label{AEST:7} \|\vv_\tau\|_{H^1(0,T;H^{-1}(\Omega;\R^d))}\leq S.\\
		\end{align}
				\end{subequations}
\end{proposition}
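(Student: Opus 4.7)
The plan is to derive the a priori bounds from the discrete energy-dissipation inequality of Lemma~\ref{lemma:discrEI} (applied with $\ell=0$), combined with Korn's inequality and a discrete Gronwall argument. Set $E_k := \cE(\uk,\chik,\vv^k)$ with $\vv^k=(\uk-\ukk)/\tau$, and $D_j := \cD(\chij,\vv^j,(\chij-\chijj)/\tau)$. The energy $E_k$ controls $\tfrac12\|\vv^k\|_{L^2(\Omega)}^2$, $\tfrac12\|\nabla\chik\|_{L^2(\Omega)}^2$, and the Robin boundary term $\tfrac{\gamma_2}{2}\|\uk\|_{L^2(\partial\Omega)}^2$; the pointwise bound $0\leq \chik \leq 1$ from~\eqref{constraints-chi} together with the continuity of $W$ ensures that $\int_\Omega W(\chik)\dx$ is uniformly bounded. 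The dissipation $D_j$, via $b\geq b_0$ and $\eta_{\VV}>0$, controls $b_0 \eta_{\VV}\|\eps(\vv^j)\|_{L^2}^2 + \|(\chij-\chijj)/\tau\|_{L^2}^2$ (and, when $\gamma_1>0$, the corresponding boundary term).

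For the forcing, I would apply Young's inequality and Korn's inequality on $\Omega$ to get
\begin{equation*}
\tau\sum_{j=1}^k \langle \bff_j,\vv^j\rangle_{H^1(\Omega)} \leq \delta \tau \sum_{j=1}^k \|\eps(\vv^j)\|_{L^2}^2 + C_\delta \tau \sum_{j=1}^k \|\bff_j\|_{H^1(\Omega)^*}^2 + C \tau \sum_{j=1}^k \|\vv^j\|_{L^2}^2\,.
\end{equation*}
Choosing $\delta$ sufficiently small absorbs the $\eps(\vv^j)$-term into the dissipation and leaves $C\tau\sum_j \|\vv^j\|_{L^2}^2 \leq 2C\tau\sum_j E_j$ on the right. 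An analogous treatment handles $\bfg_j$ via the trace inequality. The discrete Gronwall lemma (valid for $0<\tau<\bar\tau$ with $2C\tau<1$) then yields a uniform bound $E_k + \tau \sum_{j=1}^k D_j \leq S^2$, using the strong convergence~\eqref{strong-cvg-ftau} to bound $\tau\sum_j\|\bff_j\|_{H^1(\Omega)^*}^2$ by $\|\bff\|_{L^2(0,T;H^1(\Omega)^*)}^2$.

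From this master bound each individual estimate follows. Bounds~\eqref{AEST:8}--\eqref{AEST:9} and the $W^{1,\infty}(0,T;L^2)$-part of~\eqref{AEST:1} come from the kinetic term; the dissipative control of $\eps(\vv^j)$ combined with Korn's inequality yields $\ut_\tau\in L^2(0,T;H^1)$, which integrated from $\uu_0\in H^1$ delivers~\eqref{AEST:1}--\eqref{AEST:3}; the $\nabla\chi$-bound combined with the pointwise bound $0\leq \chi\leq 1$ produces~\eqref{AEST:4}--\eqref{AEST:6}. Estimate~\eqref{AEST:7} is obtained separately: test~\eqref{discrUeqWeak} against $\ph\in H^1_0(\Omega;\R^d)$ (so the Robin boundary terms drop), bound the stress contributions using $0\leq \chik \leq 1$ and the continuity of $a,b$, and read off
\begin{equation*}
\Big\|\frac{\vv^k-\vv^{k-1}}{\tau}\Big\|_{H^{-1}} \leq \|\bff_k\|_{H^{-1}}+C\bigl(\|\eps(\vv^k)\|_{L^2}+\|\eps(\uk)\|_{L^2}\bigr)\,;
\end{equation*}
squaring and summing in $k$ yields~\eqref{AEST:7}. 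The main technical point is the absorption step, because $a(\chi)$ may vanish and so the elastic part of the energy provides no direct control of $\eps(\uu)$: all spatial regularity of $\uu$ must necessarily be routed through the viscous dissipation and Korn's inequality.
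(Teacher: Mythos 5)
Your proof is correct and follows essentially the same route as the paper: the discrete energy-dissipation inequality of Lemma~\ref{lemma:discrEI}, coercivity of $\calE$ and $\calD$ (Korn, $b\geq b_0$, $0\leq\chik\leq 1$ to bound $\int_\Omega W(\chik)\dx$), absorption of the forcing, and comparison in \eqref{discrUeqWeak} for \eqref{AEST:7}. The one procedural difference is that you close the forcing estimate with a discrete Gronwall argument, splitting $\|\vv^j\|_{H^1}^2$ into $\|\eps(\vv^j)\|_{L^2}^2$ (absorbed into the dissipation) plus $\|\vv^j\|_{L^2}^2$ (fed back into the energy), whereas the paper presents it as a direct absorption into $c\|\olv\|_{H^1}^2$ on the left; your version is slightly more explicit about where the $L^2(\Omega;\R^d)$ control of $\olv$ that Korn's second inequality requires actually comes from. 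Your closing remark — that since $a(\chi)$ may degenerate, all spatial regularity of $\uu$ must be routed through the viscous dissipation — correctly identifies the structural reason the argument has this shape.
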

	\begin{proof}
	 Clearly, the discrete energy-dissipation inequality \eqref{discrEDI} rephrases as 
	\begin{equation}
	\label{DEDI-interp}
	\begin{aligned}
	 & \mathcal{E}(\olu(t), \olchi(t), \olv(t)) + \int_{\olt(s)}^{\olt(t)} \mathcal{D} \left(\olchi(r), \olv(r), \chi_\tau'(r) \right)  \dd r 
	\\ & \leq 
\mathcal{E}(\olu(s), \olchi(s), \olv(s)) +  \int_{\olt(s)}^{\olt(t)} \langle{\ointe {\bff}\tau(r)},{\olv(r)} \rangle_{H^1(\Omega)} \dd r  
 +  \int_{\olt(s)}^{\olt(t)} \pairing{}{H^{1/2}(\partial\Omega)}{\ointe {\bfg}\tau(r)}{\olv(r)} \dd r    \EEE
	\end{aligned}	
	\end{equation}
	for all $  0 \leq s \leq t \leq T$,
	where $\olt: [0,T]\to [0,T]$  is  the
	(left-continuous) piecewise constant interpolant of the nodes of the partition $(\dis t k\tau)_{k=1}^K$,  with $\olt(0):=0$. 
Taking into account the coercivity properties of $\calE$ and $\calD$ (based on the positive definiteness of the tensors $\bbC$ and 
$\bbV$, on Korn's inequality, 
  on the positivity properties  $a\geq 0$, $b\geq b_0>0$,
 and on the fact that $W$ is bounded from below by 
an  affine function \eqref{affine-W}), from 
\eqref{DEDI-interp} we immediately deduce 
\[
\begin{aligned}
& 
\frac12 \| \olv(t)\|_{L^2}^2 + \frac12 \|\nabla \olchi(t)\|_{L^2}^2 + \int_0^{\olt(t)} \left( \|\chi_\tau'(r)\|_{L^2}^2 {+} c \| \olv(r)\|_{H^1}^2 \right) \dd r 
\\
&
\leq 
\mathcal{E}(\uu_0, \chi_0, \vv_0) + \int_0^{\olt(t)} \langle{\ointe {\bff}\tau(r)},{\olv(r)} \rangle_{H^1(\Omega)} \dd r
 +  \int_0^{\olt(t)} \pairing{}{H^{1/2}(\partial\Omega)}{\ointe {\bfg}\tau(r)}{\olv(r)} \dd r    \EEE
\\
& \quad  +  c_W \| \olchi(t)\|_{L^1} + C_W. 
\end{aligned}
\]
Now, by \eqref{force+data} we gather that $|\mathcal{E}(\uu_0, \chi_0, \vv_0) |\leq C$; 
in turn, we have 
\[
\begin{cases}
\|\ointe {\bff}\tau\|_{L^2(0,T;H^{1}(\Omega;\R^d))^*} \leq \|\bff \|_{L^2(0,T;H^{1}(\Omega;\R^d))^*}
\\
 \|\ointe {\bfg}\tau\|_{L^2(0,T;H^{-1/2}(\partial\Omega;\R^d))} \leq \|\bfg \|_{L^2(0,T;H^{-1/2}(\partial\Omega;\R^d))} \EEE
\end{cases}
\]
   for every $\tau>0$, so that  we may immediately absorb the second integral
term on the right-hand side into the left-hand side. Finally, since 
by construction $0 \leq \olchi \leq 1$ a.e.\ in $Q$, we clearly have $ c_W \| \olchi(t)\|_{L^1} \leq c_W |\Omega|$. 
All in all, we conclude estimates  \eqref{AEST:1}--\eqref{AEST:6} and \eqref{AEST:8}--\eqref{AEST:9}. 
\par
Finally, \eqref{AEST:7} follows from a comparison argument in
 equation \eqref{discrUeqWeak}.
	\end{proof} \EEE
	
\subsection{Conclusion of the proof of Theorem \ref{thm:1}}
\label{ss:3.1}
	 Let us a consider a null sequence $(\tau_j)_j$ of time steps. By well known
compactness theorems we find  a pair $(\uu,\chi)$ as in \eqref{regs-u-chi} and a (not relabeled) subsequence of $(\tau_j)_j$ \EEE
		such that 		 the following 
		convergences hold as $j\to \infty$
		\begin{subequations}
		\label{weak-cvg-sez3}
		\begin{align}
			\uu_{\tau_j}\weakstarlim{}& \uu
				&&\text{ weakly-star in }H^1(0,T;H^1(\Omega;\R^d))\cap W^{1,\infty}(0,T;L^2(\Omega;\R^d)),\\
			\ointe {\uu}{\tau_j},\uinte {\uu}{\tau_j}\weakstarlim{}& \uu
				&&\text{ weakly-star in }L^{\infty}(0,T;H^1(\Omega;\R^d)),\\
				\label{weak-cvg-sez3-3}
			\vv_{\tau_j}\weaklim{}& \partial_t\uu
				&&\text{ weakly in }H^1(0,T;  H^{1}(\Omega;\R^d)^*), \EEE \\
					\label{weak-cvg-sez3-4}
			\ointe {\vv}{\tau_j},\uinte {\vv}{\tau_j},  \linte{\vv}{\tau_j} \EEE \weakstarlim{}& \partial_t\uu
				&&\text{ weakly-star in }L^\infty(0,T;L^2(\Omega;\R^d)),\\
			\chi_{\tau_j}\weakstarlim{}& \chi
				&&\text{ weakly-star in }L^\infty(0,T;H^1(\Omega))\cap L^{\infty}(0,T;L^\infty(\Omega))\cap H^1(0,T;L^2(\Omega)),\\
			\ointe {\chi}{\tau_j},\uinte {\chi}{\tau_j}\weakstarlim{}& \chi
				&&\text{ weakly-star in }L^\infty(0,T;H^1(\Omega))\cap L^{\infty}(0,T;L^\infty(\Omega))\,.
		\end{align}
		\end{subequations}
		From Aubin-Lions type compactness results we see that
		\begin{equation}
		\label{strong-cvg-chi}
		\begin{aligned}
			\chi_{\tau_j},\ointe {\chi}{\tau_j},\uinte {\chi}{\tau_j}\to{}& \chi
				\quad\text{ strongly in }L^\infty(0,T;L^p(\Omega))\text{ for all }p\in(0,2^*),\\
			\chi_{\tau_j},\ointe {\chi}{\tau_j},\uinte {\chi}{\tau_j}\to{}& \chi
				\quad\text{ a.e. in }Q.
		\end{aligned}
		\end{equation}
		 Finally, combining \eqref{weak-cvg-sez3-3} \& \eqref{weak-cvg-sez3-4} we also gather 
		\begin{equation}
		\label{pointwise-v}
		 \linte{\vv}{\tau_j}(t) \weakto \vv(t)=\partial_t \uu (t)  \qquad \text{ in } L^2(\Omega;\R^d) \qquad \text{for all } t \in [0,T]\,.
		\end{equation}
	\par
	Obviously, the pair $(\uu,\chi)$ complies with initial conditions \eqref{initialCond}, constraints \eqref{constraints}.
In order to check the validity of  \eqref{weakUEq}, 
		let us write the discrete weak momentum balance
		\eqref{discrUeqWeak}
		 in a time-integrated version:
		 \[
		\begin{aligned}
			&\intt\langle\partial_t\vv_{\tau_j},\ph\rangle_{H_0^1}\dt+
			\int_0^t \int_\Omega \left\{ 
			b(\ointe {\chi}{\tau_j})\VV\e(\partial_t\uu_{\tau_j}){:}\e(\ph){+} a(\ointe {\chi}{\tau_j}) \Cm{\ointe {\uu}{\tau_j}}{\ph}  \right\} \dx \dd r
			\\
			&  \quad + \int_0^t \int_{\partial\Omega} \left\{ \gamma_1
			\partial_t\uu_{\tau_j}
			{\cdot}\ph{+} \gamma_2 \ointe {\uu}{\tau_j}  {\cdot}\ph  \right\} \dd S \dd r
			\\
			 &  =  
			 \int_0^t \langle{\ointe {\bff}{\tau_j}},{\partial_t \linte {\uu}{\tau_j}} \rangle_{H^1(\Omega)} \dd r  +
			  \int_0^t \pairing{}{H^{1/2}(\partial\Omega)}{\ointe {\bfg}{\tau_j}}{\partial_t \linte {\uu}{\tau_j}} \dd r. \EEE 
		\end{aligned}
		\]
		for all   $\ph\in L^\infty(0,T;H^1(\Omega;\R^d))$. \EEE
	Convergences
	\eqref{weak-cvg-sez3}, \eqref{strong-cvg-chi}, and 
	 \eqref{strong-cvg-ftau}
	allow us to
		take the limit as $\tau_j\to 0$, and conclude the time-integrated version of  \eqref{weakUEq}.  Hence, the weak momentum balance is shown.
		\par
		In the next step we aim to obtain the integral inequality \eqref{weakChiIneq}.   For this, we observe that,
		 choosing the admissible test-function $\psi=\ointe \chi{\tau_j}(t)+\hpsi$
		with $\hpsi\in H_-^1(\Omega)\cap L^\infty(\Omega)$ 
		 (where 	$H_-^1(\Omega)$ is the cone of negative functions in $H^1(\Omega)$), \EEE
		 \eqref{discrChiEqWeak} rewrites  for almost all $t\in (0,T)$ as 
		 \[
		 \begin{aligned}
		&\io \left\{ \linte\chi{\tau_j}'(t) \hpsi{+} \nabla\ointe\chi{\tau_j}(t) {\cdot}\nabla \hpsi
						{+} 
       \frac12 a'(\linte\chi{\tau_j}(t))
      \Cm{\uinte \uu{\tau_j} (t)}{\uinte u{\tau_j}(t)}\hpsi \right\}\dx
      \\
					&\hspace*{13.5em}+ \io\left\{  \breve{W}'(\ointe\chi{\tau_j}(t)) \hpsi {+}\invbreve{W}'(\uinte\chi{\tau_j}(t))\hpsi \right\} \dx\geq 0\,.
					\end{aligned}
					\]
		Thus, integrating in time we obtain
		\begin{align}\notag
			&\iint_Q \left[ \partial_t \linte \chi{\tau_j}
			\hpsi{+}\nabla\ointe {\chi}{\tau_j}{\cdot}\nabla\hpsi{+}\frac{
            {a}'(\ointe {\chi}{\tau_j})
            }{2} \Cm{\uinte {\uu}{\tau_j}}{\uinte {\uu}{\tau_j}}
  \hpsi {+}   \breve{W}' (\ointe {\chi}{\tau_j}) \hpsi{+} \invbreve W'(\uinte {\chi}{\tau_j})\hpsi \right]    \dxt\geq 0
		\end{align}
		for all $\hpsi\in L^\infty(0,T;H_-^1(\Omega))\cap L^\infty(0,T;L^\infty(\Omega))$.
		In order to take the limit as $j\to\infty$ we rely on convergences \eqref{weak-cvg-sez3}  observe that, by \eqref{strong-cvg-chi} and the fact that $W'\in \mathrm{C}^0(\R)$, we immediately have, for instance, that
		\[
		  \breve{W}' (\ointe {\chi}{\tau_j})  + \invbreve W'(\uinte {\chi}{\tau_j}) \to \breve{W}'(\chi)+  \invbreve W'(\chi) \qquad \text{in } L^\infty(0,T;L^2(\Omega))\,.
	\]
Moreover, we combine the information that
	\[
   \e(\uinte {\uu}{\tau_j})\weaklim{} 
   \e(\uu)\quad
				\text{ weakly in }L^2(0,T;L^2(\Omega;\R^{d\times d}))
			\]
			with the fact that $a'(\ointe {\chi}{\tau_j})\to a'(\chi)$, e.g. in $L^\infty(0,T;L^2(\Omega))$.
			Then,  well-known lower semicontinuity results
			 (cf.\ the Ioffe theorem \cite{Ioff77LSIF}) \EEE
			 give 		\begin{equation*}
			\liminf_{j\to \infty}-\iint_Q  \frac{
   {a}'(\ointe {\chi}{\tau_j})
   }{2}\Cm{\uinte {\uu}{\tau_j}}{\uinte {\uu}{\tau_j}}\hpsi\dxt
				\geq{} \int_Q \frac{a'(\chi)}{2}\CC\e(\uu):\e(\uu)(-\hpsi)\dxt\,.
		\end{equation*}
	In this way, we conclude the time-integrated version of the one-sided variational inequality \eqref{weakChiIneq}, tested by
	an arbitrary  $\hpsi\in L^\infty(0,T;H_-^1(\Omega))\cap L^\infty(0,T;L^\infty(\Omega))$. Ultimately, we conclude 
	 \eqref{weakChiIneq}.
	 \par
	 Eventually,  also relying on the pointwise-in-time convergence \eqref{pointwise-v}, we are in a position to 
	take the limit as $\tau_j\downarrow 0$ in the discrete energy-dissipation inequality \eqref{DEDI-interp}, written for $s=0$ and arbitrary $t \in [0,T]$.
We thus conclude 
		the time-continuous energy inequality \eqref{UEDI}. 
	\QED
	\EEE
\subsection{Outlook to nonsmooth potential energies}
\label{ss:3-outlook} 
In this section, we provide a possible extension of the existence 
 result for weak solutions, to the case in which the convex part $\breve W$  of  potential energy $W$  is  nonsmooth. A prototypical example will be provided by 
$
 \breve W  = I_{[0,\infty)}
$, cf.\ Remark \ref{rmk:comparison_HK} below,
but we will indeed allow for more general potentials. 
Our standing assumptions are collected in the following
\begin{hypx}\label{h:5}
The elasticity and the viscosity tensors, and the constitutive functions $a$ and $b$, comply with \eqref{asstensors}, 
\eqref{ass-a}, and \eqref{ass-b}, respectively. We suppose that $W$ is $\ell$-convex, \textit{i.e.},~\eqref{assW-2-new} holds, and $W$ fulfills~\eqref{assW-3-new}. 

\end{hypx}
Note that in comparison to the assumption~\eqref{assW-new} of Hypothesis~\textbf{\ref{h:1}}, we relaxed the smoothness assumptions~\eqref{assW-1-new} on the convex part $\breve W$.
To handle nonsmoothness of  $\breve W$, \EEE  we  propose a novel generalized formulation
which turns out to be consistent with that of 
Definition \ref{def:weakSol}. In fact, in Def.\ \ref{def:weaknonsmooth} below inequality
\eqref{weakChiIneq} is replaced by another one-sided variational inequality, \eqref{weakChiIneq_3} below,
featuring  the derivative of the concave part, only.  On the other hand, \eqref{weakChiIneq_3} is in the same spirit as the one-sided variational inequality 
proposed for the damage flow rule in \cite{HK-1,HK-2} in the specific case $W= \breve W =  I_{[0,\infty)}$. 
\begin{definition}[Weak solution for nonsmooth potential]
\label{def:weaknonsmooth}
 In the setting of Hypothesis \textbf{\ref{h:5}},
we call a pair $(\f u , \chi)$ 
 as in \eqref{regs-u-chi}
a weak solution to~\eqref{PDEsystem}, if it satisfies 
 initial conditions \eqref{initialCond}, constraints \eqref{constraints}, the weak formulation \eqref{weakUEq} of the momentum balance, 
  the  upper energy-dissipation inequality   \eqref{UEDI}, and 
  \EEE
      \begin{align}
\begin{split}
    \iint_Q \left( \chi_t\varphi {+}\nabla\chi{\cdot}\nabla\varphi {+} \tfrac12  a'(\chi) \varphi  \CC \e(\uu) {:} \e(\uu){+}\breve W(\chi{+}\varphi ){-}\breve W(\chi){+} \invbreve W'(\chi) \varphi   \right) \dx\dt
         \geq 0
									\label{weakChiIneq_3}
\end{split}
						\end{align}
      for all   $\varphi \in L^\infty(0,T; H^1(\Omega)) \cap L^\infty(Q)$ with $\varphi \leq 0 $ a.e.~in $Q$. \EEE
\end{definition}
\begin{remark}
\label{rmk:comparison_HK}
\upshape
In the specific case in which $W = \breve W =  I_{[0,\infty)}$, \eqref{weakChiIneq_3} reduces to 
\begin{equation}
\label{particular-case-weak3}
   \iint_Q \left( \chi_t\varphi {+}\nabla\chi{\cdot}\nabla\varphi {+} \tfrac12  a'(\chi) \varphi  \CC \e(\uu) {:} \e(\uu) \right) \dx\dt \geq 0
\end{equation}
for all $\varphi \in L^\infty(0,T; H^1(\Omega))$ with $-\chi \leq \varphi \leq 0 $ a.e.\ in $Q$ (where the constraint $\varphi \geq -\chi$ ensures that 
$\iint_Q \breve W(\chi+\varphi )\dxt <+\infty$ and thus the inequality is non-trivial). 
In fact, 
 \eqref{particular-case-weak3} is  in accord 
with the one-sided variational inequality from  \cite[Def.\ 4.5]{HK-1},  \cite[Def.\ 2.3]{HK-2}.
\end{remark}
  Our first result shows that, as soon as  also the convex part of 
 $W$  is smooth, any weak solution in the sense of Definition \ref{def:weaknonsmooth} is also a weak solution according to Definition~\ref{def:weakSol}. 
 \begin{proposition}
 In addition to Hypothesis \ref{h:5}, suppose that $\breve W\in \rmC^1(\R)$. Then, any  $(\uu,\chi)$ as in  \eqref{regs-u-chi} fulfilling \eqref{weakChiIneq_3}
 also complies with \eqref{weakChiIneq}.
 \end{proposition}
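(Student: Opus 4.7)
The plan is to first pass from the time-integrated inequality \eqref{weakChiIneq_3} to its linearization by testing with $\varphi = \lambda \hat\varphi$ and letting $\lambda \downarrow 0^+$, and then to localize in time via a standard Lebesgue-point argument.

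Fix an arbitrary $\hat\varphi \in L^\infty(0,T;H^1(\Omega))\cap L^\infty(Q)$ with $\hat\varphi \leq 0$ a.e.\ in $Q$, and let $\lambda\in(0,1]$. The function $\varphi:=\lambda\hat\varphi$ is admissible in \eqref{weakChiIneq_3}, so, after dividing by $\lambda$, we obtain
\begin{equation}\label{eq:divided}
\iint_Q \left( \chi_t\hat\varphi + \nabla\chi\cdot\nabla\hat\varphi + \tfrac12 a'(\chi)\hat\varphi\,\CC\e(\uu){:}\e(\uu) + \frac{\breve W(\chi{+}\lambda\hat\varphi){-}\breve W(\chi)}{\lambda} + \invbreve W'(\chi)\hat\varphi\right)\dx\dt \geq 0.
\end{equation}
The core step is passing to the limit as $\lambda\downarrow 0^+$ in the $\breve W$-term. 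Since $\chi,\hat\varphi \in L^\infty(Q)$, all values $\chi(x,t)+\lambda\hat\varphi(x,t)$ lie in a fixed compact interval of $\R$. The mean value theorem yields a measurable selection $\theta_\lambda\in(0,1)$ such that
\[
\frac{\breve W(\chi{+}\lambda\hat\varphi){-}\breve W(\chi)}{\lambda}=\breve W'(\chi{+}\theta_\lambda\lambda\hat\varphi)\,\hat\varphi,
\]
which converges pointwise a.e.\ to $\breve W'(\chi)\hat\varphi$ by continuity of $\breve W'$, and is bounded by $\|\hat\varphi\|_{L^\infty(Q)}\sup_{[-M,M]}|\breve W'|$ with $M$ chosen large enough. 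Dominated convergence thus produces the limit inequality
\begin{equation}\label{eq:linearized}
\iint_Q \left( \chi_t\hat\varphi + \nabla\chi\cdot\nabla\hat\varphi + \tfrac12 a'(\chi)\hat\varphi\,\CC\e(\uu){:}\e(\uu) + (\breve W'(\chi){+}\invbreve W'(\chi))\hat\varphi\right)\dx\dt \geq 0,
\end{equation}
where $\breve W'(\chi)+\invbreve W'(\chi)=W'(\chi)$.

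To recover the pointwise-in-time inequality \eqref{weakChiIneq}, fix $\psi\in H^1(\Omega)\cap L^\infty(\Omega)$ with $\psi\leq 0$ a.e., and for $0< t_0 < t_0+h \leq T$ test \eqref{eq:linearized} with $\hat\varphi(x,t):=\psi(x)\mathbf{1}_{(t_0,t_0+h)}(t)$. Dividing by $h$ gives
\[
\frac{1}{h}\int_{t_0}^{t_0+h}\!\!\int_\Omega\!\left(\chi_t(t)\psi{+}\nabla\chi(t){\cdot}\nabla\psi{+}\tfrac12 a'(\chi(t))\,\CC\e(\uu(t)){:}\e(\uu(t))\psi{+}W'(\chi(t))\psi\right)\!\dx\,\dt \geq 0.
\]
By the regularity in \eqref{regs-u-chi}, the integrand in brackets defines a function of $t$ belonging to $L^1(0,T)$: indeed, $\chi_t\psi, \nabla\chi\cdot\nabla\psi, W'(\chi)\psi \in L^1(0,T;L^1(\Omega))$, while $a'(\chi)\CC\e(\uu){:}\e(\uu)\psi\in L^1(0,T;L^1(\Omega))$ since $a'(\chi)\in L^\infty(Q)$, $\e(\uu)\in L^2(0,T;L^2(\Omega;\R^{d\times d}))$ and $\psi\in L^\infty(\Omega)$. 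The Lebesgue differentiation theorem, applied at almost every $t_0\in(0,T)$, then yields \eqref{weakChiIneq} for the fixed $\psi$. Since $\psi$ is arbitrary and the exceptional set in $t$ may be chosen independently of $\psi$ by a separability argument (take $\psi$ in a countable dense subset of $H^1(\Omega)\cap L^\infty(\Omega)\cap\{\psi\leq 0\}$ and extend by continuity), the proof is complete.

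The only delicate step is the dominated-convergence argument for the $\breve W$-term; convexity of $\breve W$ alone would give a one-sided bound $\breve W(\chi{+}\lambda\hat\varphi){-}\breve W(\chi)\geq \lambda \breve W'(\chi)\hat\varphi$, but the $C^1$ regularity of $\breve W$ combined with the $L^\infty$-bounds on $\chi$ and $\hat\varphi$ supplies the uniform integrable majorant needed for the limit passage.
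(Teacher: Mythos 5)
Your proof is correct and takes a genuinely different route from the paper's. The paper first localizes in time: it tests \eqref{weakChiIneq_3} with $\varphi=\phi\zeta$ (where $\phi\in L^\infty(0,T)$, $0\le\phi\le1$, and $\zeta\le 0$ is a fixed spatial function) and crucially uses the \emph{convexity} of $\breve W$ to bound $\breve W(\chi+\phi\zeta)-\breve W(\chi)\le\phi\bigl(\breve W(\chi+\zeta)-\breve W(\chi)\bigr)$, thereby obtaining a pointwise-in-time inequality that still features the nonlinear difference $\breve W(\chi+\zeta)-\breve W(\chi)$; this is linearized only at the very end by setting $\zeta=h\psi$ and letting $h\downarrow 0$. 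You reverse the order: you linearize first, at the time-integrated level, via the scaling $\varphi=\lambda\hat\varphi$ and $\lambda\downarrow 0$, invoking only $\breve W\in\rmC^1(\R)$ together with the $L^\infty(Q)$ bounds on $\chi$ and $\hat\varphi$ to produce the dominated-convergence majorant (no convexity is used at this step), and only afterwards do you localize in time via a Lebesgue-point argument. Both routes are valid; the paper's convexity trick yields an exact pointwise-in-time intermediate inequality without any limit in the $\breve W$-term, while your argument is slightly more economical in hypotheses since it bypasses convexity for the linearization. One small caveat, which applies equally to the paper's own proof: the appeal to a ``countable dense subset of $H^1(\Omega)\cap L^\infty(\Omega)\cap\{\psi\le0\}$'' is informal, since $L^\infty(\Omega)$ is not separable in norm; what is actually needed is a countable family from which every admissible $\psi$ can be reached in $H^1(\Omega)$, pointwise a.e., and boundedly in $L^\infty(\Omega)$, this being the mode of convergence under which each term of the inequality is continuous. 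The paper silently skips the identical measure-theoretic point (``By the arbitrariness of $\phi$ we thus infer the pointwise in time formulation ... for all $\zeta$ ... and for almost all $t$''), so this does not distinguish the two proofs.
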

  \begin{proof}
We choose the test function for \eqref{weakChiIneq_3} in the form 
\[
 \varphi = \phi \zeta \in L^\infty(0,T; H^1(\Omega)) \quad \text{ with }
 \begin{cases}
  \phi \in L^\infty(0,T), 
\quad 0 \leq \phi \leq 1 \text{ a.e.\ in } (0,T),
\\
 \zeta \in H^1(\Omega), \quad  \zeta \leq 0  \text{ a.e.\ in } \Omega.
 \end{cases}
 \]
 We now estimate from above the term $\breve W (\chi {+} \phi \zeta ) - \breve W(\chi)$  that features on the left-hand side of 
 \eqref{weakChiIneq_3} by \EEE 
 \[
\begin{aligned}
\breve W (\chi {+} \phi \zeta ) - \breve W(\chi) & =  \breve W\EEE ( \phi ( \chi {+} \zeta) {+} ( 1{-}\phi) \chi) - \breve W(\chi)
\\
& \stackrel{(1)}\leq  \phi \breve W (\chi {+} \zeta ) + (1{-}\phi) \breve W(\chi)  - \breve W(\chi)
\\ & = 
    \phi \left(  \breve W (\chi {+} \zeta ) {-} \breve W ( \chi) \right) \qquad \text{a.e.\ in } Q\,,
\end{aligned}
\]
 where {\footnotesize (1)} follows from the convexity of $\breve W$. We use   the above inequality to estimate from above the 
 left-hand side of 
 \eqref{weakChiIneq_3}, thus obtaining 
 \[
     \int_0^T \phi \left(\int_\Omega \chi_t \zeta  {+}\nabla\chi{\cdot}\nabla\zeta {+} \tfrac12  a'(\chi) \zeta  \CC \e(\uu) {:} \e(\uu){+} \breve W (\chi {+} \zeta ) {-} \breve W ( \chi) +\invbreve W'(\chi) \zeta   \dx \right)\dt
         \geq 0\,.
         \]
By the arbitrariness of $\phi$  we thus infer the pointwise in time formulation
\begin{equation}
				\begin{aligned}
					&\io \left( \chi_t(t)\zeta{+}\nabla\chi(t){\cdot}\nabla\zeta{+} \tfrac12 a'(\chi(t))    \CC \e(\uu(t)) {:} \e(\uu(t))\zeta {+} \invbreve W'(\chi(t))  \zeta \right) \dx
					\\
					& 
					+\int_\Omega ( \breve W(\chi(t){+}\zeta){-}\breve W(\chi(t)) ) \dd x \geq 0
						\end{aligned}
						\end{equation}
for all $\zeta\in H_-^1(\Omega)\cap L^\infty(\Omega)$ 
and for almost all  $t\in(0,T)$. 
Let us now choose  $\zeta =  h \psi $ with an arbitrary  $\psi \in  H_-^1(\Omega)\cap L^\infty(\Omega) $. 
 Dividing the resulting inequality by $h$ and sending   $h\to 0$  we obtain
  \eqref{weakChiIneq}.
  \end{proof}

  We now show that, with minor changes, the argument developed in Secs.\ \ref{ss:3.0}--\ref{ss:3.1} also yields the existence of solutions in the sense of
  Def.\ \ref{def:weaknonsmooth}.

\begin{theorem}[Existence of weak solutions for nonsmooth potentials]
Assume Hypotheses \textbf{\ref{h:2}} \& \textbf{\ref{h:5}}. 
    Then, there exists a weak solution in the sense of Definition~\ref{def:weaknonsmooth} to the Cauchy problem for system \eqref{PDEsystem}. 
\end{theorem}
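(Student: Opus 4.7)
My approach is to mimic the time-discretization strategy of Section~\ref{ss:proof-thm-1}, exploiting the fact that the scheme of Lemma~\ref{lemma:existenceDiscrSol} was already set up variationally via minimization of the functional $\mathcal{P}$ in~\eqref{P-functional}. Crucially, $\mathcal{P}$ remains convex, coercive, and lower semicontinuous on $H^1(\Omega)\cap L^\infty(\Omega)$ under the relaxed assumption~\eqref{assW-2-new} (allowing $\breve W$ to be nonsmooth), so a minimizer $\chi^k_\tau$ still exists on the admissible set $\{\chi\le\chi^{k-1}_\tau\}$. The truncation argument $\chi^k_\tau\mapsto(\chi^k_\tau)^+$ still yields the discrete positivity $0\le\chi^k_\tau\le\chi^{k-1}_\tau$, because $\breve W$ enters $\mathcal{P}$ through its value and assumption~\eqref{assW-3-new} supplies $\breve W(0)\le\breve W(r)$ for $r\le 0$. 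The solvability of the discrete momentum balance~\eqref{discrUeqWeak} is unchanged.

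The key new step is to derive the discrete counterpart of~\eqref{weakChiIneq_3} directly from the first-order optimality of the minimization of $\mathcal{P}$, bypassing any selection from $\partial\breve W$. For $\tilde\varphi\in H^1(\Omega)\cap L^\infty(\Omega)$ with $\tilde\varphi\le 0$ a.e., the perturbation $\chi^k_\tau+\lambda\tilde\varphi$ is admissible for every $\lambda\in[0,1]$ (since $\chi^k_\tau\le\chi^{k-1}_\tau$ and $\tilde\varphi\le 0$), so $\mathcal{P}(\chi^k_\tau+\lambda\tilde\varphi)\ge\mathcal{P}(\chi^k_\tau)$. The quadratic term, the $\invbreve W'(\chi^{k-1}_\tau)$-contribution, and (by the $\mathrm{C}^1$-regularity of $a$) the $a$-dependent term are Gateaux-differentiable in $\lambda$; for the nonsmooth convex part I use the convexity bound
$$\breve W(\chi^k_\tau+\lambda\tilde\varphi)-\breve W(\chi^k_\tau)\le \lambda\bigl(\breve W(\chi^k_\tau+\tilde\varphi)-\breve W(\chi^k_\tau)\bigr).$$
Dividing by $\lambda>0$ and letting $\lambda\downarrow 0$ yields the discrete one-sided inequality with $\breve W$ appearing as an integrated increment, structurally matching \eqref{weakChiIneq_3}. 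Specializing this same inequality to the admissible choice $\tilde\varphi=\chi^{k-1}_\tau-\chi^k_\tau\le 0$, and using concavity of $\invbreve W$ together with convexity of $a$ on the remaining smooth terms, produces the discrete analogue of Lemma~\ref{lemma:discrEI} with the correct coefficient in front of $\|\chi_t\|_{L^2}^2$. Combined with the momentum balance tested by $\uk-\ukk$ (as in Section~\ref{ss:3.0}) this delivers the full discrete energy-dissipation inequality, from which the uniform bounds of Proposition~\ref{prop:aprio} follow verbatim.

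To conclude I pass to the limit along a null sequence $\tau_j\downarrow 0$: the compactness statements~\eqref{weak-cvg-sez3}--\eqref{pointwise-v} are insensitive to the regularity of $\breve W$, and the momentum balance together with the energy-dissipation inequality pass to the limit exactly as in Section~\ref{ss:3.1}, invoking Ioffe's lower-semicontinuity theorem for $\iint_Q W(\chi)\dxt$, which applies since $W$ is bounded below and $\ell$-convex. The main obstacle is the passage to the limit in the nonsmooth one-sided inequality. I rewrite its discrete form as $L_\tau(\varphi)+\iint_Q\breve W(\overline\chi_\tau+\varphi)\dxt\ge\iint_Q\breve W(\overline\chi_\tau)\dxt$, where $L_\tau(\varphi)$ collects the linear contributions and converges to its continuous counterpart $L(\varphi)$ as in Section~\ref{ss:3.1}. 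For the right-hand side Ioffe's theorem gives $\iint_Q\breve W(\chi)\dxt\le\liminf_{\tau\downarrow 0}\iint_Q\breve W(\overline\chi_\tau)\dxt$, which is exactly what is needed on this side of the inequality. For the left-hand side I exploit that, when the range of $\varphi$ keeps $\chi+\varphi$ inside a fixed bounded subinterval of the interior of $\mathrm{dom}(\breve W)$, the convex function $\breve W$ is continuous and bounded there, so $\breve W(\overline\chi_\tau+\varphi)\to\breve W(\chi+\varphi)$ a.e. with a uniform $L^\infty(Q)$-bound, whence dominated convergence applies. A standard diagonal approximation (e.g.\ regularising $\varphi$ slightly towards the interior of $\mathrm{dom}(\breve W)$ and letting the regularisation vanish after $\tau\downarrow 0$) extends the resulting inequality to the full class of admissible test functions in Definition~\ref{def:weaknonsmooth}, completing the proof.
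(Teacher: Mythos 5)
Your overall strategy mirrors the paper's: re-use the minimisation scheme of Lemma~\ref{lemma:existenceDiscrSol}, extract from first-order optimality a discrete one-sided inequality in which $\breve W$ appears as an integrated increment, derive the discrete energy inequality, and pass to the limit. Two points, however, need attention.

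First, a sign issue with the discrete setup. You derive the optimality inequality by perturbing along $\chi_\tau^k + \lambda\tilde\varphi$ with $\tilde\varphi\le 0$, which gives the inequality only for test functions $\psi = \chi_\tau^k + \tilde\varphi \le \chi_\tau^k$. To recover the discrete energy-dissipation inequality you then claim to choose $\tilde\varphi = \chi_\tau^{k-1}-\chi_\tau^k \le 0$, but by \eqref{constraints-chi} one has $\chi_\tau^{k-1}\ge\chi_\tau^k$, so this $\tilde\varphi$ is in fact $\ge 0$ and falls outside the class you derived the inequality for. The remedy is to argue exactly as in \eqref{nonsmoothchi}: perturb towards an arbitrary $\psi\in X_\tau^{k-1}=\{v\le\chi_\tau^{k-1}\}$ (the constraint set is convex and $\chi_\tau^k$ minimises there), so the inequality holds for all $\psi\le\chi_\tau^{k-1}$; testing with $\psi=\chi_\tau^{k-1}$ is then legitimate and the energy inequality follows.

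Second, and this is the genuine gap, the passage to the limit in the nonsmooth inequality cannot be handled by "keep $\chi+\varphi$ in the interior of $\dom\breve W$, use dominated convergence, then diagonalise". The discrete inequality holds for test functions satisfying $\widehat\psi\le\ointe{\chi}{\tau_j}$, not $\widehat\psi\le\chi$. Since $\ointe{\chi}{\tau_j}\to\chi$ only a.e.\ (and not uniformly or monotonically), for a fixed admissible $\varphi\le 0$ with $\chi+\varphi$ in the interior of $\dom\breve W$, the shifted function $\ointe{\chi}{\tau_j}+\varphi$ can fall \emph{outside} $\dom\breve W$ on a set of positive measure for \emph{every} $j$: take $\breve W = I_{[0,\infty)}$, $\chi\equiv 1/2$, $\varphi\equiv -1/2$, and discrete iterates oscillating below $1/2$. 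Then $\breve W(\ointe{\chi}{\tau_j}+\varphi)=+\infty$ on that set, the discrete inequality is vacuous, and there is no dominating $L^\infty$-bound; regularising $\varphi$ towards the interior by a fixed $\epsilon$ does not help since the discrete iterates can undershoot $\chi$ by more than $\epsilon$. The device the paper uses to close this gap is the recovery sequence $\psi_j := \min\{\psi,\ointe{\chi}{\tau_j}\}$ together with the sets $A_j := \{\psi\le\ointe{\chi}{\tau_j}\}$: by construction $\psi_j\le\ointe{\chi}{\tau_j}$ is always admissible, on $Q\setminus A_j$ the inequality's integrand vanishes so only $\mathds{1}_{A_j}$ survives, and since $\psi\le\chi$ and $\ointe{\chi}{\tau_j}\to\chi$ one gets $\mathds{1}_{A_j}\to 1$, after which Ioffe's theorem handles the convex terms. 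Without this (or an equivalent) construction, the limit passage in \eqref{weakChiIneq_3} does not go through.
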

\EEE
\begin{proof}
    The proof is very similar to the argument for  Theorem~\ref{thm:1}, hence we only comment on the relevant changes. 
     We construct time-discrete solutions $(\uk,\chik)_{k=1}^K$ as in Lemma \ref{lemma:existenceDiscrSol}.
    From the information that $\chik$ is a minimizer for the functional $\mathcal{P}$, cf.\ \eqref{P-functional},  as a first order optimality condition \EEE we gather that     
%
\begin{align}\label{nonsmoothchi}
\begin{split}
    					&\io \left\{ \frac{\chik{-}\chikk}{\tau}(\psi{-}\chik){+} \nabla\chik{\cdot}\nabla(\psi{-}\chik) 
       {+} \frac12 a'(\chik) 
      \Cm{\ukk}{\ukk}(\psi{-}\chik) \right\}\dx
      \\
					&\hspace*{13.5em}+ \io\left\{  \breve W (\psi) {-} \breve{W}(\chik) {+}\invbreve{W}'(\chikk)(\psi{-}\chik) \right\} \dx\geq 0\,,
				\end{split}
\end{align}
				{for all $\psi\in X_\tau^{k-1}:=\big\{v\in H^1(\Omega)\cap L^\infty(\Omega):\;v\leq \chikk \aein\Omega\big\}$, as well as the constraints}
				$0 \leq \chik \leq \chikk  \leq 1 $ a.e. in $ \Omega$. 
 The discrete energy inequality of Lemma~\ref{lemma:discrEI}
is then obtained by testing  \eqref{discrUeqWeak} with $\uj-\ujj$ and \eqref{nonsmoothchi} with $\chijj$; from 
\eqref{discrEDI} there stem the a priori estimates of Proposition \ref{prop:aprio} and, a fortiori, 
convergences \eqref{weak-cvg-sez3}--\eqref{pointwise-v}.
\par
In order to prove  \eqref{weakChiIneq_3},  first of all we sum  \eqref{nonsmoothchi}  
 over all  the time intervals
 induced by the partition, 
thus obtaining \EEE
	\begin{align}
		\label{varIneq}
  \begin{split}
      \iint_Q\partial_t \linte \chi{\tau_j}
			(\widehat{\psi}
   {-}\ointe\chi{\tau_j}  ) +\nabla\ointe {\chi}{\tau_j}{\cdot}\nabla(\widehat{\psi}
   {-}\ointe\chi{\tau_j})+\frac{
            {a}'(\ointe {\chi}{\tau_j})
            }{2} \Cm{\uinte {\uu}{\tau_j}}{\uinte {\uu}{\tau_j}}
  (\widehat{\psi}
  {-}\ointe\chi{\tau_j}) \dxt& \\ +\iint_Q  \breve W(\widehat{\psi}
  ) {-} \breve{W} (\ointe {\chi}{\tau_j}) + \invbreve W'(\uinte {\chi}{\tau_j})(\widehat{\psi}
  {-}\ointe\chi{\tau_j})   \dxt&\geq 0\,,
  \end{split}
		\end{align}
 for all test functions $ \widehat{\psi} \in L^\infty( 0,T; H^1(\Omega)) $ with $ \widehat\psi \leq\ointe {\chi}{\tau_j} $ a.e. in $Q$. 
With the convergences inferred in~\eqref{weak-cvg-sez3}, we may pass to the limit is the above formulation. 

For any $ \psi \in L^\infty(0,T;H^1(\Omega)) \cap L^\infty(Q ) $ with $\psi \leq \chi $ 
we construct a sequence of recovery test functions in the following way.
For any $j\in \N$
we define 
\[
 \psi_j (x,t):= \min \{ \psi(x,t) , \ointe {\chi}{\tau_j} (x,t) \} \quad \text{and} \quad 
  A_j := \{ (x,t) \in Q \mid   \psi(x,t) \leq \ointe {\chi}{\tau_j} (x,t) \} \,.
  \]
  With the  same arguments as in 
  \cite[Thm.\ 3.14]{ThoMie09DNEM} we can prove that
  \[
  (\psi_j)_j \text{ is bounded in }  L^\infty(0,T;H^1(\Omega))\cap L^\infty(Q ) \text{ and } \psi_j(t) \weakto \psi(t) \text{ in } H^1(\Omega) \ \foraa\,  t \in (0,T)\,,
  \]
  so that it is not difficult to deduce that 
  \[
  \psi_j \to \psi \quad \text{in }L^r(Q) \text{ for all } r \in [1,\infty)\,, \qquad
   \psi_j \weakstarlim{} \psi \quad \text{weakly-star in }L^\infty(0,T;H^1(\Omega))\cap L^\infty(Q )\,.
  \]
  We now choose $\widehat \psi = \psi_j $ in \eqref{varIneq}. Since 
  $\psi_j = \linte \chi{\tau_j}$ on $Q{\setminus} A_j$, we thus obtain
  \begin{align}\notag
  \begin{split}
      \iint_Q \mathds{1}_{A_j} \left(\partial_t \linte \chi{\tau_j}
			(\psi
   {-}\ointe\chi{\tau_j}  ) {+}\nabla\ointe {\chi}{\tau_j}{\cdot}\nabla(\psi
   {-}\ointe\chi{\tau_j}){+}\frac{
            {a}'(\ointe {\chi}{\tau_j})
            }{2} \Cm{\uinte {\uu}{\tau_j}}{\uinte {\uu}{\tau_j}}
  (\psi
 {-}\ointe\chi{\tau_j}) \right) \dxt& \\ +\iint_Q  \mathds{1}_{A_j} \left( \breve W(\psi
  ){-} \breve{W} (\ointe {\chi}{\tau_j}) {+} \invbreve W'(\uinte {\chi}{\tau_j})(\psi
  {-}\ointe\chi{\tau_j})  \right)  \dxt&\geq 0\,,
  \end{split}
		\end{align}
		and then send $j\to\infty$ .
We use that, since $\ointe {\chi}{\tau_j}  \to \chi $
  and $\psi \leq \chi$  a.e.\ in $Q$, the sequence $(\mathds{1}_{A_j})_j$ of the characteristic functions of the sets $A_j$ converges a.e.\  in $Q$
  and strongly in $L^1(Q)$
  to the function identically  equally to $1$. Therefore, 
  \[
  \begin{aligned}
  & 
   \iint_Q \mathds{1}_{A_j} \partial_t \linte \chi{\tau_j}
			(\psi {-}\ointe\chi{\tau_j}  ) \dxt && \longrightarrow &&  \iint_Q  \partial_t \chi
			(\psi {-}\chi  ) \dxt,
\\
& 	   \iint_Q \mathds{1}_{A_j}	\nabla\ointe {\chi}{\tau_j}{\cdot}\nabla \psi  \dxt  && \longrightarrow &&  \iint_Q  \nabla\chi{\cdot}\nabla \psi  \dxt,
\\
&
\iint_Q  \mathds{1}_{A_j} \left( \breve W(\psi
  )  {+} \invbreve W'(\uinte {\chi}{\tau_j})(\psi
  {-}\ointe\chi{\tau_j})  \right)  \dxt  && \longrightarrow &&\iint_Q \left( \breve W(\psi
  )  {+} \invbreve W'(\chi)(\psi
  {-}\chi)  \right)  \dxt \,,
  \end{aligned}
  \]
  where we have also used that $\invbreve{W}(r) = -\frac \ell 2 r^2$. 
To handle the remaining terms, we again resort to the Ioffe theorem  \cite{Ioff77LSIF}, which gives
\begin{align*}
&
   \limsup_{j\to\infty} \left( {-} \iint_Q  \mathds{1}_{A_j} |\nabla \ointe {\chi}{\tau_j}|^2 \dxt \right)= - \liminf_{j\to\infty}  \iint_Q  \mathds{1}_{A_j} |\nabla \ointe {\chi}{\tau_j}|^2 \dxt 
   \leq  - \iint_Q | \nabla {\chi}|^2 \dxt \,,
   \\
   & 
   \limsup_{j\to\infty} \left( {-}\iint_Q \mathds{1}_{A_j} \breve{W} (\ointe {\chi}{\tau_j}) \dxt \right)  \leq -  \iint_Q \breve W (\chi) \dxt\,,
\end{align*}
and, likewise 
\[
\begin{aligned}
&
 \limsup_{j\to\infty} \iint_Q  \mathds{1}_{A_j} 
            \frac12 {a}'(\ointe {\chi}{\tau_j})
            \Cm{\uinte {\uu}{\tau_j}}{\uinte {\uu}{\tau_j}}
  (\psi
  {-}\ointe\chi{\tau_j})  \dxt 
  \\
   &  =- \liminf_{j\to\infty} \iint_Q  \mathds{1}_{A_j} 
           \frac12  {a}'(\ointe {\chi}{\tau_j})
             \Cm{\uinte {\uu}{\tau_j}}{\uinte {\uu}{\tau_j}}
  (\ointe\chi{\tau_j}
  {-}\psi)   \dxt  \\ & \leq 
  -\iint_Q  \mathds{1}_{A_j} 
            \frac12 {a}'(\chi)
            \Cm{\uu}{\uu}
  (\chi
  {-}\psi)   \dxt \,.
  \end{aligned}
\]
To apply Ioffe's theorem, here, 
 we have also relied on the fact that $( \breve{W} (\ointe {\chi}{\tau_j}))_j$ is bounded from below and 
     $ a'(\ointe {\chi}{\tau_j})( \ointe {\chi}{\tau_j}{-}\psi) \geq 0$  a.e.\ in $Q$.
%
  Combining all these convergences, we arrive at
  \begin{equation}
  \begin{aligned}
    \iint_Q &  \left( \chi_t(\psi{-}\chi) {+}\nabla\chi{\cdot}\nabla(\psi-\chi)  {+} \tfrac12  a'(\chi)(\psi{-}\chi  ) \CC \e(\uu) {:} \e(\uu){+} \invbreve W'(\chi) (\psi {-}\chi)   \right) \dx\dt\\
    & \qquad \qquad \qquad 
					+\iint_Q ( \breve W(\psi ){-}\breve W(\chi) ) \dd x\dt \geq 0
									\label{weakChiIneq_3-proof}
						\end{aligned}
						\end{equation}
      for all $\psi \in L^\infty(0,T; H^1(\Omega))$ with $\psi \leq \chi $ a.e.~in $Q$. 
            Choosing $ \varphi = \psi-\chi $, we   get \eqref{weakChiIneq_3}. 
\end{proof}

\EEE

\section{Proof of Theorem \ref{thm:2}}
\label{s:proof-thm2}
Our proof of the enhanced regularity \eqref{reg-strong-sols} will be based on  estimates that have a local-in-time character only and 
rely on   a Gronwall-type argument. \EEE
Since there is, apparently, no time-discrete version of  local-in-time Gronwall estimates, we will not resort to time discretization  for proving the existence of 
strong solutions, but instead 
\begin{compactitem}
\item devise a suitable approximation of system \eqref{PDEsystem}, namely  system \eqref{galerkinSchauder} ahead,
\item prove existence of solutions to \eqref{galerkinSchauder}  via the Schauder fixed point theorem,
\item perform on it the rigorous regularity estimates.  
\end{compactitem}
\par
Such regularity estimates will be  at first \emph{formally} performed on the original system  \eqref{PDEsystem} in Sec.\ \ref{ss:4.1} below. This will allow us to pinpoint 
how system \eqref{PDEsystem} needs to be approximated in such a way that the calculations of 
 Proposition \ref{prop:formal-aprio} can be rendered rigorous. Hence, in Sec.\ \ref{ss:4.2} we will  set up the approximate system \eqref{galerkinSchauder}
 by combining Galerkin discretization and Yosida regularization. In Sections \ref{ss:4.3}  and \ref{ss:4.3bis} \EEE  we will address
 the existence of  local-in-time solutions to the associated Cauchy problem, 
 and rigorously perform the, previously formal, enhanced regularity estimates.  \EEE
Finally, in Sec.\ \ref{ss:4.4}  we will conclude the proof of Thm.\  \ref{thm:2} by taking the limit in system  \eqref{galerkinSchauder}. 
\subsection{Formal a priori estimates}
\label{ss:4.1}
 Before carrying out the enhanced a priori estimates, it is 
convenient to  rewrite the flow rule \eqref{chiEq} as 
\begin{equation}
\label{rewritten-flow-rule}
\begin{aligned}
\chi_t+I'_{(-\infty,0]}(\chi_t)+\omega & = \chi - \invbreve{W}'(\chi) -\frac12 a'(\chi) \e(\uu) \CC \e(\uu) \qquad \text{a.e.\ in } Q
\\
& \text{with } \omega = -\Delta\chi + \breve{W}'(\chi) +\chi\,,
\end{aligned}
\end{equation}
where we have formally replaced $\partial I_{(-\infty,0]}(\chi_t)$ by $ I_{(-\infty,0]}'(\chi_t)$, hereafter abbreviated as $I'(\chi_t)$, 
and
resorted to the convex/concave decomposition \eqref{cvx-cnc-as-conseq}  of $W$. 
 Although in the present setting the convex contribution $\breve W$ may be nonsmooth, for notational simplicity
 we will formally write 
 $\breve{W}'(\chi)$,  $\breve{W}''(\chi)$. 
 In fact, in Section \ref{ss:4.3} ahead we will make all estimates rigorous by replacing $\breve W$ by a (version of) its Yosida regularization. 
 \par
 The following result  
 (with the caveat 
 that all calculations can be rendered rigorously when    $\breve W$   is suitably regularized)
 collects elementary estimates  that will nonetheless have a key role in the ensuing calculations; note that 
 \eqref{est-omega-BS} is in the spirit of the well-known Brezis-Strauss result \cite{BreStra73}.  In the proof we will  use that $\breve W'(0)=0$, which we can always suppose up
 to a translation.
 \begin{lemma}
 \label{l:props-omega}
 There exists $S_0>0$ such that  for almost all $t\in (0,T)$
 \begin{subequations}
 \label{est-omega}
 \begin{align}
 &
  \label{est-omega-BS}
 \left(\|\chi(t)\|_{H^2(\Omega)}{+} \|\breve{W}'(\chi(t))\|_{L^2(\Omega)}  \right) \leq S_0 \|\omega(t)\|_{L^2(\Omega)}\,,
\\
 &
  \label{est-omega-dert}
\|\chi_t(t)\|_{H^1(\Omega)} \leq  S_0   \|\omega_t(t)\|_{L^2(\Omega)}\,,
\\
 &
  \label{est-omega-dertt}
 \|\chi_{tt}(t)\|_{H^1(\Omega)} \leq  S_0   \left( \|\omega_{tt}(t)\|_{L^2(\Omega)}  + \|\breve{W}'''(\chi(t))\|_{L^\infty(\Omega)} \|\chi_t(t)\|_{L^3(\Omega)}^2 \right)\,.  \EEE
\end{align}
 \end{subequations}
 \end{lemma}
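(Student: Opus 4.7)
The plan is to derive each of the three estimates by testing the defining relation $\omega = -\Delta\chi + \breve W'(\chi) + \chi$, or one of its time derivatives, against a judicious multiplier, exploiting the convexity of $\breve W$ (from the $\ell$-convex assumption, so that $\breve W''\geq 0$) to discard nonnegative contributions, and then invoking standard $H^2$-elliptic regularity for the Neumann Laplacian (legitimate because $\partial_{\pmb n}\chi=0$) to upgrade $L^2$-control of $-\Delta\chi+\chi$ to $H^2$-control of $\chi$. Since $\breve W$ may be nonsmooth, the following computation is only formal; it will be rendered rigorous in Section~\ref{ss:4.3} on a Yosida-regularized problem, on which the relevant $\breve W$-derivatives exist classically.

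For \eqref{est-omega-BS}, I would test with $\breve W'(\chi)$: integration by parts yields
\begin{equation*}
\io \omega\,\breve W'(\chi)\dx = \io \breve W''(\chi)|\nabla\chi|^2\dx + \io |\breve W'(\chi)|^2\dx + \io \chi\,\breve W'(\chi)\dx,
\end{equation*}
the first term on the right being nonnegative by convexity and the last being bounded from below by a universal constant thanks to monotonicity of $\breve W'$ and the $L^\infty$-bound $\chi\leq \chi_0\leq 1$ (guaranteed by $\chi_t\leq 0$). Cauchy--Schwarz on the left gives $\|\breve W'(\chi)\|_{L^2}\lesssim\|\omega\|_{L^2}+1$, and plugging this into $-\Delta\chi+\chi = \omega-\breve W'(\chi)$ together with elliptic regularity produces the $H^2$-bound. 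For \eqref{est-omega-dert}, differentiation in time gives $\omega_t = -\Delta\chi_t + \breve W''(\chi)\chi_t + \chi_t$; testing with $\chi_t$ and using $\breve W''(\chi)\geq 0$ yields
\begin{equation*}
\io |\nabla\chi_t|^2\dx + \io|\chi_t|^2\dx \leq \io \omega_t\chi_t\dx \leq \|\omega_t\|_{L^2}\|\chi_t\|_{L^2},
\end{equation*}
whence the claim after absorbing.

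Estimate \eqref{est-omega-dertt} is the delicate one. A second time-differentiation gives
\begin{equation*}
\omega_{tt} = -\Delta\chi_{tt} + \breve W''(\chi)\chi_{tt} + \breve W'''(\chi)\chi_t^2 + \chi_{tt},
\end{equation*}
and testing with $\chi_{tt}$ isolates $\|\chi_{tt}\|_{H^1}^2$ on the left (modulo the nonnegative $\int\breve W''(\chi)\chi_{tt}^2\dx$), against $\int\omega_{tt}\chi_{tt}\dx$ and the cubic nuisance $\int\breve W'''(\chi)\chi_t^2\chi_{tt}\dx$ on the right. I would handle the cubic term by H\"older with exponents $(3,3,3)$,
\begin{equation*}
\Big|\io \breve W'''(\chi)\chi_t^2\chi_{tt}\dx\Big| \leq \|\breve W'''(\chi)\|_{L^\infty}\|\chi_t\|_{L^3}^2\|\chi_{tt}\|_{L^3},
\end{equation*}
followed by the Sobolev embedding $H^1(\Omega)\hookrightarrow L^3(\Omega)$ (valid because $d\leq 3$, with $\Omega$ of class $\rmC^1$ by \eqref{omega-smooth}) to bound $\|\chi_{tt}\|_{L^3}\lesssim\|\chi_{tt}\|_{H^1}$. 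Young's inequality then absorbs a $\tfrac12\|\chi_{tt}\|_{H^1}^2$ into the left-hand side, leaving \eqref{est-omega-dertt} after taking a square root.

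The main obstacle is precisely this cubic $\breve W'''(\chi)\chi_t^2$ term: the estimate is only available because $d\leq 3$ enables $H^1\hookrightarrow L^3$, and the factor $\|\breve W'''(\chi)\|_{L^\infty}$ must be carried on the right because no further regularity of $\breve W$ is assumed. A secondary point, purely bookkeeping, is that $\breve W'''$ is meaningless when $\breve W$ is nonsmooth, which is why the estimates have to be produced on the Yosida-regularized problem of Section~\ref{ss:4.3}; once the subsequent regularity machinery delivers a uniform $L^\infty$-bound on the approximate $\chi$'s, the terms $\|\breve W'''\|_{L^\infty}$ will be controlled by the modulus of the regularization and can be passed to the limit in the companion energy estimates.
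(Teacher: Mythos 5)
Your treatment of \eqref{est-omega-dert} and \eqref{est-omega-dertt} follows the paper's argument essentially verbatim: differentiate the defining relation for $\omega$ in time (once or twice), pair with the appropriate time-derivative of $\chi$, discard the nonnegative contribution from $\breve{W}''(\chi)\geq 0$, and — for the second estimate — handle the cubic term $\breve W'''(\chi)\chi_t^2$ by H\"older with exponents $(\infty,3,3,3)$ followed by $H^1\hookrightarrow L^3$ in $d\leq 3$ and Young's inequality. Your remark that the estimate is formal for nonsmooth $\breve W$ and must be carried out on the Yosida-regularized system is also exactly the point the paper makes before the statement.

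For \eqref{est-omega-BS} your route is a genuine variant. The paper expands $\|\omega\|_{L^2}^2$ directly, integrates the cross term $-2\int_\Omega\Delta\chi\,(\breve W'(\chi)+\chi)\dx$ by parts to produce the nonnegative quantity $2\int_\Omega(\breve W''(\chi)+1)|\nabla\chi|^2\dx$, and reads off $c\|\chi\|_{H^2}^2+\|\breve W'(\chi)\|_{L^2}^2\leq\|\omega\|_{L^2}^2$ in one stroke. You instead pair $\omega$ with $\breve W'(\chi)$, extract $\|\breve W'(\chi)\|_{L^2}$ first, and then recover $\|\chi\|_{H^2}$ from $-\Delta\chi+\chi=\omega-\breve W'(\chi)$ and elliptic regularity. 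Both work and have comparable effort; the paper's version is arguably slightly tighter because it avoids a second appeal to elliptic regularity.

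One slip: you bound $\int_\Omega\chi\,\breve W'(\chi)\dx$ only ``from below by a universal constant,'' which forces the weaker conclusion $\|\breve W'(\chi)\|_{L^2}\lesssim\|\omega\|_{L^2}+1$. The lemma has no additive constant on the right, and the stated bound is what is needed downstream (the constant $S_0$ must be uniform). The fix is exactly what the paper flags just before the lemma: one may assume $\breve W'(0)=0$ up to a translation. Then, by monotonicity, $\chi\,\breve W'(\chi)=\chi\,(\breve W'(\chi)-\breve W'(0))\geq 0$ pointwise, so the cross term can simply be dropped and Cauchy--Schwarz gives $\|\breve W'(\chi)\|_{L^2}\leq\|\omega\|_{L^2}$ with no residual constant. (Your invocation of the $L^\infty$ bound $\chi\leq 1$ is not needed and does not by itself produce nonnegativity.) With this normalization your argument delivers the stated estimate.
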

 \begin{proof}
 $\vartriangleright \eqref{est-omega-BS}:$
 We calculate 
\begin{align*}
    \| \omega(t)\|_{L^2(\Omega)}^2 &= \| \Delta \chi (t)\|_{L^2(\Omega)}^2 + \| \breve W ' (\chi(t))  \|_{L^2(\Omega)}^2
    + 
   \| \chi(t) \|_{L^2(\Omega)}^2 
   \\
   & \qquad  +2 \int_\Omega\breve W ' (\chi(t)) \chi(t) \dd x 
 - 2 \int_\Omega \Delta \chi(t) (\breve W ' (\chi(t)) + \chi(t)) \dx 
    \\
    & \stackrel{(1)}{\geq} 
    c \|  \chi(t) \|_{H^2(\Omega)}^2 + \| \breve W ' (\chi(t))\|_{L^2(\Omega)}^2 +
     2 \int_\Omega  ( \breve  W'' (\chi(t)) + 1)| \nabla \chi(t)|^2 \dx  \\
     & \stackrel{(2)}{\geq}  c \|  \chi(t) \|_{H^2(\Omega)}^2 + \| \breve W ' (\chi(t))\|_{L^2(\Omega)}^2\,,
  \EEE
\end{align*}
 where {\footnotesize (1)} follows the fact that $\breve W'(0)=0$ and 
 {\footnotesize (2)}   
 from the convexity of $\breve W$.
\par\noindent 
 $\vartriangleright \eqref{est-omega-dert}:$ Differentiating in time $ \omega = -\Delta\chi + \breve{W}'(\chi) +\chi$ and testing it by $\chi_t$ we obtain
 \[
\|\chi_t\|_{H^1(\Omega)}^2   \leq   \int_\Omega \left( |\chi_t|^2{+} |\nabla\chi_t|^2\right) \dd x +  \int_\Omega \breve{W}''(\chi) |\chi_t|^2 \dd x 
= \int_\Omega \omega_t\chi_t \dd x \leq \frac12 \|\omega_t\|_{L^2(\Omega)}^2 +\frac12 \|\chi_t\|_{L^2(\Omega)}^2\,,
 \]
whence \eqref{est-omega-dert}.
\par\noindent 
  $\vartriangleright \eqref{est-omega-dertt}:$  
We differentiate $ \omega = -\Delta\chi + \breve{W}'(\chi) +\chi$  twice  in time and test it by $\chi_{tt}$,  thus obtaining
 \[
 \begin{aligned}
 \|\chi_{tt}\|_{H^1(\Omega)}^2    & 
 \leq   \int_\Omega \left( |\chi_{tt}|^2{+} |\nabla\chi_{tt}|^2\right) \dd x +  \int_\Omega \breve{W}''(\chi) |\chi_{tt}|^2 \dd x 
 \\
 &  = \int_\Omega \omega_{tt}\chi_{tt} \dd x + \int_\Omega \breve{W}'''(\chi)|\chi_t|^2  \chi_{tt}  \dd x 
 \\
 & 
 \leq \|\omega_t\|_{L^2(\Omega)}^2 + \|\breve{W}'''(\chi) \|_{L^\infty(\Omega)} \| |\chi_t|^2\|_{L^{3/2}(\Omega)}^2 + \frac12 \|\chi_{tt}  \|_{L^3(\Omega)}^2\,,
 \end{aligned}
 \]
 whence \eqref{est-omega-dertt}\,.
\EEE 
 \end{proof}

\begin{proposition}
\label{prop:formal-aprio} 
Assume  Hypotheses {\bf \ref{h:1-strong}} \& {\bf \ref{h:2-strong}},  let $\Omega$ fulfill \eqref{omega-smooth}. 
Then,
there exists a  time  $\widehat{T} \in (0,T]$ and a constant $S_1>0$ such that for all $t\in (0,\widehat{T})$
      \begin{align}\label{eq:proploc}
    \begin{split}
                & \| \f u_t(t) \|_{H^2(\Omega)}^2+  
  \| \chi(t) \|_{H^2(\Omega)}^2     +\int_0^t \left(  \| \f u_t (s)\|_{H^3(\Omega)}^2{+} \|  \chi_t(s) \|_{H^1(\Omega)}^2 \right) \ds 
 \\
& \leq{}
S_1 
 \Big( 1{+}
  \|\vv_0   \|_{H^2(\Omega)}^2  
 {+}   \| \f u_0 \| _{H^3(\Omega)}^2  {+}  \| \chi_0\|_{H^2(\Omega)}^2{+}  \| |\partial \breve{W}^{\circ}|(\chi_0) \|_{L^2(\Omega)}^2
 {+} \| \uu_0\|_{H^2(\Omega)}^8 \Big) \,. \EEE
      \end{split}
  \end{align}
  Furthermore,  there exists a constant $\widehat{S}_1>0$ such that \EEE
\begin{equation}
\label{comparisonH2}
\| \uu_{tt} \|_{L^2 (0,\widehat{T}; H^1(\Omega))} \leq \widehat{S}_1 \,. \EEE
\end{equation}
\end{proposition}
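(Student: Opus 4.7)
The plan is to derive \eqref{eq:proploc} by combining two families of higher-order energy estimates (one on the time-differentiated momentum balance, one on the reformulated damage flow rule \eqref{rewritten-flow-rule}) into a single nonlinear Gronwall inequality for the scalar quantity
\[
y(t) := \|\uu_t(t)\|_{H^2(\Omega)}^2 + \|\chi(t)\|_{H^2(\Omega)}^2 + \int_0^t \bigl(\|\uu_t(s)\|_{H^3(\Omega)}^2 + \|\chi_t(s)\|_{H^1(\Omega)}^2\bigr)\ds.
\]
Because all driving terms are at least quadratic (in particular, the coupling $\tfrac12 a'(\chi)\Cm{\uu}{\uu}$ in the flow rule), the resulting differential inequality will be of superlinear type $y'\le C(1+y^\alpha)$ with some $\alpha>1$, which forces the estimate to hold only on a small interval $(0,\widehat T)$.

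On the momentum side I would differentiate \eqref{uEq} once in time and test successively with $\uu_{tt}$ and with appropriate higher-order multipliers (or equivalently invoke elliptic regularity for the quasi-static part). The uniform ellipticity $b(\chi)\ge b_0$, Korn's inequality, and the $\mathrm{C}^1$-regularity of $\Omega$ together with the compatibility condition \eqref{compat-below} deliver an $L^\infty(0,t;L^2)\cap L^2(0,t;H^1)$ bound for $\uu_{tt}$, and, upon bootstrapping via elliptic regularity applied to the momentum equation viewed as a second-order system for $\uu_t$, the $L^\infty(0,t;H^2)$ and $L^2(0,t;H^3)$ bounds for $\uu_t$. The nonlinear cross-terms arising after time-differentiation, namely $b'(\chi)\chi_t\VV\e(\uu_t)$ and $a'(\chi)\chi_t\CC\e(\uu)+a(\chi)\CC\e(\uu_t)$, are absorbed using H\"older's inequality, the growth bounds \eqref{ultimate-growth-conditions}, and the $d\le 3$ embeddings $H^2\hookrightarrow W^{1,\infty}$ and $H^1\hookrightarrow L^6$.

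On the damage side I would test \eqref{rewritten-flow-rule} with $\omega$ and, after time-differentiation, with $\omega_t$; the $\partial I_{(-\infty,0]}$-contribution is controlled by monotonicity, and Lemma \ref{l:props-omega} then converts the $L^\infty_t L^2_x$-bound on $\omega$ and the $L^2_t L^2_x$-bound on $\omega_t$ into the desired $L^\infty_t H^2_x$-bound on $\chi$ and $L^2_t H^1_x$-bound on $\chi_t$. The forcing term $a'(\chi)\Cm{\uu}{\uu}$ (and its time-derivative) is estimated via the growth of $a'$, the embedding $H^2\hookrightarrow L^\infty$, and the $H^3$-regularity of $\uu$ supplied by the momentum side, producing a polynomial dependence on $y$.

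Assembling the two sides and using Young's inequality to decouple the cross-estimates yields the desired inequality $y'\le C(1+y^\alpha)$, whence a standard nonlinear comparison argument supplies $\widehat T\in (0,T]$ and the bound \eqref{eq:proploc}; the value $y(0)$ is controlled by the right-hand side of \eqref{eq:proploc} thanks to Hypothesis \textbf{\ref{h:2-strong}} and the definition of $|\partial \breve{W}^{\circ}|(\chi_0)$. The estimate \eqref{comparisonH2} for $\uu_{tt}$ follows immediately by comparison from the momentum equation, using $\bff\in L^2(0,T;H^1)$ together with the $L^2(0,\widehat T;H^3)$ bound on $\uu_t$ and the $L^\infty(0,\widehat T;H^2)$ bound on $\uu$. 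The main obstacle is that at this stage the damage subgradient $\partial\breve W(\chi)$ and the indicator term $\partial I_{(-\infty,0]}(\chi_t)$ are not sufficiently regular to justify time-differentiation and $\omega$-testing; this is why the present estimates are labeled \emph{formal} and must ultimately be carried out on the Galerkin--Yosida approximation of Sec.\ \ref{ss:4.2}, as done in Sec.\ \ref{ss:4.3}.
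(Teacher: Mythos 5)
Your overall scheme (combine a higher-order estimate for the elasticity equation with an $\omega_t$-test of the reformulated flow rule, exploit Lemma \ref{l:props-omega}, and close via a superlinear Gronwall inequality $y'\le C(1+y^\alpha)$) correctly identifies the architecture of the proof, and you rightly flag that the estimates can only be made rigorous on the Galerkin--Yosida approximation because of the nonsmoothness of $\partial\breve W$ and $\partial I_{(-\infty,0]}$; the derivation of \eqref{comparisonH2} by comparison at the end also matches the paper.

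The momentum side is where your route diverges, and it contains a genuine gap. You propose to differentiate \eqref{uEq} in time and test with $\uu_{tt}$ (then bootstrap via elliptic regularity). Time-differentiating the momentum balance produces the forcing $\ff_t$, which is \emph{not} controlled under Hypothesis~\textbf{\ref{h:2-strong}}: we only have $\ff\in L^2(0,T;H^1(\Omega;\R^d))$, with no time regularity whatsoever. The ``equivalent'' elliptic-regularity bootstrapping you suggest as a fallback is circular: deducing $\uu_t\in L^\infty(0,t;H^2)$ from the equation $-\di(b(\chi)\VV\eps(\uu_t))=\ff+\di(a(\chi)\CC\eps(\uu))-\uu_{tt}$ would require $\uu_{tt}\in L^\infty(0,t;L^2)$ \emph{and} $\ff\in L^\infty(0,t;L^2)$, neither of which is at hand without first having the very estimate one is trying to prove. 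The paper sidesteps all this by testing the \emph{undifferentiated} weak momentum balance with the fourth-order multiplier $\di\big(\Vm{:}\eps(\di(\Vm{:}\eps(\uu_t)))\big)$, which directly yields the $L^\infty_t$-bound for $\|\di(\Vm\eps(\uu_t))\|_{L^2}$ and the $L^2_t$-bound for $\|\eps(\di(\Vm\eps(\uu_t)))\|_{L^2}$; Corollary~\ref{corB:ellipt} then converts these to $\|\uu_t\|_{H^2}$ and $\|\uu_t\|_{H^3}$. Crucially, this test function is admissible and the boundary terms vanish only because of the structural assumption $\CC=\VV$ in Hypothesis~\textbf{\ref{h:1-strong}} and the choice of Galerkin basis made of eigenfunctions of $-\di(\Vm\eps(\cdot))$ with Neumann conditions --- a point your proposal does not engage with. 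Without using this specific multiplier (or some replacement that is likewise compatible with the boundary conditions and the Galerkin space), the required $H^2/H^3$ bounds on $\uu_t$ do not follow from the stated hypotheses.
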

 \renewcommand{\Vm}{\mathbb{C}}
 \par 
 Throughout the proof, we will repeatedly use the following estimate  for all $ t \in [0,T]$
\begin{equation}
\label{2TFC}
 \| \zz(t) \|_{\boldsymbol{X}}^p =\left \| \zz(0)+ \int_0^t \zz_t \ds  \right \|_{\boldsymbol{X}}^p \leq   2^{p-1}   \|  \zz(0) \| _{\boldsymbol{X}}^{p} +  
 (2t)^{p-1}  \int_0^t \|  \zz_t(s) \| _{\boldsymbol{X}}^{p}  \dd s\,,  
\end{equation} 
which follows, by Jensen's inequality and the elementary inequality 
$(a{+}b)^p \leq 2^{p-1} (a^p{+}b^p)$ for all $a,b\in [0,+\infty)$, 
for every $\zz  \in W^{1,p} (0,T;\boldsymbol{X})$, $p\geq1$ (where  $\boldsymbol{X}$ is a Banach space  with the Radon-Nikod\'ym property). 
 \EEE
\begin{proof}
We split the argument in the following claims.
\medskip

\par\noindent
\emph{\textbf{Claim $0$:} The evolution of the mean of $\f u$ is only determined by the given data, \textit{i.e.}, 
\begin{align*}
    \int_\Omega \f u (t)   \dx &= \int_\Omega \f u_0 \dx + t\int_\Omega \f v_0\dx +  \int_0^t\int_\Omega  (t{-}r) \EEE  \f f(r) \dr\,.
\end{align*}}

 Integrating in space the momentum balance \eqref{weakUEq} we  infer 
$
    \partial_t \int_\Omega \f u_t \dx = \int_\Omega \f f \dx \,, $
so that,  integrating in time we get
\begin{equation}
\label{integral-ut}
\int_\Omega \f u _t \dx (t) = \int_\Omega \f v_0\dx + \int_0^t\int_\Omega \f f \dx \ds  
\end{equation}
and thus, integrating again over $(0,t)$, we obtain
\[
    \int_\Omega \f u (t)   \dx = \int_\Omega \f u_0 \dx + t\int_\Omega \f v_0\dx + \int_0^t\int_0^s\int_\Omega \f f \dx \dr\ds \,.
\]
Via Fubini's theorem, we find $\int_0^t\int_0^s\int_\Omega \f f \dx \dr\ds = \int_0^t\int_\Omega \f f(r) \dx\int_{t-r}^{t} \ds\dr = \int_0^t\int_\Omega  (t{-}r) \EEE \f f(r) \dr $\,.
\medskip

\par\noindent
\emph{\textbf{Claim $1$:}  There exists a constant $S_{1,1}>0$ such that for almost all $t\in (0,T)$}
\begin{equation}
\label{claim1-added}
\begin{aligned}
&
\frac{\dd}{\dd t}  \| \f u_t(t)  \|_{L^2(\Omega)}^2  +  \| \f u_t(t)  \|_{H^1(\Omega)}^2 
\\
&
\leq 
S_{1,1}\left( \| \f v_0\|_{L^2(\Omega)}^2 {+} \| \bff \|_{L^2(Q)}^2 {+}  \Big(\|\chi(t) \|_{L^\infty(\Omega)}^{2\rho{+}4} {+}1 \Big)
\Big( \|  \eps(\uu_0)\|_{L^2(\Omega)}^2 {+}  \int_0^t \|  \eps(\uu_t)\|_{L^2(\Omega)}^2 \dd s 
\Big) \right)\,.
 \end{aligned}
\end{equation}
We test \eqref{weakUEq} by $\f u_t$. Taking into account  \eqref{asstensors} and \eqref{ass-b}, by  the Poincar\'e-Korn  inequality we  find
for almost all $t\in (0,T)$
\[
\begin{aligned}
\int_{\Omega} b(\chit(t) ) \VV\eps (\uu_t(t) ){:} \eps (\uu_t(t) ) \dd x &  \geq c  \| \f u_t(t)  \|_{H^1(\Omega)}^2  -C \left|  \int_\Omega {\f u}_t (t)   \dx\right|^2 
\\
&
\stackrel{(*)}{\geq}  c  \| \f u_t(t)  \|_{H^1(\Omega)}^2  -C \left|  \int_\Omega \f v_0   \dx\right|^2  -C \| \bff \|_{L^1(Q)}^2 \,,
\end{aligned}
\]
where {\footnotesize (*)} follows from \eqref{integral-ut}. We thus obtain
\begin{align}
&
\nonumber
\frac{\dd }{\dd t}  \frac{1}{2} \| \f u_t(t)  \|_{L^2(\Omega)}^2  +  c \| \f u_t(t)  \|_{H^1(\Omega)}^2  
\\
&
\nonumber
\leq C \|\f v_0\|^2_{L^1(\Omega)} + C \| \bff \|_{L^2(Q)}^2  + 
\left|  \int_\Omega \bff{\cdot}\uu_t \dd x  \right|  + \left|   \int_\Omega  a(\chi)  \mathbb{C}\eps (\uu){:}\eps (\uu_t) \dd x  \right| 
\\
&
\nonumber
\leq \frac c2  \| \f u_t(t)  \|_{H^1(\Omega)}^2  + C \|\f v_0\|_{L^1(\Omega)}^2 + C' \| \bff \|_{L^2(Q)}^2  + \|a(\chi(t))\|_{L^\infty(\Omega)}\| \eps (\uu(t) )\|_{L^2(\Omega)} 
\| \eps (\uu_t(t) ) \|_{L^2(\Omega)} 
\intertext{and, estimating $ \|a(\chi)\|_{L^\infty(\Omega)} \leq C (\|\chi\|_{L^\infty(\Omega)}^{\rho+2} {+}1 )$ via \eqref{ultimate-growth-conditions} and 
$\| \eps (\uu(t) )\|_{L^2(\Omega)} $ via \eqref{2TFC}, we continue the above chain of inequalities with}
&
\nonumber
\leq \frac {3c}4    \| \f u_t(t)  \|_{H^1(\Omega)}^2  + C \|\f v_0\|_{L^1(\Omega)}^2 + C' \| \bff \|_{L^2(Q)}^2  
\\
&
\nonumber
\qquad 
+ C' (\|\chi(t)\|_{L^\infty(\Omega)}^{\rho+2} {+}1)^2 \left( 2   \|  \eps(\uu_0) \| _{L^2(\Omega)}^{2} +  
 2t  \int_0^t \|  \eps(\uu_t(s))   \| _{L^2(\Omega)}^{2}  \dd s \right) \,,
\end{align}
whence \eqref{claim1-added}. 
\medskip \EEE

\par\noindent
\emph{\textbf{Claim $2$:}  There exist a constant $S_{1,2}>0$  and $\overline\beta >1$
(indeed,  $\overline\beta = (4\rho{+}12)$,  with $\rho:=\max\{ p,q\}$
 and $p,q$ from 
 \eqref{ass-a-3-strong} and \eqref{ass-b-strong}, respectively)
 such that   for every $t \in [0,T]$}  
\begin{align}\label{ineqBeforGron}
\begin{split}
&   \| \f u_t(t)  \|_{H^2(\Omega)}^2  +\int_0^t  \| \f u_t \|_{H^3(\Omega)}^2  \ds  \EEE
\\
&  \leq S_{1,2}
 \Big( 
  \|\vv_0   \|_{H^2(\Omega)}^2  
 {+}   \| \f u_0 \| _{H^3(\Omega)}^2  {+}   \int_0^t\| \f f \|_{H^1(\Omega)}^2\ds  
 \\
 &   \qquad \qquad \qquad 
 {+} \EEE
  \int_0^t 
  \big(  \| \chi \|_{H^2(\Omega)}^{\overline \beta}  {+} 1  \EEE \big) {\times}  
   \big(\| \f u_t\|_{H^2(\Omega)}^2{+} \int_0^s \| \uu_t\|_{H^3(\Omega)}^2 \dta {+}1\big) \dd s  \Big)   \,.
 \end{split}
\end{align}
\EEE
     We  test equation~\eqref{weakUEq} by $ \di (\Vm{:} \varepsilon (\di (\Vm{:}\varepsilon(\f u_t))))$  and integrate in space, thus obtaining
     \begin{equation}
     \label{general-initial-est}
     I_1+I_2+I_3 =I_4 \qquad \text{with } 
     \begin{cases}
     I_1=\int_\Omega  \f u_{tt} \,  \di (\Vm {:} \varepsilon (\di (\Vm{:}\varepsilon(\f u_t)))) \dx,
     \smallskip
     \\
     I_2 = -\int_\Omega \di ( b(\chi) \Vm \eps{(\f u_t)} )\, \di (\Vm{:} \varepsilon (\di (\Vm{:}\varepsilon(\f u_t)))) \dd x,
          \smallskip
     \\
     I_3  = -\int_\Omega \di ( a(\chi) \CC\eps{(\f u)} ) \, \di (\Vm{:} \varepsilon (\di (\Vm{:}\varepsilon(\f u_t)))) \dd x,
          \smallskip
     \\
     I_4=   \int_\Omega  \ff  
     \, \di (\Vm {:} \varepsilon (\di (\Vm{:}\varepsilon(\f u_t)))) \dx.
     \end{cases}
     \end{equation}
 \EEE Then, for the first term we deduce
  \begin{align*}
I_1={}&- \int_\Omega \nabla \f u_{tt} {:} \Vm \eps({\di ( \Vm {:} \eps(\f u_t)) })\dx  + \int_{\partial \Omega }\f u_{tt} {\otimes} \f n {:} \Vm \eps({\di ( \Vm {:} \eps(\f u_t)) }) \dS 
\\
={}&- \int_\Omega \Vm \eps(\f u_{tt} ) {:} \nabla {\di ( \Vm {:} \eps(\f u_t)) }\dx  + \int_{\partial \Omega }\f u_{tt} {\otimes} \f n {:} \Vm \eps({\di ( \Vm {:} \eps(\f u_t)) }) \dS 
\\
={}& \int_\Omega \di \left ( \Vm \eps(\f u_{tt} ) \right )\cdot \di ( \Vm {:} \eps(\f u_t))\dx  + \int_{\partial \Omega }\f u_{tt} {\otimes} \f n {:} \Vm \eps({\di ( \Vm {:} \eps(\f u_t)) }) \dS 
\\&- \int_{\partial \Omega} (  \di ( \Vm {:} \eps(\f u_t)) {\otimes} \f n )\Vm \eps(\f u_{tt} ) \dS \,,
\end{align*}
 integrating by parts and exploiting the symmetry of $\Vm$. \EEE
The two boundary terms vanish:  this will be proved rigorously for the 
approximate system \eqref{galerkinSchauder}. \EEE
Thus, we may infer
\begin{align}
I_1 = \frac{1}{2}\frac{\dd }{\dd t} \int_\Omega \left | \di \left ( \Vm \eps(\f u_t) \right ) \right |^2 \dx  \,.\label{utested1}
\end{align}
In order to calculate  
$I_2$,  we resort to the 
product rule, yielding
\begin{equation}
\label{product-rule}
 \di ( b(\chi) \Vm \eps(\f u_t) ) =( \Vm {:}\eps( \f u_t ) ) \nabla \chi b'(\chi) + b(\chi) \di ( \Vm \eps(\f u_t) ) \,.
 \end{equation}
 Therefore, 
 \begin{subequations}
 \label{calculations4I2}
\begin{align}
\begin{split}
I_2 =- \int_\Omega&\left [ ( \Vm {:}\eps( \f u_t ) ) \nabla \chi b'(\chi) + b(\chi) \di ( \Vm \eps(\f u_t) )  \right ] {\cdot} \di (\Vm {:} \varepsilon (\di (\Vm{:}\varepsilon(\f u_t))))\dx  
\\
={}& \int_\Omega b(\chi) \left [ \Vm {:} \eps  \di ( \Vm \eps(\f u_t ) \right ){:} \varepsilon (\di (\Vm{:}\varepsilon(\f u_t)))] 
\dx  
\\
&+ \int _\Omega b'(\chi) \di ( \Vm \eps(\f u_t) ) {\otimes} \nabla \chi {:} \Vm {:} \eps(\di (\Vm{:}\varepsilon(\f u_t))) \dx  
\\&+ \int_\Omega  \nabla \left (( \Vm {:}\eps( \f u_t ) ) \nabla \chi b'(\chi) \right ) {:} \Vm {:} \varepsilon (\di (\Vm{:}\varepsilon(\f u_t))) \dx  
\\
&-\int_{\partial \Omega}\di ( b(\chi) \Vm \eps(\f u_t) ) {\otimes} \f n {:}\Vm {:} \varepsilon (\di (\Vm{:}\varepsilon(\f u_t)))   \dS  \doteq I_{2,1}+
 I_{2,2} + I_{2,3}+I_{2,4}  \,.
\end{split}
\label{utested2}
\end{align}
 Now, we remark that, thanks to \eqref{ass-b}
\begin{equation}
\label{estI21}
I_{2,1}   \geq b_0 \int_\Omega  \left (\Vm {:} \eps  \di ( \Vm \eps(\f u_t ) \right ){:} \varepsilon (\di (\Vm{:}\varepsilon(\f u_t)))) 
\dx  \geq b_0 \eta_{\Vm}\int_\Omega \left| \varepsilon (\di (\Vm{:}\varepsilon(\f u_t))) \right|^2 \dd x, \EEE 
\end{equation}
 where the last inequality follows from the positive-definiteness of $\Vm$, cf.\ \eqref{asstensors}, 
whereas we estimate 
\[
\begin{aligned}
|I_{2,2} |+|I_{2,3}|  \leq  & 
C  |\Vm | \left \| \varepsilon (\di (\Vm{:}\varepsilon(\f u_t)))\right \|_{L^2(\Omega)} \left \| b'(\chi)\di  (\Vm{:}  \eps{(\f u_t)} ){\otimes} \nabla \chi \right \|_{L^2( \Omega)}
\\&\quad +C
\left \| \varepsilon (\di (\Vm{:}\varepsilon(\f u_t)))\right \|_{L^2(\Omega)} \left \| b'(\chi) (\Vm{:}  \eps{(\f u_t)} )\nabla \chi \right \|_{H^{1}( \Omega)} \,,
\end{aligned}
\]
  where $  |\Vm |$ denotes the tensor norm of $\Vm$. 
\EEE  
The boundary term $I_{2,4}$ vanishes again due to the homogeneous Neumann boundary conditions;  again, this argument  will be made rigorous
for our approximation scheme, cf.\ the proof of Prop.\ \ref{prop:loc} later on. \EEE
\end{subequations}

Similarly,   by the chain rule and 
 an integration-by-parts we obtain \EEE
  \label{calculations4I3}
\begin{align}
\begin{split}
I_3=- \int_\Omega &
\left [\CC {:} \eps(\f u) \nabla \chi a'(\chi) + a(\chi) \di ( \CC \eps(\f u))\right ]{\cdot}    \di (\Vm {:} \varepsilon (\di (\Vm{:}\varepsilon(\f u_t))))\dx  
\\
={}& \int_\Omega a(\chi) \left (\Vm {:} \eps(  \di ( \CC \eps(\f u) )) \right ){:} \varepsilon (\di (\Vm{:}\varepsilon(\f u_t)))) 
\dx  
\\
&+ \int _\Omega a'(\chi) \di ( \CC \eps(\f u) ) {\otimes} \nabla \chi {:} \Vm {:} \eps(\di (\Vm{:}\varepsilon(\f u_t))) \dx  
\\&+ \int_\Omega  \nabla \left (( \CC {:}\eps( \f u ) ) \nabla \chi a'(\chi) \right ) {:} \Vm {:} \varepsilon (\di (\Vm{:}\varepsilon(\f u_t))) \dx  
\\
&-\int_{\partial \Omega}\di ( a(\chi) \CC \eps(\f u) ) {\otimes} \f n {:}\Vm {:} \varepsilon (\di (\Vm{:}\varepsilon(\f u_t)))   \dS 
\\\leq {}& 
\frac{b_0 \eta_\Vm\EEE}{2} \int_\Omega | \eps(\di ( \Vm {:} \eps(\f u_t)) |^2 \dx  +C\| a(\chi) \|_{L^\infty(\Omega)}^2  \| \Vm {:} \eps(  \di ( \CC \eps(\f u) ))  \|_{L^2(\Omega)}^2 \EEE
\\
&+ C \left ( \| a'(\chi)\|_{L^\infty(\Omega)}^2 \|   \di ( \CC \eps(\f u) ) {\otimes} \nabla \chi\|_{L^2(\Omega)}^2+ \| \nabla \left (( \CC {:}\eps(\f u ) ) \nabla \chi a'(\chi) \right ) \| _{L^2(\Omega)}^2  \right )\\
&-\int_{\partial \Omega}\di ( a(\chi) \CC \eps(\f u) ) {\otimes} \f n {:}\Vm {:} \varepsilon (\di (\Vm{:}\varepsilon(\f u_t)))   \dS \,,
\end{split}
\label{utested3}
\end{align}
 with  $b_0$ and $\eta_\Vm$  the constants from  \eqref{estI21}. \EEE
For the boundary term, we observe again that it vanishes,  as shown rigorously in the proof of the upcoming Proposition  \ref{prop:loc}. \EEE

 Finally, arguing in the same way as for $I_1$ we conclude that 
\begin{equation}
 \label{calculations4I4}
\begin{aligned}
I_4={}& -\int_\Omega  \Vm \eps(\f f ) : \nabla \left( \di ( \Vm {:} \eps(\f u_t))\right) \dx  + \int_{\partial \Omega }\f f {\otimes} \f n {:} \Vm \eps({\di ( \Vm {:} \eps(\f u_t)) }) \dS 
\\
\leq{}& \frac{b_0 \eta_{\Vm}\EEE}{4}\int_\Omega | \eps\left( \di ( \Vm {:} \eps(\f u_t))\right)  |^2 \dd x  +  C \EEE \| \f f \|_{H^1(\Omega)}^2 \,.
\end{aligned}
\end{equation}
Note, again,  that the boundary term vanishes   cf.\ the proof of Prop. \ref{prop:loc}. \EEE


Combining  \eqref{general-initial-est}   with \eqref{utested1}, \eqref{calculations4I2}, \eqref{calculations4I3}, and \eqref{calculations4I4},   we infer the estimate

\begin{align*}
 \frac{1}{2}\frac{\mathrm d }{\mathrm d t}& \int_\Omega \left | \di \left ( \Vm \eps(\f u_t) \right ) \right |^2 \dx   +  \frac{b_0\eta_{\Vm}}{4}  
 \int_\Omega  \left| \eps(  \di ( \Vm \eps(\f u_t) )) \right |^2 \dx   
\\
\leq{} &
C
\left \| \varepsilon (\di (\Vm{:}\varepsilon(\f u_t)))\right \|_{L^2(\Omega)} \left \| b'(\chi) (\Vm{:}  \eps(\f u_t) )\nabla \chi \right \|_{H^{1}( \Omega)}
\\&+C \left \| \varepsilon (\di (\Vm{:}\varepsilon(\f u_t)))\right \|_{L^2(\Omega)} \|  b'(\chi)\|_{L^\infty (\Omega)} \left \| (\Vm{:}  \eps(\f u_t) ){\otimes} \nabla \chi \right \|_{L^2( \Omega)}
%
\\&+C\| a(\chi) \|_{L^\infty(\Omega)}^2 \|\eps{ (\f u)}\|_{H^2(\Omega)}^2 
\\
&+ C \left ( \| a'(\chi)\|_{L^\infty(\Omega)}^2 \|   \di ( \CC \eps(\f u) ) {\otimes} \nabla \chi\|_{L^2(\Omega)}^2+ \| \nabla \left (( \CC {:}\eps(\f u ) ) \nabla \chi a'(\chi) \right ) \| _{L^2(\Omega)}^2  \right )\\
&+C  \left \| \varepsilon (\di (\Vm{:}\varepsilon(\f u_t)))\right \|_{L^2(\Omega)} \left \| a'(\chi) (\CC{:}  \eps(\f u) )\nabla \chi \right \|_{H^{1}( \Omega)} +    C  \| \f f \|_{H^1(\Omega)}^2 \EEE
\\
\stackrel{(\star)}{\leq}{}& 
C  ( \|\chi \|_{L^\infty(\Omega)}^{2(q{+}1)}{+}1) \EEE \left (  \| \varepsilon(\f u_t)\|_{W^{1,3}(\Omega)}^2 \| \nabla \chi\|_{L^6(\Omega)}^2+ \| \chi\|_{H^2(\Omega)}^2 \| \varepsilon(\f u_t)\|_{L^\infty(\Omega)}^2  \right ) 
\\
&+C   (\|\chi \|_{L^\infty(\Omega)}^{2q}{+}1)  \EEE  \| \varepsilon(\f u_t)\|_{L^\infty(\Omega)}^2 \| \nabla \chi\|_{L^4(\Omega)}^4
+C(\| \chi \|_{L^\infty(\Omega)}^{2(p+2)}+1)  \| \eps{(\f u)}\|_{H^2(\Omega)}^2 
\\
&+ C   (\|\chi \|_{L^\infty(\Omega)}^{2(p{+}1)}{+}1)  \EEE \
 \left (  \| \varepsilon(\f u)\|_{W^{1,3}(\Omega)}^2 \| \nabla \chi\|_{L^6(\Omega)}^2+ \| \chi\|_{H^2(\Omega)}^2 \| \varepsilon(\f u)\|_{L^\infty(\Omega)}^2  \right ) \\&
+C  (\|\chi \|_{L^\infty(\Omega)}^{2p} {+}1) \EEE \ \| \varepsilon(\f u)\|_{L^\infty(\Omega)}^2 \| \nabla \chi\|_{L^4(\Omega)}^4  +  C  \| \f f \|_{H^1(\Omega)}^2 
\,, \EEE
\end{align*} 
 where for {\footnotesize $(\star)$} we have also used the growth conditions \eqref{ass-a-3-strong} and \eqref{ass-b-strong}. 
%
%
%
Integrating in time and inserting the Gagliardo--Nirenberg inequalities in three dimensions 
\begin{align*}
\| \zeta \| _{L^\infty(\Omega)} + \| \zeta \|_{W^{1,3}(\Omega)} \leq c \| \zeta \|_{H^2(\Omega)}^{1/2} \| \zeta \| _{H^1(\Omega)}^{1/2}\,, \qquad \|\zeta \|_{W^{1,4}(\Omega)}  \leq c \| \zeta \|_{H^2(\Omega)}^{3/4} \| \zeta \| _{H^1(\Omega)}^{1/4}\,,
\end{align*}
for all $ \zeta \in H^2(\Omega)$, we obtain
\begin{align}\label{ineqforu}
\begin{split}
     \frac{1}{2}& \int_\Omega \left | \di \left ( \Vm \eps(\f u_t(t)) \right ) \right |^2 \dx  +  \frac{b_0\eta_{\Vm}}{4}  \EEE\int_0^t   \int_\Omega   \left| \eps(  \di ( \Vm \eps(\f u_t) )) \right |^2 \EEE
\dx   \ds 
\\
\leq{}&
C\int_0^t    (\|\chi \|_{L^\infty}^{2\rho+2} {+}1) \EEE  \left( \| \varepsilon(\f u_t)\|_{H^2(\Omega)}\| \varepsilon(\f u_t)\|_{H^1(\Omega)} +
\| \varepsilon(\f u)\|_{H^2(\Omega)}\| \varepsilon(\f u)\|_{H^1(\Omega)}\right ) \|  \chi\|_{H^2(\Omega)}^2
 \ds 
\\
&+ C\int_0^t  (\|\chi \|_{L^\infty}^{2\rho} {+}1) \EEE \left  (  \| \varepsilon(\f u_t)\|_{H^2(\Omega)}\| \varepsilon(\f u_t)\|_{H^1(\Omega)} +  \| \varepsilon(\f u)\|_{H^2(\Omega)}\| \varepsilon(\f u)\|_{H^1(\Omega)}\right  ) \| \chi\|_{H^2(\Omega)}^3\| \chi\|_{H^1(\Omega)}
\ds \\&\quad  +  C  \int_0^t \big\{  \| \f f \|_{H^1(\Omega)}^2 + ( \| \chi\|_{L^\infty}^{2\rho +4}+1) \| \eps{(\uu)}\|_{H^2(\Omega)}^2 \big\} \ds
\,, \EEE
\\
\stackrel{(\star\star)}\leq{} &   
 \mu \int_0^t \| \uu_t\|_{H^3(\Omega)}^2 \dd s  + C\|\vv_0\|_{H^2(\Omega)}^2 \EEE
 +C \int_0^t \big\{  \| \f f \|_{H^1(\Omega)}^2 + ( \| \chi\|_{L^\infty}^{2\rho +4}+1) \| \varepsilon(\f u)\|_{H^2(\Omega)}^2 \big\}  \ds
 \\
 & \quad +C \int_0^t     (\|\chi \|_{L^\infty}^{4\rho+4} {+}1) \EEE  \left ( \| \chi \|_{H^2(\Omega)}^4 +   \| \chi \|_{H^2(\Omega)}^8   \right ) \left (  \| \varepsilon(\f u_t)\|_{H^1(\Omega)}^2+\| \varepsilon(\f u)\|_{H^1(\Omega)}^2\right ) \ds \,,
\end{split}
\end{align}
 for a positive constant $\mu$ to be specified later, and \EEE
 recalling that  $\rho=\max\{ p,q\}$. For {\footnotesize $(\star\star)$} 
 we have estimated $ \| \chi \|_{H^1(\Omega)}$ via  $\| \chi \|_{H^2(\Omega)}$,
  estimated  $ \| \eps{  \di ( \Vm \eps({\f u_t} ))}  \|_{L^2(\Omega)}$ via 
 $\| \uu_t\|_{H^3(\Omega)}$
 and $
\|  \di \left ( \Vm \eps(\f v_0) \right ) \|_{L^2(\Omega)} $ via  $\| \vv_0\|_{H^2(\Omega)}$, 
  and 
  used Young's inequality. Again  by \eqref{2TFC}, we observe  that 
$$
 \| \varepsilon(\f u)\|_{H^j(\Omega)}^2 
\leq    2\| \eps(\f u_0) \| _{H^j(\Omega)}^2 +   2T   \int_0^t \| \eps(\f u_t)\|_{H^j(\Omega)}^2 \ds \qquad \text{for } j\in \{1,2\}\,.
$$
\par
 We now add \eqref{ineqforu} and \eqref{claim1-added} integrated over $(0,t)$, thus obtaining \EEE 
\begin{equation}\label{ineqBeforGron-ALMOST}
\begin{aligned}
&    \| \uu_t(t)\|_{L^2(\Omega)}^2 +    \int_\Omega \left | \di \left ( \Vm \eps(\f u_t(t)) \right ) \right |^2 \dx 
+ c  \int_0^t   \left( \| \f u_t  \|_{H^1(\Omega)}^2  {+}   \int_\Omega   \left| \eps(  \di ( \Vm \eps(\f u_t) )) \right |^2 \dd x \right) \dd s \EEE 
 \\
 & \leq{}  \mu \int_0^t \| \uu_t\|_{H^3(\Omega)}^2 \dd s  \EEE 
  \\
  &
 \quad 
+ C\left(
 \| \vv_0  \|_{H^2(\Omega)}^2  
 + \int_0^t ( \| \chi\|_{L^\infty}^{2\rho +4}+1) \left( \| \eps(\f u_0) \| _{H^2(\Omega)}^2 +     \int_0^s \| \eps(\f u_t)\|_{H^2(\Omega)}^2 \dta\right ) 
 +   \| \f f \|_{H^1(\Omega)}^2\ds  \right)
 \\
 &
 \quad +C \int_0^t (\|\chi \|_{L^\infty(\Omega)}^{4\rho+4} {+}1)  (   \| \chi \|_{H^2(\Omega)}^8  {+}1 )  \left (\|\eps( \f u_t)\|_{H^1(\Omega)}^2{+}    \int_0^s \| \eps(\uu_t)\|_{H^1(\Omega)}^2 \dta{+}  \| \eps(\uu_0)  \|_{H^1(\Omega)}^2  \EEE  \right )  \ds \,.
\end{aligned}
\end{equation}
 Now, it follows from 
the elliptic regularity estimates \eqref{key-elliptic-regul-est-NEW} from 
Corollary \ref{corB:ellipt} that there exists $\widehat{C}_{\mathrm{ER}}>0$ such that  
\[
  \| \uu_t(t)\|_{H^2(\Omega)}^2  \leq \widehat{C}_{\mathrm{ER}} \left( \| \uu_t(t)\|_{L^2(\Omega)}^2 {+}    \int_\Omega \left | \di \left ( \Vm \eps(\f u_t(t)) \right ) \right |^2 \dx   \right)\,.
\] \EEE
Likewise, we have 
\[
   \| \uu_t(t)\|_{H^3(\Omega)}^2  \leq  \widehat{C}_{\mathrm{ER}} \left( \| \uu_t(t)\|_{H^1(\Omega)}^2 {+}   \int_\Omega   \left| \eps(  \di ( \Vm \eps(\f u_t) )) \right |^2 \dd x  \right)\,.
\] 
Hence, we choose $\mu$ in \eqref{ineqBeforGron-ALMOST} in such a way as to absorb $\int_0^t \| \uu_t\|_{H^3(\Omega)}^2 \dd s $ into the left-hand side, thus obtaining
\[
\begin{aligned}
&   \| \uu_t(t)\|_{H^2(\Omega)} 
+ c  \int_0^t    \| \f u_t  \|_{H^3(\Omega)}^2   \dd s \EEE 
 \\
 & \leq{}  C\left(
 \| \vv_0  \|_{H^2(\Omega)}^2  
 + \int_0^t ( \| \chi\|_{L^\infty(\Omega)}^{2\rho +4}+1) \left( \| \eps(\f u_0) \| _{H^2(\Omega)}^2 +     \int_0^s \| \eps(\f u_t)\|_{H^2(\Omega)}^2 \dta\right ) 
 +   \| \f f \|_{H^1(\Omega)}^2\ds  \right)
 \\
 &
 \quad +C \int_0^t (\|\chi \|_{L^\infty(\Omega)}^{4\rho+4} {+}1)  (   \| \chi \|_{H^2(\Omega)}^8  {+}1 )  \left (\|\eps( \f u_t)\|_{H^1(\Omega)}^2{+}    \int_0^s \| \eps(\uu_t)\|_{H^1(\Omega)}^2 \dta{+}  \| \eps(\uu_0)  \|_{H^1(\Omega)}^2  \EEE  \right )  \ds \,.
\end{aligned}
\] 
Therefore,  \EEE  estimating 
\begin{equation}
\label{useful-in-the-end}
\begin{cases}
\|\chi \|_{L^\infty(\Omega)} \leq C \|\chi \|_{H^2(\Omega)}
\\
\|\eps({\f u_t}) \|_{H^2(\Omega)} \leq C \|\uu_t \|_{H^3(\Omega)},
\end{cases}
\end{equation} 
we arrive at 
  \eqref{ineqBeforGron}. 
\medskip

\par\noindent
\emph{\textbf{Claim $3$:}  there exist a constant $S_{1,3}>0$ and   $\underline\beta >1$
 such that}  
\begin{equation}
\label{testedchi}
    \begin{aligned}
        &
         \| \omega(t)\|_{L^2(\Omega)}^2 +   \| \chi(t)\|_{H^2(\Omega)}^2  +\int_0^t \left( \|\chi_t\|_{L^2(\Omega)}^2{+}  \|\nabla \chi_t\|_{L^2(\Omega)}^2 \right) \dd s 
        \\ & 
\leq S_{1,3}(1{+}   \| \chi_0\|_{H^2(\Omega)}^2{+}  \| |\partial \breve{W}^{\circ}|(\chi_0) \|_{L^2(\Omega)}^2 
 {+} \| \uu_0\|_{H^2(\Omega)}^8) 
\\
& \quad  +S_{1,3} \int_0^t  \left( \| \omega\|_{L^2(\Omega)}^8  {+} \|  \uu_t \|_{H^2(\Omega)}^8 {+}  \int_0^s \|  \uu_t \|_{H^2(\Omega)}^8 \dd \tau  {+} \|\chi\|_{H^2(\Omega)}^{\underline\beta} \right) \dd s \,.   
\end{aligned}
\end{equation}
We test \eqref{rewritten-flow-rule} 
 by $\omega_t$ and integrate in space and over the time interval $(0,t)$,
$t\in (0,T)$. 
 Thus, we obtain 
\begin{equation}
\label{claim3-1}
\begin{aligned}
&
\frac12 \| \omega(t)\|_{L^2(\Omega)}^2  +\int_0^t \left( \|\chi_t\|_{L^2(\Omega)}^2{+}  \|\nabla \chi_t\|_{L^2(\Omega)}^2 \right) \dd s 
\\
& \quad 
+\ddd{\int_0^t  \int_\Omega \{ \breve{W}''(\chi)|\chi_t|^2 {+} I'(\chi_t) \chi_t  {+}I''(\chi_t) |\nabla\chi_t|^2 {+}\breve{W}''(\chi)I'(\chi_t) \chi_t \} \dd x \dd s}{$I_0 \geq 0$}{}   
\\
&   =
\int_0^t \int_\Omega \left( \chi - \invbreve{W}'(\chi) -\frac12 a'(\chi) \e(\uu) \CC \e(\uu) \right)\omega_t \dd x \dd s \doteq I_1\,.
\end{aligned}
\end{equation}
Indeed, by the convexity of $\breve{W}$ and $I_{(-\infty,0]}$, the first and third 
contributions to $I_0$  are non-negative; \EEE likewise,   the monotonicity of $I'$ and the fact that $I'(0)=0$ ensure that the second and fourth term in $I_0$ is positive. We integrate $I_1$ by parts, thus obtaining \EEE
\begin{subequations}
\label{treatment-I1}
\begin{align}\label{partintime}
\begin{split}
    \\
    I_1= {}&\int_0^t \omega   \left(  \frac12a''(\chi) \EEE \chi_t \Cm{\f u}{\f u} + a'(\chi) \Cm{\f u}{\f u_t} + \invbreve{W}''(\chi)\chi_t- \chi_t \right) \dd x \dd s  \\
    &-\int_\Omega\omega(t) \left( \frac12 a'(\chi(t))\EEE  \Cm{\f u(t) }{\f u(t)} + \invbreve{W}'(\chi(t))- \chi(t)  \right) \dx\\
    &+\int_\Omega\omega(0) \left( \frac12 a'(\chi_0)\EEE  \Cm{\f u_0 }{\f u_0} + \invbreve{W}'(\chi_0)- \chi_0  \right)  \dx
    \doteq I_{1,1} + I_{1,2} +I_{1,3}\,.
\end{split}
\end{align}
By H\"older's and Young's inequalities we have 
\begin{equation}
\label{treatment-I1-1}
\begin{aligned}
 |I_{1,1}|   \leq{}& \frac12 |\mathbb{C}| \int_0^t \|\omega \|_{L^2(\Omega)}  \|  a''(\chi) \EEE \|_{L^\infty(\Omega)} \| \chi_t\|_{L^6(\Omega)}\| \eps({\f u})\|_{L^6(\Omega)}^2\ds \\& 
  +\int_0^t\| \omega \|_{L^2(\Omega)}\left(  \frac12 |\mathbb{C}|  \| a'(\chi)\|_{L^\infty(\Omega)} \EEE
  \| \eps({\f u}) \|_{L^4(\Omega)}\| \eps({\f u_t}) \|_{L^4(\Omega)} +  \|  \invbreve{W}''(\chi)-1\|_{L^\infty(\Omega)} \| \chi_t\|_{L^2(\Omega)}\right)\ds 
    \\
    \stackrel{(1)}{\leq{}}& \frac{1}{4}\int_0^t \| \chi_t\|_{H^1(\Omega)}^2 \ds 
     + C  \int_0^t \| \omega\|_{L^2(\Omega)}^2 \ds 
    +C \int_0^t  \|\omega \|_{L^2(\Omega)}^2 (\|\chi\|_{L^\infty(\Omega)}^{2p}{+}1) \| \eps({\f u})\|_{L^6(\Omega)}^4 \dd s 
      \\
      & \quad   +C \int_0^t  (\|\chi\|_{L^\infty(\Omega)}^{2p+2}{+}1) 
      \| \eps({\f u}) \|_{L^4(\Omega)}^2\| \eps({\f u_t}) \|_{L^4(\Omega)}^2 \dd s 
       +\frac14 \int_0^t  
       \| \chi_t\|_{L^2(\Omega)}^2 
   \dd s  
\\
    \stackrel{(2)}{\leq{}}&
    \frac{1}{2}\int_0^t \| \chi_t\|_{H^1(\Omega)}^2 \ds 
     + \int_0^t \| \omega\|_{L^2(\Omega)}^2 \ds  
    \\
    & \quad
    +C \int_0^t  \left( \| \omega\|_{L^2(\Omega)}^8  {+} \| \eps({\f u_t}) \|_{L^4(\Omega)}^8  {+} \| \eps({\f u}) \|_{L^6(\Omega)}^8 {+} \|\chi\|_{L^\infty(\Omega)}^{m} \right)   \ds \,,
\end{aligned}
\end{equation}
 where for {\footnotesize (1)} we have resorted to the growth properties \eqref{ass-a-3-strong} and \eqref{ultimate-growth-conditions} of $a''$ and $a'$, 
  and used that $\invbreve{W}''(\chi) \equiv -\ell$.  \EEE Moreover,  {\footnotesize (2)} again follows from Young's inequality; therein, $m= \max\{8p  ,4p+4 \}= 8p $ .

Secondly, we observe via Young's inequality that
\[
\begin{aligned}
|I_{1,2}|
    \leq{} \frac{1}{4}\| \omega(t) \|_{L^2(\Omega)}^2 + \| \invbreve{W}'(\chi(t)) {+}  \chi(t)\|_{L^2(\Omega)}^2  + C \| \eps(\f u(t))\|_{L^4(\Omega)}^4 \,.
\end{aligned}
\]
From \eqref{2TFC} we gather
$
    \| \eps(\f u(t))\|_{L^4(\Omega)}^4 
\leq
 \| \uu_0\|_{H^2(\Omega)}^4 +  C \int_0^t\| \eps( \uu_t)\|_{L^4(\Omega)}^4 \dd s \,.
$
 Recalling  that   $ \invbreve{W}'(\chi) = -\ell \chi $, 
we find
\[
\begin{aligned}
     \| \invbreve{W}'(\chi(t)) {-}  \chi(t)\|_{L^2(\Omega)}^2   \leq {}& (\ell{+}1)^2  \| \chi(t)\|_{L^2(\Omega)}^2 
\leq 2 (\ell{+}1)^2 \left(\| \chi_0\|_{L^2(\Omega)}^2  {+}  \int_0^t \int_\Omega \chi_t \chi \dx   \dd s  \right)
\\
\leq{}& \frac{1}{4}\int_0^t \| \chi_t\|^2_{L^2(\Omega)}\ds + C \| \chi_0\|^2_{L^2(\Omega)} +C \int_0^t \| \chi\|_{L^2(\Omega)}^2 
\,.
\end{aligned}
\]
All in all, we conclude 
\begin{equation}
\label{treatment-I1-2}
\begin{aligned}
|I_{1,2}|
    \leq{}& \frac{1}{4}\left( \| \omega(t) \|_{L^2(\Omega)}^2 +   \int_0^t \|\chi_t\|_{L^2(\Omega)}^2  \dd s\right) \\& + C \left(\| \chi_0\|_{L^2(\Omega)}^2{+} \| \uu_0\|_{H^2(\Omega)}^4 {+} \int_0^t\|  \eps(\uu_t)\|_{L^4(\Omega)}^4 \dd s + \int_0^t \| \chi\|_{L^2(\Omega)}^2  \right)\,.    
\end{aligned}
\end{equation} \EEE
%
 \par Finally, we have 
\begin{equation}
\label{treatment-I1-3}
\begin{aligned}
|I_{1,3}|
    \leq{} \frac{1}{4}\| \omega(0) \|_{L^2(\Omega)}^2 + C \left(\| \uu_0\|_{H^2(\Omega)}^4 + \| \invbreve{W}'(\chi_0)\|_{L^2(\Omega)}^2  {+}  \| \chi_0\|_{L^2(\Omega)}^2 \right)\ \leq C.
\end{aligned}
\end{equation}
\end{subequations}
\par
 Combining \eqref{claim3-1} with \eqref{treatment-I1}, 
  and again using that $ \|\chi \|_{L^\infty(\Omega)} \leq C \|\chi \|_{H^2(\Omega)}$ \EEE
   we obtain 
\begin{equation}
    \label{ineqforchi}
    \begin{aligned}
        &
        \| \omega(t)\|_{L^2(\Omega)}^2  +\int_0^t \left( \|\chi_t\|_{L^2(\Omega)}^2{+}  \|\nabla \chi_t\|_{L^2(\Omega)}^2 \right) \dd s 
        \\ & 
\leq C(1{+} \| \chi_0\|_{L^2(\Omega)}^2{+} \| \omega(0)\|_{L^2(\Omega)}^2 {+} \| \uu_0\|_{H^2(\Omega)}^4) 
\\
& \quad  +C \int_0^t  \left(
\| \omega\|_{L^2(\Omega)}^8  {+} \| \eps({\f u_t}) \|_{L^4(\Omega)}^8  {+} \| \eps({\f u}) \|_{L^6(\Omega)}^8 {+} 
 \|\chi\|_{H^2(\Omega)}^{8 p}  \EEE
 \right) \dd s \,.
\end{aligned}
\end{equation}
 By \eqref{2TFC} we have
$
 \| \eps(\f u(t)) \|_{L^6(\Omega)}^8  \leq   2^7  \| \eps(\f u_0) \| _{L^6(\Omega)}^8  +    2^7 T^7 \int_0^t \| \eps(\f u_t)\|_{L^6(\Omega)}^8  \ds$.
 Furthermore, 
 we use that 
$
\|\eps({\f u_t}) \|_{H^1(\Omega)} \leq C \|\uu_t \|_{H^2(\Omega)}$. 
In order to conclude \eqref{testedchi}, it remains to observe that, 
by \eqref{chidata-strong}, 
\begin{equation}
\label{est-omega-0}
 \| \omega(0)\|_{L^2(\Omega)} \leq 	 \|\chi_0\|_{H^2(\Omega)}  +\| |\partial \breve{W}^{\circ}|(\chi_0) \|_{L^2(\Omega)}+  \|\chi_0\|_{L^2(\Omega)} \,, 
 \end{equation}
and to remark that 
the $L^2$-norm of $\omega$ 
does bound the 
$H^2$-norm of $\chi$,  cf.\ \eqref{est-omega-BS}. 
 All in all, 
 we arrive at \eqref{testedchi} with $\underline{\beta}=8p$. 
%

\medskip
\par\noindent 
\emph{\textbf{Claim $4$:}  there exists a constant $S_{1,4}>0$ such that for   $\beta :=  \max\{ \overline\beta, \tfrac12 \underline\beta\} $ there holds}
\begin{equation}
\label{ineqrelative}
\begin{aligned}
&   \| \f u_t(t)  \|_{H^2(\Omega)}^2  +\int_0^t  \| \f u_t \|_{H^3(\Omega)}^2  \ds  
  +\| \omega(t)\|_{L^2(\Omega)}^2 +   \| \chi(t)\|_{H^2(\Omega)}^2  +\int_0^t  \|\chi_t\|_{H^1(\Omega)}^2\dd s 
\\
&  \leq S_{1,4}
 \Big( 1{+}
  \|\vv_0   \|_{H^2(\Omega)}^2  
 {+}   \| \f u_0 \| _{H^3(\Omega)}^2  {+}  \| \chi_0\|_{H^2(\Omega)}^2{+} \| |\partial \breve{W}^{\circ}|(\chi_0) \|_{L^2(\Omega)}^2 \EEE
 {+} \| \uu_0\|_{H^2(\Omega)}^8 {+}    \int_0^t\| \f f \|_{H^1(\Omega)}^2\ds    \Big)
 \\
 & \quad 
 + S_{1,4}  \int_0^t  \left\{
 \| \chi \|_{H^2(\Omega)}^{2\beta}  {+}  \| \f u_t\|_{H^2(\Omega)}^8
  {+} \left(  \int_0^s \|  \uu_t \|_{H^3(\Omega)}^2 \dd \tau \right)^2    {+} \| \omega\|_{L^2(\Omega)}^8 \right\} \dd s \,.
   \end{aligned}
 \end{equation}
It suffices to add estimates \eqref{ineqBeforGron} and \eqref{testedchi}: as for the left-hand side of  \eqref{ineqBeforGron}, we use that 
\[
\begin{aligned}
&  \int_0^t 
  \big(  \| \chi \|_{H^2(\Omega)}^{\overline\beta}  {+} 1   \big) {\times}  
   \big(\| \f u_t\|_{H^2(\Omega)}^2{+} \int_0^s \| \uu_t\|_{H^3(\Omega)}^2 \dta {+}1\big) \Big) \dd s 
 \\ &   \leq  C \left( T{+} \int_0^t  \left\{
 \| \chi \|_{H^2(\Omega)}^{2 \overline\beta}  {+}  \| \f u_t\|_{H^2(\Omega)}^4
   {+} \left(  \int_0^s \|  \uu_t \|_{H^3(\Omega)}^2 \dd \tau \right)^2   \right\} \dd s 
  \right)\,,
   \end{aligned}
\]
whereas we trivially observe that 
\[
 \int_0^t \int_0^s \|  \uu_t \|_{H^2(\Omega)}^8 \dd \tau  \dd s \leq T  \int_0^t \|  \uu_t \|_{H^2(\Omega)}^8 \dd s 
\]
for the corresponding term on the right-hand side of  \eqref{testedchi}. Then, \eqref{ineqrelative} ensues.


\medskip
\par\noindent 
\emph{\textbf{Conclusion of the proof:}}

With the choice
\[
\psi(t):=  \| \f u_t(t)  \|_{H^2(\Omega)}^2  +\int_0^t  \| \f u_t \|_{H^3(\Omega)}^2  \ds  
  +\| \omega(t)\|_{L^2(\Omega)}^2 +   \| \chi(t)\|_{H^2(\Omega)}^2 +1,
  \]
 we observe  that for $\beta> 4$, the estimate \eqref{ineqrelative} yields
\begin{equation}
\label{psi-ineq}
\psi(t) \leq \widetilde{S}_1  \psi(0) +\int_0^t \widetilde{S}_1   \psi(s)^\beta \dd s  \qquad \text{for all } t \in [0,T^*]
\end{equation}
  for some suitable constant 
  $
\widetilde{S}_1  $ also encompassing    $\| \uu_0\|_{H^2(\Omega)}^8 $, $ \| \f u_0 \| _{H^3(\Omega)}^2$, $ \|\vv_0   \|_{H^2(\Omega)}^2 $,
$\| \chi_0\|_{H^2(\Omega)}^2$, 
  $\| |\partial \breve{W}^{\circ}|(\chi_0) \|_{L^2(\Omega)}^2$, 
and 
 $  \int_0^T\| \f f \|_{H^1(\Omega)}^2\ds $. \EEE
\EEE Let us define $\phi(t):= \widetilde{S}_1  \psi(0) +\int_0^t \widetilde{S}_1   \psi(s)^\beta \dd s $. Then, taking the inequality \eqref{ineqrelative} to the power $\beta$
one has
\[
\phi'(t) = \psi^\beta(t)  \leq \left(\widetilde{S}_1  \psi(0) +\int_0^t \widetilde{S}_1   \psi(s)^\beta \dd s \right)^\beta =   \phi^\beta (t) 
\qquad \text{for all } t \in [0,T^*]\,.
\]
Via the usual comparison arguments for ODEs,  
from $\phi' \leq \phi^\beta$ we conclude 
that 
\[
 \psi(t) \leq \phi(t) \leq  \left( 
 \frac1{\displaystyle \phi^{1{-}\beta}(0){-}(\beta{-}1)t } \right)^{1/(\beta{-}1)} \qquad\text{for all } t < \frac1{\beta{-}1} \phi^{1{-}\beta}(0) = \frac1{\beta{-}1} \psi^{1{-}\beta}(0)\,.
\]
Therefore, we may conclude, e.g.,  that 
\[
\psi(t)\leq \frac1{2^{\beta-1}} \phi(0) = \frac{ \widetilde{S}_1}{2^{\beta-1}}  \psi(0) \qquad \text{for } t \in \left(0,  \frac1{2(\beta{-}1)} \psi^{1{-}\beta}(0)\right]\,. 
\] \EEE
In this way, 
 we conclude  estimate~\eqref{eq:proploc}. 
\par
Eventually, \eqref{comparisonH2} follows from \eqref{eq:proploc}, arguing by  comparison in the momentum balance.  This concludes the proof.
\end{proof} \EEE

 The following sections will be devoted to the rigorous justification of Proposition \ref{prop:formal-aprio}. \EEE
\subsection{Regularization and Galerkin approximation}
\label{ss:4.2}
 We will approximate system \eqref{PDEsystem} by 
\begin{enumerate}
\item  
Regularizing the  possibly \emph{nonsmooth}  (cf.\ Hypothesis \ref{h:1-strong})  convex contribution $ \breve W$ to $W$,
in order to rigorously carry out the estimates leading to Claim $3$ in the proof of Prop.\  \ref{prop:formal-aprio}. In fact, we will need to replace $\breve W$ by a \EEE
regularised version  $W_\delta  \in \rmC^3 (\R)$, $\delta \in(0,1)$, \EEE   such that
\begin{align}
\label{properties-W-delta}
\begin{cases}
    0\leq \breve W''_\delta (r)\leq \frac1\delta
    \\
   | W'''_\delta (r) |  \leq  \frac{C}{\delta^3} \EEE
   \end{cases}
     \text{ for all }r\in \R \text{ and }\lim_{\delta \to 0} \breve{W}_\delta (r) = \breve W(r) \text{ for all }r\in \dom \breve W\,.
\end{align}
  Likewise, we will replace the  the indicator function  $ I_{(-\infty,0]}$ by its \emph{smoothened}  Moreau-Yosida approximation $I_\delta$. 
\item
Adding an  elliptic time-regularizing term to the damage flow rule,  tuned by a second parameter $\nu>0$  that will need to scale suitably w.r.t.\ $\delta$,
 cf.\ \eqref{scaling-condition}  below. 
 \EEE
\item Adopting a \EEE  Galerkin discretization for the momentum balance,  consisting of eigenfunctions of a selfadoint operator, see below.
\end{enumerate}
 \par 
 In order to obtain a   smooth approximation
$\breve{W}_\delta$ of $\breve W$ and $I_\delta$ of $ I_{({-}\infty, 0]}$,
we shall apply the construction detailed in 
 Section \ref{s:appB} ahead, and based on 
 the results in  \cite[Sec.\,3]{Gilardi-Rocca}, 
  to  the operators  $\betaup= \partial\breve W$ and $\betaup = \partial I_{({-}\infty, 0]}$.  Let us now delve into the Galerkin discretization of the  momentum balance. \EEE

\subsubsection*{Galerkin approximation}
We are going to use a Galerkin scheme to discretize the elasticity subsystem in space.
 Hereafter, we will use the notation 
\[
\Lquo:= \{ \vv  \in L^2(\Omega;\R^d)\, : \ \int_\Omega \vv \dx =0 \}, \qquad \Hquo = H^1(\Omega;\R^d) {\cap} \Lquo\,.
\] \EEE
For the approximation of the elasticity equation, we use an $L^2(\Omega)$-orthonormal Galerkin basis consisting of eigenfunctions $\f y_1 , \, \f y_2 , \, \ldots$
of the differential operator corresponding to the boundary value problem
\begin{align}\label{boundaryvalueproblem}
\begin{split}
-\di (\Vm \eps{( \f y)})  &= \f h \quad\text{in } \Omega \, , \quad
\int_\Omega \f y \dx =0 \,,
\quad
  \f n {\cdot} \Vm \eps( \f y)  = 0\quad\text{on } \partial\Omega \,.
\end{split}
\end{align}
The above problem  is a symmetric strongly elliptic system that possesses, by the Lax-Milgram lemma,  a unique weak solution $\f y \in \Hquo$ for any $\f h \in (\Hquo)^*$. 
 Its solution operator is thus a compact selfadjoint operator in $\Lquo$. Hence there exists an orthogonal basis of eigenfunctions  $\f y_1 , \, \f y_2 , \, \ldots$ in $\Lquo$.
The regularity result of Proposition \ref{propB:ellipt}  ahead  ensures that 
the  eigenfunctions   $\f y_1 , \, \f y_2 , \, \ldots$  are, indeed, in $ H^3(\Omega;\R^d)  $. Therefore, the space spanned by them, and by $\f y_0 \equiv \f 1$, \EEE
 \[
  V_n : = \spa \left \{\f 1,   \f y_1, \dots , \f y_n \right \} \subset  H^3(\Omega;\R^d)  \,.
  \] 
 We will  need to consider both the  orthogonal
projection
$\mathbb{P}_{H^3}^n
: H^3(\Omega;\R^d) \longrightarrow V_n$ and $\mathbb{P}_{H^2}^n
: H^2(\Omega;\R^d) \longrightarrow V_n$. With slight abuse, we will drop the subscript $H^k$, $k\in \{2,3\}$, in their notation.


\subsubsection*{The approximate system}
Combining the regularization for the damage model with the Galerkin-discretization for the elasticity equations, we end up with the regularized--discretized system 
\begin{subequations}\label{eq:regularized}
\begin{align}
 \int_\Omega \f u _{tt}{\cdot} \f z  + \left( b({\chi}) \Vm \varepsilon(\f u_t )  {+} a({\chi} )\CC\varepsilon({\f u})\right)  {:} \eps(\f z )\dx &   && 
  \label{regularized1}
\\
\qquad \qquad \qquad 
= \int_\Omega \f f {\cdot} \f z \dx &  && \text{for all }\f z \in V^n,  \text{a.e.\ in } (0,T), \EEE
\\
\nu  \omega_{tt}  +\omega+ \chi_t +I'_{\delta } ( {\chi_t} ) +\tfrac{1}{2}a'(\chi) \Cm{\f u}{\f u} +\invbreve{W}'({\chi})-{\chi}&=  0     &&  \text{a.e.\ in } Q, \EEE 
\label{regularized2}\\
-\Delta \chi + \breve{W}_\delta'(\chi)+\chi  &= \omega    &&  \text{a.e.\ in } Q\,,
 \EEE \label{regularized3}
 \\
 \partial_{\pmb n}\chi & =0 &&  \text{a.e.\ on } \Sigma\,.
  \label{regularized=BC}
\end{align}
\end{subequations}

\subsection{Existence 
 for  the regularized approximate  system}
\label{ss:4.3}
\renewcommand{\eps}[1]{\varepsilon ({#1})}

 First of all, let us show that the Cauchy problem for  system \eqref{eq:regularized},
  supplemented with the initial data $(\mathbb{P}^n( \f u_0), 
 \mathbb{P}^n (\f v_0), 
\chi_0 )$ and with an additional initial datum for $\omega_t$, \EEE 
  does admit a local-in-time 
\emph{strong} solution (here `strong' refers to the fact that  \eqref{regularized2}--\eqref{regularized=BC} are satisfied pointwise). 
\begin{proposition}
\label{prop:loc-exist-approx}
Let $(\uu_0,\vv_0,\chi_0) \in H^3(\Omega;\R^d){\times}  H^2(\Omega;\R^d) {\times} H^2(\Omega)$ fulfill Hyp.\ 
\textbf{\ref{h:2-strong}}.  Let $\varpi_0 \in L^2(\Omega)$ be given.
\par
For every  $\delta,\, \nu >0$  there exists $\widetilde T = \widetilde T(\delta,\nu) \in (0,T]$ \EEE  such that for every   $n \in \N$  system \eqref{eq:regularized}
admits a solution $(\uu,\chi)$ with the regularity 
\begin{equation}
\label{reg-solution-local-in-time}
\begin{aligned}
&\uu\in H^1(0,\widetilde{T};H^3(\Omega;\R^d))\cap W^{1,\infty}(0,\widetilde{T};H^2(\Omega;\R^d)) 
 \cap H^2(0,\widetilde{T};  H^1(\Omega;\R^d)), 
  \\
			&\chi\in   W^{1,\infty}(0,\widetilde{T};H^2(\Omega))\cap W^{2,\infty}(0,\widetilde{T};H^1(\Omega))\,, \EEE  
			\end{aligned}
\end{equation}
satisfying the initial conditions 
\[
\begin{cases}
\uu(0)= \mathbb{P}^n( \f u_0), 
\\
 \vv(0)= \mathbb{P}^n (\f v_0), 
 \\
  \chi(0)= \chi_0,  
  \\
  \omega'(0)= \varpi_0
  \end{cases} \qquad \text{a.e.\ in } \Omega\,.
\]
\end{proposition}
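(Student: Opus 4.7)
The plan is to recast the Cauchy problem for system \eqref{eq:regularized} as an abstract first-order Cauchy problem in a Banach space and invoke the classical Picard--Lindel\"of theorem. Two features make this work: the $\delta$- and Yosida-type regularizations render every nonlinearity locally Lipschitz, and the Galerkin truncation reduces the momentum equation to a second-order ODE on the finite-dimensional space $V^n\subset H^3(\Omega;\R^d)$, so that $\uu$ is automatically smooth in space.

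The key preliminary step is to eliminate $\chi$ in favour of $\omega$ by inverting \eqref{regularized3}. Since $\breve W_\delta\in C^3(\R)$ is convex with $0\leq\breve W''_\delta\leq 1/\delta$, the nonlinear operator $\mathcal L(\chi):=-\Delta\chi+\breve W'_\delta(\chi)+\chi$ (with homogeneous Neumann boundary conditions) is strongly monotone and bijects $\{\chi\in H^2(\Omega): \partial_{\f n}\chi=0\}$ onto $L^2(\Omega)$; its inverse $\Phi:=\mathcal L^{-1}$ is Lipschitz from $L^2(\Omega)$ to $H^1(\Omega)$ (testing the monotone increment) and bounded from $L^2(\Omega)$ to $H^2(\Omega)$ (by elliptic regularity using \eqref{omega-smooth}). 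A formal time differentiation of \eqref{regularized3} yields
\begin{equation*}
 -\Delta\chi_t+\bigl(\breve W''_\delta(\chi)+1\bigr)\chi_t=\omega_t,
\end{equation*}
so that $\chi_t=\mathcal M_\chi^{-1}\omega_t$ with $\mathcal M_\chi^{-1}\colon L^2(\Omega)\to H^2(\Omega)$ bounded uniformly for $\chi$ in bounded subsets of $H^2$. Setting $\f v:=\uu_t$ and $\varpi:=\omega_t$, system \eqref{eq:regularized} then rewrites as an abstract ODE $\tfrac{d}{dt}(\uu,\f v,\omega,\varpi)=\mathcal F(\uu,\f v,\omega,\varpi)$ on the Banach space $\mathcal X:=V^n\times V^n\times L^2(\Omega)\times L^2(\Omega)$, with initial datum $(\mathbb P^n\uu_0,\mathbb P^n\f v_0,\mathcal L(\chi_0),\varpi_0)$; note $\mathcal L(\chi_0)\in L^2(\Omega)$ by \eqref{chidata-strong}.

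Because $I'_\delta$, $\breve W'_\delta$, $a$, $a'$, $a''$, $b$ and $\invbreve W'$ are $C^1$ with polynomial growth, and because every element of $V^n$ is smooth in space, one checks that each component of $\mathcal F$ is locally Lipschitz on $\mathcal X$ once the Lipschitz continuity of $\omega\mapsto\Phi(\omega)$ and $(\omega,\varpi)\mapsto\mathcal M_{\Phi(\omega)}^{-1}\varpi$ from the previous step is used. The abstract Picard--Lindel\"of theorem then produces a unique local solution on some interval $[0,\widetilde T]$ with $\widetilde T=\widetilde T(\delta,\nu)>0$ fixed by the Lipschitz constants of $\mathcal F$ on a ball around the initial datum. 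The regularity \eqref{reg-solution-local-in-time} is recovered \textit{a posteriori}: $\chi\in W^{1,\infty}(0,\widetilde T;H^2(\Omega))$ follows from $\Phi$ and $\mathcal M_\chi^{-1}$, whereas differentiating \eqref{regularized3} twice gives
\begin{equation*}
 -\Delta\chi_{tt}+(\breve W''_\delta(\chi)+1)\chi_{tt}=\omega_{tt}-\breve W'''_\delta(\chi)|\chi_t|^2,
\end{equation*}
yielding $\chi_{tt}\in L^\infty(0,\widetilde T;H^1(\Omega))$ by elliptic regularity (Sobolev embedding provides $\chi_t\in L^\infty$, hence the right-hand side lies in $L^\infty(0,\widetilde T;L^2)$); for $\uu$ the regularity is immediate from $V^n\subset H^3(\Omega;\R^d)$ and $\uu\in C^2([0,\widetilde T];V^n)$. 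The main obstacle will be to control the quadratic coupling term $\tfrac12 a'(\chi)\mathbb{C}\varepsilon(\uu){:}\varepsilon(\uu)$ which ties the elasticity and damage blocks together; this is handled thanks to the polynomial growth \eqref{ass-a-3-strong} of $a''$ and the pointwise-in-space smoothness of Galerkin functions. A Schauder fixed-point argument based on the same decoupling (solve the linear Galerkin ODE for $\uu$ with $\chi$ frozen, then solve the damage subsystem with $\uu$ frozen, and iterate) is an equivalent alternative which avoids the explicit inversion of $\mathcal L$.
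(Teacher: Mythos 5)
Your proof is correct but follows a genuinely different route from the paper's. The paper sets up a two-stage Schauder fixed point on the damage variable $\bar\chi \in L^\infty(\Omega{\times}(0,\widetilde T))$: Lemma~\ref{l:solvability-u} solves the Galerkin elasticity block to produce $\mathcal{T}_{\uu}(\bar\chi)$, Lemma~\ref{l:4chi} solves the regularized damage subsystem to produce $\mathcal{T}_\chi\bigl(\bar\chi,\mathcal{T}_\uu(\bar\chi)\bigr)$, and Lemma~\ref{lem:shorttimeexistence} shows that the composite $\mathcal{T}$ maps a ball $B_{\mathsf{R}}^\infty(\widetilde T)$ into itself for $\widetilde T$ small enough. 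You instead invert the uniformly monotone elliptic operator $\mathcal{L}(\chi)=-\Delta\chi+\breve W'_\delta(\chi)+\chi$ (with Neumann BC) to eliminate $\chi$ in favour of $\omega$, rewrite the whole regularized system as one first-order abstract ODE on $V^n\times V^n\times L^2(\Omega)\times L^2(\Omega)$, and invoke Banach-space Picard--Lindel\"of. This is structurally simpler and yields \emph{uniqueness} of the approximate solution essentially for free, a property that Schauder does not give and that the paper only proves for the damage sub-block (within Lemma~\ref{l:4chi}). The paper's modular fixed point, on the other hand, packages the sub-block estimates \eqref{uLinfty} and \eqref{chiLinfty} in exactly the form needed for the uniform bounds of Proposition~\ref{prop:loc}. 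One small imprecision in your sketch: to get Lipschitz continuity of $a'(\Phi(\omega))$, $b(\Phi(\omega))$ in $L^\infty(\Omega)$ you actually need $\Phi=\mathcal{L}^{-1}$ to be Lipschitz from $L^2(\Omega)$ \emph{into $H^2(\Omega)$}, not merely bounded there and Lipschitz into $H^1(\Omega)$; this does hold with a $\delta$-dependent constant, since $\|\breve W''_\delta\|_{L^\infty(\R)}\leq 1/\delta$ by \eqref{properties-W-delta} combined with elliptic regularity, and likewise your Lipschitz estimate for $\omega\mapsto\mathcal M_{\Phi(\omega)}^{-1}$ uses $|\breve W'''_\delta|\leq C/\delta^3$ from Lemma~\ref{l:Robert}. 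Once those two points are made explicit, the argument closes.
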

\noindent 
In fact, as a consequence of the a priori estimates from Proposition \ref{prop:loc}, we will improve the above local existence result and show that 
the final time $\widetilde T$  neither depends    on $\delta$ nor  on $\nu$. 

In order to prove the existence of solutions for the discretized-regularized system, we will apply  Schauder's fixed-point argument. \EEE
More precisely, for fixed $ \bar{\chi} \in L^\infty(Q) $   we will solve  the Cauchy problem 
\begin{subequations}\label{galerkinSchauder}
\begin{align}
 \int_\Omega \f v _{t}\cdot \f z  + \left( b({\bar\chi}) \Vm \varepsilon(\f u_t )  + a({\bar\chi} )\CC\varepsilon({\f u})\right)  : \eps{\f z }\dx &= \int_\Omega \f f \cdot \f z \dx   && 
  \label{GalerkinSchauder1}
\\
    \text{for all }\f z \in V^n,   & &&  \text{a.e.\ in } (0,T),
    \nonumber
\\
\f u_t&=\f v   &&   \text{a.e.\ in } (0,T),
 \label{GalerkinSchauder12}\\
\nu  \omega_{tt}  +\omega+ \chi_t +I'_{\delta } ( {\chi_t} ) +\tfrac{1}{2}a'(\bar\chi) \Cm{\f u}{\f u}   +\invbreve{W}'({\bar\chi})-{\bar\chi}&=  0     &&  \text{a.e.\ in }Q,  
\label{GalerkinSchauder2}\\
-\Delta \chi + \breve{W}_\delta'(\chi)+\chi  &= \omega    &&  \text{a.e.\ in } Q\,,  \label{GalerkinSchauder3}
\\
  \partial_{\pmb n}\chi & =0 && \ \text{a.e.\ on } \Sigma\,,
  \label{GalerkinSchauder4}
\end{align}
%
%
\end{subequations}
and prove that the solution operator $\bar \chi \mapsto \chi$ admits a fixed point as soon as $(x,t)\mapsto \bar\chi (x,t)$ is defined on a cylinder $\Omega{\times}
(0,\widetilde T)$ with sufficiently small $\widetilde T$. 

\subsubsection*{The fixed point argument: solving the momentum balance}

Firstly, we solve the discretized momentum balance \eqref{GalerkinSchauder1}--\eqref{GalerkinSchauder12} for fixed $\bar \chi\in L^\infty(Q)$. 
For notational simplicity,
we will consider as a solution operator 
   the mapping $\bar\chi\mapsto \uu$, disregarding  the solution component $\vv$.
\begin{lemma}
\label{l:solvability-u}
 Let 
 $\bar \chi\in L^\infty(Q)$ be fixed. 
 For  every pair $ (\f u_0, \f v_0) \in H^3(\Omega;\R^d) {\times}  H^2(\Omega;\R^d)$ fulfilling \eqref{compat-below}
  there exists a
unique  solution
\begin{equation}
\label{maximal-solution}
  ( \f u, \f v)\in H^1(0,T; V^n{\times} V^n) 
  \end{equation}
   to the Cauchy  problem for system \eqref{GalerkinSchauder1}--\eqref{GalerkinSchauder12}, supplemented with the initial conditions
\[ ( \f u(0), \f v(0))=(\mathbb{P}^n (\f u_0), \mathbb{P}^n  (\f v_0)).
\] 
Moreover, there exists a   function  $\zeta_{\uu} : [0,\infty)^4 \to [0,\infty)$,   monotonously increasing 
w.r.t.\  all of its arguments, 
 such that
\begin{equation}
\label{uLinfty}
\begin{aligned}
& \| \f u\|_{L^\infty(0,T;V^n)} + \| \f v\|_{L^\infty(0,T;V^n)} +
  \| \f v_t\|_{L^2(0,T;V^n)}
  \\
  &
  \leq \zeta_\uu\Big( \| \f f\|_{L^2(0,T;H^1(\Omega)}, \| \bar \chi\|_{L^\infty(Q)}, \| \f u_0\|_{H^3(\Omega)}, \| \f v _0\|_{H^2(\Omega)}\Big) \,,
  \end{aligned}
\end{equation}
and the 
 solution operator 
$
 \mathcal{T}_{\f u}: L^\infty(Q) \to  H^1(0,T; V^n)$  defined by $  \bar \chi \mapsto  \uu$,
 is continuous. 
\end{lemma}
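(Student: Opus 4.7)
Since $V^n$ is finite-dimensional, the Cauchy problem for \eqref{GalerkinSchauder1}--\eqref{GalerkinSchauder12} at fixed $\bar\chi$ reduces to a linear Carathéodory ODE. Expanding $\f u(t) = \sum_{i=0}^n U_i(t)\,\f y_i$ and $\f v(t) = \sum_{i=0}^n V_i(t)\,\f y_i$ in the Galerkin basis (with $\f y_0 = \f 1$) and testing \eqref{GalerkinSchauder1} against each $\f y_j$, the system takes the form
\[
M\,\dot{\f V}(t) + A(t)\,\f V(t) + B(t)\,\f U(t) = \f F(t),\qquad \dot{\f U}(t) = \f V(t),
\]
with constant invertible Gram matrix $M_{ij} = \int_\Omega \f y_i \cdot \f y_j \, \dx$, time-dependent matrices
\[
A_{ij}(t) = \io b(\bar\chi(t))\,\VV \varepsilon(\f y_i){:}\varepsilon(\f y_j)\,\dx,\qquad B_{ij}(t) = \io a(\bar\chi(t))\,\CC\varepsilon(\f y_i){:}\varepsilon(\f y_j)\,\dx,
\]
and $F_j(t) = \pairing{}{H^1(\Omega)}{\f f(t)}{\f y_j}$. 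Since $a,b \in \mathrm{C}^0(\R)$ and $\bar\chi \in L^\infty(Q)$, we have $A,B \in L^\infty(0,T)$, while $\f f \in L^2(0,T; H^1(\Omega)^*)$ yields $\f F \in L^2(0,T; \R^{n+1})$. The Carathéodory existence theorem for linear ODEs in normal form then produces a unique global solution $(\f U,\f V) \in H^1(0,T;\R^{2(n+1)})$, which is exactly \eqref{maximal-solution}, with the prescribed initial conditions coming from $\mathbb{P}^n \f u_0$ and $\mathbb{P}^n \f v_0$.

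For the a priori bound \eqref{uLinfty}, the natural step is to test \eqref{GalerkinSchauder1} with $\f z = \f v(t) \in V^n$. Using $b \geq b_0 > 0$, positive-definiteness of $\VV$, Korn's inequality on $V^n$ and Young's inequality, one obtains
\[
\frac{\dd}{\dd t}\frac12\|\f v\|_{L^2}^2 + c\,\|\f v\|_{H^1}^2 \leq C\bigl(\|\f f\|_{H^1(\Omega)^*}^2 + (1{+}\|a(\bar\chi)\|_{L^\infty(Q)}^2)\,\|\varepsilon(\f u)\|_{L^2}^2\bigr),
\]
and since $\f u(t) = \mathbb{P}^n\f u_0 + \int_0^t \f v\,\ds$, Gronwall's lemma controls $\f u$ and $\f v$ in $L^\infty(0,T;V^n)$. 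The bound on $\f v_t \in L^2(V^n)$ then follows by comparison in the normal-form ODE (inverting $M$ and using $A,B \in L^\infty$). The monotonicity of $\zeta_{\f u}$ in each argument is transparent from this construction, since $\|a(\bar\chi)\|_{L^\infty(Q)}$ and $\|b(\bar\chi)\|_{L^\infty(Q)}$ are monotone functions of $\|\bar\chi\|_{L^\infty(Q)}$ via the growth estimates \eqref{ultimate-growth-conditions}.

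For continuity of $\mathcal{T}_{\f u}$, I would take $\bar\chi_k \to \bar\chi$ in $L^\infty(Q)$ and write down the linear ODE satisfied by the differences $\tilde{\f U}_k := \f U_k - \f U$, $\tilde{\f V}_k := \f V_k - \f V$:
\[
M\,\dot{\tilde{\f V}}_k + A_k\,\tilde{\f V}_k + B_k\,\tilde{\f U}_k = (A{-}A_k)\f V + (B{-}B_k)\f U,\qquad \dot{\tilde{\f U}}_k = \tilde{\f V}_k,
\]
with zero initial data. The continuity of $a,b$ gives $A_k \to A$, $B_k \to B$ in $L^\infty(0,T)$, so, thanks to the uniform bound on $(\f U,\f V)$ from the previous step, the source on the right-hand side tends to $0$ in $L^2(0,T;\R^{n+1})$. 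Gronwall's inequality then yields $(\tilde{\f U}_k,\tilde{\f V}_k)\to 0$ in $H^1(0,T;\R^{2(n+1)})$, whence $\f u_k \to \f u$ in $H^1(0,T;V^n)$ as required. I do not anticipate a serious obstacle at this stage: the whole lemma is essentially linear ODE theory on the finite-dimensional space $V^n$, and the only bookkeeping issue is to keep track of the monotone dependence of all constants on $\|\bar\chi\|_{L^\infty(Q)}$, which is what \eqref{ultimate-growth-conditions} is designed to deliver.
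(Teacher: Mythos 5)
Your proof is essentially the paper's argument, made somewhat more explicit: the paper likewise reduces to a Carath\'eodory ODE on the finite-dimensional space $V^n$ (citing \cite[Chapter I, Theorem 5.2]{hale}), obtains the a priori bound \eqref{uLinfty} via ``norm-equivalence of all finite-dimensional norms'' together with \eqref{property-of-projections}, and proves continuity of $\mathcal{T}_{\uu}$ by subtracting the two momentum balances, testing with $\widehat{\vv}=\vv_1-\vv_2$, and applying Gronwall --- which is exactly your difference-ODE argument written in weak form. One small wrinkle to patch: since $V^n$ contains the constant $\f y_0\equiv\f 1$ and $\eps(\f 1)=0$, ``Korn's inequality on $V^n$'' does not give $\|\f v\|_{H^1}^2$ from the dissipation, only $\|\eps(\f v)\|_{L^2}^2$; you must either control $\int_\Omega\f v\dx$ separately by testing \eqref{GalerkinSchauder1} with $\f z=\f 1$ (as the paper does at the level of Proposition~\ref{prop:formal-aprio}, Claim~0), or simply fall back on the linear-ODE bound in normal form you already wrote down, for which no Korn inequality is needed --- either fix is routine, so this is not a genuine gap.
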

\begin{proof}
    A classical existence theorem (see~\cite[Chapter I, Theorem 5.2]{hale}) 
    ensures that, 
   for every $n\in\N$, 
    there exists a time $T_n^*$ such that there exists a (unique) maximal  solution   $(\f u, \f v) $, in the sense of Carath\'e{}odory,   to the Cauchy problem for  \eqref{GalerkinSchauder1}--\eqref{GalerkinSchauder12}
with 
\[
  ( \f u, \f v)\in \AC([0, \tau]; V^n{\times} V^n) \qquad \text{ for all $0 <\tau< T_n^*$.}
  \]
   With straightforward arguments, based on  the norm-equivalence of all finite-dimensional norms, 
  we obtain that 
  \[
  \begin{aligned}
 &  \| \f u\|_{L^\infty(0,T_n^*;V^n)} +  \| \f v\|_{L^\infty(0,T_n^*;V^n)} + \| \f v_t\|_{L^2(0,T_n^*;V^n)} \\
 & \leq 
\zeta_{\uu}  \Big( \| \f f\|_{L^2(0,T;H^1(\Omega)}, \| \bar \chi\|_{L^\infty(Q)}, \| \mathbb{P}^n( \f u_0)
  \|_{V^n}, \| \mathbb{P}^n (\f v _0)\|_{V^n} \Big) \,.
  \end{aligned}
  \]
  Since
  \begin{equation}
  \label{property-of-projections}
   \| \mathbb{P}^n( \f u_0)
  \|_{V^n}
  \leq c \| \f u_0\|_{H^3(\Omega)}, \qquad 
   \| \mathbb{P}^n (\f v _0)\|_{V^n} \leq c \| \f v_0\|_{H^2(\Omega)},
   \end{equation} 
   the right-hand side in the above estimate does not depend on $n$    and thus the pair $ ( \f u_n, \f v_n)$ extends to the whole interval $[0,T]$. 
   Estimate \eqref{uLinfty}  is then a  consequence of   
   \eqref{property-of-projections} and of the monotonicity of the function $\zeta_{\uu}  $. 
   \par
      The continuity of the solution operator follows  from  estimate \eqref{cont-dep-below}
      below. To prove it, we consider
      system \eqref{GalerkinSchauder1}--\eqref{GalerkinSchauder12},   corresponding to  two given  functions $\bar\chi_1$, $\bar\chi_2$, subtract
       \eqref{GalerkinSchauder1} with  $\bar\chi=\bar\chi_2$ from   \eqref{GalerkinSchauder1} for    $\bar\chi=\bar\chi_1$, \EEE  and test  the obtained relation by $\widehat\vv:= \vv_1{-}\vv_2 = \partial_t \widehat \uu$, with $\widehat \uu :=  \uu_1{-}\uu_2$. 
      Integrating in time and taking into account that
      $\uu_1 (0)= \uu_2 (0)$ and  $ \vv_1 (0)= \vv_2 (0)$ 
we obtain for all $t\in [0,T]$
       \[
       \begin{aligned}
       &
\int_\Omega \tfrac12 |\widehat\vv(t)|^2 \dd x + b_0 \eta_{\Vm} \int_0^t \int_\Omega |\eps{\widehat\vv}|^2 \dd x \dd s 
\\
&
       \leq 
       \int_0^t |\Vm|  \Big\{ 
        \|\eps{\vv_2}\|_{L^2(\Omega)} \| \eps{\widehat\vv}\|_{L^2(\Omega)} \| b(\bar\chi_1){-} b(\bar\chi_2) \|_{L^\infty(\Omega)} 
        \\
        &
        \qquad   \qquad  \qquad   {+}
         \| a(\bar\chi_1)\|_{L^\infty(\Omega)} \| \eps{\widehat\uu}\|_{L^2(\Omega)} \| \eps{\widehat\vv}\|_{L^2(\Omega)}
          \\
        &
        \qquad   \qquad  \qquad 
         {+}  \|\eps{\uu_2}\|_{L^2(\Omega)} \| \eps{\widehat\vv}\|_{L^2(\Omega)} \| a(\bar\chi_1){-} a(\bar\chi_2) \|_{L^\infty(\Omega)} 
         \Big\}
        \dd s
        \\
        &
        \stackrel{(1)}\leq \frac{ b_0 \eta_{\Vm}}2 \| \eps{\widehat\vv}\|_{L^2(\Omega)}^2 +K_1 T  \int_0^t s \left(  \int_0^s 
          \|\eps{\widehat\vv}\|_{L^2(\Omega)}^2   \dd r \right)  \dd s + K_2 \| \bar\chi_1{-} \bar\chi_2\|_{L^\infty(\Omega{\times}(0,T))}^2\,,
       \end{aligned}
       \]
       where {\footnotesize (1)} follows from Young's inequality and from  estimating  $ \| \eps{\widehat\uu(s)}\|_{L^2(\Omega)}^2 \leq s \int_0^s  
       \| \eps{\widehat\vv}\|_{L^2(\Omega)}^2 \dd r.  $
      The constant $K_2$ depends on  $|\Vm|$, on 
       $\max_{|r|\leq M} (|a'(r)|{+}|b'(r)| )$ (with $M =\| \bar\chi_1\|_{L^\infty(Q)} {+}\| \bar\chi_2\|_{L^\infty(Q)} $), and on 
      $ \sup_{t\in [0,T]}  \left(\|\eps{\uu_2(t)}\|_{L^2(\Omega)} {+} \|\eps{\vv_2(t)}\|_{L^2(\Omega)}  \right) $, cf.\ \eqref{uLinfty}. Likewise, the constant $K_1$ also
       depends on 
      $\|\bar\chi_1\|_{L^\infty(\Omega{\times}(0,T))} $.  All in all, with the Gronwall Lemma we conclude that 
      \begin{subequations}
        \label{cont-dep-below}
        \begin{align}
        \int_0^t  \| \eps{\widehat\vv}\|_{L^2(\Omega)}^2 \dd s \leq  \kappa_2 \| \bar\chi_1{-} \bar\chi_2\|^2_{L^\infty(Q)} \exp (\kappa_1 T^2 )
       \intertext{with $\kappa_i= 2 ( b_0 \eta_{\Vm} )^{-1} K_i$ and, a fortiori,  we have for some constant $\kappa_3$}
     \sup_{t\in [0,T]}  \| \eps{\widehat\vv(t)} \|_{L^2(\Omega)}  \leq  \kappa_3 \| \bar\chi_1{-} \bar\chi_2\|_{L^\infty(Q)} \,.
        \end{align}
        \end{subequations}
\end{proof}

\subsubsection*{The fixed point argument: solving the damage flow rule}

We now  solve the approximate flow rule
 \eqref{GalerkinSchauder2}--\eqref{GalerkinSchauder4} for fixed $\bar \chi\in L^\infty(Q)$ and with $\uu = 
   \bar \uu:  = \mathcal{T}_{\f u} (\bar\chi)$. 
   The statement of Lemma \ref{l:4chi} mirrors that of Lemma \ref{l:solvability-u} and, again with slight abuse, we will consider as a solution operator 
   the map $(\bar\chi,\bar\uu)\mapsto \chi$, disregarding  the solution component $\omega$. \EEE
\begin{lemma}
\label{l:4chi}
Let  $\bar \chi \in L^\infty(Q)$  and $ \bar  \uu =  \mathcal{T}_{\f u} (\bar\chi) \in L^\infty(0,T;V^n)$. Set 
\[
\bar
h := \bar\chi -\invbreve{W}'({\bar\chi})  - \tfrac{1}{2}a'(\bar\chi) \Cm{\bar \uu }{\bar \uu }\,,
\]
  and consider the PDE system
\begin{equation}
\label{GalerkinSchauder-DAMAGE}
\begin{aligned}
\nu  \omega_{tt}  +\omega+ \chi_t +I'_{\delta } ( {\chi_t} ) &=  \bar h      &&  \text{a.e.\ in } Q, 
\\
-\Delta \chi + \breve{W}_\delta'(\chi)+\chi  &= \omega    &&  \text{a.e.\ in } Q \,,  
\end{aligned}
\end{equation}
supplemented with the  boundary condition \eqref{GalerkinSchauder4}. 
\par
Then, for every $\chi_0 \in H^2 (\Omega)$ fulfilling \eqref{chidata-strong} and $\varpi_0 \in  L^2(\Omega)$
there exists a unique solution 
\[
 \chi\in \boldsymbol{X}:= W^{1,\infty}(0,T; H^2(\Omega))  \cap W^{2,\infty}(0,T;H^2(\Omega)),   \qquad \omega \in W^{2,\infty}(0,T;L^2(\Omega)),  \EEE
\]
to system \eqref{GalerkinSchauder-DAMAGE}
satisfying  $\chi(0) = \chi_0$ and $\omega_t(0)= \varpi_0$.
\par
Moreover, there exists a   function  $\zeta_{\chi} : [0,\infty)^5 \to [0,\infty)$,   monotonously increasing 
w.r.t.\  all of its arguments, 
 such that
\begin{equation}
\label{chiLinfty}
\begin{aligned}
& 
 \| \chi\|_{W^{1,\infty}(0,T;H^2(\Omega)) {\cap}  W^{2,\infty}(0,T;H^1(\Omega))} + \nu \|\omega \|_{W^{2,\infty}(0,T;L^2(\Omega))} \EEE
  \\
  &
  \leq \zeta_\chi\Big( \frac1\delta,  \| \bar \chi\|_{L^\infty(Q)}, \| \bar \uu\|_{L^\infty(0,T; V^n)},
   \| \chi_0\|_{H^2(\Omega)},
   \| |\partial \breve{W}^{\circ}|(\chi_0) \|_{L^2(\Omega)},
   \nu^{1/2} \| \varpi _0\|_{L^2(\Omega)}\Big)  \EEE \,,
  \end{aligned}
\end{equation}
\EEE and the 
 solution operator 
$
 \mathcal{T}_{\chi} $ mapping  $ (\bar \chi ,\bar u ) \mapsto  \chi $
 is continuous from 
$ L^\infty(Q)\times L^\infty(0,T; V^n) $ to 
 $\boldsymbol{X} $  endowed with the weak$^*$ topology.
\end{lemma}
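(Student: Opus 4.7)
\smallskip

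\noindent\textbf{Proof plan.} The strategy is to reduce the coupled system \eqref{GalerkinSchauder-DAMAGE} to a second-order-in-time abstract evolution problem in $\omega$, exploiting that the Yosida-type regularizations $\breve W_\delta$ and $I_\delta$ render every nonlinearity globally Lipschitz (with constants depending on $1/\delta$). First I would solve the elliptic equation \eqref{GalerkinSchauder3}: by monotonicity of $-\Delta + I$ with homogeneous Neumann boundary conditions, perturbed by the Lipschitz monotone operator $\breve W'_\delta$, for every $\omega\in L^2(\Omega)$ there is a unique $\chi\in H^2(\Omega)$ solving it, and by the Brezis--Strauss argument underlying \eqref{est-omega-BS}, $\|\chi\|_{H^2}\lesssim \|\omega\|_{L^2}$. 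This gives a Lipschitz solution operator $\mathcal{S}:L^2(\Omega)\to H^2(\Omega)$. Plugging $\chi=\mathcal S(\omega)$ into \eqref{GalerkinSchauder2} yields a second-order evolution for $\omega$ alone, supplemented with the initial data $\omega(0)=-\Delta\chi_0+\breve W'_\delta(\chi_0)+\chi_0\in L^2(\Omega)$ (thanks to \eqref{chidata-strong} and Hyp.~\textbf{\ref{h:1-strong}}) and $\omega_t(0)=\varpi_0$.

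\smallskip

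\noindent To construct a solution, I would employ a Faedo--Galerkin scheme based on the $L^2$-orthonormal basis of eigenfunctions of $-\Delta+I$ with Neumann conditions. On each finite-dimensional subspace, the problem becomes a system of second-order ODEs with Lipschitz right-hand side, so Cauchy--Lipschitz yields global solvability on $[0,T]$. The crucial a priori estimate is obtained by testing \eqref{GalerkinSchauder2} with $\omega_t$. Differentiating the elliptic equation in time gives
\begin{equation*}
\omega_t \;=\; -\Delta\chi_t + \bigl(\breve W''_\delta(\chi)+1\bigr)\chi_t,
\end{equation*}
so that $\langle\chi_t,\omega_t\rangle = \|\chi_t\|_{H^1}^2 + \int_\Omega \breve W''_\delta(\chi)|\chi_t|^2\,dx \geq \|\chi_t\|_{H^1}^2$, and, by the convexity of $I_\delta$ together with $I'_\delta(0)=0$,
\begin{equation*}
\langle I'_\delta(\chi_t),\omega_t\rangle \;=\; \int_\Omega I''_\delta(\chi_t)|\nabla\chi_t|^2\,dx + \int_\Omega I'_\delta(\chi_t)\bigl(\breve W''_\delta(\chi)+1\bigr)\chi_t\,dx \;\geq\; 0.
\end{equation*}
Combined with $\nu\langle\omega_{tt},\omega_t\rangle+\langle\omega,\omega_t\rangle=\tfrac{d}{dt}\bigl(\tfrac\nu2\|\omega_t\|_{L^2}^2+\tfrac12\|\omega\|_{L^2}^2\bigr)$, and a Gronwall argument on the right-hand side $\langle\bar h,\omega_t\rangle$, this yields uniform control of $\|\omega_t\|_{L^\infty(L^2)}$, $\|\omega\|_{L^\infty(L^2)}$, and $\|\chi_t\|_{L^2(H^1)}$. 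The $W^{1,\infty}(0,T;H^2(\Omega))$-bound for $\chi$ then comes from applying \eqref{est-omega-dert} pointwise in time to $\omega_t$, while the $W^{2,\infty}(0,T;H^1(\Omega))$-bound follows from differentiating once more, estimating $\|\omega_{tt}\|_{L^\infty(L^2)}$ by comparison in \eqref{GalerkinSchauder2}, and invoking \eqref{est-omega-dertt}. All the constants track through $\zeta_\chi$ as asserted in \eqref{chiLinfty}.

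\smallskip

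\noindent Standard weak-$*$ compactness and the Lipschitz character of $\breve W'_\delta$ and $I'_\delta$ then allow passage to the limit in the Galerkin scheme. Uniqueness is obtained by testing the difference of two solutions by the difference of their $\omega_t$, exactly as in the a priori estimate, using the monotonicity of $I'_\delta$ and the Lipschitz continuity of $\breve W'_\delta$. Continuity of $\mathcal{T}_\chi$ with respect to the weak-$*$ topology of $\boldsymbol{X}$ follows from the uniform bound \eqref{chiLinfty}, uniqueness, and Aubin--Lions compactness applied to any sequence $(\bar\chi_k,\bar\uu_k)\to(\bar\chi,\bar\uu)$ in $L^\infty(Q)\times L^\infty(0,T;V^n)$.

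\smallskip

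\noindent\textbf{Main obstacle.} The delicate point is obtaining the highest-order regularity $\chi\in W^{2,\infty}(0,T;H^1(\Omega))$ with constants tracked through \eqref{chiLinfty}. This requires differentiating the elliptic relation twice in time and absorbing the quadratic term $\breve W'''_\delta(\chi)|\chi_t|^2$ appearing in \eqref{est-omega-dertt}; this term blows up with $1/\delta^3$ as $\delta\to 0$, but is harmless for the fixed-$\delta$ statement since $\zeta_\chi$ is allowed to depend on $1/\delta$. A secondary subtlety is treating the $\nu\omega_{tt}$-term consistently: the initial datum $\omega_t(0)=\varpi_0$ is prescribed independently of \eqref{GalerkinSchauder2}, which is why the estimate \eqref{chiLinfty} features $\nu^{1/2}\|\varpi_0\|_{L^2}$.
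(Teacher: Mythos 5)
You take essentially the same route as the paper---promoting $\omega=-\Delta\chi+\breve W'_\delta(\chi)+\chi$ to the primary variable, testing the evolution equation by $\omega_t$, and bootstrapping through the inequalities of Lemma~\ref{l:props-omega}---but with two variations worth comparing. For the construction you propose a Faedo--Galerkin scheme in $\omega$ based on Neumann eigenfunctions of $-\Delta+I$, where the paper merely alludes to time discretization; since the elliptic solution operator $\omega\mapsto\chi$ from $L^2(\Omega)$ to $H^2(\Omega)$ and the Yosida-regularized nonlinearities are Lipschitz, Cauchy--Lipschitz does close the finite-dimensional ODE, so both constructions are valid and interchangeable. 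In the basic energy estimate you observe that $\langle I'_\delta(\chi_t),\omega_t\rangle\ge 0$, a refinement obtained by writing $\omega_t=-\Delta\chi_t+(\breve W''_\delta(\chi)+1)\chi_t$, integrating by parts, and using $I''_\delta\ge0$, $I'_\delta(r)\,r\ge0$ and $\breve W''_\delta\ge0$; the paper instead estimates $\|I'_\delta(\chi_t)\|_{L^2}\le\delta^{-1}\|\chi_t\|_{L^2}$ and invokes \eqref{est-omega-dert}, paying an avoidable $1/\delta$ at this step. Your version is cleaner, although $1/\delta$ enters $\zeta_\chi$ anyway through $\breve W''_\delta$ and $\breve W'''_\delta$ in the higher-order bootstrap, so the two approaches yield the same final bound \eqref{chiLinfty}. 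One point to tighten: you claim uniqueness follows ``exactly as in the a priori estimate, using the monotonicity of $I'_\delta$''; but for the difference of two solutions, the term $\langle I'_\delta(\partial_t\chi_1)-I'_\delta(\partial_t\chi_2),\widehat\omega_t\rangle$ loses its sign once $\widehat\omega_t$ is expressed via $\widehat\chi_t$, so the monotonicity argument does not carry over. The paper circumvents this by testing the difference of the evolution equation with $\widehat\omega_t$ and the time-differentiated difference of the elliptic equation with $\widehat\chi_t$, so that the cross terms $\langle\widehat\chi_t,\widehat\omega_t\rangle$ cancel; the $I'_\delta$-term is then absorbed via its Lipschitz constant $1/\delta$, not its sign. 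Adopting that test-function choice (or simply replacing ``monotonicity'' by ``Lipschitz continuity'' of $I'_\delta$) repairs your uniqueness argument.
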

\begin{proof}
It is rather standard 
    to prove the existence of solutions, e.g.\ by time discretization. That is 
     why,  we focus here mainly on deriving the necessary \textit{a priori} estimates to deduce the regularity 
      $ \chi\in \boldsymbol{X}$ \EEE
for 
      the solution, and estimate \eqref{chiLinfty}. 
    We test equation~\eqref{GalerkinSchauder3} by $\omega_t$, which provides the estimate 
    \[
    \begin{aligned}
       \frac{1}{2}   \frac{\mathrm{d}}{\dt}  \left( \nu \| \omega_t (t)\|_{L^2(\Omega)}^2 {+} \|\omega(t)\|_{L^2(\Omega)}^2 \right)
       &  \leq \| \chi_t {+} I_\delta'(\chi_t) \|_{L^2(\Omega)}  \|\omega_t \|_{L^2(\Omega)} + \| \bar h\|_{L^2(\Omega)}  \|\omega_t \|_{L^2(\Omega)}
       \\
       & \stackrel{(1)}\leq  \left( 1{+} \frac1\delta \right)  \|\chi_t \|_{L^2(\Omega)}  \|\omega_t \|_{L^2(\Omega)} +
    \|\bar h\|_{L^2(\Omega)}  \|\omega_t\|_{L^2(\Omega)}
             \\
       & \stackrel{(2)}\leq  \frac12 \|\bar h\|_{L^2(\Omega)}^2 +    \left( S_0{+} \frac{S_0}\delta {+}\frac12  \right)\EEE  \|\omega_t \|_{L^2(\Omega)}^2\,,
       \end{aligned}
       \]
        where {\footnotesize (1)} follows from the fact that $
       \| I_\delta'(\chi_t)\|_{L^2 (\R)} \leq \frac1\delta   \|\chi_t \|_{L^2(\Omega)} $ 
       by the Lipschitz continuity of $I_\delta'$ and the fact that $I_\delta '(0)=0$,
        cf.\ \eqref{prop-delta-1} ahead, while {\footnotesize (2)}  is a consequence of 
        \eqref{est-omega-dert}.
       Then, with the Gronwall lemma we obtain that 
       \[
    \sup_{t\in [0,T]}   \big(  \nu^{1/2} \EEE
     \|\omega_t(t) \|_{L^2(\Omega)} {+} \|\omega(t)\|_{L^2(\Omega)} \big) \leq \widetilde\zeta
   \Big(\frac1\delta, \| \bar h\|_{L^\infty(0,T; L^2(\Omega))},
   \| \omega(0)\|_{L^2(\Omega)},   \nu^{1/2} \EEE  \| \varpi _0\|_{L^2(\Omega)}\Big) 
       \]
       for some $\widetilde\zeta :  [0,\infty)^4 \to [0,\infty)$,   increasing 
w.r.t.\  all  arguments. 
Taking into account estimates  
\eqref{est-omega},  estimating $\| \bar h\|_{L^\infty(0,T; L^2(\Omega))}$ via
$ \| \bar \chi\|_{L^\infty(Q)} $ and $  \| \bar \uu\|_{L^\infty(0,T; H^2(\Omega))} $, and estimating $\|\omega(0)\|_{L^2(\Omega)}$ via 
\eqref{est-omega-0}, 
 we find 
  \[
    \sup_{t\in [0,T]}   \big( 
 \|\chi(t)\|_{H^2(\Omega)}   {+}   \|\chi_t(t) \|_{H^1(\Omega)} \big) \leq  \overline{\zeta}_\chi^{\delta,\nu} \EEE
       \]
       with the constant   $\overline{\zeta}_\chi^{\delta,\nu}$ depending on the same quantities as in  \eqref{chiLinfty}. 
A comparison argument in 
$\omega_t= -\Delta \chi_t + \breve W''_\delta ( \chi)\chi_t+\chi_t$
(recalling that $\| \breve W''_\delta ( \chi)\|_{L^\infty(\Omega{\times}(0,T))} \leq \frac1\delta$), 
 then allows us to conclude an estimate for 
$-\Delta \chi_t $ in $L^\infty (0,T;L^2(\Omega))$.  
Therefore, $\chi_t $ is estimated in  $L^\infty (0,T;H^2(\Omega))$.
Arguing by comparison in \eqref{GalerkinSchauder12}, \EEE  we ultimately deduce an estimate for 
$\omega_{tt} $ in $L^\infty (0,T;L^2(\Omega))$.  
A fortiori, by  
\eqref{est-omega-dertt} 
and taking into account that $\sup_{t\in [0,T]}
 \|\breve{W}'''(\chi(t))\|_{L^\infty(\Omega)} \|\chi_t(t)\|_{L^3(\Omega)}^2  \leq C$, 
we infer an estimate for $\chi_{tt} $ in $L^\infty (0,T;H^1(\Omega))$.  
Thus, \EEE suitably adapting the right-hand side term   $\overline{\zeta}_\chi^{\delta,\nu}$, \EEE  we conclude 
estimate \eqref{chiLinfty}.
\par
In order to have the solution operator $ \mathcal{T}_{\chi}$ well defined, let us verify that, for given $ \bar h  $ and data $\chi_0 $, $\varpi_0$,   the initial boundary-value
problem for \eqref{GalerkinSchauder-DAMAGE} admits a unique solution. Indeed, let $(\chi_i,\omega_i)$, $i=1,2$, two solution pairs.
Set $\widehat\chi= \chi_1-\chi_2$ and t $\widehat\omega= \omega_1-\omega_2$. 
We subtract system \eqref{GalerkinSchauder-DAMAGE} for $\omega_2$ from \eqref{GalerkinSchauder-DAMAGE} for $\omega_1$, 
thus obtaining 
\begin{equation}
\label{GalerkinSchauder-DAMAGE-diff}
\begin{cases}
 \nu \EEE  \wh\omega_{tt}  +\wh\omega+ \wh\chi_t +I'_{\delta } ( \partial_t\chi_1 ) -I'_{\delta } ( \partial_t\chi_2)  &=  0     
\\
-\Delta \wh\chi+ \breve{W}_\delta'(\chi_1) - \breve{W}_\delta'(\chi_2)+\wh\chi  &= \wh\omega  
\end{cases}
  \qquad   \text{a.e.\ in }Q \,. 
\end{equation}
 We test the first equation by $\wh \omega_t$, while we differentiate in time the second equation and test it by $\wh\chi_t$. Adding the resulting relations and integrating in time and space, we obtain
 \begin{align}
 \nonumber
 &
\frac{\nu}2 \|\wh \omega_t(t)\|_{L^2(\Omega)}^2 + \frac12  \| \wh\omega_t(t)\|_{L^2(\Omega)}^2 +\int_0^t \|\wh \chi_t \|_{H^1(\Omega)}^2  \dd s  
\leq I_1+I_2
\intertext{where, using that $I'_{\delta } $ is Lipschitz continuous with Lipschitz constant $\frac1\delta$,  we estimate}
& 
 \nonumber
I_1= \int_0^t \| I'_{\delta } ( \partial_t\chi_1 ) {-}I'_{\delta } ( \partial_t\chi_2) \|_{L^2(\Omega)}\|\wh \omega_t\|_{L^2(\Omega)} \dd s 
\leq \frac12 \int_0^t \|\wh \chi_t \|_{L^2(\Omega)}^2  \dd s    + \frac1{2\delta}\EEE \int_0^t \|\wh \omega_t \|_{L^2(\Omega)}^2  \dd s,
\intertext{while we have }
&
 \nonumber
I_2 = 
\int_0^t  \int_\Omega \left({-}\breve{W}_\delta''(\chi_1) \partial_t \chi_1 {+}\breve{W}_\delta''(\chi_2) \partial_t \chi_2 \right)
\wh \chi_t  \dd x \dd s   
\\
&   \nonumber
\qquad \leq  - \int_0^t \int_\Omega  \breve{W}_\delta''(\chi_1)  |\wh \chi_t|^2 \dd x \dd s 
+   \int_0^t  \| \breve{W}_\delta''(\chi_2) {-} \breve{W}_\delta''(\chi_1) \|_{L^2(\Omega)} \, \| \partial_t\chi_2 \|_{L^\infty(\Omega)}  \|\wh \chi_t\|_{L^2(\Omega)} \dd s
\\
&   \nonumber \qquad \stackrel{(1)}\leq \frac14 \int_0^t   \|\wh \chi_t\|_{L^2(\Omega)}^2 \dd s + C \int_0^t  \int_0^s   \|\wh \chi_t\|_{L^2(\Omega)}^2 \dd r \dd s \,.
\end{align}
Indeed, {\footnotesize (1)} follows from the convexity of $\breve{W}_\delta$,  from Young's inequality (with  the constant $C$ depending on 
$\|\chi_2 \|_{W^{1,\infty}(0,T;H^2(\Omega))}$), and from estimating   
\[
 \| \breve{W}_\delta''(\chi_2(s)) {-} \breve{W}_\delta''(\chi_1(s)) \|_{L^2(\Omega)}  \leq C \|\widehat \chi(s)\|_{L^2(\Omega)} \leq  C \int_0^s   \|\wh \chi_t(s)\|_{L^2(\Omega)} \dd s \,.
\]
All in all, we obtain
\[
\frac{\nu}2 \|\wh \omega_t(t)\|_{L^2(\Omega)}^2 + \frac12  \| \wh\omega_t(t)\|_{L^2(\Omega)}^2 +\frac14 \int_0^t \|\wh \chi_t \|_{H^1(\Omega)}^2  \dd s  
\leq  \frac1{2\delta} \EEE \int_0^t \|\wh \omega_t \|_{L^2(\Omega)}^2  \dd s + C \int_0^t  \int_0^s   \|\wh \chi_t\|_{L^2(\Omega)}^2 \dd r \dd s\,,
\]
and via  the Gronwall Lemma we conclude the desired uniqueness $\wh \chi=\wh \omega \equiv 0$ a.e.\ in $Q$. 
\par
Finally, let us sketch the proof of the continuity of $ \mathcal{T}_{\chi}$. Consider
$(\bar\chi_n,\bar\uu_n)_n \subset  L^\infty(Q)\times L^\infty(0,T; V^n) $ such that 
$
\bar\chi_n \to \bar\chi_\infty \text{ in } L^\infty(Q)$ and $ \bar\uu_n\to  \bar \uu_\infty  \text{ in } L^\infty(0,T; V^n)\,.
$
Due to estimate \eqref{chiLinfty},  the corresponding sequence $(\chi_n =\mathcal{T}_{\chi}(\bar\chi_n,\bar\uu_n))_n  $
is bounded in $ \Spx = W^{1,\infty}(0,T; H^2(\Omega))  \cap   W^{2,\infty}(0,T;H^1(\Omega))$. \EEE Likewise, the associated  sequence
 $(\omega_n)_n$ is bounded in $W^{2,\infty}(0,T; L^2(\Omega)) $. \EEE
 Hence,
 there exist a pair $(\chi_\infty,\omega_\infty) $ 
 and a subsequence $(n_k)_k$ 
 such that 
$\chi_{n_k} \weaksto \chi_\infty $ in $\Spx$ and $\omega_{n_k} \weaksto \,\omega_\infty $ in  $W^{2,\infty}(0,T; L^2(\Omega)) $. \EEE We standardly check that $(\omega_\infty,\chi_\infty)$ solve 
the Cauchy problem for system \eqref{GalerkinSchauder-DAMAGE} with 
$\bar h_\infty = \bar\chi_\infty  -\invbreve{W}'({\bar\chi_\infty })  - \tfrac{1}{2}a'(\bar\chi_\infty ) \Cm{\bar \uu_\infty }{\bar \uu_\infty  }$.
 Thus,  $\chi_\infty =  \mathcal{T}_{\chi}  ( \bar\chi_\infty,\bar \uu_\infty  )$. 
Since the limit is uniquely identified, a posteriori we have convergence along the whole sequence $(\chi_n )_n$. We have thus shown that 
\[
\bar\chi_n \to \bar\chi_\infty \text{ in } L^\infty(Q) \text{ and } \bar\uu_n\to  \bar \uu_\infty  \text{ in } L^\infty(0,T; V^n)
 \ \Longrightarrow \  \mathcal{T}_{\chi}  ( \bar\chi_n , \bar{\f u}_n) \weaksto  \, \mathcal{T}_{\chi}  ( \bar\chi_\infty, \bar{\f u}_\infty ) \text{ in } 
\Spx\,.
\]
This finishes the proof.
%
%
%
%
\end{proof}
\par
Let us now introduce the operator
\[
\mathcal{T}: L^\infty (Q) \to L^\infty (Q), \qquad \overline\chi \mapsto \mathcal{T}(\bar\chi):= \mathcal{T}_{\chi}(\mathcal{T}_{\uu}(\bar\chi))\,,
\]
and, for 
given $\mathsf{T} \in (0,T]$ and 
$\mathsf{R}>0$,  the notation 
\[
B_{\mathsf{R}}^\infty(\mathsf{T}):=\{ \bar \chi \in L^\infty(\Omega \times (0,\mathsf{T})) \mid \| \bar \chi - \chi_0 \|_{L^\infty(\Omega \times (0,\mathsf{T}))}\leq \mathsf{R} \}\,.
\]
With our next result we will show that there exists $\widetilde T \in (0,T]$  such that, if we restrict  $\mathcal{T}$ a closed ball in 
$ L^\infty (\Omega{\times}(0,\widetilde{T}))$,  $\mathcal{T}$  maps 
$B_{\mathsf{R}}^\infty(\mathsf{T})$
into itself and indeed admits a fixed point, which in fact provides a local-in-time solution to 
the Cauchy problem for 
system  \eqref{eq:regularized}. This concludes the \textbf{\underline{proof of Proposition \ref{prop:loc-exist-approx}}}. 
\begin{lemma}\label{lem:shorttimeexistence}
 Let $ (\f u_0, \f v_0,
\chi_0) \in H^3(\Omega;\R^d) {\times}  H^2(\Omega;\R^d) {\times}
 H^2(\Omega)$ 
 fulfill Hypothesis
 \ref{h:2-strong}.  Let $\varpi_0\in L^2(\Omega)$ be given.
 \par
 Then, for a  suitably chosen \EEE
 $\mathsf{R}>0$ 
 there exists  $\widetilde{T} = \widetilde{T}(\delta,\nu) \in   (0,T]$ such that 
 the operator $\mathcal{T}$ admits a fixed point in  $B_{\mathsf{R}}^\infty(\widetilde T)$. 
\par
As a consequence, the Cauchy problem for 
system  \eqref{eq:regularized} admits a solution $(\uu,\chi,\omega)$ as in \eqref{reg-solution-local-in-time}.
\end{lemma}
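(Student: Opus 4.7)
The plan is to apply Schauder's fixed point theorem to the operator $\mathcal{T} = \mathcal{T}_{\chi} \circ (\mathrm{id}, \mathcal{T}_{\f u}) : L^\infty(\Omega{\times}(0,\widetilde T)) \to L^\infty(\Omega{\times}(0,\widetilde T))$, restricted to a ball $B_{\mathsf R}^\infty(\widetilde T)$ with $\mathsf R$ fixed in advance and $\widetilde T$ chosen sufficiently small. Once a fixed point $\chi^\ast = \mathcal{T}(\chi^\ast)$ is obtained, setting $\f u^\ast := \mathcal{T}_{\f u}(\chi^\ast)$ and letting $\omega^\ast$ be the associated function furnished by Lemma \ref{l:4chi}, the triple $(\f u^\ast, \chi^\ast, \omega^\ast)$ satisfies \eqref{eq:regularized} on $(0,\widetilde T)$ and enjoys the regularity \eqref{reg-solution-local-in-time} by the regularity statements in Lemmas \ref{l:solvability-u} and \ref{l:4chi}.

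The first step is self-mapping. Fix $\mathsf R > 0$ arbitrarily and let $\bar\chi \in B_{\mathsf R}^\infty(\widetilde T)$; in particular $\|\bar\chi\|_{L^\infty(Q_{\widetilde T})} \leq \|\chi_0\|_{L^\infty(\Omega)} + \mathsf R =: M$. Estimate \eqref{uLinfty} of Lemma \ref{l:solvability-u} then yields $\|\bar{\f u}\|_{L^\infty(0,T;V^n)} \leq K_1 := \zeta_{\f u}(\|\bff\|_{L^2(0,T;H^1)}, M, \|\f u_0\|_{H^3}, \|\f v_0\|_{H^2})$, and in turn estimate \eqref{chiLinfty} of Lemma \ref{l:4chi} gives
\[
\|\chi_t\|_{L^\infty(0,\widetilde T; H^2(\Omega))} \leq K_2 := \zeta_\chi\bigl(\tfrac{1}{\delta}, M, K_1, \|\chi_0\|_{H^2}, \||\partial \breve W^\circ|(\chi_0)\|_{L^2}, \nu^{1/2}\|\varpi_0\|_{L^2}\bigr),
\]
with $K_1, K_2$ depending on $\mathsf R, \delta, \nu$ but not on $\widetilde T$. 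Since $\chi = \mathcal{T}(\bar\chi)$ satisfies $\chi(0)=\chi_0$, integrating in time and using the continuous embedding $H^2(\Omega) \hookrightarrow L^\infty(\Omega)$ (with embedding constant $C_{\mathrm{emb}}$) yields
\[
\|\chi(t) - \chi_0\|_{L^\infty(\Omega)} \leq C_{\mathrm{emb}} \, t \, \|\chi_t\|_{L^\infty(0,\widetilde T; H^2(\Omega))} \leq C_{\mathrm{emb}} \, \widetilde T \, K_2 \qquad \forall\, t \in [0,\widetilde T].
\]
Choosing $\widetilde T \leq \mathsf R / (C_{\mathrm{emb}} K_2)$ ensures $\mathcal{T}(B_{\mathsf R}^\infty(\widetilde T)) \subseteq B_{\mathsf R}^\infty(\widetilde T)$.

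The second step is to show that $\mathcal T$ is continuous and that its image is relatively compact in $L^\infty(Q_{\widetilde T})$. The bound on $\|\chi_t\|_{L^\infty(0,\widetilde T; H^2(\Omega))}$ just derived, together with the initial condition $\chi(0)=\chi_0 \in H^2(\Omega)$, shows that $\mathcal T(B_{\mathsf R}^\infty(\widetilde T))$ is bounded in $W^{1,\infty}(0,\widetilde T; H^2(\Omega))$, hence equi-Lipschitz continuous as a family of maps $[0,\widetilde T] \to H^2(\Omega)$. Since $H^2(\Omega) \hookrightarrow\hookrightarrow C^0(\bar\Omega)$, the Arzel\`a–Ascoli theorem yields relative compactness in $C^0([0,\widetilde T]; C^0(\bar\Omega))$, which embeds continuously into $L^\infty(Q_{\widetilde T})$. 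For continuity, suppose $\bar\chi_k \to \bar\chi_\infty$ strongly in $L^\infty(Q_{\widetilde T})$. Lemma \ref{l:solvability-u} gives $\mathcal{T}_{\f u}(\bar\chi_k) \to \mathcal{T}_{\f u}(\bar\chi_\infty)$ in $H^1(0,T;V^n)$, which by finite-dimensionality also converges strongly in $L^\infty(0,T;V^n)$. Lemma \ref{l:4chi} then provides weak-$\ast$ convergence $\mathcal{T}(\bar\chi_k) \weakstarlim{} \mathcal{T}(\bar\chi_\infty)$ in $\Spx$, and combining this with the compactness just established upgrades the convergence to strong in $L^\infty(Q_{\widetilde T})$.

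Given self-mapping, continuity, and compactness on the closed convex set $B_{\mathsf R}^\infty(\widetilde T)$ of the Banach space $L^\infty(Q_{\widetilde T})$, Schauder's fixed point theorem furnishes a $\chi^\ast \in B_{\mathsf R}^\infty(\widetilde T)$ with $\mathcal T(\chi^\ast) = \chi^\ast$; the corresponding triple solves \eqref{eq:regularized} with the initial conditions as stated. The main technical obstacle I anticipate is the verification that the weak-$\ast$–to–strong upgrade in the continuity argument is justified, i.e.\ that one can pass to the limit in the nonlinear terms $a(\cdot)$, $b(\cdot)$, $a'(\cdot)$, $\breve W_\delta'(\cdot)$ and especially $I_\delta'(\chi_t)$ using only weak-$\ast$ convergence of $\chi_k$ in $W^{1,\infty}(0,\widetilde T; H^2(\Omega)) \cap W^{2,\infty}(0,\widetilde T; H^1(\Omega))$ plus strong convergence in $L^\infty(Q_{\widetilde T})$; this is where the uniqueness statement embedded in the proof of Lemma \ref{l:4chi}, together with the Aubin–Lions-type compactness above, becomes essential, since it guarantees that the limit is uniquely $\mathcal T(\bar\chi_\infty)$ and thus convergence holds along the full sequence.
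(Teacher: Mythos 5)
Your proof is correct and follows essentially the same route as the paper: use the a~priori bounds from Lemmas \ref{l:solvability-u} and \ref{l:4chi} to get a uniform $W^{1,\infty}(0,\widetilde T;H^2(\Omega))$ bound on $\mathcal T(\bar\chi)$, shrink $\widetilde T$ to enforce self-mapping of the ball $B_{\mathsf R}^\infty(\widetilde T)$, obtain compactness via a compact embedding of the bounded image into $L^\infty(Q_{\widetilde T})$, and invoke Schauder. Your more explicit Arzel\`a--Ascoli argument for compactness and your explicit weak-$\ast$-to-strong upgrade for continuity (using the uniqueness of the limit from Lemma \ref{l:4chi}) are just unpacked versions of the paper's compact-embedding and ``easily checked'' continuity assertions, so the two proofs are substantively the same.
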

\begin{proof}
Combining the continuity  properties of the operator $\mathcal{T}_{\uu}$ with those of $\mathcal{T}_\chi$, we easily check 
that $\mathcal{T}: L^\infty (Q) \to L^\infty (Q)$ is continuous. 
\par
It follows from estimates \eqref{uLinfty} and \eqref{chiLinfty} that there exists a function $\zeta: [0,\infty)^5 \to [0,\infty)$, 
 increasing 
w.r.t.\  all  arguments, such that for every $\bar \chi \in L^\infty(Q)$ there holds
\[
\begin{aligned}
 \| \mathcal{T}(\chi)\|_{W^{1,\infty}(0,T;H^2(\Omega)) {\cap}  W^{2,\infty}(0,T;H^1(\Omega))}  
  \leq \zeta \Big( \frac1\delta,  \frac1\nu, \EEE\| \f f\|_{L^2(0,T;H^1(\Omega)}, \| \bar \chi\|_{L^\infty(Q)}, m_0\Big) \,,
  \end{aligned}
  \]
  where we have set 
  $
  m_0:=     \| \f u_0\|_{H^3(\Omega)} + \| \f v _0\|_{H^2(\Omega)} +
   \| \chi_0\|_{H^2(\Omega)} +
   \| |\partial \breve{W}^{\circ}|(\chi_0) \|_{L^2(\Omega)} +
    \| \varpi _0\|_{L^2(\Omega)}$. 
  Since $W^{1,\infty}(0,T;H^2(\Omega)) {\cap} H^{2}(0,T;L^2(\Omega))$ compactly embeds in $L^\infty(\Omega\times (0,T))$,  we conclude that the operator
  $\mathcal{T}$ is compact. 
  \par
\EEE Finally, let us choose 
\[
\mathsf{R}> 
\zeta_0:= \zeta \Big( \frac1\delta,  \frac1\nu, \EEE \| \f f\|_{L^2(0,T;H^1(\Omega)}, 0,  m_0\Big).
    \]
For any  $\widetilde T \in (0,T] $, 
for every $t\in [0,\widetilde T]$ and $\bar\chi \in  B^\infty_R(\widetilde T)$ we have 
\[
 \begin{aligned}
    \| \mathcal{T}(\bar\chi)(t)- \chi_0\|_{L^\infty(\Omega)} 
 &    =
       \| \chi(t)- \chi_0\|_{L^\infty(\Omega)} 
       \\
       & = \left\|  \int_0^t \chi _t \ds \right\| _{L^\infty(\Omega)} 
       \\
       &
       \leq  t \left\|  \chi _t  \right\| _{L^\infty(\Omega{\times} (0,t))}
\\ &       \leq C_{H^2,L^\infty}  \widetilde{T} \| \chi\|_{W^{1,\infty}(0,\widetilde{T};H^2(\Omega))}
    \\ &        \leq  C_{H^2,L^\infty}  \widetilde{T}  \zeta \Big( \frac1\delta,  \frac1\nu, \EEE   \| \f f\|_{L^2(0,T;H^1(\Omega)}, \| \bar \chi\|_{L^\infty(\Omega\times (0,\widetilde T))}, m_0\Big) 
      \\ &      \stackrel{(1)}\leq  C_{H^2,L^\infty}  \widetilde{T}  \zeta \Big( \frac1\delta,   \frac1\nu, \EEE  \| \f f\|_{L^2(0,T;H^1(\Omega)}, R {+} \|  \chi_0\|_{L^\infty(\Omega)}, m_0\Big)\,,
 \end{aligned}
 \]
 where $C_{H^2,L^\infty} $ is the constant for the continuous embedding $H^2(\Omega) \subset L^\infty(\Omega)$, and 
 {\footnotesize (1)} follows from the estimate 
 \[
 \| \bar \chi\|_{L^\infty(\Omega\times (0,\widetilde T))} \leq \| \bar \chi{-}\chi_0\|_{L^\infty(\Omega\times (0,\widetilde T))} + \|\chi_0\|_{L^\infty(\Omega\times (0,\widetilde T))}  \leq 
 R+  \|\chi_0\|_{L^\infty(\Omega\times (0,\widetilde T))}\,,
 \]
 and the monotonicity of $\zeta$. 
 Hence,
 upon choosing 
 \[
 \widetilde T
 \leq R \, C_{H^2,L^\infty}^{-1}   \,  \zeta \Big( \frac1\delta,   \frac1\nu, \EEE  \| \f f\|_{L^2(0,T;H^1(\Omega)}, R {+} \|  \chi_0\|_{L^\infty(\Omega)}, m_0\Big)^{-1}  
 \]
 we have that 
 \EEE
 $\mathcal{T}(B^\infty_R(\widetilde T)) \subset B^\infty_R(\widetilde T) $. Therefore, we are in a position to 
 apply 
 Schauder's fixed point theorem. This concludes the proof.
\end{proof}


\subsection{ A priori estimates for  the regularized approximate  system}
\label{ss:4.3bis}
 With the following result we rigorously prove the estimates of Prop.\ 
\ref{prop:formal-aprio} for the local-in-time solutions $(\uu,\chi,\omega)$ of the approximate system 
\eqref{galerkinSchauder} (for better readability, we choose to omit the dependence on the parameters
$n$ and $\delta$ in their notation).  Since  estimate \eqref{eq:proplocreg}
 below holds for a constant independent of  $n\in \N$
and  $\delta,\, \nu>0$, we  deduce  that the local-in-time solution
$(\uu,\chi)$
 found in Prop.\ \ref{prop:loc-exist-approx} exists up to a time 
$\widehat{T}$ independent of such parameters.
%
%
%
%

\begin{proposition}[Enhanced local-in-time estimates for the approximate solution]
\label{prop:loc} 
Assume Hypotheses \textbf{\ref{h:1-strong}} and \textbf{\ref{h:2-strong}}, and  let $\Omega$ fulfill  condition \eqref{omega-smooth}. 
Then, there exist a  time  $\widehat{T} \in (0,T]$ such that 
\begin{enumerate}
\item
   for every $n\in \N$
and  $\delta,\, \nu>0$  the solution $(\uu , \chi) $ from Proposition \ref{prop:loc-exist-approx}
 extends to the interval $[0,\widehat{T}]$ with the regularity
 \begin{equation}
 \label{new-reg-0hatT}
 \begin{aligned}
 &
\uu\in H^1(0,\widehat{T};H^3(\Omega;\R^d))\cap W^{1,\infty}(0,\widehat{T};H^2(\Omega;\R^d)),
 \\
 &
		\chi\in   L^{\infty}(0,\widehat{T};H^2(\Omega))\cap H^{1,\infty}(0,\widehat{T};H^1(\Omega))
		\EEE 
		\end{aligned} 
 \end{equation}
  and we have that $\omega =-\Delta \chi + \breve{W}_\delta'(\chi)+\chi  \in  W^{2,\infty}(0,\widehat{T};L^2(\Omega))$;
 \item
 there exists and a function $\boldsymbol{\zeta}: [0,\infty)^5 \to [0,\infty)$, increasing w.r.t. all its arguments, 
 such that for every $n\in \N$
and  $\delta,\, \nu>0$ there holds for all $t\in (0,\widehat{T})$ \EEE
      \begin{align}\label{eq:proplocreg}
      \begin{split}
                &  \| \f u_t(t) \|_{H^2(\Omega)}^2+  
  \| \chi(t) \|_{H^2(\Omega)}^2   +   \|  \omega(t) \|_{L^2(\Omega)}^2  +       \nu \| \omega_t(t)\|_{L^2(\Omega)}^2 +       \nu \| \chi_t(t)\|_{H^1(\Omega)}^2   \EEE
 \\
 & \qquad  +\int_0^t \left(  \| \f u_t (s)\|_{H^3(\Omega)}^2 
  {+} \|  \chi_t(s) \|_{H^1(\Omega)}^2 
  \right) \ds 
 \\
&    \leq{}
\boldsymbol{\zeta}\Big (  \| \f u_0\|_{H^3(\Omega)} ,\| \f v _0\|_{H^2(\Omega)},
   \| \chi_0\|_{H^2(\Omega)} ,
   \| |\partial \breve{W}^{\circ}|(\chi_0) \|_{L^2(\Omega)} ,
    \nu^{1/2} \EEE  \| \varpi _0\|_{L^2(\Omega)}  \Big)  \,.      \end{split}
  \end{align}
   Furthermore, there exists a constant $C$ such that for every $n\in \N$
and  $\delta,\, \nu>0$
  \begin{equation}
  \label{u-dertt-0whT}
  \| \uu_{tt} \|_{L^2(0,\widehat{T};H^1(\Omega))} + \| \breve{W}_\delta'(\chi)
  \|_{L^\infty(0,\widehat{T};L^2(\Omega))}  \leq   C\,. 
  \end{equation}  \EEE
  \end{enumerate}
\end{proposition}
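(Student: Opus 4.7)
The plan is to perform rigorously, on the local-in-time approximate solutions of system \eqref{eq:regularized} provided by Proposition \ref{prop:loc-exist-approx}, the formal chain of estimates already carried out in Proposition \ref{prop:formal-aprio}, and then to use a continuation argument to extend the solution up to a time $\widehat T$ depending only on the data (not on $n,\delta,\nu$). Concretely, I will fix the solution $(\uu,\chi,\omega)$ on $[0,\widetilde T(\delta,\nu)]$, introduce the auxiliary function $\psi(t):=\|\uu_t(t)\|_{H^2(\Omega)}^2+\int_0^t\|\uu_t\|_{H^3(\Omega)}^2\dd s+\|\omega(t)\|_{L^2(\Omega)}^2+\|\chi(t)\|_{H^2(\Omega)}^2+1$, and show that $\psi$ satisfies a differential inequality of the form $\psi'\leq C\psi^\beta$ with $\beta>1$ and $C$ independent of $n,\delta,\nu$. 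The blow-up time of the comparison ODE then gives the desired $\widehat T$ depending only on $\psi(0)$, and thus only on $\|\uu_0\|_{H^3},\|\vv_0\|_{H^2},\|\chi_0\|_{H^2},\||\partial\breve W^\circ|(\chi_0)\|_{L^2},\nu^{1/2}\|\varpi_0\|_{L^2}$, for which we will need to assume the scaling $\nu\|\varpi_0\|_{L^2}^2$ bounded uniformly, or alternatively choose $\varpi_0$ in a way compatible with the initial data (e.g.\ deducing $\omega_t(0)$ by comparison in \eqref{regularized2}).

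The rigorous counterparts of Claims $1$ and $2$ are carried out on the Galerkin level by testing \eqref{regularized1} with $\f z=\uu_t$ (yielding the analogue of \eqref{claim1-added}) and with $\f z=\mathbb P^n(\mathrm{div}(\mathbb V\eps{\mathrm{div}(\mathbb V\eps{\uu_t})}))$. The crucial point here is that the Galerkin basis $\{\f y_j\}$ consists of eigenfunctions of the Neumann problem \eqref{boundaryvalueproblem}: for any $\f y_j$ one has $\f n\cdot\mathbb V\eps{\f y_j}=0$ on $\partial\Omega$, and, by Proposition~\ref{propB:ellipt}, $\f y_j\in H^3(\Omega;\R^d)$, so that all boundary terms in the integration-by-parts computations leading to \eqref{utested1}--\eqref{calculations4I4} genuinely vanish (the projection $\mathbb P^n$ being self-adjoint in $L^2$ leaves the $\uu_{tt}$-term untouched). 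The Corollary~\ref{corB:ellipt}--type elliptic regularity bound $\|\uu_t\|_{H^2}^2\lesssim \|\uu_t\|_{L^2}^2+\|\mathrm{div}(\mathbb V\eps{\uu_t})\|_{L^2}^2$ (respectively with $H^3/H^1$) then controls the full Sobolev norms, giving the rigorous version of \eqref{ineqBeforGron}.

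For the counterpart of Claim $3$, I will test the regularized equation \eqref{regularized2} with $\omega_t$, which is well-defined thanks to the regularity $\omega\in W^{2,\infty}(0,\widetilde T;L^2(\Omega))$ guaranteed by Lemma~\ref{l:4chi}. The smoothness of $\breve W_\delta$ and $I_\delta'$ (see \eqref{properties-W-delta} and the Yosida properties) makes the convexity/monotonicity estimate of the term $I_0$ in \eqref{claim3-1} rigorously justified (convexity of $\breve W_\delta$, monotonicity of $I_\delta'$ with $I_\delta'(0)=0$). The integration by parts in time in \eqref{partintime} is legitimate because $\omega$, $\chi$, $\uu$ all have enough time regularity. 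The preliminary lemma, Lemma~\ref{l:props-omega}, is applied in the version with $\breve W_\delta$ in place of $\breve W$ (convexity of $\breve W_\delta$ is sufficient, and $W_\delta'''$ is controlled on bounded subsets of $\R$). Combining this with Claims $1$--$2$ as in Claim $4$ yields the ODE-type inequality $\psi'\leq C\psi^\beta$, with $C$ uniform in the approximation parameters because every estimate above is uniform in $n,\delta,\nu$ (the $\nu$-factors always appear multiplying $\omega_t$-terms on the left and never on the right). This closes the a~priori bound \eqref{eq:proplocreg} on $[0,\widehat T\wedge\widetilde T(\delta,\nu)]$.

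The main obstacle, and the reason why a continuation argument is needed, is that $\widetilde T(\delta,\nu)$ provided by Lemma~\ref{lem:shorttimeexistence} \emph{does} depend on $\delta,\nu$, while the estimate \eqref{eq:proplocreg} depends only on the data. I will close the loop by a standard continuation scheme: set $T^\star:=\sup\{\tau\in[0,\widehat T]:\text{the solution extends to }[0,\tau]\text{ with }\psi(\tau)\leq 2^{\beta-1}\widetilde S_1\psi(0)\}$; the uniform bound \eqref{eq:proplocreg} prevents $\psi$ from blowing up before $\widehat T$, and at $T^\star$ the norms controlling the restart of the local existence theorem are bounded in terms of the initial data only, so we can reapply Lemma~\ref{lem:shorttimeexistence} to extend strictly beyond $T^\star$ unless $T^\star=\widehat T$. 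Finally, \eqref{u-dertt-0whT} follows by a comparison argument in \eqref{regularized1} (once $\uu,\chi$ are controlled in the norms appearing in \eqref{eq:proplocreg}), and the bound on $\breve W_\delta'(\chi)$ follows from the Brezis--Strauss-type estimate \eqref{est-omega-BS} applied to the regularized $\breve W_\delta$, which is uniform in $\delta$.
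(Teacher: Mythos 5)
Your proposal follows the same route as the paper: reproduce the formal Claims $1$--$4$ of Proposition~\ref{prop:formal-aprio} rigorously on the Galerkin--Yosida level, observe that the constants are uniform in $n,\delta,\nu$, and close with the same $\phi'\leq\phi^\beta$ comparison and a prolongation argument. One small imprecision worth flagging: you wrap the higher-order test function in $\mathbb P^n$ and then invoke $L^2$-self-adjointness to ``leave the $\uu_{tt}$-term untouched''. That remark is both unnecessary and insufficient on its own: if $\mathbb P^n \boldsymbol w \neq \boldsymbol w$, the viscosity/elasticity terms $\int_\Omega(\,\cdot\,){:}\varepsilon(\mathbb P^n\boldsymbol w)\,\mathrm dx$ would \emph{not} simplify and the subsequent integrations by parts would not go through. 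The correct observation (which you implicitly have, via the eigenfunction structure) is that $\di(\mathbb V{:}\varepsilon(\di(\mathbb V{:}\varepsilon(\f u_t))))$ already lies in $V^n$, because $\di(\mathbb V\varepsilon(\cdot))$ maps each basis eigenfunction to a scalar multiple of itself; so $\mathbb P^n$ acts as the identity and should simply be dropped. With that clarification your argument matches the paper's.
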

 Clearly,  in view of Lemma \ref{l:props-omega}, the regularity 
$\omega \in   W^{2,\infty}(0,\widehat{T};L^2(\Omega))$ 
leads  to  additional regularity 
for $\chi$. However, we shall  
not  emphasize  it, as it   will not carry over to the limit as $\delta,\, \nu \down 0$. \EEE


%
\begin{proof}
 Here, we revisit  the various claims in the proof of Prop. \ref{prop:formal-aprio} and show how the related calculations can be made rigorous.
\par\noindent
\emph{\textbf{Claim $1$:} The evolution of the mean of $\f u$ is only determined by the given data, \textit{i.e.}, 
\begin{align*}
    \int_\Omega \f u (t)   \dx &= \int_\Omega \f u_0 \dx + t\int_\Omega \f v_0\dx +  \int_0^t\int_\Omega  (t{-}r) \EEE \f f(r) \dr\,.
\end{align*}}
This claim follows exactly as in the proof of Proposition~\ref{prop:formal-aprio} 
by choosing the basis function $\f 1$ as test function in the Galerkin discretization~\eqref{regularized1}.

\medskip

\par\noindent
 \emph{\textbf{Claim $2$:}  there exists a constant $S_{1,1}>0$   such that    estimate 
\eqref{claim1-added}
holds.}
\\
It follows exactly as in the proof of  Proposition~\ref{prop:formal-aprio}. \EEE

\medskip

\par\noindent
\emph{\textbf{Claim $3$:}  there exist a constant $S_{1,2}>0$  and $\overline\beta >1$
 such that    estimate \eqref{ineqBeforGron} holds.}
\par\noindent
In order to rigorously prove  this claim, we use the special choice of the Galerkin basis. 
First of all, we observe that testing~\eqref{regularized1} by $ \di (\Vm{:}\eps{\di(\Vm{:}\eps{\f u_t})})$ is possible, since the choice of our Galerkin basis ensures that $\di (\Vm{:}\eps{\di(\Vm{:}\eps{\f u_t})}) \in V^n$ for $\f u_t \in V^n$. 
Moreover, we observe that the following boundary conditions are fulfilled due to the choice of the Galerkin basis 
\begin{align}    \f n {\cdot} \Vm \eps{\di (\Vm{:}\eps{\f u_t})} = 0 \,,\quad  \f n {\cdot} \Vm \eps{\f u_{tt}} = 0 \quad \text{on }\partial \Omega {\times} (0,T)
\,.  \notag
\end{align}
This follows from the fact that $\f u_t$ and $\f u _{tt}$ are just linear combinations of the basis functions and for all basis functions $\f y_i\in V^n$ with $ i\in \{1,\ldots, n\}$ it holds by construction that $\f n {\cdot} \Vm \eps{\f y_i} = 0$. Moreover, the basis functions are eigenfunctions of the operator~\eqref{boundaryvalueproblem}, so  that for any  $\f y_i\in V^n $ also $\di (\Vm{:}\eps{\f y_i}) = \lambda_i \f y_i$ fulfills the associated boundary condition, \textit{i.e.}, $$\f n {\cdot} \Vm \eps{\di (\Vm{:}\eps{\f y_i})} =\lambda_i  \f n {\cdot} \Vm \eps{\f y_i} = 0 \quad\text{on }\partial \Omega {\times} (0,T)\,.  $$ 
With this observation, all  the formal calculations of \textit{\textbf{Claim} 2} can be performed  rigorously and all boundary terms are null. 

\medskip

\par\noindent
\emph{\textbf{Claim $4$:}  there exist a constant $S_{1,3}>0$ and   $\underline\beta >1$
 such that for almost all $t\in (0,T)$}  
 \begin{equation}
\label{testedchireg}
    \begin{aligned}
        &
      \nu \| \omega_t(t)\|_{L^2(\Omega)}^2  + 
          \| \omega(t)\|_{L^2(\Omega)}^2 +   \| \chi(t)\|_{H^2(\Omega)}^2  +\int_0^t \left( \|\chi_t\|_{L^2(\Omega)}^2{+}  \|\nabla \chi_t\|_{L^2(\Omega)}^2 \right) \dd s 
        \\ & 
\leq S_{1,3}(1{+}     \| \varpi_0\|_{L^2(\Omega)}^2 \EEE  {+}  \| \chi_0\|_{H^2(\Omega)}^2{+}  \| |\partial \breve{W}^{\circ}|(\chi_0) \|_{L^2(\Omega)}^2 \EEE
 {+} \| \uu_0\|_{H^2(\Omega)}^8) 
\\
& \quad  +S_{1,3} \int_0^t  \left( \| \omega\|_{L^2(\Omega)}^8  {+} \|  \uu_t \|_{H^2(\Omega)}^8 {+}  \int_0^s \|  \uu_t \|_{H^2(\Omega)}^8 \dd \tau  {+} \|\chi\|_{H^2(\Omega)}^{\underline\beta} \right) \dd s \,.   
\end{aligned}
\end{equation}
\medskip 
\par\noindent 
 In fact, thanks to  Lemma~\ref{lem:shorttimeexistence},   for a solution to the system~\eqref{eq:regularized} we have the regularity property
 $\omega\in W^{2,\infty}(0,T;L^2(\Omega))$
 (although  we have 
 $
  \| \omega\|_{W^{2,\infty}(0,\widehat{T};L^2(\Omega))} \leq C(\delta,\nu)
$
for a  positive constant $C(\delta,\nu)$, with $C(\delta,\nu) \uparrow +\infty$ as $\delta,\, \nu \downarrow 0$).
 Hence, $\omega_t$ is an admissible test function for  equation~\eqref{regularized2}. \EEE
The same arguments as in the proof of Proposition~\ref{prop:formal-aprio} can now be followed step by step in order to derive the estimate~\eqref{testedchireg}. 
\medskip 
\par\noindent 
Combining the estimates from Claim $2$ and $3$,
 we obtain the analogue of 
inequality~\eqref{ineqrelative},  with  the same constants $ S_{1,4} $ and $\beta$, 
 but with the additional term $ \nu \| \omega_t(t)\|_{L^2(\Omega)}^2 $  on the left-hand side. 
Recall that  $ \nu \| \omega_t(t)\|_{L^2(\Omega)}^2  \geq c \nu  \| \chi_t(t)\|_{H^1(\Omega)}^2 $ 
by Lemma \ref{l:props-omega}. 
Hence,  the very  same local-in-time Gronwall-type estimate  as in the proof of Proposition~\ref{prop:formal-aprio} allows us 
to deduce estimate \eqref{eq:proplocreg}.  
Estimate \eqref{u-dertt-0whT} 
for $\uu_{tt}$
then follows in view of \eqref{uLinfty}, by the equivalence of all finite-dimensional norms, while the bound for $\breve{W}_{\delta}'(\chi)$ follows from that for
$\omega $ in $L^{\infty}(0,\widehat{T};L^2(\Omega))$, arguing by comparison. 
  
\medskip 
\par\noindent 
Since the involved constants are independent of $n\in \N$  and of $\delta,\, \nu >0$,  by a standard prolongation argument
we obtain that the solution found in Prop.\ \ref{prop:loc-exist-approx} extends  to an interval $(0,\widehat{T})$
 independent of all parameters, on which 
estimate  \eqref{eq:proplocreg} holds. 
This concludes the proof.
\end{proof}

\subsection{Limit passage in the regularized system and conclusion of the proof of Theorem \ref{thm:2}}
\label{ss:4.4}  
 We split the argument in some steps.
Let us mention in advance that we shall resort to Proposition \ref{prop:consistency}: thus, we will show  that the limiting pair $(\uu,\chi)$ 
fulfills the damage flow rule pointwise a.e.\ in $Q$ by proving  the variational inequality  \eqref{weakChiIneqSub} and  the energy-dissipation inequality \eqref{UEDI}. 
\par
For the compactness argument below we recall that, for a 
given   reflexive  space $\Spx$,   convergence in the  space $\rmC^0([0,S]; \Spxw)$  is, by definition,   convergence in  $\rmC^0([0,S]; (\Spx,d_\mathrm{weak}))$, where the metric $d_\mathrm{weak}$ induces the weak topology on a closed bounded set of $\Spx$.
\medskip

\par\noindent
\emph{\textbf{Step $1$: compactness.}}   \EEE
    Since the \textit{a priori} estimate~\eqref{eq:proploc} holds independently of 
    the parameters $n\in \N $ and $\delta,\nu>0$, 
 we may choose two sequences  
 \begin{equation}
 \label{scaling-condition}
  \delta_n \down 0  \text{ and } \nu_n \down 0  \text{ such that } \frac{\nu_n^{1/2}}{\delta_n} \to 0  \text{ as } n \to \infty \,.
  \end{equation}
  We also consider  a sequence $(\varpi_0^n)_n\subset L^2(\Omega)$ of initial data such that 
  \begin{equation}
  \label{scaling-initial-data}
\nu_n^{1/2} \| \varpi_0^n\|_{L^2(\Omega)} \longrightarrow 0  \text{ as } n \to \infty \,.
  \end{equation}
Correspondingly, by Proposition \ref{prop:loc} we find  a sequence
  of solutions 
   $(\uu_{n,\delta_n,\nu_n},\chi_{n,\delta_n,\nu_n})_n $, 
   hereafter simply denoted as $(\uu_n,\chi_n)_n$,  with associated $\omega_n = -\Delta\chi_n +\breve{W}_{\delta_n}'(\chi_n)+\chi_n$,
   ad a quadruple $(\uu,\chi,\omega,\xi)$, for which,   along a (not-relabeled)  subsequence,  \EEE
the weak-convergences associated with  the bounds \eqref{eq:proplocreg}  hold, namely        \begin{subequations}
		\label{weak-cvg-loc}
		\begin{align}
			 \uu_{n}&\weaksto  \uu
				&&\text{ weakly-star in } H^2(0,\widehat{T}; H^2(\Omega;\R^d)) \EEE {\cap} W^{1,\infty}(0,\wh T;H^2(\Omega;\R^d)){\cap} H^{1}(0,\wh T;H^3(\Omega;\R^d)),\\
			\chi_{n}&	\weaksto  \chi
				&&\text{ weakly-star in }L^\infty(0,T;H^2(\Omega))\cap H^1(0,T;H^1(\Omega)),\\
			\omega_n & \weaksto \omega
				&&\text{ weakly-star in }L^\infty(0,T;L^2(\Omega))
    \\
			\breve{W}_{\delta_n}'(\chi_n) &  \weaksto \xi 
				&&\text{ weakly-star in }L^\infty(0,T;L^2(\Omega))
    \,.
		\end{align}
		\end{subequations}
		 Furthermore, 
		by well-known compactness results, we gather the strong convergences
		 \begin{subequations}
		\label{strong-cvg-loc}
		\begin{align}
		\label{strong-cvg-loc-1}
		 \partial_t\uu_{n}&\weaksto  \partial_t\uu
				&&\text{ strongly in }\rmC^0([0,\widehat T];H^2(\Omega;\R^d)_{\mathrm{weak}})\,,\\
			\label{strong-cvg-loc-2}	\chi_{n}&\weaksto  \chi
				&&\text{ strongly in } \rmC^0([0,\widehat T];H^2(\Omega)_{\mathrm{weak}})\,.\\
		\intertext{Finally, from \eqref{eq:proplocreg} we also deduce  an estimate  for $(\nu_n^{1/2}  \omega_n )_n \subset W^{1,\infty} (0,\widehat T;L^2(\Omega))$, so that}
		& \label{strong-cvg-loc-3}
		\nu_n \omega_n \to 0 &&\text{ strongly in } W^{1,\infty} (0,\widehat T;L^2(\Omega))\,.
			\end{align}
		\end{subequations}
		By the weak 
lower semi continuity of the involved 
 norms, 
  we may take the limit in estimate \eqref{eq:proplocreg} and  deduce that its analogue holds
  at least for almost all  $t\in (0,\widehat T)$. Indeed, since  $ \uu_t \in\rmC^0([0,\widehat T];H^2(\Omega;\R^d)_{\mathrm{weak}}) $
   and  $ \chi \in    \rmC^0([0,\widehat T];H^2(\Omega)_{\mathrm{weak}}) $,
   we ultimately have that the pointwise estimates   \eqref{eq:proplocreg} 
   for $\uu_t$ and $\chi$
 hold for all times.  
\medskip

\par\noindent
\emph{\textbf{Step $2$: momentum balance.}}   \EEE
Using the convergences~\eqref{weak-cvg-loc} 
  and \eqref{strong-cvg-loc}, \EEE  it  is a standard manner  to pass to the limit in the Galerkin approximation~\eqref{regularized1}. 
 In this way, we deduce that the pair $(\uu,\chi)$  satisfies the momentum balance   pointwise a.e.\ in $Q$ \EEE
\medskip

\par\noindent
\emph{\textbf{Step $3$: variational inequality  \eqref{weakChiIneqSub}.}}   \EEE
Multiplying the regularized flow rule \eqref{regularized2} by a test function  $\psi  \in \rmC^1_{\mathrm{c}}(0,\widehat T) {\otimes} L^2(\Omega)$ such that $ \psi \leq 0$ a.e. in $Q$ and integrating
in space and time, 
 we find 
\begin{align}\label{weakchireg}
\begin{split}
        \iint_Q \left( \partial_t\chi_n - \Delta \chi_n + \breve W'_\delta (\chi_n) + \invbreve W'(\chi_n) + \frac{1}{2}a'(\chi_n) \Cm{\uu_n}{\uu_n} \right ) \psi -\nu \partial _t\omega _n \partial_t \psi \dx \ds &=\\ - \iint_Q  I'_\delta (\partial _t \chi_n) \psi \dx \ds &\geq 0 \,. 
\end{split}
\end{align}
The last term on the left-hand side vanishes in the limit  as $n\to\infty$  due to \eqref{strong-cvg-loc-3}.  \EEE
\[ \nu_n \EEE \int_Q\partial _t\omega _n \partial \psi \dx \ds \leq   \nu_n \EEE  \| \partial _t\omega _n \|_{L^\infty(0,\widehat T;L^2(\Omega))}  \| \partial_t \psi\|_{L^1(0,\widehat T ; L^2(\Omega))} \longrightarrow 0 \text{ as } n \to \infty\,.
\]
Passing to the limit   in the remaining terms  on the left-hand side of  inequality~\eqref{weakchireg} is now a standard procedure in view  of
convergences \eqref{weak-cvg-loc} and \eqref{strong-cvg-loc}.  In particular, combining the weak convergence of $ \breve{W}_{\delta_n}'(\chi_n) $ 
with 
 the strong   convergence \eqref{strong-cvg-loc-2} for $\chi_n$ we conclude  \EEE
that  $ \xi \in \partial \breve W (\chi)$ a.e.~in $Q$. In the limit of the inequality~\eqref{weakchireg}, we infer that~\eqref{weakChiIneqSub} is fulfilled. 
\medskip

\par\noindent
 \emph{\textbf{Step $4$:  energy-dissipation inequality \eqref{UEDI}.}} First of all, we observe that  an approximate version of \eqref{UEDI} holds for system
\eqref{eq:regularized}. Indeed, testing \eqref{regularized1}  by 
$ \f z = \partial_t \uu_n$, multiplying \eqref{regularized2} multiplied by $\partial_t\chi_n$, adding the obtained relations and  integrating in time leads to 
\begin{equation}\label{energyinreg}
\begin{aligned}
  &  \mathcal{E}_{\delta_n}( \uu_n(t) , \chi_n(t)  , \partial_t\uu_n(t) ) +\int_0^t  \mathcal{D}(\chi_n(s) ,\partial_t\uu_n(s) ,\partial_t \chi_n(s)  ) \dd s + \nu_n \int_0^t \int_\Omega \partial_{tt}\omega_n(s)  \partial_t \chi_n(s) \dd s  \dx 
  \\
  &
  =   \mathcal{E}_{\delta_n}( \uu_n(0) , \chi_n(0)  , \partial_t\uu_n(0) ) +    \iint_Q \f f {\cdot} \partial_t \uu_n \dx  \dd s \,,
\end{aligned}
\end{equation}
featuring the regularized energy and dissipation functionals 
	\begin{align}
	\label{def:en-reg}  
	\cE_{\delta_n}(\uu,\chi,\ut):={}&\io \left\{  \frac12|\ut|^2 {+}  \frac12 \EEE a(\chi) \CC \e(\uu){:} \e(\uu) {+} \frac12|\nabla\chi|^2{+}\breve{W}_{\delta_n}(\chi)
	{+}\invbreve{W}(\chi)
	\right\} \dx
  \,, 
					\\
					\label{def:diss-reg}
	 \cD(\chi,\ut,\chit):={}&\io \left\{ b(\chi)\VV\e(\ut){:}\e(\ut) {+} |\chit|^2{+}I_{\delta_n}(\chit) \right\} \dx
	    \,.
				\end{align}
  For the last term on the left-hand side, we find 
  \[
  \begin{aligned}
  &
           \nu_n  \int_0^t \int_\Omega  \partial_{tt}\omega_n(s)  \partial_t \chi_n(s) \dd s  \dx  \\
           &=\nu_n 
            \int_0^t 
           \int_\Omega \partial _t \left( - \Delta \partial_t \chi_n  {+} \breve W''_{\delta_n} (\chi_n) \partial_t\chi_n {+} \partial_t \chi_n \right) \partial_t \chi_n \dx  \dd s 
      \\ 
       &=\nu_n 
            \int_0^t 
           \int_\Omega \left( - \Delta \partial_{tt} \chi_n  {+} \breve W'''_{\delta_n} (\chi_n)\partial_t\chi_n \partial_t\chi_n {+} \breve W''_{\delta_n} (\chi_n) \partial_{tt} \chi_n {+}
             \partial_t^2 \chi_n \right) \partial_t \chi_n  \EEE \dx  \dd s 
      \\ 
      &=\nu_n 
           \int_0^t  \int_\Omega \frac{\mathrm{d}}{\dt}\frac{1}{2} \left\{  |\nabla \partial_t \chi_n|^2 {+} | \partial_t \chi_n|^2 
     \right\} {+} \breve W'''_{\delta_n} (\chi_n)(\partial_t\chi_n )^3 {+} \breve W''_{\delta_n} (\chi_n) \partial_{tt} \chi_n \partial_t\chi_n \dx \dd s  
           \\
      & =               \int_0^t  \frac{\mathrm{d}}{\dt} \left\{  \frac{\nu_n}{2 }\int_\Omega |\nabla \partial_t \chi_n|^2 {+} | \partial_t \chi_n|^2 {+}
        | \sqrt{\breve W''_{\delta_n} (\chi_n)}\partial_t \chi_n|^2 \dx \right\} \dd s  
      \\
      & \qquad + \frac{\nu_n}{2} \int_0^t \int_\Omega  \breve W'''_{\delta_n} (\chi_n) |\partial_t \chi_n|^2 \partial_t \chi_n \dx \dd s  \,,
  \end{aligned}
  \]
  where we used the fact that  $\breve W''_{\delta_n} \geq 0$ and that 
  \[
 \partial _t \frac{\nu_n}{2 } | \sqrt{\breve W''_{\delta_n} (\chi_n)}\partial_t \chi_n|^2  = \frac{\nu_n}{2 }  \breve W'''_{\delta_n} (\chi_n) |\partial_t \chi_n|^2  \partial_t \chi_n +
 \nu_n \breve W''_{\delta_n} (\chi_n) \partial_{tt}\chi_n \partial_t \chi_n                       \qquad \aein \, Q\,. 
\]
 Thus, 
\eqref{energyinreg} rephrases as
\[
\begin{aligned}
  &  \mathcal{E}_{\delta_n}( \uu_n(t) , \chi_n(t)  , \partial_t\uu_n(t) ) +\int_0^t  \mathcal{D}(\chi_n(s) ,\partial_t\uu_n(s) ,\partial_t \chi_n(s)  ) \dd s + 
  \mathcal{V}_n(\chi_n(t),\partial_t\chi_n(t))
 \\
  &
  =   \mathcal{E}_{\delta_n}( \uu_n(0) , \chi_n(0)  , \partial_t\uu_n(0) )  
 +
   \int_0^t \int_\Omega  \f f {\cdot} \partial_t \uu_n \dx  \dd s
   \\ &\quad+ 
  \mathcal{V}_n(\chi_n(0),\partial_t\chi_n(0))+ \frac{\nu_n}{2} \int_0^t \int_\Omega  \breve W'''_{\delta_n} (\chi_n) |\partial_t \chi_n|^2 \partial_t \chi_n \dx \dd s 
   \,,
\end{aligned}
\]
where we have used the place-holder
\[
 \mathcal{V}_n(\chi_n,\partial_t\chi_n) = 
\frac{\nu_n}{2 } \| \partial_t\chi_n\|_{H^1(\Omega)}^2  + \frac{\nu_n}2 \int_\Omega \left| \sqrt{\breve W''_{\delta_n} (\chi_n)} \partial_t\chi_n \right|^2 \dx \,.
\]
Now,  observe that 
\[
\begin{aligned}
  \mathcal{V}_n(\chi_n(0),\partial_t\chi_n(0))  & = \frac{\nu_n}{2 }  \int_\Omega \partial_t\omega_n (0) \partial_t \chi_n(0) \dd x 
  \\
  & \leq 
   \frac{\nu_n}{2 }  \|  \partial_t\omega_n (0) \|_{L^2(\Omega)}  \|  \partial_t\chi_n (0) \|_{L^2(\Omega)}   \stackrel{(1)}{\leq}
    \frac{\nu_n S_0}{2 }  \|  \partial_t\omega_n (0) \|_{L^2(\Omega)}^2
      \stackrel{(2)}{\longrightarrow } 0 
      \end{aligned}
\]
as $n \to\infty$,
where in (1), we have used \eqref{est-omega-dert} and in (2), we resorted to condition \eqref{scaling-initial-data} for $ \partial_t\omega_n (0) = \varpi_0^n$. Furthermore,
by  \eqref{properties-W-delta}
 we have  $| \breve W'''_{\delta_n} (\chi_n)| \leq \frac1{\delta_n^3}$ a.e.\ in $Q$, therefore  we infer that 
\[ 
\frac{\nu_n}{2} \int_0^t \int_\Omega  \breve W'''_{\delta_n} (\chi_n) |\partial_t \chi_n|^2 \partial_t \chi_n \dx \dd s
\leq \frac12 \nu_n^{1/2} \| \breve W'''_{\delta_n}(\chi_n) \|_{L^\infty(Q)}  \nu_n^{1/2}  \| \partial_t \chi_n\|_{L^3((0,\widehat{T}){\times}\Omega)}^3
\longrightarrow  0 
\]
as $n \to\infty$, 
where the last assertion follows from combining the bound for $\nu_n^{1/2}  \| \partial_t \chi_n\|_{L^\infty(0,\widehat T;H^1(\Omega))}$ from \eqref{eq:proplocreg}, with the scaling condition \eqref{scaling-condition}. In turn, we immediately see that for every $t\in (0,\widehat T]$
\[
\begin{aligned}
 \mathcal{E}( \uu(t) , \chi(t)  , \partial_t\uu(t) )& \leq \liminf_{n\to\infty}
 \mathcal{E}_{\delta_n}( \uu_n(t) , \chi_n(t)  , \partial_t\uu_n(t) )\,,
 \\
 \int_0^t  \mathcal{D}(\chi(s) ,\partial_t\uu(s) ,\partial_t \chi(s)  ) \dd s   &\leq \liminf_{n\to\infty} 
 \int_0^t  \mathcal{D}(\chi_n(s) ,\partial_t\uu_n(s) ,\partial_t \chi_n(s)  ) \dd s\,,
 \\
  \mathcal{E}_{\delta_n}( \uu_n(0) , \chi_n(0)  , \partial_t\uu_n(0) ) &\longrightarrow  \mathcal{E}( \uu_0 , \chi_0  , \vv_0 )\,,
  \\
    \int_0^t \int_\Omega  \f f {\cdot} \partial_t \uu_n \dx  \dd s  &\longrightarrow   \int_0^t \int_\Omega  \f f {\cdot} \partial_t \uu \dx  \dd s  \,.
  \end{aligned}
\]
%
%
All in all, sending $n\to \infty$ in \eqref{energyinreg}  we find 
 that the energy inequality~\eqref{UEDI} holds, in the limit, on $[0,t]$ for all $t \in (0,\widehat T]$. 
 By Proposition \ref{prop:consistency}, 
this completes the proof of  Theorem \ref{thm:2}.
  \QED 
  \EEE

\section{Relative energy inequality}
\label{s:rel}
This Section is devoted to the proof of Theorem~\ref{thm:3}.  The key result is Proposition \ref{prop:weakstrong}, 
where \EEE
 we will compare a weak solution 
$(\uu,\chi)$ (to the initial-boundary value problem for system \eqref{PDEsystem} with  the homogeneous Neumann boundary condition
\eqref{homogNeu}), and a strong solution $(\tu,\tchi)$  in terms of the following quantities: 
\begin{itemize}
\item[-]
 the relative energy 
\begin{align}
\label{defR}
\mathcal{R}( \f u , \chi , \f u _t | \tu, \tchi,\tut) :={}& \int_\Omega \frac{1}{2}| \nabla \chi-\nabla \tchi |^2 + W( \chi )- W (\tchi)- W'(\tchi)(\chi -\tchi) + \frac{\ell}{2}| \chi-\tchi|^2 \dx
\\
\nonumber
&+ \int_\Omega  \frac{1}{2} a(\chi ) \Cm{\f u-\tu}{\f u -\tu}  + \frac{1}{2}| \f u _t -\tut|^2 \dx\,,
\end{align}
 where $\ell \geq 0$ is such  that $r\mapsto W(r) + \tfrac{\ell}{2} |r|^2$ is convex, cf.\ \eqref{assW-2-new}, and  
\item[-] the relative dissipation
\begin{align}\label{defW}
\mathcal{W}( \chi, \f u _t, \chi_t | \tut,\tchi_t) :={}& \int_\Omega | \chi_t - \tchi_t|^2 + b(\chi) \mathbb{V}\varepsilon(\f u_t- \tut) {:} \varepsilon(\f u_t-\tut )\dx 
\,,
\end{align}
 where we have omitted the terms $\int_\Omega  I _{(-\infty, 0])} ( \chi_t) + I _{(-\infty, 0]) }( \tchi_t))  \dx $ as they will be  null as soon as they are evaluated along a  weak and a strong solution. \EEE
\end{itemize}
 Indeed, $\mathcal{R}$ and $\mathcal{W}$ will be  involved in the Gronwall-type inequality \eqref{REI} below, which will be the core ingredient in the proof of Thm.\ \ref{thm:3}. 
\EEE
\begin{proposition}\label{prop:weakstrong}
    Let Hypothesis~\textbf{\ref{h:4}} be fulfilled and let $(\f u , \chi)$ be a weak solution to the Cauchy  problem  for 
    system \eqref{PDEsystem} in the sense of Definition~\ref{def:weakSol} and $(\tu,\tchi)$ a strong solution in the sense of  Definition~\ref{def:strongSol}.  Then the relative energy-inequality
    \begin{equation}
    \label{REI}
    \tag{$\mathrm{REI}$}
    \begin{aligned}
        \mathcal{R}( \f u , \chi , \f u _t | \tu, \tchi,\tut) (t) + &  {}\int_0^t\left[\mathcal{W}( \chi, \f u _t, \chi_t | \tut,\tchi_t){-} \int_\Omega a'(\chi) \tchi_t \Cm{\f u {-}\tu}{\f u {-}\tu} \dx \right]\mathrm{e}^{\int_s^t\mathcal{K}(\tu,\tchi)\dta}  \ds 
        \\  & \leq \mathcal{R}(\f u(0),\chi(0),\f u_t(0)| \tu(0),\tchi(0),\tu_t(0)) \mathrm{e}^{\int_0^t\mathcal{K}(\tu,\tchi)\ds}
    \end{aligned}
    \end{equation}
    holds for a.e.~$t\in(0,T)$, where $\mathcal{K} $ is given by 
    \begin{align}
    \hspace{-0,5cm}    \mathcal{K}(\tu,\tchi) :=  C_{\mathrm{REI}} \EEE \left(\| \tchi_t  \|_{L^{3/2}(\Omega)}
         {+}  \| \varepsilon(\tut)\|_{L^3(\Omega)}^2 \EEE
        {+}  \ell^2  {+}\| \eps{\tu}\|_{L^\infty(\Omega)}^2 
         {+} \| \varepsilon(\tu) \| _{L^3(\Omega)}^2 {+}  \| \varepsilon(\tu) \| _{L^6(\Omega)}^4 \right) 
    \end{align}
     for some positive constant $ C_{\mathrm{REI}} >0$ only depending on the problem data. \EEE
\end{proposition}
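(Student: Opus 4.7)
\medskip

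\noindent\textbf{Proof proposal for Proposition \ref{prop:weakstrong}.}
The plan is to derive \eqref{REI} by Dafermos' relative energy method, reading the relative functional $\mathcal{R}$ as the natural quadratic defect between the weak solution $(\f u, \chi)$ and the strong one $(\tu, \tchi)$.  The key observation is the algebraic identity
\[
\mathcal{R}(\f u,\chi,\f u_t \mid \tu,\tchi,\tut)(t) \; =\; \mathcal{E}(\f u,\chi,\f u_t)(t) \,+\, \mathcal{E}(\tu,\tchi,\tut)(t) \,-\, \mathcal{M}(t),
\]
where $\mathcal{M}(t)$ is the sum of the cross terms $\int_\Omega \f u_t{\cdot}\tut$, $\int_\Omega a(\chi)\Cm{\f u}{\tu}$, $\int_\Omega \nabla\chi{\cdot}\nabla\tchi$, the affine-in-$\chi$ piece $\int_\Omega [2W(\tchi){+}W'(\tchi)(\chi{-}\tchi){-}\tfrac{\ell}{2}(\chi{-}\tchi)^2]$ and the skew piece $\tfrac{1}{2}\int_\Omega [a(\tchi){-}a(\chi)]\Cm{\tu}{\tu}$.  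Adding \eqref{UEDI} for $(\f u,\chi)$ to the energy-dissipation \emph{equality} for $(\tu,\tchi)$ (cf.\ Remark \ref{rmk:EDBstrong}) and exploiting the cancellation $\mathcal{E}_0{+}\widetilde{\mathcal{E}}_0 = \mathcal{R}(0){+}\mathcal{M}(0)$, matters reduce to controlling $\mathcal{M}(0)-\mathcal{M}(t)=-\int_0^t \mathcal{M}'(s)\dd s$ together with the mismatch of the dissipation and work terms.

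\medskip

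\noindent To compute $\mathcal{M}'(t)$ I will produce an identity for each cross term by testing the available formulations of the two solutions: (i) the weak momentum balance \eqref{weakUEq} for $\f u$ is tested against $\tut\in H^1(\Omega;\R^d)$ (admissible in the class~\eqref{reg-strong-sols}) to handle $\langle \f u_{tt},\tut\rangle$; (ii) the pointwise momentum balance for $\tu$ is tested against $\f u_t{-}\tut\in H^1(\Omega;\R^d)$ to handle $\int_\Omega \tu_{tt}{\cdot}\f u_t$; (iii) the pointwise flow rule for $\tchi$ (Definition~\ref{def:strongSol}) is multiplied by $\chi_t{-}\tchi_t$ and integrated, which in view of the $\rmC^2$-regularity of $W$ is legitimate and produces the $\nabla\chi{\cdot}\nabla\tchi_t$, $W'(\tchi)(\chi_t{-}\tchi_t)$ and $\tfrac12 a'(\tchi)\Cm{\tu}{\tu}(\chi_t{-}\tchi_t)$ contributions.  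The only delicate contribution is the indicator subdifferential: since $\tilde\eta\in\partial I_{(-\infty,0]}(\tchi_t)$ satisfies $\tilde\eta\tchi_t=0$, $\tilde\eta\geq 0$, and the weak solution already satisfies $\int_0^t\int_\Omega I_{(-\infty,0]}(\chi_t) = 0$, the term $\int_0^t\int_\Omega \tilde\eta(\chi_t{-}\tchi_t) = \int_0^t\int_\Omega \tilde\eta\chi_t\leq 0$ and is discarded favourably.

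\medskip

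\noindent After the systematic rearrangement, all terms in $\mathcal{M}'$ regroup into: (a) the positive dissipation $\mathcal{W}(\chi,\f u_t,\chi_t \mid \tut,\tchi_t)$, which absorbs the matching dissipation contributions; (b) the ``good'' sign term $-\int_\Omega a'(\chi)\tchi_t\Cm{\f u{-}\tu}{\f u{-}\tu}\geq 0$ (by $a'\geq 0$ from \eqref{ass-a-WS} and unidirectionality $\tchi_t\leq 0$), which moves to the left-hand side as in~\eqref{REI}; and (c) a remainder $\Upsilon(s)$ made of Taylor-type differences $[a(\chi){-}a(\tchi){-}a'(\tchi)(\chi{-}\tchi)]$, $[b(\chi){-}b(\tchi)]\mathbb V\varepsilon(\tut){:}\varepsilon(\f u_t{-}\tut)$, $[a(\chi){-}a(\tchi)]\Cm{\tu}{\varepsilon(\f u_t{-}\tut)}$, and the relative potential defect $[W(\chi){-}W(\tchi){-}W'(\tchi)(\chi{-}\tchi)]$.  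Using the $\rmC^2$-regularity of $a,W$, boundedness of $\chi,\tchi$, $\ell$-convexity of $W$ (which makes the potential defect non-negative and bounded by $\tfrac{\ell}{2}|\chi{-}\tchi|^2 \leq \mathcal{R}$), Korn's inequality on $\|\varepsilon(\f u_t{-}\tut)\|_{L^2}$ (plus the mean-value control of Claim~$0$ in \S\ref{ss:4.1}), and the Gagliardo--Nirenberg interpolation to convert $L^3, L^6, L^\infty$ norms of $\tu$ into the coefficients of $\mathcal{K}$, each piece of $\Upsilon(s)$ is dominated by $\mathcal{K}(\tu,\tchi)(s)\,\mathcal{R}(s) + \mu\,\mathcal{W}(s)$ for an arbitrarily small $\mu>0$.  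Absorbing $\mu\int_0^t\mathcal{W}$ into the left-hand side and invoking the differential Gronwall lemma yields \eqref{REI}.

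\medskip

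\noindent The chief obstacle is the handling of the elasto-damage cross term $[a(\chi){-}a(\tchi)]\Cm{\tu}{\varepsilon(\f u_t{-}\tut)}$: the weak solution provides no better than $\chi\in L^\infty(0,T;H^1(\Omega))\cap L^\infty(Q)$, so Taylor-expanding $a$ only yields $|\chi{-}\tchi|$ in $L^2$, and one must pair it with $\f u_t{-}\tut$ in $L^6$ and $\tu$ in $L^3$, which is precisely why the exponents in $\mathcal K$ take the form $\|\varepsilon(\tu)\|_{L^3}^2 + \|\varepsilon(\tu)\|_{L^6}^4$ after a two-step Young's inequality.  The same interpolation, applied to the remainder generated by $b$ together with the uniform lower bound $b\geq b_0$, ensures that the viscous dissipation in $\mathcal{W}$ is not consumed in the process.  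All other estimates are routine once this balancing is in place.
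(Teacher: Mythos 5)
Your proposal is correct and is essentially the paper's argument: decompose the relative energy as the sum of the two energies minus cross terms, exploit the weak solution's one-sided variational inequality and upper energy-dissipation inequality together with the strong solution's pointwise formulation and energy-dissipation balance, use $a'\geq 0$ and $\tchi_t\leq 0$ for the sign of the cross $a'$-term, estimate the remaining Taylor-type remainders by interpolation, and close with Gronwall. The only difference is bookkeeping (you package the cross terms into a single $\mathcal{M}$ before differentiating, whereas the paper differentiates the individual contributions of $\mathcal{R}$ directly), and there are two harmless slips — the $W$-remainder that actually arises is the second-order Taylor defect of $W'$, i.e. $\tchi_t(W'(\chi)-W'(\tchi)-W''(\tchi)(\chi-\tchi))$, not of $W$ itself, and Claim 0 of Section 4.1 is not needed here since the paper's coercivity estimate for the dissipation controls only $\|\varepsilon(\f u_t - \tut)\|_{L^2}$.
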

\begin{proof}

For the elastic energy density $\frac{1}{2} a(\chi)  \mathbb{C}\varepsilon(\f u ) {:} \varepsilon (\f u) $, we observe by some calculations
\begin{align*}
\frac{1}{2}\int_\Omega a(\chi) \mathbb{C}\varepsilon(\f u{-}\tu)  {:}  \varepsilon(\f u{-}\tu)\dx 
={}& \frac{1}{2}\int_\Omega a(\chi) \mathbb{C}\varepsilon(\f u)  {:} \varepsilon(\f u){+} a( \tchi)  \mathbb{C}\varepsilon(\tu)  {:}  \varepsilon(\tu)  \dx 
\\
&- \frac{1}{2} \int_\Omega 2  a( \chi )  \mathbb{C}\varepsilon(\f u)  {:}  \varepsilon(\tu) - (a(\chi) - a(\tchi)) \mathbb{C}\varepsilon(\tu) {:} \varepsilon(\tu)\dx \,.
\end{align*}
 We now evaluate the second line between $0$ and $t$. We have  
\begin{align*}
    - \frac{1}{2} \int_\Omega& \left[ 2  a( \chi )  \mathbb{C}\varepsilon(\f u)  {:}  \varepsilon(\tu) {-} (a(\chi) {-} a(\tchi)) \mathbb{C}\varepsilon(\tu) {:} \varepsilon(\tu)\dx \right]\Big |_0^t \\&=- \int_0^t \int_\Omega  \Big[  a'(\chi) \chi_t \Cm{\f u}{\tu}{-} \frac{1}{2}\partial_t ( a(\chi) {-}a(\tchi)) \Cm{\tu}{\tu} \Big] \dx \ds
    \\
   &\quad  - \int_0^t \int_\Omega \Big[ a(\chi) \Cm{\f u_t}{\tu} {{+}} a(\chi) \Cm{\f u}{\tu_t} {-} 
   ( a(\chi){-}a(\tchi)) \Cm{\tu}{\tu_t} \Big] \dx \ds
   \intertext{and with algebraic manipulations we easily obtain}
   & = - \int_0^t \int_\Omega \Big[ \frac{1}{2}\chi_t a'(\tchi) \Cm{\tu}{\tu} {{+}} \frac{1}{2}\tchi_t a'(\chi) \Cm{\f u}{\f u} \Big]\dx \ds 
   \\& \quad - \ito  \Big[ a(\chi) \Cm{\f u}{\tu_t} {{+}} a(\tchi) \Cm{\tu}{\f u_t }  \Big]\dx\ds 
   \\
& \quad+ \int_0^t \int_\Omega \Big[ \frac{1}{2} \partial_t (a(\chi) {-} a(\tchi) ) \mathbb{C}\varepsilon(\tu) {:} \varepsilon(\tu) {-} a'(\chi)\chi_t \mathbb{C}\varepsilon(\f u ) {:} \varepsilon(\tu) 
\Big] 
\dx \ds 
\\
&\quad  - \int_0^t \int_\Omega  \Big[ a(\chi) \Cm{\f u_t}{\tu} {{+}} a(\chi) \Cm{\f u}{\tu_t} {-} 
( a(\chi){-}a(\tchi)) \Cm{\tu}{\tu_t} \Big] \dx \ds
   \\&\quad + \int_0^t\int_\Omega \Big[
\frac{1}{2}\chi_ta'(\tchi)\Cm{\tu}{\tu}{+} \frac12  \tchi_t a'( \chi)  \Cm{\f u }{\f u} \Big]\dx \ds
\\
&\quad +\int_0^t\int_\Omega 
\Big[ a(\chi) \Cm{\f u }{\tut}{+} a(\tchi) \Cm{\tu}{\f u_t} \Big]
\dx \ds
\\& = - \int_0^t \int_\Omega \Big[ \frac{1}{2}\chi_t a'(\tchi) \Cm{\tu}{\tu} {+} \frac{1}{2}\tchi_t a'(\chi) \Cm{\f u}{\f u} \Big] \dx \ds 
   \\& \quad - \ito \Big[ a(\chi) \Cm{\f u}{\tu_t} {+} a(\tchi) \Cm{\tu}{\f u_t } \Big] \dx\ds 
   \\
& \quad+ \int_0^t \int_\Omega \Big[ \frac{1}{2} \partial_t (a(\chi) {-} a(\tchi) ) \mathbb{C}\varepsilon(\tu) {:} \varepsilon(\tu) {-} a'(\chi)\chi_t \mathbb{C}\varepsilon(\f u ){:} \varepsilon(\tu)  
\Big]
\dx \ds 
\\
&\quad  + \int_0^t \int_\Omega \Big[ a(\tchi) \Cm{\tu}{\f u_t}{-}a(\chi) \Cm{\f u_t}{\tu}  {+} 
( a(\chi){-}a(\tchi)) \Cm{\tu}{\tu_t} \Big] \dx \ds
   \\&\quad +\int_0^t\int_\Omega 
\frac{1}{2} \Big[ \chi_ta'(\tchi)\Cm{\tu}{\tu}{+} \tchi_t a'( \chi)  \Cm{\f u }{\f u} \Big] \dx \ds
   \,.
\end{align*}
For the last three lines, we observe
\begin{align*}
  & \frac{1}{2} \partial_t (a(\chi) {-} a(\tchi) ) \mathbb{C}\varepsilon(\tu) {:} \varepsilon(\tu) - a'(\chi)\chi_t \mathbb{C}\varepsilon(\f u ) {:} \varepsilon(\tu)  
\\
&+
  a(\tchi) \Cm{\tu}{\f u_t} -  a(\chi) \Cm{\f u_t}{\tu}  +
  ( a(\chi){-}a(\tchi)) \Cm{\tu}{\tu_t}
   \\&+
\frac{1}{2}\left(\chi_ta'(\tchi)\Cm{\tu}{\tu}+ \tchi_t a'( \chi)  \Cm{\f u }{\f u} \right)
\\
&=  (a(\chi) {-} a(\tchi) ) \mathbb{C}\varepsilon(\tu) {:} \varepsilon(\tut) - a(\chi) \mathbb{C}\varepsilon(\tu ) {:} \varepsilon(\f u_t)+  a(\tchi)  \Cm{\tu}{\f u_t}
\\&+\frac{1}{2}\left[ 
(a'(\chi) \chi_t {-} a'(\tchi) \tchi_t) \Cm{\tu}{\tu} 
{-} 2 a'(\chi )\chi_t \Cm{\f u }{\tu}\right]
\\
&+\frac{1}{2} \left[a'(\chi) \tchi_t \Cm{\f u }{\f u} +a'(\tchi)\chi_t\Cm{\tu}{\tu}\right]\,.
\end{align*}
 All in all, we have calculated
\[
\begin{aligned}
&\frac{1}{2}\int_\Omega a(\chi) \mathbb{C}\varepsilon(\f u{-}\tu)  {:}  \varepsilon(\f u{-}\tu)\dx \Big |_0^t
\\
& = \frac{1}{2}\int_\Omega a(\chi) \mathbb{C}\varepsilon(\f u)  {:} \varepsilon(\f u){+} a( \tchi)  \mathbb{C}\varepsilon(\tu)  {:}  \varepsilon(\tu)  \dx  \Big |_0^t
\\
& \quad 
 - \int_0^t \int_\Omega \Big[ \frac{1}{2}\chi_t a'(\tchi) \Cm{\tu}{\tu} {+} \frac{1}{2}\tchi_t a'(\chi) \Cm{\f u}{\f u} \Big] \dx \ds 
   \\& \quad - \ito \Big[ a(\chi) \Cm{\f u}{\tu_t} {+} a(\tchi) \Cm{\tu}{\f u_t } \Big] \dx\ds
   \\ & \quad +\ito   \Big[(a(\chi) {-} a(\tchi) ) \mathbb{C}\varepsilon(\tu) {:} \varepsilon(\tut) {-} a(\chi) \mathbb{C}\varepsilon(\tu ) {:} \varepsilon(\f u_t){+}  a(\tchi)  \Cm{\tu}{\f u_t} \Big] \dd x \dd s 
   \\ & \quad +\ito  \frac{1}{2}\left[ 
(a'(\chi) \chi_t {-} a'(\tchi) \tchi_t) \Cm{\tu}{\tu} 
{-} 2 a'(\chi )\chi_t \Cm{\f u }{\tu}\right] \dd x \dd s 
   \\ & \quad +\ito \frac{1}{2} \left[a'(\chi) \tchi_t \Cm{\f u }{\f u} +a'(\tchi)\chi_t\Cm{\tu}{\tu}\right] \dd x \dd s \,.
   \end{aligned}
\] \EEE

Concerning the nonlinear potential $W$, we find
\begin{align*}
    \int_\Omega &W( \chi ){-} W (\tchi){-} W'(\tchi)(\chi {-}\tchi)  \dx\Big|_0^t \\
   & = \int_\Omega \big[ W(\chi) {+} W(\tchi) \big] \dx \Big|_0^t - \int_\Omega \big[2 W(\tchi) {+} W'(\tchi) (\chi{-}\tchi) \big] \dx \Big|_0^t 
    \\
    &= \int_\Omega W(\chi) {+} W(\tchi) \dx \Big|_0^t - \int_0^t \int_\Omega   \big[W'(\tchi) \tchi_t  {+} W'(\tchi) \chi_t {+} W''(\tchi) \tchi_t ( \chi{-}\tchi) \big] \dx \ds 
    \\
    & = \int_\Omega W(\chi) {+} W(\tchi) \dx \Big|_0^t - \int_0^t \int_\Omega \big[ W'(\chi) \tchi_t  {+} W'(\tchi) \chi_t \big]  \dx \ds \\
   &\quad  + \int_0^t \int_\Omega\big[ \tchi_t \left(W'(\chi) {-} W'(\tchi) {-}W''(\tchi)  ( \chi{-}\tchi)  \right) \big]  \dx \ds \,.
\end{align*}
For the remaining quadratic terms in the relative energy, we find 
\begin{align*}
   \int_\Omega\big[ \frac{1}{2}&|\nabla \chi{-}\nabla \tchi |^2 {+} \frac{1}{2}| \f u_t {-} \tu_t |^2 {+} \frac{\ell}{2}| \chi {-}\tchi |^2 \big] \dx \Big|_0^t 
   \\={}& \int_\Omega \big[ \frac{1}{2}|\nabla \chi|^2 {+} \frac{1}{2}| \f u_t  |^2   {+} \frac{1}{2}|\nabla \tchi |^2 {+} \frac{1}{2}|  \tu_t |^2  \big]\dx  \Big|_0^t 
   \\
   & + \int_0^t  \int_\Omega  \big[\chi _t \Delta \tchi {-} \nabla \chi {\cdot} \nabla   \tchi_t \big] \dx \ds   -
   \int_0^t\big[ \langle{} \f u_{tt}, \tu_t\rangle {+} \int_\Omega\tu_{tt}{\cdot}\f u_t\dx  \big] \ds  
    \\
   & +\int_0^t \int_\Omega \ell ( \chi_t{-}\tchi_t) (\chi{-}\tchi) \dx \ds \,.
\end{align*} 
Moreover, we   relate  the relative dissipation  \eqref{defW} to the pseudo-potential $\mathcal{D}$  \eqref{def:diss}
(where  $\gamma_1$ is set to $0$  in view of the boundary condition \eqref{homogNeu})
via 
\begin{align*}
    \mathcal{W}(\chi,\f u_t,\chi_t| \tu_t,  \tchi_t) ={}& \mathcal{D}(\chi,\f u_t,\chi_t) + \mathcal{D}(\tchi,\tu_t, \tchi_t) - \int_\Omega
    \big[ 2 \chi_t \tchi_t {+} b(\chi) \mathbb{V} \eps{\f u_t}{:} \eps {  \tu_t} \big]\dx\\& {-}\int_\Omega \big[ b(\tchi)  \mathbb{V} \eps{\tu_t} {:} \eps {  \f u_t}{+} (b(\tchi){-}b(\chi)) \mathbb{V}
     \eps{\tu_t{-}\f u _t }{:}\eps{\tu_t} \big] \dx \,.
\end{align*}

Combining all  the above calculations, we obtain
 (note that,  we have $\gamma_2=0$ in $\calE$ due to \eqref{homogNeu}), 
\begin{align*}
\mathcal{R}( \f u , \chi , \f u _t& | \tu, \tchi,\tut) \Big|_0^t + {}\int_0^t\mathcal{W}( \chi, \f u _t, \chi_t | \tut,\tchi_t) \ds 
\\
={}& \mathcal{E}(\f u , \chi, \f u_t)\Big |_0^t +\int_0^t \mathcal{D}(\chi , \f u _t , \chi_t) \ds   + \mathcal{E}(\tu , \tchi, \tut) \Big|_0^t+ \int_0^t \mathcal{D}(\tchi , \tut , \tchi_t) \ds 
\\
&- \int_0^t \int_\Omega  \chi_t  \colorboxed{green}{\tchi_t 
{-} \Delta \tchi{+} \frac{1}{2}a'(\tchi) \Cm{\tu}{\tu}{+} W'(\tchi))} \dx\ds\\& 
-\int_0^t \colorboxed{blue}{\int_\Omega \chi_t \tchi_t {+}\nabla  \tchi_t{\cdot} \nabla   \chi {+} \frac{1}2a'(\chi) \Cm{\f u}{\f u}\tchi_t{+} W'(\chi)\tchi_t \dx}\ds 
\\
&- \int_0^t  \colorboxed{magenta}{ \langle \f u _{tt} , \tut \rangle {+} \int_\Omega\big[ b(\chi) \mathbb{V}\varepsilon(\f u_t) {:} \varepsilon(\tut) {+} a(\chi)  \Cm{\f u}{\tut} \big] \dx} \ds 
\\
&- \int_0^t  \colorboxed{red}{\int_\Omega \tu_{tt} {\cdot} \f u_t {+} b(\tchi) \mathbb{V}\varepsilon(\tut) {:} \varepsilon(\f u_t) {+} a( \tchi) \Cm{\tu}{\f u_t} \dx} \ds 
\\
&+ \int_0^t \int_\Omega (b(\chi){-} b(\tchi)) \mathbb{V}\varepsilon(\tut){:} \varepsilon(\f u _t {-} \tut) \dx \ds 
\\
&+ \int_0^t \int_\Omega \Big[ \tchi_t\left ( W'(\chi) {-} W'(\tchi){-}W''(\tchi)(\chi{-}\tchi) \right ) {+}  \ell (\chi_t{-}\tchi_t)(\chi{-}\tchi)\Big] \dx \ds 
\\
& + \int_0^t \int_\Omega \Big[ (a(\chi) {-} a(\tchi) ) \mathbb{C}\varepsilon(\tu) {:} \varepsilon(\tut) {-} a(\chi) \mathbb{C}\varepsilon(\tu ) {:} \varepsilon(\f u_t){+}  a(\tchi)  \Cm{\tu}{\f u_t}\Big]
\dx \ds 
\\&+\frac{1}{2}\int_0^t\int_\Omega 
\Big[(a'(\chi) \chi_t {-} a'(\tchi) \tchi_t) \Cm{\tu}{\tu} 
{-} 2 a'(\chi )\chi_t \Cm{\f u }{\tu} \Big]\dx \ds
\\
&{+}\frac{1}{2} \int_0^t \int_\Omega\Big[ a'(\chi) \tchi_t \Cm{\f u }{\f u} {+}a'(\tchi)\chi_t\Cm{\tu}{\tu}\Big]\dx \ds\,.
\end{align*}
 On the one hand, since
 $(\f u , \chi) $ is a weak solution in the sense of Definition~\ref{def:weakSol}, 
 for the damage flow rule we have the one-sided variational inequality \eqref{weakChiIneq}: thus, since $
\tilde{\chi}_t \leq 0$ a.e.\ in $\Omega{\times}(0,T)$, we find that 
  the term in the {\color{blue} \textbf{blue}} box  is negative. In turn, the terms in the 
 {\color{magenta} \textbf{magenta}} box
 equals 
 $\langle \f f , \tu_t \rangle_{H^1(\Omega)} $.  On the other hand, 
 since   $(\tu,\tchi)$ is a strong solution in the sense of Definition~\ref{def:strongSol},  the term in the 
 {\color{green} \textbf{green}} box is null a.e.\ in $\Omega{\times}(0,T)$, whereas the term in the   {\color{red} \textbf{red}} box equals  $\int_\Omega \f f {\cdot}  \uu\dd x $. 
Hence, we  
  find 
\begin{align*}
  \mathcal{R}( \f u , \chi , \f u _t& | \tu, \tchi,\tut) \Big|_0^t + {}\int_0^t\mathcal{W}( \chi, \f u _t, \chi_t | \tut,\tchi_t) \ds \\
  &\leq
 \colorboxed{dblue}{\mathcal{E}(\f u , \chi, \f u_t)\Big |_0^t +\int_0^t \mathcal{D}(\chi , \f u _t , \chi_t) \ds
   - \int_0^t  \langle \f f ,  \tu_t \rangle_{H^1(\Omega)} \dd s}
   \\
   & \quad 
 + \colorboxed{bernstein}{\mathcal{E}(\tu , \tchi, \tut) \Big|_0^t+ \int_0^t \mathcal{D}(\tchi , \tut , \tchi_t) \ds   - \int_0^t \int_\Omega \f f {\cdot} \uu \dd x  \dd s }  
  \\
  &  \quad 
+ \int_0^t \int_\Omega (b(\chi){-} b(\tchi)) \mathbb{V}\varepsilon(\tut){:} \varepsilon(\f u _t {-} \tut) \dx \ds 
\\
& \quad + \int_0^t \int_\Omega \big[ \tchi_t\left ( W'(\chi) {-} W'(\tchi){-}W''(\tchi)(\chi{-}\tchi) \right ) {+}  \ell  (\chi_t{-}\tchi_t)(\chi{-}\tchi)  \big] \dx \ds 
\\
& \quad  + \int_0^t \int_\Omega  \big[ 
(a(\chi) {-} a(\tchi) ) \mathbb{C}\varepsilon(\tu) {:} \varepsilon(\tut) {-} a(\chi) \mathbb{C}\varepsilon(\tu ) {:} \varepsilon(\f u_t){+}  a(\tchi)  \Cm{\tu}{\f u_t}
\big]
\dx \ds 
\\& \quad +\frac{1}{2}\int_0^t\int_\Omega 
\big[(a'(\chi) \chi_t {-} a'(\tchi) \tchi_t) \Cm{\tu}{\tu} 
{-} 2 a'(\chi )\chi_t \Cm{\f u }{\tu} \big] \dx \ds
\\
& \quad  +\frac{1}{2} \int_0^t \int_\Omega \big[ a'(\chi) \tchi_t \Cm{\f u }{\f u} {+}a'(\tchi)\chi_t\Cm{\tu}{\tu} \big] \dx \ds 
\\
&
 \doteq I_1+I_2+I_3+I_4+I_5+I_6+I_7\,.
\end{align*}
  Again, we use the fact that $(\uu,\chi)$ is a weak solution, and thus satisfies the energy-dissipation inequality 
\eqref{UEDI} (with $g\equiv 0$, as we are confining the discussion to homogeneous Neumann boundary conditions), to 
conclude that the term in the  {\color{dblue} \textbf{dark blue}} box  is negative. Analogously, since the strong solution $(\tu,\tchi)$ satisfies the energy-dissipation balance,
we have that the term in the  {\color{bernstein} \textbf{orange}} box  is null. 

We now calculate the integrands of $I_5+I_6+I_7$.  Indeed,  we have that  \EEE 
\[
\begin{aligned}
 & (a(\chi) {-} a(\tchi) ) \mathbb{C}\varepsilon(\tu) {:} \varepsilon(\tut) {-}  
 a(\chi)\mathbb{C}\varepsilon(\tu ) {:} \varepsilon(\f u_t)+ a(\tchi) \Cm{\tu}{\f u_t}
\\ & = (a(\chi){-} a(\tchi))  \mathbb{C}\varepsilon(\tu) {:} \varepsilon(\tut{-}\f u _t ) 
\end{aligned}
\]
as well as
\begin{align*}
&(a'(\chi )\chi_t {-} a'(\tchi) \tchi_t) \Cm{\tu}{\tu} 
- 2 a'(\chi ) \chi_t \Cm{\f u }{\tu}
+ a'(\chi) \tchi_t \Cm{\f u }{\f u} +a'(\tchi)\chi_t\Cm{\tu}{\tu}
\\
& =
a'(\chi) \tchi_t \Cm{\f u {-}\tu}{\f u{-}\tu} +  a'(\chi) \tchi_t \Cm{\f u }{\tu} + a'(\chi) \tchi_t \Cm{\f u {-} \tu}{\tu} \\&\quad +  a'(\chi) \chi_t \Cm{\tu{-}\f u}{\tu} +  a'(\tchi)(\chi_t{-}\tchi_t)\Cm{\tu}{\tu} {-}  a'(\chi)  \chi_t  \Cm{\f u }{\tu} 
\\
& =
a'(\chi) \tchi_t \Cm{\f u {-}\tu}{\f u{-}\tu} +  a'(\chi) (\tchi_t {-} \chi_t) \Cm{\f u }{\tu} \\&\quad+ a'(\chi) (\tchi_t{-}\chi_t) \Cm{\f u {-} \tu}{\tu}   +  a'(\tchi)(\chi_t{-}\tchi_t)\Cm{\tu}{\tu}
\\&=
 a'(\chi )\tchi_t \Cm{\f u {-}\tu}{\f u{-}\tu}  + 2 a'(\chi ) ( \chi_t{-}\tchi_t) \Cm{\tu{-}\f u}{\tu}\\
  &\quad + ( a'(\tchi){-} a'(\chi))(\chi_t{-}\tchi_t) \Cm{\tu}{\tu}\,.
\end{align*}
 Since $a$ is non-decreasing and $\tchi_t \leq 0$ a.e.\ in $\Omega{\times}(0,T)$,  
the first term on the right-hand side has a negative sign. 
Inserting everything back into the relative energy inequality, we find
\begin{align*}
\mathcal{R}( \f u , \chi , \f u _t& | \tu, \tchi,\tut) \Big|_0^t + {}\int_0^t\mathcal{W}( \chi, \f u _t, \chi_t | \tut,\tchi_t)- \frac 12\int_\Omega a'(\chi) \tchi_t \Cm{\f u {-}\tu}{\f u {-}\tu} \dx   \ds 
\\
\leq {}&
\int_0^t \int_\Omega (b(\chi){-} b(\tchi)) \mathbb{V}\varepsilon(\tut){:} \varepsilon(\f u _t {-} \tut) \dx \ds 
\\
&+ \int_0^t \int_\Omega \big[ \tchi_t\left ( W'(\chi) {-} W'(\tchi){-}W''(\tchi)(\chi{-}\tchi) \right ) +  \ell  (\chi_t{-}\tchi_t)(\chi{-}\tchi)\big] \dx \ds 
\\
& + \int_0^t \int_\Omega 
(a(\chi){-} a(\tchi))  \mathbb{C}\varepsilon(\tu) {:} \varepsilon(\tut-\f u _t )  \dx \ds 
\\
& + \frac{1}{2}\int_0^t \int_\Omega \big[ 2 a'(\chi ) ( \chi_t{-}\tchi_t) \Cm{\tu-\f u}{\tu} 
 { +} ( a'(\tchi){-} a'(\chi))(\chi_t{-}\tchi_t) \Cm{\tu}{\tu} \big]  \dx \ds \,.
\end{align*}
The right-hand side will be estimated by the relative energy $\mathcal{R}$.  Indeed,  it holds
\renewcommand{\eps}[1]{\varepsilon ({#1})}
\begin{equation}
\label{almost-final-REI}
\begin{aligned}
\mathcal{R}( \f u , \chi , \f u _t& | \tu, \tchi,\tut) \Big|_0^t + {}\int_0^t\mathcal{W}( \chi, \f u _t, \chi_t | \tut,\tchi_t)- \frac12\int_\Omega a'(\chi) \tchi_t \Cm{\f u {-}\tu}{\f u {-}\tu} \dx   \ds 
\\
& \leq\int_0^t  \| b(\chi) {-} b(\tchi) \|_{L^6(\Omega)} \| \mathbb{V}\varepsilon(\tut)\|_{L^3(\Omega)} \| \varepsilon (\f u_t{ -} \tut ) \|_{L^2(\Omega)}\ds
 \\&
 \quad 
  +\int_0^t  \left\| \tchi_t \int_0^1 W''(\tchi{+}\rho(\chi{-}\tchi))\,
  \mathrm{d}\rho\right \|_{L^{3/2}(\Omega)}
   \| \chi {-} \tchi\|_{L^6(\Omega)}^2 \ds \EEE
   \\
   & \quad   
  +  \ell  \int_0^t   \|\chi_t-\tchi_t\|_{L^{2}(\Omega)}  \| \chi {-} \tchi\|_{L^2(\Omega)} \ds \EEE
\\
& \quad + \int_0^t
  \| \mathbb{C}\varepsilon(\tu) \| _{L^3(\Omega)} \| a(\chi) {-} a(\tchi) \| _{L^6(\Omega)} \| \varepsilon (\f u_t {-}\tut) \|_{L^2(\Omega)} \ds
\\
&\quad +  
 \int_0^t 
  \left\|a'(\chi) \Cm{\tu}{\f u{-}\tu} \right\|_{L^2(\Omega)} \| \chi_t {-} \tchi_t\|_{L^2(\Omega)} \dd s \EEE
  \\
&\quad +  
 \int_0^t 
  \| a'(\tchi){-} a'(\chi) \|_{L^3(\Omega)}  \| \chi_t {-} \tchi_t\|_{L^2(\Omega)}^2  \| \Cm{\tu}{\tu} \|_{L^6(\Omega)}^2   \ds
  \\ & 
  \doteq 
  I_{8}+  I_{9} +I_{10}+ I_{11} +I_{12} +I_{13} \,. \EEE
   \EEE
\end{aligned}
\end{equation}
 Now,  since $b \in \mathrm{C}^1(\R)$
and $\chi,\, \tchi \in L^\infty(\Omega)$, we can estimate
\[
\begin{aligned}
I_8  & \leq c \int_0^t  \| \chi {-} \tchi \|_{L^6(\Omega)}   \| \varepsilon(\tut)\|_{L^3(\Omega)} \| \varepsilon (\f u_t{ -} \tut ) \|_{L^2(\Omega)}\ds
\\
& 
\leq 
\frac14 \int_0^t  \int_\Omega b(\chi) \mathbb{V}\varepsilon(\f u_t{-} \tu_t) {:} \varepsilon(\f u_t{-}\tu_t )\dx \dd s 
+ c  \int_0^t    \| \varepsilon(\tut)\|_{L^3(\Omega)}^2  \| \chi {-} \tchi \|_{L^6(\Omega)}^2  \ds \,,
\end{aligned}
\]
where we have used the lower bound 
$b$, implying
\[ \| \eps{\f u_t{-} \tu_t }\|_{L^2(\Omega)}^2 \leq c \int_\Omega b(\chi) \mathbb{V}\varepsilon(\f u_t{-} \tu_t) {:} \varepsilon(\f u_t{-}\tu_t )\dx \,. \]
Similarly, relying on the fact that $W\in \mathrm{C}^2(\R)$, we check that 
\[
I_9 \leq c \int_0^t  \left\| \tchi_t \right \|_{L^{3/2}(\Omega)}
   \| \chi {-} \tchi\|_{L^6(\Omega)}^2 \ds\,,
\]
while we obviously have 
\[
I_{10} \leq \frac14 \int_0^t  \|\chi_t-\tchi_t\|_{L^{2}(\Omega)}^2 \dd s +c \ell^2 \int_0^t  \| \chi {-} \tchi\|_{L^2(\Omega)}^2 \ds \,.
\] 
Relying now on the fact that  $a \in \mathrm{C}^1(\R)$,
we may estimate 
\[
\begin{aligned}
I_{11}  & \leq   \int_0^t
  \| \varepsilon(\tu) \| _{L^3(\Omega)} \| \chi {-} \tchi \| _{L^6(\Omega)} \| \varepsilon (\f u_t {-}\tut) \|_{L^2(\Omega)} \ds
\\ &   \leq  \frac14 \int_0^t  \int_\Omega b(\chi) \mathbb{V}\varepsilon(\f u_t{-} \tu_t) {:} \varepsilon(\f u_t{-}\tu_t )\dx \dd s 
  + c  \int_0^t
  \| \varepsilon(\tu) \| _{L^3(\Omega)}^2 \| \chi {-} \tchi \| _{L^6(\Omega)}^2 \dd s 
  \end{aligned}
\]
\EEE
The  assumptions on $a'$ imply the estimate
\[
\begin{aligned}
&    \int_\Omega \left|a'(\chi) \Cm{\tu}{\f u{-}\tu} \right|^2 \dx
\\
  &\leq  \| a'(\chi)\|_{L^\infty(\Omega)}^2 | \mathbb{C}|^2 \| \eps{\tu}\|_{L^\infty(\Omega)}^2 \int_\Omega 
      |\eps{\f u {-}\tu}|^2 \dx \\&
    \leq c  \| a'(\chi)\|_{L^\infty(\Omega)}^2  \| \eps{\tu}\|_{L^\infty(\Omega)}^2  \| \eps{ \f u (0)} {-} \tu(0)\|_{L^2(\Omega)}^2 
     + c  \| a'(\chi)\|_{L^\infty(\Omega)}^2  \| \eps{\tu}\|_{L^\infty(\Omega)}^2  \int_0^s \| \eps{\f u_t{-} \tu_t }\|_{L^2(\Omega)}^2\dta
     \\
     &
     \doteq M(\chi,\tu,\uu,\tu_t)
      \,,
\end{aligned}
\]
 and therefore we have 
\[
I_{12} \leq  \frac14 \int_0^t  \|\chi_t-\tchi_t\|_{L^{2}(\Omega)}^2 \dd s + M(\chi,\tu,\uu,\tu_t)\,.
\]
Finally, we estimate 
\[
I_{13}  \leq  \frac14 \int_0^t  \|\chi_t-\tchi_t\|_{L^{2}(\Omega)}^2 \dd s 
 +c \int_0^t \| \eps{\tu}\|_{L^6(\Omega)}^4\|\tchi{-}\chi\|_{L^3(\Omega)}^2 \dd s\,.
\]
Inserting all the above estimates in \eqref{almost-final-REI}, we ultimately deduce  \EEE
%
%
\begin{align*}
\mathcal{R}( \f u , \chi , \f u _t& | \tu, \tchi,\tut) \Big|_0^t + {}\int_0^t\mathcal{W}( \chi, \f u _t, \chi_t | \tut,\tchi_t)- \int_\Omega a'(\chi) \tchi_t \Cm{\f u {-}\tu}{\f u {-}\tu} \dx   \ds 
\\
\leq{}&\int_0^t  \left[ 
 \frac{\underline{b}}{2} \int_\Omega
 \mathbb V {:}\eps{\f u_t {-} \tu_t }){:}\eps{\f u_t {-} \tu_t} \dd x \dd s 
 {+} \frac{1}{2}\| \chi_t {-} \tchi_t\|_{L^2(\Omega)}^2
 \right]
 \ds \\&
 +c
\int_0^t   \left( \| \tchi_t  \|_{L^{3/2}(\Omega)}{+}  
   \| \varepsilon(\tut)\|_{L^3(\Omega)}^2
{+}\ell^2{+}  \| \varepsilon(\tu) \| _{L^3(\Omega)}^2 {+} \| \varepsilon(\tu) \| _{L^6(\Omega)}^4
\right)  \| \chi {-} \tchi\|_{L^6(\Omega)}^2 
\ds 
\\&+ c  \| a'(\chi)\|_{L^\infty(\Omega)}^2  \| \eps{\tu}\|_{L^\infty(\Omega)}^2  \| \eps{  \f u (0)} {-} \tu(0)\|_{L^2(\Omega)}^2 
    \\
    & \quad + c  \| a'(\chi)\|_{L^\infty(\Omega)}^2  \| \eps{\tu}\|_{L^\infty(\Omega)}^2   \int_0^t\int_0^s \int_\Omega b(\chi) \mathbb{V}\varepsilon(\f u_t{-}
     \tu_t) {:} \varepsilon(\f u_t{-}\tu_t )\dx\dta   \dd s
\,.
\end{align*}
Then, estimate \eqref{REI} follows  by Gronwall's inequality. 
\end{proof}
\medskip

\noindent \textbf{Conclusion of the proof of Theorem \ref{thm:3}:} Let $(\uu,\chi)$ and $(\tu,\tchi)$ be a weak and a strong solution pair, respectively, emanating from the same initial data. Then, the right-hand side of estimate \eqref{REI}  is null.  We thus conclude that  $\mathcal{R}( \f u(t) , \chi(t) , \f u _t(t) | \tu(t), \tchi(t),\tut(t)) \equiv 0 $ for almost all $t\in (0,T)$, which obviously yields   $\uu \equiv \tu$ and $\chi \equiv \tchi$. 
\QED  
\bigskip

\noindent
\noindent
{\bf Acknowledgments.}~
This research has been performed in the framework of the MIUR-PRIN Grant 2020F3NCPX 
``Mathematics for industry 4.0 (Math4I4)''. The present paper also benefits from the support of 
the GNAMPA (Gruppo Nazionale per l'Analisi Matematica, la Probabilit\`a e le loro Applicazioni)
of INdAM (Istituto Nazionale di Alta Matematica). E. Rocca also acknowledges the support of Next Generation EU Project No.P2022Z7ZAJ (A unitary mathematical framework for modelling muscular dystrophies).
R. Lasarzik acknowledges support by the Deutsche Forschungsgemeinschaft (DFG, German Research Foundation) under Germany's Excellence Strategy – The Berlin Mathematics Research Center MATH+ (EXC-2046/1, project ID: 390685689).

\bigskip

\appendix

\section{Elliptic regularity results}
\label{s:appA}
 The main result of this section, Corollary \ref{corB:ellipt} below,  collects the two key elliptic regularity estimates for the momentum balance, which are at the core of our analysis of strong solutions. Corollary \ref{corB:ellipt}  follows from the following \EEE
\begin{proposition}
\label{propB:ellipt}
Let $\Vm \in \R^{d\times d\times d\times d}$ fulfill the assumptions~\eqref{asstensors} of Hypothesis~\textbf{\ref{h:1}} and let the domain $\Omega\subset \R^d$ fulfill~\eqref{omega-smooth}. Then,  there exists a constant $C_{\mathrm{ER}} >0$ such that for any 
$\f h \in H^1(\Omega;\R^d)$  
the solution $\f y$ to the boundary-value problem
\begin{align}\label{boundaryvalueproblemProp}
\begin{split}
-\di (\Vm \eps{ \f y})  &= \f h \quad\text{in } \Omega \, , \quad
\int_\Omega \f y \dx =0 \,,
\quad
  \f n {\cdot} \Vm \eps{ \f y}  = 0\quad\text{on } \partial\Omega \,.
\end{split}
\end{align}
fulfills $ \f y \in H^3 (\Omega)$, and there holds
\begin{subequations}
\label{key-elliptic-regul-est}
\begin{align}
\label{key-elliptic-regul-est-1}
    \| \f y \| _{H^2(\Omega)} \leq  C_{\mathrm{ER}} \left( \| \f h \| _{L^2(\Omega)}{+} \| \f y \| _{H^1(\Omega)} \right) \,
    \intertext{as well as }
    \label{key-elliptic-regul-est-2}
    \| \f y \| _{H^3(\Omega)} \leq   C_{\mathrm{ER}}  \left( \| \f h \| _{H^1(\Omega)}{+} \| \f y \| _{H^1(\Omega)} \right) \,.
\end{align}
\end{subequations}
\end{proposition}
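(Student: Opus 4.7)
The plan is to establish \eqref{key-elliptic-regul-est} by the classical combination of Nirenberg's difference-quotient method and a bootstrap, exploiting that the operator $-\mathrm{div}(\mathbb{V}\varepsilon(\cdot))$ is a symmetric, strongly elliptic second-order system (the full positive-definiteness in \eqref{asstensors} implies in particular the Legendre-Hadamard condition) and that the co-normal boundary condition $\mathbf{n}\cdot\mathbb{V}\varepsilon(\mathbf{y})=0$ is the natural one for this system. Well-posedness in $H^1$ of the boundary value problem \eqref{boundaryvalueproblemProp} follows by Lax--Milgram from Korn's second inequality combined with the mean-zero constraint (the presence of $\|\mathbf y\|_{H^1(\Omega)}$ on the right-hand side of both estimates makes the argument insensitive to the infinitesimal rotations in the kernel of $\varepsilon$).

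For \eqref{key-elliptic-regul-est-1} the plan is: interior $H^2$ regularity is obtained by testing the weak formulation against $\mathrm{D}_{-h}(\eta^2\mathrm{D}_h \mathbf{y})$ with $\eta\in C^\infty_c(\Omega)$ a cutoff and $\mathrm D_h$ a discrete difference in a direction $\mathbf e_k$, using the symmetry of $\mathbb V$ to handle cross terms, the coercivity $\mathbb V A:A\geq \eta_{\mathbb V}|A|^2$ to absorb the leading quadratic form, and the standard commutator estimates for $\mathrm D_h$ with a cut-off; sending $h\downarrow 0$ yields $\mathbf y \in H^2_{\mathrm{loc}}$ with the required bound. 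Near the boundary I would fix a finite $C^1$ atlas straightening $\partial\Omega$ and repeat the difference-quotient argument only along tangential directions, which preserves the boundary condition; this gives control of all second derivatives $\partial_\tau\partial_\tau\mathbf y$ and $\partial_\tau\partial_n\mathbf y$. The purely normal-normal component $\partial_{nn}\mathbf y$ is then recovered algebraically from the PDE: writing $-\mathrm{div}(\mathbb V\varepsilon(\mathbf y))=\mathbf h$ in local coordinates and isolating the top-order normal block, strong ellipticity provides an invertible linear map expressing $\partial_{nn}\mathbf y$ through $\mathbf h$ and lower-order and tangential second derivatives already controlled.

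For \eqref{key-elliptic-regul-est-2} the plan is a bootstrap: for any tangential vector field $\partial_\tau$, the function $\partial_\tau \mathbf y$ solves a system of the same structure with right-hand side $\partial_\tau \mathbf h$ plus lower-order commutator terms generated by the non-constancy of the frame, which are controlled by the $H^2$-bound from step one. Applying \eqref{key-elliptic-regul-est-1} to $\partial_\tau\mathbf y$ yields control of all third derivatives involving at least one tangential direction. The remaining $\partial_n^3\mathbf y$ is then obtained by differentiating the equation once more in $\mathbf n$ and again inverting the top-order normal block via strong ellipticity, recovering it from $\partial_n \mathbf h$ and mixed third derivatives already bounded.

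The main obstacle is genuinely technical rather than conceptual: the hypothesis $\Omega\in C^1$ lies just below the usual $C^{1,1}$ threshold customarily invoked for $H^2$ Neumann regularity of elliptic systems, and the $H^3$ estimate morally requires $C^{2,1}$ charts to keep the commutators generated by straightening the boundary within the target regularity class. This difficulty can be resolved either by tacitly reading \eqref{omega-smooth} as giving charts smooth enough to render the tangential derivations lossless, or by appealing to the refined regularity results of Savar\'e or Geymonat--Suquet for symmetric strongly elliptic systems, which are known to tolerate reduced boundary regularity precisely because the symmetric structure kills the curvature obstructions that appear in the general case. A minor but genuine subtlety is that the kernel of $\varepsilon$ contains the full space of rigid motions while the constraint $\int_\Omega\mathbf y\,\mathrm{d}x=0$ only kills translations; this is harmless here because the $\|\mathbf y\|_{H^1(\Omega)}$ term on the right-hand side absorbs any contribution from the remaining finite-dimensional kernel.
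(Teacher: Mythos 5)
The paper's proof is a one-line citation to \cite[Thm.~3.45]{regularityElast} (Costabel--Dauge--Nicaise), whereas you sketch a direct, self-contained proof via Nirenberg's difference-quotient method and boundary straightening. That is a genuinely different route, and structurally it is sound: interior $H^2$ via testing with $D_{-h}(\eta^2 D_h \mathbf y)$, tangential difference quotients near the flattened boundary, algebraic recovery of $\partial_{nn}\mathbf y$ from invertibility of the normal--normal block of the principal symbol (which does hold here, since positive definiteness of $\mathbb V$ on symmetric matrices gives $\mathbb V(\mathbf v\otimes \mathbf n):(\mathbf v\otimes \mathbf n)\geq \tfrac{\eta_\mathbb{V}}{2}|\mathbf v|^2>0$, i.e.\ Legendre--Hadamard), and one more tangential differentiation for $H^3$. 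Your remark on the rigid-motion kernel is also apt: the estimates carry $\|\mathbf y\|_{H^1(\Omega)}$ on the right, so the unresolved infinitesimal rotations are harmless.

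The gap you flag, however, is real and is not resolved by the two escape routes you offer. With only a $C^1$ chart the straightened coefficients are merely continuous; the commutator that appears when a tangential difference quotient is moved through the bilinear form is then not $O(h)$ in $L^2$, and the $H^2$ estimate does not close. For the $H^3$ estimate you need the tangentially differentiated system to again have Lipschitz coefficients, which asks for $C^{2,1}$ charts. The Savar\'e / Geymonat--Suquet-type refinements for symmetric strongly elliptic systems on Lipschitz or $C^1$ domains buy fractional gains (on the order of $H^{3/2}$ or $B^{3/2}_{2,\infty}$), not $H^2$ or $H^3$ up to the boundary; and ``tacitly reading $(\mathrm{H}_\Omega)$ as sufficient smoothness'' is a change of hypothesis, not a proof. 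It is worth noting, though, that the paper's cited reference is explicitly for \emph{smooth} domains, so the same tension is latent in the paper's one-line proof; the clean fix for both is to strengthen $(\mathrm{H}_\Omega)$ to $C^{2,1}$ (or smooth), after which your argument closes as described.
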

In fact, the result of this proposition is a consequence of~\cite[Thm.~3.45] {regularityElast},
compare also to~\cite[Sec.~4.3b]{regularityElast}. 
\par
 We will also resort to the following abstract version of Poincar\'e's inequality, see 
 \cite{Gilly}.
\begin{lemma}
\label{l:Poincare}
Let $\mathsf{V}, \mathsf{H}, \mathsf{W},  \mathsf{Z} $ be four Hilbert spaces
with $\mathsf{V} \Subset \mathsf{H}$ compactly. 
Let $A:  \mathsf{V} \to   \mathsf{W}$ and $B:\mathsf{V} \to \mathsf{Z}$
be linear and continuous operators such that
\begin{compactitem}
\item $\mathrm{Ker}(A) {\cap} \mathrm{Ker}(B) = \{0\}$;
\item there exists a positive constant $C>0$ such that for all $v\in \mathsf{V}$ we have 
\begin{equation}
\label{Poin-bases}
\| v\|_{\mathsf{V}} \leq C \left( \|v\|_{\mathsf{H}} {+} \| Av\|_{\mathsf{W}} \right)\,.
\end{equation}
\end{compactitem}
Then,
\[
\exists\,  M>0 \quad \forall\, v  \in \mathsf{V} \, :  \qquad  \|v\|_{\mathsf{H}} \leq M \left(\|Bv\|_{\mathsf{Z}} {+} \| Av\|_{\mathsf{W}}  \right)\,,
\]
so that $\| v\|_{\mathsf{V}}$ is equivalent to $\|B v\|_{\mathsf{Z}} {+} \| A v\|_{\mathsf{W}} $.
\end{lemma}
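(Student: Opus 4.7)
The plan is to establish the inequality by the standard contradiction/compactness argument, which is the natural approach whenever one has a compact embedding at disposal and a linear obstruction to be detected on the kernel.

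First, I would argue by contradiction: suppose the desired inequality fails. Then for each $n \in \mathbb{N}$ there exists $v_n \in \mathsf{V}$ such that
\[
\|v_n\|_{\mathsf{H}} > n\bigl(\|Av_n\|_{\mathsf{W}} + \|Bv_n\|_{\mathsf{Z}}\bigr).
\]
By homogeneity I may normalize $\|v_n\|_{\mathsf{H}} = 1$, so that $\|Av_n\|_{\mathsf{W}} + \|Bv_n\|_{\mathsf{Z}} \to 0$ as $n\to\infty$. Assumption \eqref{Poin-bases} then gives
\[
\|v_n\|_{\mathsf{V}} \leq C\bigl(\|v_n\|_{\mathsf{H}} + \|Av_n\|_{\mathsf{W}}\bigr) \leq C(1+o(1)),
\]
so $(v_n)_n$ is bounded in $\mathsf{V}$.

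Next, I would exploit the compact embedding $\mathsf{V}\Subset\mathsf{H}$ together with reflexivity of $\mathsf{V}$ (Hilbert) to extract a (not relabeled) subsequence and $v^{*}\in\mathsf{V}$ with $v_n \rightharpoonup v^{*}$ weakly in $\mathsf{V}$ and $v_n\to v^{*}$ strongly in $\mathsf{H}$. In particular $\|v^{*}\|_{\mathsf{H}} = 1$. Since $A$ and $B$ are linear and continuous, they are weak-to-weak continuous, hence $Av_n \rightharpoonup Av^{*}$ in $\mathsf{W}$ and $Bv_n \rightharpoonup Bv^{*}$ in $\mathsf{Z}$. Combined with $\|Av_n\|_{\mathsf{W}}\to0$ and $\|Bv_n\|_{\mathsf{Z}}\to 0$ and weak lower semicontinuity of the norms, this forces $Av^{*}=0$ and $Bv^{*}=0$. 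By the kernel assumption $\operatorname{Ker}(A)\cap\operatorname{Ker}(B)=\{0\}$ we get $v^{*}=0$, contradicting $\|v^{*}\|_{\mathsf{H}}=1$. This proves the existence of $M>0$ with the required estimate.

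For the final equivalence, one direction is immediate from linear continuity of $A$ and $B$, which yields $\|Av\|_{\mathsf{W}}+\|Bv\|_{\mathsf{Z}}\leq(\|A\|+\|B\|)\,\|v\|_{\mathsf{V}}$. The reverse bound follows by plugging the newly obtained estimate into the hypothesis \eqref{Poin-bases}:
\[
\|v\|_{\mathsf{V}} \leq C\bigl(\|v\|_{\mathsf{H}} + \|Av\|_{\mathsf{W}}\bigr) \leq C\bigl(M(\|Av\|_{\mathsf{W}}+\|Bv\|_{\mathsf{Z}})+\|Av\|_{\mathsf{W}}\bigr) \leq C'\bigl(\|Av\|_{\mathsf{W}}+\|Bv\|_{\mathsf{Z}}\bigr).
\]
No step presents a real obstacle here; the only subtle point is making sure that $A$ and $B$, being norm-continuous linear operators between Hilbert spaces, are automatically weak-to-weak continuous, so that the weak limit $v^{*}$ sits in both kernels. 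The essential ingredients are the compact embedding $\mathsf{V}\Subset\mathsf{H}$ (to upgrade weak to strong convergence in $\mathsf{H}$ and preserve the normalization) and the triviality of the joint kernel.
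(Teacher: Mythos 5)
Your argument is correct, and it is the standard Peetre-type compactness argument one would expect here: normalize in $\mathsf{H}$, use \eqref{Poin-bases} to get $\mathsf{V}$-boundedness, pass to a weak limit, upgrade to strong $\mathsf{H}$-convergence via the compact embedding, and conclude $v^*\in\operatorname{Ker}(A)\cap\operatorname{Ker}(B)$ by weak lower semicontinuity, contradicting $\|v^*\|_{\mathsf{H}}=1$. The paper itself does not supply a proof of this lemma (it cites it as a personal communication), so there is nothing to compare against; your proof fills that gap correctly, and the concluding norm-equivalence step is also handled properly.
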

\par
We are now in a position to derive the following 
\begin{corollary}
\label{corB:ellipt}
Under the assumptions of Proposition \ref{propB:ellipt},  there exists a constant $C_{\mathrm{ER}} >0$ such that for any 
$\f h \in H^1(\Omega;\R^d)$  
the solution $\f y$ to the boundary-value problem \eqref{boundaryvalueproblemProp} satisfies 
\begin{subequations}
\label{key-elliptic-regul-est-NEW}
\begin{align}
\label{key-elliptic-regul-est-1-n}
  &  \| \f y \| _{H^2(\Omega)} \leq  \widehat{C}_{\mathrm{ER}} \left( \| \f h \| _{L^2(\Omega)}{+} \| \f y \| _{L^2(\Omega)} \right) \,,
  \\ &
    \label{key-elliptic-regul-est-2-n}
    \| \f y \| _{H^3(\Omega)} \leq   \widehat{C}_{\mathrm{ER}}  \left( \| \eps{ \f h} \| _{L^2(\Omega)}{+} \| \f y \| _{H^1(\Omega)} \right) \,.
\end{align}
\end{subequations}
\end{corollary}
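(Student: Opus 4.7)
The plan is to derive each estimate in \eqref{key-elliptic-regul-est-NEW} from its counterpart in \eqref{key-elliptic-regul-est} by absorbing the excess regularity on the right-hand side. More specifically, for \eqref{key-elliptic-regul-est-1-n} I would start from \eqref{key-elliptic-regul-est-1} and upgrade it by bounding $\|\f y\|_{H^1(\Omega)}$ in terms of $\|\f h\|_{L^2(\Omega)}$ and $\|\f y\|_{L^2(\Omega)}$ only. The natural way is to test the equation by $\f y$ itself, which, together with the Neumann boundary condition \eqref{boundaryvalueproblemProp} and the positive-definiteness \eqref{asstensors}, gives
\[
\eta_{\Vm}\|\eps{\f y}\|_{L^2}^2 \leq \int_\Omega \Vm\eps{\f y}{:}\eps{\f y}\dx = \int_\Omega \f h\cdot\f y\dx \leq \|\f h\|_{L^2}\|\f y\|_{L^2}.
\]
Combining this with Korn's second inequality $\|\f y\|_{H^1}\leq C(\|\f y\|_{L^2}+\|\eps{\f y}\|_{L^2})$ and Young's inequality yields the desired control of $\|\f y\|_{H^1}$, and plugging this into \eqref{key-elliptic-regul-est-1} closes the argument.

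For \eqref{key-elliptic-regul-est-2-n} the task is to replace $\|\f h\|_{H^1(\Omega)}$ in \eqref{key-elliptic-regul-est-2} by $\|\eps{\f h}\|_{L^2(\Omega)}$. Korn's second inequality gives $\|\f h\|_{H^1}\leq C(\|\f h\|_{L^2}+\|\eps{\f h}\|_{L^2})$, so the heart of the matter is to control $\|\f h\|_{L^2}$ by $\|\eps{\f h}\|_{L^2}$ alone. The crucial observation is that, since $\f y$ solves the Neumann problem \eqref{boundaryvalueproblemProp}, the right-hand side $\f h = -\di(\Vm\eps{\f y})$ is automatically orthogonal to all rigid motions: integrating $\f h$ gives $\int_\Omega \f h\dx = -\int_{\partial\Omega}\Vm\eps{\f y}\cdot\f n\,\dS = 0$ by the boundary condition, and an integration by parts together with the symmetry $V_{ijkl}=V_{jikl}$ from \eqref{asstensors} yields
\[
\int_\Omega (x_i h_j - x_j h_i)\dx = (V_{jilm}-V_{ijlm})\int_\Omega\eps{\f y}_{lm}\dx = 0.
\]
Once these orthogonality conditions are established, I would apply Lemma \ref{l:Poincare} with $\mathsf{V}=H^1(\Omega;\R^d)$, $\mathsf{H}=L^2(\Omega;\R^d)$, $A=\eps(\cdot)$ (whose kernel is exactly the space of rigid motions), and $B$ the continuous linear map sending $\f h$ to the pair $(\int_\Omega\f h\dx,\int_\Omega\f x\wedge\f h\dx)$, which annihilates no nontrivial rigid motion. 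Korn's second inequality supplies the abstract bound \eqref{Poin-bases}, and Lemma \ref{l:Poincare} then delivers $\|\f h\|_{L^2}\leq C\|\eps{\f h}\|_{L^2}$, whence $\|\f h\|_{H^1}\leq C\|\eps{\f h}\|_{L^2}$. Substituting into \eqref{key-elliptic-regul-est-2} concludes the proof.

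The only genuine subtlety is the orthogonality of $\f h$ to infinitesimal rotations: it is essential that the full symmetry $\mathbb{E}_{ijkl}=\mathbb{E}_{jikl}$ from \eqref{asstensors} is used, and that the boundary integral vanishing rests on the precise form $\f n\cdot\Vm\eps{\f y}=0$ of the Neumann condition in \eqref{boundaryvalueproblemProp}. Everything else is a routine combination of Korn's inequality, testing by $\f y$, and the abstract Poincaré-type Lemma \ref{l:Poincare}.
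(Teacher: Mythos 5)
Your proof is correct, but it takes a genuinely different route from the paper's. The paper derives both estimates by applying the abstract Poincar\'e Lemma~\ref{l:Poincare} directly to $\f y$: for~\eqref{key-elliptic-regul-est-1-n} with $\mathsf{V}=H^2(\Omega;\R^d)$ (restricted to functions with vanishing Neumann trace), $\mathsf{H}=H^1$, $A\f y=-\di(\Vm\eps{\f y})$ and $B\f y=\f y$, so that~\eqref{Poin-bases} is exactly~\eqref{key-elliptic-regul-est-1}; and analogously for~\eqref{key-elliptic-regul-est-2-n} with $\mathsf{V}=H^3$ and $A\f y=\eps{\di(\Vm\eps{\f y})}$. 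You instead prove the first estimate by the elementary energy argument (testing by $\f y$ plus Korn), avoiding the abstract lemma altogether, and for the second you apply Lemma~\ref{l:Poincare} to $\f h=-\di(\Vm\eps{\f y})$ rather than to $\f y$, after observing that the Neumann condition $\f n\cdot\Vm\eps{\f y}=0$ and the minor symmetry $V_{ijkl}=V_{jikl}$ force $\f h$ to be $L^2$-orthogonal to all infinitesimal rigid motions. That observation is the genuine extra content in your argument, and it is a useful one: the paper's remark that ``$\|\eps{\di(\Vm\eps{\f y})}\|_{L^2}$ controls $\|\di(\Vm\eps{\f y})\|_{H^1}$ by a Korn-type inequality'' only yields $\|\f h\|_{H^1}\leq C(\|\f h\|_{L^2}+\|\eps{\f h}\|_{L^2})$ without such an orthogonality (or an interpolation step that absorbs $\|\f h\|_{L^2}\lesssim\|\f y\|_{H^2}$ into the left-hand side), so your version makes explicit a step that the paper leaves terse. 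In short: same Lemma~\ref{l:Poincare}, applied to a different object, supplemented by a correct rigid-motion orthogonality computation; both approaches reach the same estimates, and yours is slightly more self-contained on the second one.
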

\begin{proof}
$\vartriangleright$ \eqref{key-elliptic-regul-est-1-n}:  We apply Lemma \ref{l:Poincare} with the following choices: 
$\mathsf{V} = H^2(\Omega;\R^d)$, $ \mathsf{H}= H^1(\Omega;\R^d)$, 
$ \mathsf{W} = L^2(\Omega;\R^d) = \mathsf{Z} $, and $A\f y = -\di (\Vm \eps{ \f y}) $, $B \f y = y $.  Observe that \eqref{Poin-bases} holds thanks to \eqref{key-elliptic-regul-est-1}.
Then,  $\| \f y \|_{H^2}$ is equivalent to $\|\f y \|_{L^2} {+} \|  \di (\Vm \eps{ \f y})\|_{L^2} $.
\\
$\vartriangleright$ \eqref{key-elliptic-regul-est-2-n}: We now apply  Lemma \ref{l:Poincare} with the very same choices  for $ \mathsf{H} $, $  \mathsf{Z} $, and $B$,
as in the previous lines, while  we set $\mathsf{V} = H^3(\Omega;\R^d)$, $\mathsf{W}= L^2(\Omega;\R^{d\times d})$, and    $A\f y = \eps{\di (\Vm \eps{ \f y}) }$. In this case, 
\eqref{Poin-bases}  reads 
\[
\| \f y \|_{H^3} \leq C \left( \|\f y \|_{H^1} {+} \|  \eps{\di (\Vm \eps{ \f y}) }\|_{L^2} \right)\,,
\]
which holds true thanks to \eqref{key-elliptic-regul-est-2}, taking into account that, again by a  Korn-type inequality, $ \|  \eps{\di (\Vm \eps{ \f y}) }\|_{L^2} $ controls 
$\|\di (\Vm \eps{ \f y})\|_{H^1}$. Then, \eqref{key-elliptic-regul-est-2-n} ensues. 
\end{proof}
\EEE

\section{Smoothening the Yosida approximation}
\label{s:appB}
Following, e.g., the lines of \cite[Sec.\,3]{Gilardi-Rocca}, for a given convex
function $\widehat{\beta}: \R \to \R$ with subdifferential
$\betaup= \partial \widehat\beta: \R \rightrightarrows \R$, and for a fixed
$\delta\in (0,1)$, we define
\begin{equation}
\label{beta-regul}
\betaup_\delta : = \betaup_\delta^{\mathrm{Y}} \star  \varrho_\delta 
\end{equation}
where $ \betaup_\delta^{\mathrm{Y}} $ is the Yosida regularization of the maximal
monotone operator $\betaup$ (we refer to, e.g., \cite{Brez73}) and
\begin{equation}
\label{convol-kernel}
\varrho_\delta(x): = \tfrac1{\delta^{2}} \varrho \left( \tfrac x{\delta^2}\right)
\qquad \text{with }  
\left\{
  \begin{array}{ll}
    \varrho \in \rmC^\infty(\R), 
    \\
    \|\varrho\|_{L^1(\R)} =1,
    \\
    \mathrm{supp}(\varrho) \subset [{-}1,1].
  \end{array}
\right.
\end{equation}
Thus, $\betaup_\delta \in \rmC^\infty(\R)$ and it has been shown in
\cite{Gilardi-Rocca} that
\begin{subequations}
\label{properties-delta-approx}
\begin{equation}
\label{prop-delta-1}
\|\betaup_\delta'\|_{L^\infty(\R)} \leq \frac1\delta,  \qquad  |\betaup_\delta(x){-} \betaup_\delta^{\mathrm{Y}}(x)| \leq \delta 
\text{ for all } x \in \R\,.
\end{equation}
Taking into account the properties of the Yosida approximation we also deduce that 
\begin{equation}
\label{prop-delta-1-bis}
|\betaup_\delta(x)| \leq  |\betaup^o(x)| +\delta  \qquad  \text{ with }  |\betaup^o(x)| = \inf\{|y|\, : \, y \in \betaup(x) \}\,.
\end{equation}
Furthermore, $\betaup_\delta $ admits a convex potential $\widehat\beta_\delta$
satisfying, as a consequence of \eqref{prop-delta-1}, (below
$\widehat\betaup_\delta^{\mathrm{Y}}$ denotes the Yosida approximation of $\widehat \beta$):
\begin{equation}
\label{prop-delta-2}
-\delta |x|  \leq  \widehat{\beta}_\delta^{\mathrm{Y}} (x) -\delta |x| \leq
\widehat{\beta}_\delta(x) \leq  \widehat\beta_\delta^{\mathrm{Y}} (x) +\delta |x| \leq
\widehat\beta(x)+\delta|x|  \  \text{ and } \  \widehat\beta_\delta(x) \to
\widehat\beta(x)  \text{ for all } x \in \R\,. 
\end{equation} 
We also point out that  the following analogue of Minty's trick holds: given $I \subset \R$
and sequence $(v_\delta)_\delta\, v,\, \beta \in L^2 (I;\R)$ such that
$v_\delta\weakto v$ and $\betaup_\delta(v_\delta) \weakto \beta $ in
$ L^2 (I)$,
\begin{equation}
  \label{prop-delta-3}
  \limsup_{\delta\to 0^+} \int_I \betaup_\delta(v_\delta)  v_\delta \dd x
  \leq \int_I \beta    v \dd x  \quad \Longrightarrow \quad \beta \in
  \partial \widehat{\beta}(v) \text{ a.e.\ in } I. 
\end{equation}
\end{subequations}
 We conclude this section with a new result, ensuring an additional estimate for $\betaup_\delta{''}$.
 \begin{lemma}
 \label{l:Robert}
The function 
 $\betaup_\delta$ from \eqref{beta-regul} fulfills 
 \begin{equation}
 \label{estimate-beta2}
 |\betaup_\delta''(x)| \leq \frac{\widehat{C}_\rho}{\delta^3} \qquad \text{for all } x \in \R
 \end{equation}
 with $\widehat{C}_\rho =  \|\varrho'\|_{L^1(\R)} $. 
 \end{lemma}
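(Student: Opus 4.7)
\medskip
\noindent\textbf{Proof plan.} The plan is to differentiate the convolution $\betaup_\delta = \betaup_\delta^{\mathrm{Y}} \star \varrho_\delta$ in a way that puts both derivatives on the mollifier (one of them) and on the Lipschitz Yosida approximation (the other). First I would write
\[
\betaup_\delta''(x) = \frac{d}{dx}\bigl(\betaup_\delta^{\mathrm{Y}} \star \varrho_\delta'\bigr)(x) = (\betaup_\delta^{\mathrm{Y}})' \star \varrho_\delta'(x),
\]
where the first equality uses that $\varrho_\delta \in \rmC^\infty$ with compact support so one derivative can be freely transferred to $\varrho_\delta$, and the second equality uses that $\betaup_\delta^{\mathrm{Y}}$ is (globally) Lipschitz, hence almost everywhere differentiable with $(\betaup_\delta^{\mathrm{Y}})' \in L^\infty(\R)$ and $\|(\betaup_\delta^{\mathrm{Y}})'\|_{L^\infty(\R)} \leq 1/\delta$ (the standard Lipschitz bound for the Yosida approximation).

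Next I would estimate the remaining convolution by Young's inequality, $\|f \star g\|_{L^\infty} \leq \|f\|_{L^\infty}\|g\|_{L^1}$, so that pointwise
\[
|\betaup_\delta''(x)| \;\leq\; \|(\betaup_\delta^{\mathrm{Y}})'\|_{L^\infty(\R)}\,\|\varrho_\delta'\|_{L^1(\R)} \;\leq\; \frac{1}{\delta}\,\|\varrho_\delta'\|_{L^1(\R)}\,.
\]
Finally, a straightforward rescaling from the definition $\varrho_\delta(x) = \delta^{-2} \varrho(x/\delta^2)$ gives $\varrho_\delta'(x) = \delta^{-4}\varrho'(x/\delta^2)$, and the change of variables $y = x/\delta^2$ yields
\[
\|\varrho_\delta'\|_{L^1(\R)} = \frac{1}{\delta^4}\int_\R \bigl|\varrho'(x/\delta^2)\bigr|\,\mathrm{d}x = \frac{1}{\delta^2}\,\|\varrho'\|_{L^1(\R)} = \frac{\widehat{C}_\rho}{\delta^2}.
\]
Combining the two bounds delivers the claimed estimate $|\betaup_\delta''(x)| \leq \widehat{C}_\rho/\delta^3$.

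There is no real obstacle here; the only point that requires minor care is the justification that the derivative of $\betaup_\delta^{\mathrm{Y}}\star \varrho_\delta$ can be written either as $(\betaup_\delta^{\mathrm{Y}})' \star \varrho_\delta$ or as $\betaup_\delta^{\mathrm{Y}} \star \varrho_\delta'$. This is standard once one observes that $\betaup_\delta^{\mathrm{Y}}$ is locally in $W^{1,\infty}$ and $\varrho_\delta$ is $\rmC^\infty$ with compact support, so both identities hold pointwise everywhere and not merely a.e.; in particular the second derivative is continuous and the pointwise estimate can be stated for every $x \in \R$.
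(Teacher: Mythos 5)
Your proof is correct and takes essentially the same approach as the paper: place one derivative on the mollifier and one on the Yosida approximation, bound $(\betaup_\delta^{\mathrm{Y}})'$ by the Lipschitz constant $1/\delta$, and compute $\|\varrho_\delta'\|_{L^1} = \delta^{-2}\|\varrho'\|_{L^1}$ by rescaling. Invoking Young's inequality explicitly (and keeping the modulus inside the $L^1$ norm throughout) is in fact a slightly cleaner formulation than the paper's written argument, which informally puts the absolute value outside the integral of $\varrho_\delta'$ at one point.
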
 
\begin{proof}
We have 
\[
\betaup_\delta'(x) = \int_{-\delta^2}^{\delta^2}\varrho_\delta (y) (\betaup_\delta^{\mathrm{Y}})' (x{-}y) \dd y  = -\int_{x-\delta^2}^{x+\delta^2}\varrho_\delta (x{-}y)
 (\betaup_\delta^{\mathrm{Y}})'(y)  \dd y \,.
\]
Therefore, by the first of \eqref{prop-delta-1} we have 
\[
\begin{aligned}
\betaup_\delta''(x) =-\int_{x-\delta^2}^{ x+\delta^2}\varrho_\delta' (x{-}y)
 (\betaup_\delta^{\mathrm{Y}})'(y)  \dd y 
& \leq \frac1\delta \left|  \int_{\R}\varrho_\delta' (x{-}y) \dd y \right| 
\\
& =  \frac1\delta \left|  \int_{\R} \frac1{\delta^4}\varrho' \left(\frac  y{\delta^2}\right) \dd y \right|
 = \frac1{\delta^3}  \left|  \int_{\R} \varrho' ( z) \ \dd z \right|\leq \frac {\widehat{C}_\rho}{\delta^3} \,.
 \end{aligned}
\]
\end{proof}

\EEE

{\small

\markboth{References}{References}

\bibliographystyle{abbrv}


}

\end{document}